\newcommand{\stkout}[1]{\ifmmode\text{\sout{\ensuremath{#1}}}\else\sout{#1}\fi}
\colorlet{Changes@Color}{red}
\numberwithin{equation}{section}
\newtheorem{prop}{Proposition}
\newtheorem{lemma}[prop]{Lemma}
\newtheorem{thm}[prop]{Theorem}
\newtheorem*{thm*}{Theorem}
\newtheorem{cor}[prop]{Corollary}
\numberwithin{prop}{section}
\newtheorem{defn}[prop]{Definition}
\newtheorem*{defn*}{Definition}
\theoremstyle{definition}
\newtheorem*{ex*}{Example}
\newtheorem{rmk}[prop]{Remark}
\definecolor{c1}{rgb}{0.2,0.4,0.5}
\definecolor{c2}{rgb}{0.1,0.3,0.5}
\definecolor{c3}{rgb}{0.2,0.7,0.5}
\def \k {K\"ahler }
\def \ke {K\"ahler--Einstein }
\newcommand{\oo}[1]{\overline{#1}}
\newcommand{\nab}{\nabla}
\newcommand{\bC}{\mathbb{C}}
\newcommand{\dbar}{\oo\partial}
\def\Ol{\overline}
\def\ol{\overline}
\def\endpf{\hbox{\vrule height1.5ex width.5em}}
\def\e{\epsilon}
\DeclareFontFamily{U}{MnSymbolC}{}
\DeclareSymbolFont{MnSyC}{U}{MnSymbolC}{m}{n}
\DeclareFontShape{U}{MnSymbolC}{m}{n}{
	<-6>  MnSymbolC5
	<6-7>  MnSymbolC6
	<7-8>  MnSymbolC7
	<8-9>  MnSymbolC8
	<9-10> MnSymbolC9
	<10-12> MnSymbolC10
	<12->   MnSymbolC12}{}
\DeclareMathSymbol{\intprod}{\mathbin}{MnSyC}{'270}
\DeclareMathOperator{\tr}{tr}
\DeclareMathOperator{\Ric}{Ric}
\DeclareMathOperator{\Aut}{Aut}
\DeclareMathOperator{\grad}{grad}
\DeclareMathOperator{\Real}{Re}
\DeclareMathOperator{\PU}{PU}
\DeclareMathOperator{\Iso}{Iso}
\DeclareMathOperator{\Bl}{Bl}
\begin{document}

\title[]{K\"ahler-Einstein metrics and Obstruction flatness of circle bundles}

\begin{abstract} Obstruction flatness of a strongly pseudoconvex hypersurface $\Sigma$ in a complex manifold refers to the property that any (local) K\"ahler-Einstein metric on the pseudoconvex side of $\Sigma$, complete up to $\Sigma$, has a potential $-\log u$ such that $u$ is $C^\infty$-smooth up to $\Sigma$. In general, $u$ has only a finite degree of smoothness up to $\Sigma$.
In this paper, we study obstruction flatness of hypersurfaces $\Sigma$ that arise as unit circle bundles $S(L)$ of negative Hermitian line bundles $(L, h)$ over \k manifolds $(M, g).$ We prove that if $(M,g)$ has constant Ricci eigenvalues, then $S(L)$ is obstruction flat. If, in addition, all these eigenvalues are strictly less than one and $(M,g)$ is complete, then we show that the corresponding disk bundle admits a complete K\"ahler-Einstein metric. Finally, we give a necessary and sufficient condition for obstruction flatness of $S(L)$ when $(M, g)$ is a \k surface $(\dim M=2$) with constant scalar curvature.

\end{abstract}

\subjclass[2010]{32F45, 32Q20, 32E10,32C20}


\author [Ebenfelt]{Peter Ebenfelt}
\address{Department of Mathematics, University of California at San Diego, La Jolla, CA 92093, USA} \email{{pebenfelt@ucsd.edu}}

\author[Xiao]{Ming Xiao}
\address{Department of Mathematics, University of California at San Diego, La Jolla, CA 92093, USA}
\email{{m3xiao@ucsd.edu}}

\author [Xu]{Hang Xu}
\address{Department of Mathematics, University of California at San Diego, La Jolla, CA 92093, USA}
\email{{h9xu@ucsd.edu}}

\thanks{The first author was supported in part by the NSF grant DMS-1900955 and DMS-2154368. The second author was supported in part by the NSF grants DMS-1800549 and DMS-2045104.}

\maketitle
\section{Introduction}

On a smoothly bounded strongly pseudoconvex domain $\Omega\subset \mathbb{C}^n$, $n\geq 2$, the existence of a complete \ke metric on $\Omega$ is governed by the following Dirichlet problem:
\begin{equation}\label{Dirichlet problem}
	\begin{dcases}
	J(u):=(-1)^n \det \begin{pmatrix}
	u & u_{\oo{z_k}}\\
	u_{z_j} & u_{z_j\oo{z_k}} \\
	\end{pmatrix}=1 & \mbox{in } \Omega\\
	u=0 & \mbox{on } \partial\Omega
	\end{dcases}
\end{equation}
with $u>0$ in $\Omega$. The equation $J(u)=1$ is often referred to as Fefferman's complex Monge-Amp\`ere equation. If $u$ is a solution of \eqref{Dirichlet problem}, then $-\log u$ is the \k potential of a complete \ke metric on $\Omega$.
Fefferman \cite{Fe2} established the existence of an approximate solution $\rho\in C^{\infty}(\oo{\Omega})$ of \eqref{Dirichlet problem} that only satisfies $J(\rho)=1+O(\rho^{n+1})$, and showed that such a $\rho$ is unique modulo $O(\rho^{n+2})$. Such an approximate solution $\rho$ is called a {\em Fefferman defining function}. Cheng and Yau \cite{CheYau} then proved the existence and uniqueness of an exact solution $u\in C^{\infty}(\Omega)$ to \eqref{Dirichlet problem}, which is now called the Cheng--Yau solution. Lee and Melrose \cite{LeeMel82} established that the Cheng--Yau solution has the following asymptotic expansion:
\begin{equation}\label{Lee-Melrose expansion}
	u\sim \rho\sum_{k=0}^{\infty} \eta_k\bigl(\rho^{n+1}\log\rho\bigr)^k
\end{equation}
where each $\eta_k\in C^{\infty}(\oo{\Omega})$ and $\rho$ is a Fefferman defining function.

It follows from \eqref{Lee-Melrose expansion} that, in general, the Cheng--Yau solution $u$ can only be expected to possess a finite degree of boundary smoothness; namely, $u\in C^{n+2-\varepsilon}(\oo{\Omega})$ for any $\varepsilon>0$. Graham \cite{Graham1987a} discovered that in the expansion (\ref{Lee-Melrose expansion}), the restriction of $\eta_1$ to the boundary, $\eta_1|_{\partial\Omega}$, turns out to be precisely the obstruction to $C^{\infty}$ boundary regularity of the Cheng--Yau solution. More precisely, in \cite{Graham1987a} Graham proved that if $\eta_1|_{\partial\Omega}$ vanishes identically (on $\partial\Omega$), then every $\eta_k$ vanishes to infinite order on $\partial\Omega$ for all $k\geq 1$. For this reason, $\eta_1|_{\partial\Omega}$ is called the \emph{obstruction function}.
Graham also showed ({\em op.\ cit.}) that, for any $k\geq 1$, the coefficients $\eta_k$ mod $O(\rho^{n+1})$ are independent of the choice of Fefferman defining function $\rho$ and are locally uniquely determined by the local CR geometry of $\partial\Omega$.
As a consequence, the $\eta_k$ mod $O(\rho^{n+1})$, for $k\geq 1$, are local CR invariants that can be defined on any strongly pseudoconvex CR hypersurface in a complex manifold. In particular, the obstruction function $\mathcal{O}=\eta_1|_{\partial\Omega}$ is a local CR invariant and it can be defined on any strongly pseudoconvex CR hypersurface $\Sigma$ (see more details $\S$ \ref{Sec 2.1}). If $\Sigma$ is a CR hypersurface for which the obstruction function $\mathcal{O}$ vanishes identically, then $\Sigma$ is said to be \emph{obstruction flat}. The most basic examples of obstruction flat hypersurfaces are the sphere $\{z\in \mathbb{C}^n: |z|=1 \}$ and, more generally, any spherical CR hypersurface; recall that a CR hypersurface $\Sigma$ is called {\em spherical} if, at every $p\in \Sigma$, there is a neighborhood of $p$ that is CR diffeomorphic to an open piece of the sphere. On the other hand, another result of Graham ({\em op.\ cit.}) shows that there are also plenty of non-spherical CR hypersurfaces that are obstruction flat.
The construction of such examples, however, is local and amounts to solving a Cauchy problem. Two questions then arise naturally: (Q1) Is there a more constructive and natural way to obtain obstruction flat CR hypersurfaces, and (Q2) is it possible to characterize {\em compact}, obstruction flat hypersurfaces in terms of more classical notions of CR geometry and complex geometry/several complex variables?

For question (Q2), for example, ample evidence supports the conjecture that if a $3$-dimensional strongly pseudoconvex CR hypersurface $\Sigma$ is obstruction flat and compact, then $\Sigma$ is spherical. In particular, Curry and the first author confirmed this conjecture for smooth boundaries of bounded, strongly pseudoconvex domains in $\bC^2$, subject to the existence of a holomorphic vector field satisfying a mild approximate tangency condition along the boundary $\Sigma$. The same authors also confirmed the conjecture for arbitrary small deformations, not necessarily embeddable, of the standard CR structure of the unit sphere in $\bC^2$.
We refer readers to \cite{CE1, CE2} for precise statements and related results.


Much less is known about higher dimensional obstruction flat strongly pseudoconvex hypersurfaces. Recently, Takeuchi in \cite{Ta18} proved that a Sasakian $\eta$-Einstein manifold (see \cite{Ta18} for the definition) must be obstruction flat. Hirachi announced the result that if the boundary of a smoothly bounded strongly pseudoconvex domain $\Omega\subset\mathbb{C}^n$, $n\geq 3$, is obstruction flat and is sufficiently close to the unit sphere, then $\Omega$ is biholomorphic to the unit ball (in his talk at the conference on `Symmetry and Geometric Structures' at IMPAN, Warsaw, November 12-18, 2017).

In this paper, we shall consider the special case where the CR hypersurface arises as the unit circle bundle of a negative Hermitian line bundle over a \k manifold. Let $(L,h)$ be a negative line bundle over a complex manifold $M$, so that the dual bundle $(L^*, h^{-1})$ induces a K\"ahler metric $g$ on $M$. By a well-known observation of Grauert, the corresponding circle bundle $S(L)=\{v \in L: |v|_h=1 \}$ is strongly pseudoconvex; here, $|v|_h$ denotes the norm of $v$ with respect to the metric $h$. One of our goals is to characterize obstruction flatness of the circle bundle $S(L)$ in terms of the \k geometry of $(M,g)$.

To formulate our results, we first recall some standard notions in \k geometry. Let $(M, g)$ be an $n$-dimensional \k manifold  and let $\mathrm{Ric}=-i\partial \Ol{\partial} \log \det(g)$ denote the associated Ricci tensor.  The latter naturally induces an endomorphism, the {\em Ricci endomorphism}, of the holomorphic tangent space $T_p^{1,0}M$ given by $\mathrm{Ric} \cdot g^{-1}$ for $p\in M$.
The eigenvalues of this endomorphism will be referred to as
the {\em Ricci eigenvalues} of $(M,g)$ and, by design, depends on $p\in M$. 
All Ricci eigenvalues are real-valued as both  $\mathrm{Ric}$ and $g$ are Hermitian tensors.
For a fixed $p$, we label the Ricci eigenvalues such that $\lambda_1(p) \leq \cdots \leq \lambda_n (p)$.
Note that the sum of the $\lambda_i(p)$, i.e., the trace of the Ricci endomorphism, gives the scalar curvature at $p$. The product of the $\lambda_i(p)$, i.e., the determinant of the Ricci endomorphism, gives the so-called central curvature at $p$ (cf. \cite{Ma03}). The \k manifold $(M, g)$ is said to have {\em constant Ricci eigenvalues}, if each $\lambda_i(p)$, for $1 \leq i \leq n$, is a constant function on $M$; equivalently, the characteristic polynomial of the Ricci endomorphism, $\mathrm{Ric} \cdot g^{-1}: T_p^{1,0}M \rightarrow T_p^{1,0}M$ is the same at every point $p \in M$.

A \k manifold with constant Ricci eigenvalues must have constant scalar and central curvatures. Typical examples of \k manifolds with  constant Ricci eigenvalues include homogeneous spaces and \ke manifolds, and more generally products of \ke manifolds. We emphasize, however, that there are also many examples of  \k manifolds with constant Ricci eigenvalues, which cannot be expressed as a product of \ke manifolds. The following is such an example:
\begin{ex*}[Siegel-Jacobi space]
	Set $M=\mathbb{C}\times \mathbb{H}=\{(z_1, z_2)\in \mathbb{C}^2: \Real z_2>0 \}$. Let $\phi: M \rightarrow \mathbb{R}$ be
	
	\begin{equation*}
	\phi(z_1, z_2)=\frac{(z_1+\oo{z_1})^2}{2(z_2+\oo{z_2})}-\log(z_2+\oo{z_2}).
	\end{equation*}
	
	One can verify that $\phi$ is strongly plurisubharmonic and thus $\omega=i\partial\dbar\phi$ induces a \k metric $g$ on $M$. The \k manifold $(M, g)$ is called the Siegel-Jacobi space. It is a complete homogeneous \k surface (see \cite{Yang15}).
	A routine computation yields that
	the Ricci endomorphism $T^{1,0}M\rightarrow T^{1,0}M$ is given by
	\begin{equation}\label{Ricci operator Siegel-Jacobi}
	\Ric \cdot g^{-1}=\begin{pmatrix}
	0 & 0 \\
	-3\frac{z_1+\oo{z_1}}{z_2+\oo{z_2}} & -3\\
	\end{pmatrix}.
	\end{equation}
	The Ricci eigenvalues are $0$ and $-3$, which are obviously constants. However, the Siegel-Jacobi space cannot be locally holomorphically isometric to a product of \ke manifolds (Riemann surfaces in this case)  because if it were, then every entry in the matrix \eqref{Ricci operator Siegel-Jacobi} would be holomorphic, as is clearly not the case.
	
\end{ex*}

Our first result asserts that $S(L)$ is obstruction flat if $(M, g)$ has constant Ricci eigenvalues. 

\begin{thm}\label{thm 1}
Let $(L,h)$ be a negative line bundle over a complex manifold $M$, so that the dual bundle $(L^*, h^{-1})$ of $(L, h)$ induces a K\"ahler metric $g$ on $M$ (not necessarily complete). 
If $(M, g)$ has constant Ricci eigenvalues, then the circle bundle $S(L)$ is obstruction flat.
\end{thm}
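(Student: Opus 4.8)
To prove Theorem~\ref{thm 1}, I would work directly with an explicit K\"ahler potential for the disk bundle and compute the obstruction function in terms of the \k geometry of $(M,g)$.

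\textbf{Step 1: Set up coordinates and a potential on the disk bundle.} Over a small coordinate patch $U\subset M$ with holomorphic coordinates $z=(z_1,\dots,z_n)$, trivialize $L$ and let $\psi\colon U\to\bR$ be a local potential for $g$, i.e.\ $g_{j\bar k}=\psi_{z_j\bar z_k}$, chosen so that the Hermitian metric $h$ on $L$ is represented by $e^{\psi}$ (this is exactly the Grauert setup: $|v|_h^2=|\xi|^2 e^{\psi(z)}$ where $\xi$ is the fiber coordinate). The disk bundle $D(L)=\{|v|_h<1\}$ is then $\{(z,\xi):|\xi|^2 e^{\psi(z)}<1\}$, and a natural defining function is $\rho=1-|\xi|^2 e^{\psi(z)}$; its boundary is $S(L)$. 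The first key computation is the complex Monge--Amp\`ere (Fefferman) operator $J$ applied to powers of $\rho$, or more precisely, to a candidate solution of the form $u=\rho\cdot (\text{correction depending on }z)$. Because $\rho$ has the product structure $1-|\xi|^2 e^\psi$, the determinant $J(u)$ will organize into a sum of terms involving $\det(g)$, its logarithmic derivatives (i.e.\ $\Ric$), and the ``radial'' variable $s:=|\xi|^2 e^\psi\in[0,1)$, with $\rho=1-s$.

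\textbf{Step 2: Reduce the Fefferman equation to an ODE in the radial variable.} I expect that the ansatz $u=(1-s)\,f(s)\,\det(g)^{-1/(n+1)}$ (up to normalization), or something close to it, converts $J(u)=1$ into an ordinary differential equation for $f$ in the single variable $s$, \emph{provided} the Ricci eigenvalues of $(M,g)$ are constant. The point is that the obstruction to smoothness in the Lee--Melrose expansion \eqref{Lee-Melrose expansion} enters through the coefficient of $(1-s)^{n+1}\log(1-s)$, and if the equation genuinely decouples into an ODE with \emph{constant} coefficients (constancy forced by the constant-Ricci-eigenvalue hypothesis), then one can solve it to infinite order at $s=1$ and check that the log term's coefficient vanishes. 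Concretely, I would: (a) expand $J(u)$ using the block-determinant structure, isolating how $\partial_\xi,\partial_{\bar\xi}$ and the base derivatives interact; (b) observe that the base-manifold contributions collapse to symmetric functions of the Ricci eigenvalues (trace, and higher elementary symmetric functions), hence to constants; (c) read off the resulting ODE and its Frobenius-type analysis at the boundary point $s=1$ (equivalently $\rho=0$).

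\textbf{Step 3: Verify vanishing of the obstruction.} With the ODE in hand, construct the formal power-series solution $f(s)=\sum a_k(1-s)^k$; the obstruction function $\mathcal{O}$ is (a nonzero multiple of) the obstruction to continuing this series past order $n+1$ without a logarithm. Since the recursion has constant coefficients, a smooth formal solution exists, so $\mathcal{O}\equiv 0$ along $S(L)$ over $U$; as $\mathcal{O}$ is a local CR invariant (noted in the introduction), this local vanishing on each patch gives global obstruction flatness of $S(L)$.

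\textbf{Main obstacle.} The crux is Step~2: showing that the complex Monge--Amp\`ere equation for this class of potentials \emph{actually} reduces to a constant-coefficient ODE in $s$, and that the only base-geometric data surviving the reduction are the elementary symmetric polynomials in the Ricci eigenvalues (which are constants by hypothesis). Carrying this out requires a careful expansion of the $(n+1)\times(n+1)$ bordered Hessian determinant in the fiber and base variables, tracking terms like $\psi_{z_j\bar z_k}$, $(\log\det g)_{z_j\bar z_k}$, and cross terms with $\xi$. One must also confirm that non-constant quantities such as the Ricci \emph{tensor} (as opposed to its eigenvalues) and the off-diagonal terms exhibited in the Siegel--Jacobi example genuinely cancel out of the relevant coefficients; this is the technical heart and the place where the constant-Ricci-eigenvalue hypothesis is used in an essential, rather than incidental, way.
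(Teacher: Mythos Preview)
Your proposal is correct and follows essentially the same approach as the paper: reduce $J(u)=1$ to an ODE in the radial variable using that the characteristic polynomial of the Ricci endomorphism is constant, then produce a smooth (in fact real analytic) solution near the boundary, whence obstruction flatness via Graham. The paper's precise ansatz is $u=(GH)^{-1/(n+2)}\phi(|\xi|H^{1/2})$ with $G=\det g$ and $H$ the bundle metric, and it handles your Step~3 cleanly by the substitution $Z=1/Y$ (where $Y=\tfrac{2}{n+2}-r\phi'/\phi$), which converts the equation into a first-order ODE $rZ'\hat P(Z)+\hat Q(Z)=0$ that is manifestly regular at $r=1$, so no Frobenius or log-term analysis is required.
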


\begin{rmk}\label{rmk12}
A standard result in \k geometry states that a \k metric with constant scalar curvature must be real analytic. Consequently, the circle bundle $S(L)$ in Theorem \ref{thm 1} is a real analytic CR hypersurface in the ambient space $L$. We also remark that a special case of Theorem \ref{thm 1}  occurs when the manifold $(M, g)$ is K\"ahler-Einstein. In this case the circle bundle $S(L)$ becomes a Sasakian $\eta$-Einstein manifold and Theorem \ref{thm 1} reduces to  Proposition 3.5 of \cite{Ta18}.
\end{rmk}

An interesting problem (as alluded to above) is to find obstruction flat CR hypersurfaces that are not spherical. Combining Theorem \ref{thm 1} with the work of Webster \cite{Web77b} and Bryant \cite{Bry01} (see also Wang \cite{Wang}), we obtain the following corollary, which provides a way to construct many such CR hypersurfaces as circle bundles; $\omega_c$ in the statement of the corollary denotes the \k metric with constant holomorphic sectional curvature $c$.

\begin{cor}\label{cor 1}
Let $(L, h)$ and $(M, g)$ be as in Theorem $\ref{thm 1}$ and suppose that $(M, g)$ has  constant Ricci eigenvalues. Then the circle bundle $S(L)$ of $L$ is obstruction flat. Furthermore, $S(L)$ is spherical if and only if 
$(M,g)$ is locally holomorphically isometric to one of the following:
	\begin{itemize}
		\item [(1)] $(\mathbb{B}^n, \lambda \,\omega_{-1})$ for some $\lambda\in \mathbb{R}^+$,
		\item [(2)] $(\mathbb{CP}^n, \lambda\, \omega_1)$ for some $\lambda\in \mathbb{R}^+$,
		\item [(3)] $(\mathbb{C}^n, \omega_0)$,
		\item [(4)] $(\mathbb{B}^l\times \mathbb{CP}^{n-l}, \lambda \omega_{-1}\times \lambda\omega_1)$ for some $1\leq l\leq n-1$ and some $\lambda\in\mathbb{R}^+$.
	\end{itemize}
\end{cor}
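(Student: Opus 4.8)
The obstruction flatness of $S(L)$ is immediate from Theorem \ref{thm 1}, so the content is the characterization of sphericity. The plan is to combine Theorem \ref{thm 1} with the classical rigidity results of Webster and Bryant, and to translate the condition ``$S(L)$ is spherical'' into a curvature condition on $(M,g)$.

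First, I recall the precise statements I will invoke. Webster \cite{Web77b} showed that the circle bundle $S(L)$ over a K\"ahler manifold $(M,g)$ (with $(L^*,h^{-1})$ inducing $g$) is spherical if and only if $(M,g)$ has constant holomorphic sectional curvature; more precisely one must pin down the relation between the curvature constant and the normalization coming from the line bundle, and this forces the K\"ahler metric $g$ to be (a positive multiple of) $\omega_c$ for one of $c=-1,0,1$ after rescaling — the sign of $c$ being governed by whether $L$ is negative over a ball-type, flat, or projective base. Bryant \cite{Bry01} (see also Wang \cite{Wang}) classified the complete simply connected K\"ahler manifolds of constant holomorphic sectional curvature, and more to the point classified the K\"ahler manifolds with constant Ricci eigenvalues whose associated Sasakian/circle-bundle structure is spherical: the local models are exactly the complex space forms and their products of the mixed type $\mathbb{B}^l\times\mathbb{CP}^{n-l}$.

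The key steps, in order, are: (i) assume $S(L)$ is spherical; invoke Webster's theorem to conclude that $(M,g)$ is (locally) a K\"ahler manifold whose Bochner-type curvature obstruction vanishes, i.e.\ that $S(L)$ spherical pins down the second-order CR invariants of $S(L)$, which via the explicit formula for the CR curvature of a circle bundle (already needed in the proof of Theorem \ref{thm 1}) forces $(M,g)$ to have parallel Ricci tensor and, together with the constant-Ricci-eigenvalue hypothesis, to be locally a product of K\"ahler--Einstein factors each of constant holomorphic sectional curvature; (ii) among such products, determine which are compatible with the normalization $\mathrm{Ric}\cdot g^{-1}$ imposed by $L$ being the anticanonical-type bundle of $g$ — this is where the eigenvalues get forced to be $\{-\lambda,\dots,-\lambda\}$, $\{\lambda,\dots,\lambda\}$, $\{0,\dots,0\}$, or the mixed sign case, yielding exactly (1)--(4); (iii) conversely, for each of (1)--(4) check directly (or cite Webster/Bryant) that the resulting $S(L)$ is spherical, which is a short computation since $\omega_c$ has constant holomorphic sectional curvature and the circle bundle over a complex space form is a standard spherical CR manifold (a sphere, a Heisenberg-type manifold, or a lens-type quotient in cases (1)--(3), and a known homogeneous spherical CR manifold in case (4)).

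The main obstacle I anticipate is step (i)/(ii): extracting from sphericity of $S(L)$ — an $a\ priori$ statement about the full Chern--Moser curvature tensor of the total space — the rigid conclusion that the base must be a \emph{product} of complex space forms, rather than merely having constant holomorphic sectional curvature pointwise or constant scalar curvature. This requires carefully matching the vanishing of the CR curvature of $S(L)$ against the Bochner tensor and the traceless Ricci tensor of $(M,g)$, and using that constant Ricci eigenvalues plus vanishing Bochner tensor forces the de Rham--type splitting into Einstein factors; the normalization bookkeeping relating the constant $\lambda$ and the curvature $c$ to the Hermitian metric $h$ on $L$ is routine but must be done with care to land precisely on the four listed models and no others.
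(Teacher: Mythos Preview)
Your proposal has a genuine gap: you misstate Webster's theorem and, as a consequence, build an unnecessarily complicated and unsubstantiated plan for the sphericity direction.

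Webster's result is \emph{not} that $S(L)$ is spherical iff $(M,g)$ has constant holomorphic sectional curvature; it is that $S(L)$ is spherical iff the Bochner tensor of $(M,g)$ vanishes (see Proposition~4 in \cite{Wang}). Bochner flatness is strictly weaker than constant holomorphic sectional curvature---indeed, the product in item~(4) is Bochner flat but does not have constant holomorphic sectional curvature. Your step~(i), which tries to extract ``parallel Ricci tensor'' and a de Rham splitting into Einstein factors from sphericity, is neither justified nor necessary; and your step~(ii), invoking a supposed normalization ``$\mathrm{Ric}\cdot g^{-1}$ imposed by $L$ being the anticanonical-type bundle,'' is a red herring: $L$ is an arbitrary negative line bundle inducing $g$, and no such constraint is present.

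The paper's argument is much shorter and goes as follows. Constant Ricci eigenvalues trivially implies constant scalar curvature. By Webster, $S(L)$ is spherical iff $(M,g)$ is Bochner flat. Then one invokes Bryant's classification directly: Proposition~2.5 of \cite{Bry01} says that a Bochner-flat K\"ahler manifold with constant scalar curvature is locally holomorphically isometric to one of the four listed models. (In the compact case one could instead use Corollary~4.17 of \cite{Bry01}.) No splitting argument or line-bundle bookkeeping is required; the list (1)--(4) is exactly Bryant's output, and the converse direction is immediate since each model is Bochner flat.
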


We observe that Corollary \ref{cor 1} allows us to easily construct examples of compact, obstruction flat and non-spherical CR hypersurfaces as circle bundles $S(L)$ (with $\dim S(L) \geq 5$); take as the base any compact \k manifold $(M,g)$ with constant Ricci eigenvalues, but not locally holomorphically isometric to any one in the list (1)-(4) above. Such $(M,g)$ are clearly abundant, for instance, compact homogeneous Hodge manifolds (in particular compact Hermitian symmetric spaces) other than $\mathbb{CP}^n$. This observation indicates that question (Q2) above is non-trivial even for $S(L)$ when $n=\dim M\geq 2$, in the situation considered in this paper. When $n=1$, obstruction flatness of $S(L)$ implies that it is also spherical by a result proved (in a different context) by the first author \cite{Ebenfelt18}, since $S(L)$ always has a transverse $S^1$-symmetry.

As we have noted, the notion of obstruction flatness is a local CR invariant, but is originally related to complete \ke metrics on domains. It is natural to ask whether a complete \ke metric exits on the corresponding disk bundle $D(L):=\{w \in L: |w|_h <1 \}$ of $(L, h)$ in the situations we are considering.  We prove the following result:

\begin{thm}\label{thm 2}
Let $m=n+1 \geq 2.$ Let $M$ be a complex manifold of dimension $n$ and $(L,h)$ a negative line bundle over $M$ such that the dual bundle $(L^*, h^{-1})$ of $(L, h)$ induces a complete K\"ahler  metric $g$ on $M$. If $(M, g)$ has constant Ricci  eigenvalues
and every Ricci eigenvalue is strictly less than one, then the disk bundle $D(L)=\{w \in L: |w|_h <1 \}$ admits a unique complete \ke metric $\widetilde{g}$ with Ricci curvature equals to $-(m+1)$.
Moreover, this metric is induced by the following K\"ahler form:
\begin{equation}\label{eqnmet}
\widetilde{\omega} (w, \ol{w})=-\frac{1}{m+1} \pi^*(\Ric)|_{w}+\frac{1}{m+1} \pi^*(\omega)|_{w} -i \partial \Ol{\partial} \log \phi(|w|_h),
\end{equation}
where $\omega$ and $\Ric$ are respectively the \k and the Ricci form of $(M,g)$, $\pi: L \rightarrow M$ the canonical fiber projection of the line bundle, and $\phi: (-1, 1) \rightarrow \mathbb{R}^+$ an even real analytic function that depends only on the characteristic polynomial of the Ricci endomorphism.
\end{thm}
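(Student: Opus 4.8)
The plan is to verify by a direct computation that the explicit K\"ahler form \eqref{eqnmet} defines a complete K\"ahler--Einstein metric, reducing the complex Monge--Amp\`ere equation to an ODE for the radial profile $\phi$, and then to deduce uniqueness from the standard uniqueness of complete K\"ahler--Einstein metrics of negative Ricci curvature. First I would work in a local chart: choose holomorphic coordinates $z=(z_1,\dots,z_n)$ on $M$ and a local holomorphic frame $e$ of $L$, so that a point of $L$ is $w=\xi e$ and, in the Grauert convention for the negative bundle $L$, $|w|_h^2=|\xi|^2 e^{\rho(z)}$ with $\rho:=\log|e|_h^2$ a local K\"ahler potential for $g$ (so $\omega=i\del\dbar\rho$, $g_{j\oo{k}}=\rho_{j\oo{k}}$). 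Put $\sigma:=\log|w|_h^2=\log|\xi|^2+\rho$ (so $i\del\dbar\sigma=\pi^*\omega$ off the zero section) and $H(\sigma):=\log\phi(e^{\sigma/2})=\log\phi(|w|_h)$. Using $i\del\dbar\rho=\omega$ and $i\del\dbar\log\det g=-\Ric$, the form \eqref{eqnmet} has local K\"ahler potential $\Phi=\tfrac1{m+1}(\rho+\log\det g)-H(\sigma)$. A direct computation of the complex Hessian in $(z_1,\dots,z_n,\xi)$ gives $\widetilde g_{\xi\oo{\xi}}=-H''/|\xi|^2$, $\widetilde g_{j\oo{\xi}}=-H''\rho_j/\oo{\xi}$, and $\widetilde g_{j\oo{k}}=\tfrac1{m+1}(g-\Ric)_{j\oo{k}}-H'g_{j\oo{k}}-H''\rho_j\rho_{\oo{k}}$; forming $\det\widetilde g$ via the Schur complement of the fiber entry, the rank-one term $-H''\rho_j\rho_{\oo{k}}$ cancels, leaving
\[
\det\widetilde g=-\frac{H''(\sigma)}{|\xi|^2}\,\det(g)\,\prod_{i=1}^n\Bigl(\frac{1-\lambda_i}{m+1}-H'(\sigma)\Bigr),
\]
with $\lambda_1\le\cdots\le\lambda_n$ the (constant) Ricci eigenvalues. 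In particular $\widetilde\omega$ is a genuine K\"ahler metric exactly when $H''<0$ and $H'<(1-\lambda_n)/(m+1)$.

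Next I would reduce the Einstein condition to an ODE. Since $\Ric(\widetilde\omega)=-i\del\dbar\log\det\widetilde g$ and $\widetilde\omega=i\del\dbar\Phi$, the equation $\Ric(\widetilde\omega)=-(m+1)\widetilde\omega$ is equivalent to $\log\det\widetilde g-(m+1)\Phi$ being pluriharmonic; substituting the formula above, this quantity equals
\[
\log\bigl(-H''(\sigma)\bigr)+\sum_{i=1}^n\log\Bigl(\frac{1-\lambda_i}{m+1}-H'(\sigma)\Bigr)+(m+1)H(\sigma)-\sigma,
\]
a function of $\sigma$ alone, and a nonconstant function of $\sigma=\log|\xi|^2+\rho$ has nonzero $i\del\dbar$ (test against $\del_\xi$). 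Hence the Einstein condition is equivalent to the ODE
\[
-H''(\sigma)\,e^{(m+1)H(\sigma)}\prod_{i=1}^n\Bigl(\frac{1-\lambda_i}{m+1}-H'(\sigma)\Bigr)=C\,e^{\sigma}
\]
for a positive constant $C$. The constancy of the Ricci eigenvalues is used crucially here: it is what makes the ODE independent of the base point, so a single radial profile $\phi$ works globally. Substituting $y:=1-(m+1)H'(\sigma)$ and passing to $y$ as independent variable (a Legendre transform), a Bernoulli/linear-ODE manipulation integrates the equation once to
\[
H''=\frac{1}{\hat P(y)}\Bigl(c_1-\frac{1}{n+2}\int^y s\,\hat P(s)\,ds\Bigr),\qquad \hat P(y):=\prod_{i=1}^n(y-\lambda_i),
\]
after which $\sigma$ and $H$ follow by quadratures. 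This makes transparent that $\phi$ depends only on $\hat P$, the characteristic polynomial of the Ricci endomorphism; the remaining integration constant for $H$ only rescales $\phi$, hence is immaterial.

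Then I would fix the constants and check the boundary behavior. Smoothness of $\widetilde\omega$ across the zero section ($|w|_h\to0$, i.e.\ $\sigma\to-\infty$) forces $H'(\sigma)\to0$, hence $y\to1$, hence (since $H''$ must vanish there) $c_1=\tfrac1{n+2}\int^1 s\,\hat P(s)\,ds$; equivalently, writing $R'(y)=y\hat P(y)$, one gets $H''=(R(1)-R(y))/((n+2)\hat P(y))$. Here the dimension hypothesis $m=n+1$ enters: it forces the exponent governing the expansion of $y$ near $\sigma=-\infty$ to equal $1$, so that $y$, and hence $H$, has a convergent expansion in $e^\sigma=|w|_h^2$, i.e.\ $\phi$ extends as an even real-analytic function with $\phi(0)>0$ and $\phi''(0)<0$ — exactly what makes $\widetilde\omega$ a smooth positive form across $M$, where it restricts to $\tfrac1{m+1}(\omega-\Ric)$, positive since every $\lambda_i<1$. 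Because every $\lambda_i<1$ we also have $\hat P(y)>0$ and $R'(y)=y\hat P(y)>0$ on $(1,\infty)$, so $H''<0$ there; thus $\sigma(y)$ increases strictly from $-\infty$ (as $y\to1^+$, from the simple zero) to a finite value (as $y\to\infty$, since $H''\sim-y^2/(n+2)^2$ makes $d\sigma/dy\sim(n+2)/y^2$ integrable), and the last integration constant is fixed so that this finite value is $0$, matching the outer boundary $|w|_h=1$. One checks $H'=(1-y)/(m+1)\le0<(1-\lambda_n)/(m+1)$ throughout, so $\widetilde\omega>0$ on all of $D(L)$; and the asymptotics $H(\sigma)=\log\phi(0)+O(e^\sigma)$ as $\sigma\to-\infty$ and $H(\sigma)\sim\log(-\sigma)$ as $\sigma\to0^-$ show $\phi\to0$ at $\pm1$ and $\int^{0^-}\sqrt{-H''}\,d\sigma=\infty$, so the fiber direction has infinite length toward $S(L)$; combined with completeness of $g$ and the bound $\widetilde\omega\ge\tfrac{1-\lambda_n}{m+1}\,\pi^*\omega$ (valid because $-i\del\dbar\log\phi(|w|_h)\ge0$), this yields completeness of $\widetilde g$.

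Uniqueness then follows from the standard fact that a complete K\"ahler metric whose Ricci form equals $-(m+1)$ times itself is unique (the metric just constructed has bounded curvature, so any competitor is comparable to it and the usual Schwarz-lemma/maximum-principle argument applies). I expect the main obstacle to be the boundary analysis of the third step: the Schur-complement cancellation turning the Monge--Amp\`ere equation into an ODE is the decisive simplification, but the real work is verifying that the ODE solution extends smoothly across the zero section — precisely where $m=n+1$ and the strict inequalities $\lambda_i<1$ (through $\hat P(1)\ne0$) are used — and produces a complete metric at $S(L)$, with all constants forced; the remainder is bookkeeping.
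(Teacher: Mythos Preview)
Your strategy coincides with the paper's: reduce the Monge--Amp\`ere equation on $D(L)$ to an ODE in the radial variable using constancy of the Ricci eigenvalues, integrate once, and then analyze the two ends $|w|_h\to 0$ and $|w|_h\to 1$. Your change of variables is a harmless reparametrization of theirs: with $r=|w|_h$ and $\sigma=2\log r$, your $y=1-(m+1)H'(\sigma)$ equals $\tfrac{m+1}{2}Y$ where $Y=\tfrac{2}{m+1}-r\phi'/\phi$ is the paper's variable, and your $\hat P$ is a rescaling of their $P$; their auxiliary function $Z=1/Y$ and polynomial $Q$ (with $Q'=(m+1)yP(y)$) play the role of your Legendre transform and $R$. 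Your Schur-complement computation of $\det\widetilde g$ is exactly Lemma~\ref{lemma3d6}.

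There is, however, a genuine gap in the completeness argument. Infinite length of the \emph{radial} curve toward $S(L)$ together with the base bound $\widetilde\omega\ge\tfrac{1-\lambda_n}{m+1}\pi^*\omega$ does not by itself rule out a divergent geodesic whose projection stays in a compact $K\subset M$ but which spirals toward $S(L)$ with finite $\widetilde g$-length: for such a curve $d\sigma$ involves both $d\xi$ and $d\rho(z)$, so the integral $\int\sqrt{-H''}\,|d\sigma|$ is not bounded below by the radial one. The paper handles this case separately (their ``Case~II''): they first use $\lambda_i<1$ to get $\widetilde\omega\ge\omega_1:=-i\partial\overline\partial\log\phi(|w|_h)$, then compare $\omega_1$ near $S(L)$ to $\omega_0:=i\partial\overline\partial\log\frac{1}{1-|w|_h^2}$ (the canonical psh-defining-function metric), and finally invoke Cheng--Yau's bounded-gradient estimate $|\nabla\log\tfrac{1}{1-|w|_h^2}|_{\omega_0}\le 1$ to force any curve with $|w|_h\to 1$ to have infinite $\omega_0$-length. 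Some argument of this type is needed; your sketch stops short of it.

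A smaller point: the smooth (indeed real-analytic) extension across the zero section is more delicate than your parenthetical suggests. In the paper this occupies five lemmas (their Lemmas~\ref{lm1}--\ref{lm17}): one must show the ODE solution has the limit $y\to 1$ (equivalently $Z\to\tfrac{m+1}{2}$) as $r\to 0^+$, that the approach is exactly quadratic in $r$, and that the resulting even function is analytic at $0$---the last step requires matching with a genuine ODE solution at the singular point $r=0$ and invoking uniqueness. Your remark about ``the dimension hypothesis $m=n+1$'' is misplaced: $m=n+1$ is definitional here, and the correct exponent comes instead from the relation $\hat Q'(\tfrac{m+1}{2})=-2\hat P(\tfrac{m+1}{2})$ (their Lemma~\ref{lm2}), which is what makes the singularity of the ODE at $r=0$ regular.
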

\begin{rmk} The even real analytic function $\phi: (-1, 1) \rightarrow \mathbb{R}^+$ in Theorem \ref{thm 2} is constructed in Proposition \ref{prpnzh2} below.
\end{rmk}
\begin{rmk}
The special case of Theorem \ref{thm 2} when $(M, g)$ is \ke  was already proved by Calabi \cite{Ca79} and Tsuji \cite{Tsu88} (see also \cite{Ban90}). However, our treatment is different from these classical approaches, as the methods used there do not reflect the boundary regularity of the solution.
\end{rmk}

\begin{rmk}
If the complex manifold $M$ in Theorem \ref{thm 2} is realized as a domain in $\mathbb{C}^n$ and $L$ is the trivial line bundle, then the disk bundle $D(L)$ becomes a domain in $\mathbb{C}^{n+1}.$ In this case, the Cheng--Yau solution of $D(L)$, which satisfies (\ref{Dirichlet problem}) with $\Omega=D(L)$, extends real analytically across the circle bundle $S(L).$ This can be seen easily from the proof of Theorem \ref{thm 2}.
\end{rmk}

Theorem \ref{thm 2} is optimal in the sense that the conclusion fails, in general, if some Ricci eigenvalue is greater than or equal to $1$, as is shown by the following proposition.

\begin{prop}\label{prpn17}
Let $\pi: (L, h)\rightarrow \mathbb{CP}^n$ be a negative line bundle over $\mathbb{CP}^n$. Assume that the metric $g$ on $\mathbb{CP}^n$ induced by  the dual bundle of $L$ satisfies the \ke equation $\Ric(g)=\lambda g$ with $\lambda \in \mathbb{R}$.
Then the disk bundle $D(L)=\{ v\in L: |v|_h^2<1 \}$  admits a complete \ke metric with negative Ricci curvature if and only if $\lambda<1$.
\end{prop}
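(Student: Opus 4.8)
The plan is to reduce the problem to an explicit ODE analysis, exploiting the fact that over $\mathbb{CP}^n$ with a \ke metric the line bundle $L$ must be a power of the hyperplane (or tautological) bundle, and the whole geometry is homogeneous under $\mathrm{PU}(n+1)$. First I would normalize: since $(L^*, h^{-1})$ induces the \ke metric $g$ with $\Ric(g)=\lambda g$, and $\Ric(g) = -i\partial\dbar \log\det(g)$ is, up to a constant multiple, the curvature of $(L^*, h^{-1})$, the Einstein constant $\lambda$ is directly proportional to the ``slope'' of $L$; concretely $g$ is a positive multiple of the Fubini--Study metric $\omega_1$ (with holomorphic sectional curvature normalized appropriately) and $L = \mathcal{O}(-a)$ for a suitable $a>0$, with $\lambda$ an explicit increasing function of $a$. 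The point $\lambda = 1$ corresponds to $L = K_{\mathbb{CP}^n}=\mathcal{O}(-(n+1))$, i.e.\ $D(L)$ being (a quotient related to) the unit ball bundle in the canonical bundle.

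Next I would write down the candidate Monge--Amp\`ere/\ke potential on the disk bundle in the rotationally invariant form coming from Theorem \ref{thm 2}: on $D(L)$ one seeks a \k form $\widetilde{\omega} = \frac{1}{m+1}\bigl(\pi^*\omega - \pi^*\Ric\bigr) - i\partial\dbar\log\phi(|w|_h^2)$, and the \ke equation $\Ric(\widetilde\omega) = -(m+1)\widetilde\omega$ becomes, after the standard substitution $s = -\log|w|_h^2 \in (0,\infty)$ (or $t=|w|_h^2\in(0,1)$), a single ODE for $\phi$ whose coefficients involve only $\lambda$ and $n$ — this is exactly the mechanism behind Theorem \ref{thm 2} and the reason $\phi$ depends only on the characteristic polynomial of the Ricci endomorphism, which here is $(X-\lambda)^n$. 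The upshot is that a complete \ke metric with negative Ricci curvature exists on $D(L)$ if and only if this ODE admits a positive solution $\phi$ on $(0,1)$ with the correct boundary behavior: $\phi$ smooth and positive up to $t=0$ (so that the metric extends across the zero section, which is $\mathbb{CP}^n$ sitting inside $L$) and $\phi \to 0$ with the appropriate rate as $t\to 1$ (so the metric is complete toward $S(L)$ and $-\log\phi$ is a genuine exhaustion). I would integrate the ODE explicitly — it should reduce to a first-order equation for $(\log\phi)'$ or for a related curvature quantity — and read off that the required positivity/monotonicity of the solution on the whole interval holds precisely when $\lambda < 1$; when $\lambda \geq 1$ the candidate solution either fails to stay positive, fails to be complete, or yields a metric that is not everywhere negatively curved (indeed for $\lambda>1$ the zero section $\mathbb{CP}^n$ carries positive curvature and obstructs a global negatively curved \ke metric).

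For the converse direction — that no complete \ke metric with $\Ric<0$ exists on $D(L)$ when $\lambda\geq 1$ — I would argue by uniqueness/symmetrization: any such metric, if it existed, would by the Cheng--Yau theory be the unique complete \ke metric, hence invariant under the $\mathrm{PU}(n+1)$-action and the $S^1$ fiber rotation, hence of the rotationally invariant form above; so its non-existence follows from the non-existence of the ODE solution. Alternatively, and more robustly, one restricts the putative \ke metric to the compact zero section $M=\mathbb{CP}^n \subset D(L)$ and derives a contradiction: a negatively Ricci-curved ambient \ke metric forces, via the adjunction/second-fundamental-form computation (or simply via $\Ric(\widetilde g)|_M$ expressed in terms of $\Ric(g)$ and the curvature of $L$), an inequality on $\lambda$ that fails when $\lambda\geq 1$. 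The main obstacle I anticipate is not the ODE integration itself but handling the two boundary behaviors simultaneously — pinning down the exact exponent with which $\phi$ must vanish at $t=1$ for completeness and for $-\log\phi$ to define a complete metric (rather than an incomplete one or one with the wrong sign of curvature), and showing this exponent is compatible with smooth positive extension at $t=0$ if and only if $\lambda<1$; this is the step where the constant $1$ genuinely enters, and I would be careful to track it through the indicial equation of the ODE at the endpoint $t=1$.
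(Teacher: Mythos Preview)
Your overall strategy is correct and matches the paper's: the ``if'' direction is Theorem \ref{thm 2}, and for ``only if'' one uses uniqueness of the complete \ke metric to force invariance under $S^1$ and $\mathrm{PU}(n+1)$, reducing the Monge--Amp\`ere equation to an ODE in the radial variable $r=|w|_h$. The paper does exactly this, arriving at $P(y)\,y' = 2^{n+2}\,r\,\phi(r)^{-(n+2)}$ with $P(y)=(y-\tfrac{2\lambda}{n+2})^n$ and $y(r)=\tfrac{2}{n+2}-r\phi'/\phi$.

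Where your proposal goes astray is in locating \emph{where} the threshold $\lambda=1$ appears. You expect it in the indicial equation at the boundary $r=1$; in fact the leading asymptotics of $\phi$ at $r=1$ (namely $\phi'(1)=-2$) are universal and carry no information about $\lambda$. The constraint comes instead from the interior and the zero section. First, one shows (using positivity of the metric and completeness near $r=1$) that $y$ is even, $y(0)=\tfrac{2}{n+2}$, $y'(0)=0$, $y'>0$ on $(0,1)$, and $y(r)\to+\infty$ as $r\to 1^-$. Then two short arguments pin down $\lambda<1$: (i) comparing vanishing orders at $r=0$ in the ODE, the right side vanishes to order exactly $1$ while on the left $y'$ already contributes order $\geq 1$, so one must have $P(y(0))\neq 0$, i.e.\ $\lambda\neq 1$; (ii) if $\lambda>1$ then by the intermediate value theorem $y$ hits $\tfrac{2\lambda}{n+2}$ at some $r^*\in(0,1)$, forcing the left side to vanish while the right side is strictly positive --- a contradiction. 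Your alternative idea of restricting the ambient metric to the zero section does not directly yield the inequality: the induced metric on $\mathbb{CP}^n$ need not inherit the sign of the ambient Ricci curvature, so that route would require an additional argument you have not supplied.
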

When the base manifold $M$ has a high degree of symmetry (e.g, is homogeneous), one can use the automorphisms of $M$ to reduce the complex Monge-Amp\`ere equation (\ref{Dirichlet problem}) to an ODE. This idea seems to first have been used by Bland \cite{Bla86} and subsequently by many others; see for example \cite{WaYiZhaRo} and references therein. The novelty of our proof (of Theorem \ref{thm 1} and \ref{thm 2}) consists of applying an ODE trick in a different way so that it works on a much more general class of base manifolds $M$ whose automorphism groups can be trivial. While one cannot expect to explicitly solve the resulting ODE in general, we investigate the rationality of its solution in a particular case of interest; namely, when the underlying \k manifold is a domain of holomorphy. As a consequence, we obtain the following corollary, characterizing the unit ball among egg domains in terms of the Bergman-Einstein condition. A well-known conjecture posed by Yau \cite{Yau16} asserts that if the Bergman metric of a bounded pseudoconvex domain is K\"ahler--Einstein, then the domain must be homogeneous. The following result provides affirmative support for this conjecture in a special class of  domains; we note that some (most) domains in this class are not smoothly bounded.
\begin{prop}\label{prpn1d10}
		Let $p$ be a positive real number and $E_p \subset \mathbb{C}^{n+1}$, $n \geq 1,$  the egg domain:
		$$E_p=\{W=(z, \xi) \in \mathbb{C}^n \times \mathbb{C}: |z|^2 +|\xi|^{2p} <1  \}.$$
		Then the Bergman metric of $E_p$ is K\"ahler-Einstein if and only if $p=1$.
\end{prop}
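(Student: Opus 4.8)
The plan is to realize $E_p$ as the disk bundle $D(L)$ of Theorems~\ref{thm 1}--\ref{thm 2}, compute the Bergman kernel of $E_p$ in closed form, and reduce the Bergman--Einstein property to a single ODE which holds only when $p=1$.

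Write $W=(z,\xi)\in\mathbb{C}^n\times\mathbb{C}$, let $M=\mathbb{B}^n$, and let $L=M\times\mathbb{C}$ be the trivial line bundle with Hermitian metric $|w|_h^2=h(z)|\xi|^2$, $h(z)=(1-|z|^2)^{-1/p}$. Then $|w|_h<1\iff|z|^2+|\xi|^{2p}<1$, so $E_p=D(L)$, and the metric induced by $(L^*,h^{-1})$ is $\omega_g=i\partial\dbar\log h=\tfrac1p\,\omega_{\mathbb{B}^n}$; hence $g=\tfrac1p g_{\mathbb{B}^n}$ is complete K\"ahler--Einstein with $\Ric(g)=-p(n+1)\,g$, so all Ricci eigenvalues equal the constant $-p(n+1)<1$ and Theorems~\ref{thm 1}--\ref{thm 2} apply to $E_p$. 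The forward implication is then immediate, since $E_1=\mathbb{B}^{n+1}$ and the Bergman metric of the ball is K\"ahler--Einstein.

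For the converse, I would first compute $K_{E_p}$ using the $U(n)\times U(1)$ symmetry: the monomials $z^\alpha\xi^k$ form an orthogonal basis of the Bergman space $A^2(E_p)$, and slicing in $\xi$ together with the Beta integral $\int_{\mathbb{B}^n}|z^\alpha|^2(1-|z|^2)^s\,dV=\pi^n\alpha!\,\Gamma(s+1)/\Gamma(n+|\alpha|+s+1)$ yields, with $t:=|w|_h^2=|\xi|^2(1-|z|^2)^{-1/p}\in[0,1)$,
\[
K_{E_p}(W,W)=c_n\,(1-|z|^2)^{-(n+1+1/p)}\,G_p(t),\qquad
G_p(t)=\sum_{k\ge0}(k+1)\,\frac{\Gamma\!\big(n+1+\tfrac{k+1}{p}\big)}{\Gamma\!\big(1+\tfrac{k+1}{p}\big)}\,t^k.
\]
Because the coefficients are polynomial in $k$, $G_p$ is rational in $t$ with denominator $(1-t)^{n+2}$; for $p=1$ it collapses to $G_1(t)=(n+1)!\,(1-t)^{-(n+2)}$.

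Since $E_p$ is a contractible Reinhardt domain, the Bergman metric $g_B=i\partial\dbar\log K_{E_p}$ is K\"ahler--Einstein, say with $\Ric(g_B)=\lambda g_B$, iff $\log\det g_B+\lambda\log K_{E_p}$ is pluriharmonic and $U(n)\times U(1)$-invariant, hence constant, i.e.\ $\det g_B=c\,K_{E_p}^{-\lambda}$. Now $K_{E_p}$ decomposes as a power of $(1-|z|^2)$ times a function of $t$, and writing $\omega_B=\mathcal{A}(t)\,\pi^*\omega_g+\mathcal{B}(t)\,i\partial\log t\wedge\dbar\log t$ with $\mathcal{A}(t)=\big(p(n+1)+1\big)+tG_p'/G_p$ and $\mathcal{B}(t)=t\big(tG_p'/G_p\big)'$ (using $i\partial\dbar\log t=\pi^*\omega_g$), a Schur-complement computation of the determinant gives $\det g_B=c_n'\,\mathcal{A}(t)^n\mathcal{B}(t)\,t^{-1}(1-|z|^2)^{-(n+1+1/p)}$. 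Separating the $z$- and $t$-dependence in $\det g_B=c\,K_{E_p}^{-\lambda}$ then forces $\lambda=-1$ and reduces the whole problem to the single ODE
\[
\mathcal{A}(t)^n\,\mathcal{B}(t)=c'\,t\,G_p(t),
\]
which is exactly the compatibility equation behind the function $\phi$ of Theorem~\ref{thm 2} for the base $\mathbb{B}^n$. Expanding in powers of $t$: the $t$-coefficient fixes $c'$, and the $t^2$-coefficient becomes $2\big(p(n+1)+1\big)G_p(0)G_p''(0)=\big(3(p(n+1)+1)-n\big)G_p'(0)^2$; substituting the Gamma-quotients $G_p(0),G_p'(0),G_p''(0)$ and clearing denominators turns this into a polynomial identity in $p$ that fails unless $p=1$ --- for $n=1$ it is the equation $3(2p+1)(p+1)(p+3)=2(3p+1)(p+2)^2$, whose two sides differ by $(p-1)^2$, so $p=1$.

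The technical heart is the determinant computation and, above all, checking that for every $n$ a bounded number of low-order Taylor coefficients of $\mathcal{A}^n\mathcal{B}=c'tG_p$ collapse to a constraint equivalent to $p=1$; organizing the $\log$-derivatives of $G_p$ --- equivalently, analyzing the Theorem~\ref{thm 2} ODE over the homogeneous base $\mathbb{B}^n$ --- is what makes this step delicate.
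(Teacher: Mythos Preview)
Your setup is correct and matches the paper's: $E_p$ is the disk bundle over $(\mathbb{B}^n, g)$ with $\Ric(g) = -p(n+1)g$, your $G_p$ recovers D'Angelo's formula for the Bergman kernel, and the reduction of the Bergman--Einstein condition to the single ODE $\mathcal{A}^n\mathcal{B} = c'\,t\,G_p$ via a Schur-complement determinant is exactly the computation in Lemma~\ref{lemma3d6}. The genuine gap is the final step. The $t^2$-coefficient yields a polynomial identity in $p$ of degree $2n$; you verify for $n=1$ that it collapses to $(p-1)^2=0$, but you have not shown for general $n$ that $p=1$ is the unique positive root. (For $n=2$ the polynomial factors as $(p-1)^2(p+1)(8p+11)$, so it still works --- but that is a case check, not a proof, and you yourself flag the general step as ``delicate'' without completing it.)

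The paper bypasses coefficient-matching entirely. Since the Cheng--Yau metric of Theorem~\ref{T1} and the Bergman metric are both complete K\"ahler--Einstein of negative Ricci curvature on $E_p$, uniqueness (Yau's Schwarz lemma) forces their volume forms to agree up to a constant; combined with Fu--Wong's $\det g_B = cK$, this gives $u^{-(m+1)} = cK$, hence $\phi(r)^{-(m+1)}$ is a rational function of $r^2$. The decisive lemma is then the rationality criterion Proposition~\ref{prpnrn}: in the equivalent ODE $r\xi' = -c\,\xi^{m+1} + a\xi - 1$, if $\xi = P/Q$ with $P,Q$ coprime polynomials, clearing denominators shows $Q\mid P^{m+1}$, so $Q$ is constant; comparing degrees then forces $c=0$, which happens iff $\lambda=-m$, i.e., $p=1$. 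This two-line degree argument is uniform in $n$ and replaces your open-ended Taylor analysis.
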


\begin{rmk}
	When $n=1$ in the above proposition, the result was proved by Cho in \cite{Cho21} (see Proposition 12 there) via an explicit (and rather lengthy) computation. When $n=1$ and $ p \in \mathbb{Z}^+$ (so that $E_p$ is smoothly bounded), the result also follows from Fu--Wong \cite{FuWo} (see Section 3 there).
\end{rmk}

When $M$ is a complex manifold and $g$ a \k metric on $M$ (which is not necessarily complete), then for any $p\in M$, the \k condition implies that there exist a local coordinate chart $(U,z)$ near $p$, a trivial line bundle $L=U\times \mathbb{C}$, and a Hermitian metric $h$ on $L$ such that $(L, h)$ is negative and $g$ is induced by the dual of $(L, h)$ on $U$ via $\omega_g=i\partial\dbar\log h$, where $\omega_g$ is the \k form of $g$. We define in the usual way the circle bundle
\begin{equation*}
	S(L)=S(L,h):=\{(z,\xi)\in U\times\mathbb{C}: |\xi|^2h(z,\bar{z})=1 \}.
\end{equation*}
Note that while the choice of bundle metric $h$ inducing the K\"ahler metric $g$ is not unique, any other choice $\widetilde{h}$ must be such that $\log \widetilde{h}$ and $\log h$ differ by a pluriharmonic function near $p$. Consequently, $h=\widetilde{h} |e^{f}|^2$ for some holomorphic function $f$ near $p$. This implies that, by shrinking $U$ if needed, the circle bundles $S(L,h)$ and $S(L,\widetilde{h})$ are CR diffeomorphic via the map $(z, \xi) \rightarrow (z, e^f \xi)$. With this observation and motivated by Theorem \ref{thm 1}, we introduce the following terminology.
\begin{defn} {\rm 
The \k manifold $(M, g)$, or simply the metric $g$, is said to be {\em obstruction flat} (respectively, {\em spherical}) at $p \in M$, if the circle bundle $S(L)$ (introduced above) is obstruction flat (respectively, spherical) over some neighborhood $U$ of $p$. Moreover, $(M, g)$, or simply $g$, is said to be {\em obstruction flat} (respectively, {\em spherical}) if it is so at every $p\in M$.}
\end{defn}

With this terminology, Theorem \ref{thm 1} can be reformulated as follows: {\em If a \k manifold $(M, g)$ has constant Ricci eigenvalues, then $g$ is obstruction flat.}

When $(M,g)$ is a Riemann surface (i.e., $n=1$) and $g$ has constant Gaussian curvature, then locally $(M,g)$ is a space form and, consequently, $g$ is spherical and therefore obstruction flat. As indicated above, the converse holds for compact Riemann surfaces by a result of the first author \cite{Ebenfelt18}: If $g$ is obstruction flat, then it is spherical and, hence, $(M,g)$ is locally a space form. It is natural to ask about the higher dimensional case with Gaussian curvature replaced by scalar curvature.
As a consequence of the work of Webster \cite{Web77b}, Bryant \cite{Bry01} and Wang \cite{Wang}, the only constant scalar curvature \k manifolds (cscK manifolds) that are spherical are those that are locally holomorphically isometric to the spaces listed in (1)-(4) in Corollary \ref{cor 1}. We summarize this characterization of spherical metrics in the following proposition:

\begin{prop}\label{prpnsp}
Let $(M, g)$ be a \k manifold of complex dimension at least two. Assume that either $M$ is compact or $(M, g)$ has constant scalar curvature. Then $(M, g)$ is spherical if  and only if $(M, g)$ is locally holomorphically isometric to one of (1)-(4) in Corollary $\ref{cor 1}$.
\end{prop}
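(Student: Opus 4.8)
My approach is to pass through the vanishing of the \emph{Bochner curvature tensor} of $(M,g)$, which is the \k analogue of the Chern--Moser (CR) curvature tensor of a strongly pseudoconvex hypersurface.

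\emph{Step 1: a CR/\k dictionary.} I would first record the equivalence
\[
(M,g) \text{ is spherical} \iff (M,g) \text{ is Bochner-flat},
\]
valid because $\dim_{\mathbb{C}} M \geq 2$. This is essentially Webster's observation \cite{Web77b} (see also \cite{Wang}): under the circle-bundle projection $S(L)\to M$, the Chern--Moser curvature of $S(L)$ — whose identical vanishing is precisely the condition that $S(L)$ be spherical — is determined by the Bochner tensor of $(M,g)$; the presence of a transverse $S^1$-symmetry on $S(L)$ is what makes the reduction to the base clean. Since both ``spherical'' (in the sense of the Definition above) and ``Bochner-flat'' are pointwise/local conditions, this gives the displayed equivalence and reduces the proposition to: \emph{a \k manifold of complex dimension $\geq 2$ that is compact or has constant scalar curvature is Bochner-flat if and only if it is locally holomorphically isometric to one of $(1)$--$(4)$.}

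\emph{Step 2: the easy direction.} Each space in $(1)$--$(4)$ is Bochner-flat: the complex space forms $(\mathbb{B}^n,\lambda\,\omega_{-1})$, $(\mathbb{CP}^n,\lambda\,\omega_1)$, $(\mathbb{C}^n,\omega_0)$ have vanishing Bochner tensor by the explicit form of the curvature of a constant-holomorphic-sectional-curvature metric, and the product $(\mathbb{B}^l\times\mathbb{CP}^{n-l},\lambda\,\omega_{-1}\times\lambda\,\omega_1)$ is Bochner-flat precisely because the two factors carry opposite holomorphic sectional curvatures (the equal scaling $\lambda$ being exactly the balancing condition); this last point is the direct computation of the Bochner tensor of a product of two complex space forms, carried out e.g. in \cite{Bry01}. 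By Step 1 the associated circle bundles are spherical.

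\emph{Step 3: the substantive direction.} Suppose $(M,g)$ is spherical; by Step 1 it is Bochner-flat. If $g$ has constant scalar curvature, I would invoke Bryant's classification of Bochner--K\"ahler metrics \cite{Bry01}: imposing constancy of the scalar curvature collapses the moduli and forces $(M,g)$ to be, locally, a product of complex space forms. Among such products the Bochner tensor vanishes only for a single complex space form or for $\mathbb{B}^l\times\mathbb{CP}^{n-l}$ with balanced curvatures (any other pairing of non-flat factors, or a pairing of a flat factor with a non-flat one, leaves a nonzero Bochner tensor), and these are exactly $(1)$--$(4)$. If instead $M$ is compact but constancy of the scalar curvature is not assumed a priori, I would first show that a compact Bochner--K\"ahler manifold automatically has constant scalar curvature — via an integral / maximum-principle argument applied to the scalar curvature using the Bochner structure equations of \cite{Bry01}, or by quoting the corresponding statement in \cite{Wang} — and then fall back to the previous case.

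\emph{Main obstacle.} The delicate step is Step 3: extracting from Bryant's (rather involved) local classification the clean conclusion that constant scalar curvature together with Bochner-flatness forces one of $(1)$--$(4)$, and separately the fact that compactness forces constant scalar curvature in the Bochner-flat setting. Steps 1 and 2 are bookkeeping on top of \cite{Web77b}, \cite{Bry01} and \cite{Wang}.
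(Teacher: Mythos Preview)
Your proposal is correct and follows essentially the same route as the paper: reduce ``spherical'' to ``Bochner-flat'' via Webster \cite{Web77b} (cf.\ \cite{Wang}), then invoke Bryant's classification \cite{Bry01}. The only minor difference is in the compact case: the paper cites Bryant's Corollary~4.17 directly (which classifies compact Bochner--K\"ahler manifolds), whereas you propose the intermediate step of first deducing constant scalar curvature and then reducing to Proposition~2.5; both work, but the direct citation is cleaner and avoids the ``delicate'' integral/maximum-principle argument you flag as the main obstacle.
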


Much less (in fact, essentially nothing) is known about necessary conditions for a cscK manifold to be obstruction flat in higher dimensions. We shall give a complete answer to this question in the \k surface case ($n=\dim M=2$). Recall that the central curvature is defined as the determinant of the Ricci endomorphism and note that when $n=2$, the scalar curvature and the central curvature determine completely both the Ricci eigenvalues.

\begin{thm}\label{thm 3}
	Let $(M,g)$ be a \k surface with constant scalar curvature and central curvature $C: M\rightarrow \mathbb{R}$. The following are equivalent:
	\begin{itemize}
		\item [(1)] The metric $g$ is obstruction flat.
		\item [(2)] $L C:=\nab^{\bar{\alpha}}\nab^{\bar{\beta}}\nab_{\bar{\beta}}\nab_{\bar{\alpha}}C=0$ on $M$, where $\nab$ is the Levi-Civita connection of $g$.
	\end{itemize}
	If, in addition, $M$ is compact, then (1) and (2) are also equivalent to:
	\begin{itemize}
		\item [(3)] $\nab_{\bar{\beta}}\nab_{\bar{\alpha}} C=0$, or equivalently, the central vector field $\grad^{1,0}C$ is a holomorphic vector field on $M$.
	\end{itemize}
\end{thm}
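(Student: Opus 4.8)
The plan is to reduce the obstruction function of $S(L)$, when $(M,g)$ is a cscK surface, to an explicit differential expression in the central curvature $C$, and then analyze when that expression vanishes. The starting point is the ODE reduction underlying Theorems~\ref{thm 1} and~\ref{thm 2}: writing a candidate K\"ahler potential on the disk bundle in the form $-\log\bigl(u(|w|_h^2)\bigr)$ twisted by the pullbacks of $\omega$ and $\Ric$, the Fefferman equation $J(u)=1$ for $S(L)$ becomes (after using constancy of the scalar curvature, which makes $g$ real analytic by Remark~\ref{rmk12}) a single ODE whose coefficients involve only the characteristic polynomial of the Ricci endomorphism, i.e. in the surface case only the scalar curvature $R$ (a constant) and the central curvature $C$. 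I would carry out the Graham--Lee--Melrose boundary expansion of the solution of this ODE, keeping track of the $\log$-term coefficient $\eta_1|_{S(L)}$, and express it as a function on $M$ determined by $C$ and its covariant derivatives. The key computational output should be that $\mathcal{O}_{S(L)}$ is, up to a nonzero universal constant and positive factors, equal to $LC = \nab^{\bar\alpha}\nab^{\bar\beta}\nab_{\bar\beta}\nab_{\bar\alpha}C$; this establishes $(1)\iff(2)$.

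First I would set up the twisted potential ansatz and derive the governing ODE, isolating how $C$ enters as the only non-constant datum; the point is that when the Ricci eigenvalues are \emph{not} constant, the $\pi^*\Ric$ correction term in \eqref{eqnmet} no longer diagonalizes things, and the error in $J(u)$ picks up exactly the Laplacian-type operator $L$ acting on $C$. Second, I would compute the Fefferman defining function and the first log coefficient by matching asymptotics order-by-order in the boundary-defining variable $1-|w|_h^2$; since $S(L)$ has dimension $5$, the obstruction sits at order $\rho^{3}\log\rho$, so only finitely many terms of the expansion are needed. Third, reading off $\eta_1|_{S(L)}$ and simplifying using the K\"ahler identities and the cscK hypothesis yields the claimed proportionality with $LC$, giving the equivalence of (1) and (2).

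For the compact case, I would add the implication $(2)\Rightarrow(3)$, which is the place an integration-by-parts/maximum-principle argument enters. The operator $L = \nab^{\bar\alpha}\nab^{\bar\beta}\nab_{\bar\beta}\nab_{\bar\alpha}$ is, up to lower-order curvature terms controlled by the cscK condition, the square of the second-order operator $P C := \nab_{\bar\beta}\nab_{\bar\alpha}C$ (the $(0,2)$-part of the Hessian), so on a compact manifold $\int_M (LC)\,\bar C\,\omega^2$ equals $\int_M |PC|^2\,\omega^2$ up to a commutator term; here I expect to need the Bochner-type identity relating $\nab^{\bar\beta}\nab_{\bar\beta}$ commutators to the Ricci curvature and to use that $R$ is constant so that $\nab C$ itself satisfies a favorable equation. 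Then $LC=0$ forces $PC=0$, i.e. $\nab_{\bar\beta}\nab_{\bar\alpha}C=0$, which is exactly the statement that $\grad^{1,0}C$ is holomorphic; conversely $(3)\Rightarrow(2)$ is immediate since $LC = \nab^{\bar\alpha}\nab^{\bar\beta}(PC)$. The main obstacle I anticipate is the bookkeeping in the second step: carrying the twisted ansatz through the determinant $J(u)$ on the total space of $L$ and extracting the order-$\rho^3\log\rho$ term cleanly enough to recognize $LC$ without getting lost in curvature contractions; a careful choice of normal coordinates on $M$ adapted to the Ricci endomorphism, together with the identity $-i\partial\dbar\log\det g = \Ric$, should keep this manageable.
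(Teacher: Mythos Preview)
Your approach to the equivalence $(1)\Leftrightarrow(2)$ has a genuine gap. The ODE reduction behind Theorems~\ref{thm 1} and~\ref{thm 2} works precisely because the characteristic polynomial of $\Ric\cdot g^{-1}$ is \emph{constant} on $M$: in Proposition~\ref{prpn3d5} the determinant $\det\bigl((-\log u)_{i\bar j}\bigr)$ collapses to $P(Y)Y'/(2^{m+1}X)\cdot HG$ only because $P(y)$ is independent of the base point. Under the cscK hypothesis alone the central curvature $C$ varies, $P(y,p)$ depends on $p$, and the twisted ansatz no longer converts $J(u)=1$ into a single ODE in $r=|w|_h$; you get a genuine PDE in which base derivatives of $C$ enter already at the level of the complex Hessian, not merely as a residual ``error''. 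Your sentence ``the error in $J(u)$ picks up exactly the Laplacian-type operator $L$ acting on $C$'' is the heart of the matter and is not justified: the obstruction sits at order $\rho^{4}\log\rho$ (the ambient is $\mathbb{C}^3$, so $n+1=4$, not $3$), and extracting it from a direct expansion would require iterating Fefferman's construction several times with all base derivatives of $C$ tracked --- a computation you have not outlined and whose outcome you are assuming.

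The paper takes a completely different route that avoids this difficulty. It passes to the Fefferman circle bundle $(\mathcal{F},\mathsf{g})$ over $S(L)$ and invokes the Graham--Hirachi identification of the CR obstruction function with the conformal obstruction tensor component $\mathcal{O}_{00}$ of $(\mathcal{F},\mathsf{g})$. Since $\dim_{\mathbb{R}}\mathcal{F}=6$, there is an explicit formula for $\mathcal{O}_{ij}$ in terms of the Bach, Weyl, Cotton and Schouten tensors; Lee's work then expresses all of these in terms of pseudohermitian invariants of $S(L)$, which for a circle bundle coincide with the K\"ahler curvature data of $(M,g)$. The computation is organized modulo the space $\mathcal{P}$ of symmetric polynomials in the Ricci eigenvalues, and the residual $\mathcal{P}$-term is killed by applying Theorem~\ref{thm 1} to the model surfaces $\Delta_a\times\Delta_b$ with arbitrary constant curvatures --- a structural trick that replaces what would otherwise be a long algebraic simplification. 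The result is $\mathcal{O}=c\,L\Lambda$ with $\Lambda=K_{\alpha\bar\beta}K^{\alpha\bar\beta}$, and since $\Lambda=\tfrac{13}{288}R^2-\tfrac{1}{8}C$ with $R$ constant, this gives $\mathcal{O}=c'\,LC$.

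Your treatment of the compact addendum $(2)\Leftrightarrow(3)$ is correct and matches the paper: $L=\nab^{\bar\alpha}\nab^{\bar\beta}\nab_{\bar\beta}\nab_{\bar\alpha}$, so integrating $\overline{C}\,LC$ by parts on compact $M$ gives $\|\nab_{\bar\beta}\nab_{\bar\alpha}C\|_{L^2}^2$; no curvature commutator terms survive because the operator is already in divergence form.
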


\begin{rmk}\label{Lichnerowicz operator rmk}
	The fourth order differential operator $L$ is the well-known Lichnerowicz operator, which plays a fundamental role in the study of extremal metrics. Indeed, on a compact manifold a \k metric is extremal if and only if $L$ annihilates the scalar curvature (see \cite{Ga14} for more details on this topic). When the scalar curvature is constant, we can also express this operator as
	\begin{equation*}
	L \Phi=\Delta^2 \Phi +R^{\alpha\bar{\beta}}\nab_{\alpha}\nab_{\bar{\beta}} \Phi, ~\Phi \in C^{\infty}(M)
	\end{equation*}
	where $\Delta=g^{\alpha\bar{\beta}}\nab_{\alpha}\nab_{\bar{\beta}}$ is the complex Laplacian on $(M, g)$ and $R^{\alpha\bar{\beta}}=g^{\alpha\bar\mu}g^{\nu\bar\beta}R_{\nu\bar\mu}$ is the Ricci curvature with indices raised by the metric in the usual way.
	
In Theorem \ref{thm 3}, if we assume that $M$ is compact and, additionally, that $M$ admits no nontrivial holomorphic vector fields with zeros, then by (3), $C$ must be constant. Combining this with the constant scalar curvature assumption, we see that $(M, g)$ has constant Ricci eigenvalues. Thus, in this situation, the converse of Theorem \ref{thm 1} holds.
\end{rmk}

Theorem \ref{thm 1}, as well as the work of Lee \cite{Lee86} and Graham-Hirachi \cite{GrHi05, GrHi08} (see also Gover-Peterson \cite{GoPe06}), will play a fundamental role in the proof of Theorem \ref{thm 3}.
As an application of Theorem \ref{thm 3}, we provide what seems to be the first collection of examples of cscK manifolds that are not obstruction flat.
Noncompact examples can be found explicitly, while compact examples seem more complicated to construct. For the following proposition, recall that the Burns-Simanca metric on $\mathbb{C}^2\setminus\{0\}$ is given by the \k form $\omega=i\partial\dbar(|z|^2+\log |z|^2 )$, where $z=(z_1, z_2)$. It extends to a complete \k metric on $\Bl_0\mathbb{C}^2$ (see subsection 8.1.2 in \cite{Ga14} for more details), where $\Bl_0(\mathbb{C}^2)$ denotes the space obtained by blowing up the origin in $\mathbb{C}^2$. The so extended metric is scalar flat (cscK with scalar curvature identically zero) on $\Bl_0(\mathbb{C}^2)$.

\begin{prop}\label{prop two examples} The following hold:
\vspace {-.2 cm}
	
	\begin{enumerate}
		\item The Burns-Simanca metric is not obstruction flat.
		
		\item Let $k\geq 14$. The complex projective space $\mathbb{CP}^2$, blown up $k$ suitably chosen points, admits a scalar flat \k metric that is not obstruction flat.
	\end{enumerate}
\end{prop}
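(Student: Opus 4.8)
The plan is to apply Theorem \ref{thm 3} directly to each candidate metric, computing the central curvature $C$ and then checking whether the Lichnerowicz condition $LC = 0$ fails. Since both metrics in the proposition are cscK (indeed scalar flat), Theorem \ref{thm 3} tells us that obstruction flatness is equivalent to $LC=0$ where, by the formula in Remark \ref{Lichnerowicz operator rmk}, $L\Phi = \Delta^2 \Phi + R^{\alpha\bar\beta}\nabla_\alpha\nabla_{\bar\beta}\Phi$. So the entire task reduces to showing $C$ is a nonconstant function that is not annihilated by $L$.

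For part (1), the Burns--Simanca metric on $\Bl_0(\mathbb{C}^2)$ is $U(2)$-invariant and given explicitly by $\omega = i\partial\dbar(|z|^2 + \log|z|^2)$ away from the exceptional divisor, so all curvature quantities reduce to functions of $r = |z|^2$ and can be computed by a one-variable calculation. First I would write the metric potential as $\psi(r) = r + \log r$ and compute the metric components, the Ricci form, and hence the Ricci endomorphism $\Ric\cdot g^{-1}$; its determinant is the central curvature $C$, which will be a nonconstant rational function of $r$ (vanishing at infinity since the metric is asymptotically flat, but nonzero near the exceptional divisor). Then I would compute $\Delta C$ and $R^{\alpha\bar\beta}\nabla_\alpha\nabla_{\bar\beta}C$ for this radial function — each is again an explicit ODE expression in $r$ — and verify $LC \not\equiv 0$. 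A cleaner alternative, which I would try first, is to use the compactness criterion: $\Bl_0(\mathbb{C}^2)$ is not compact, so (3) is unavailable, but one can still observe that if $LC = 0$ then combined with scalar flatness and the structure of the ODE the only radial solutions decaying at infinity would force $C$ to have a specific form, which a short computation rules out. Either way the computation is a finite check on a function of one variable.

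For part (2), I would invoke the existence theory for scalar-flat \k metrics on blow-ups: by results of LeBrun, Rollin--Singer, and others, $\mathbb{CP}^2$ blown up at $k\geq 14$ suitably chosen points (e.g., in sufficiently general position, or along the lines of the Rollin--Singer construction) admits a scalar-flat \k metric. Such a manifold is compact, so Theorem \ref{thm 3} gives the equivalence of obstruction flatness with condition (3): $\nabla_{\bar\beta}\nabla_{\bar\alpha}C = 0$, i.e., $\grad^{1,0}C$ is a holomorphic vector field. Now the point is to choose the blow-up configuration so that the automorphism group is discrete — equivalently, $\mathbb{CP}^2$ blown up at $k$ generic points has no nontrivial holomorphic vector fields once $k$ is large (already $k\geq 4$ generic points kills $\Aut^0$, but $k \geq 14$ is used to guarantee existence of the scalar-flat metric via the relevant gluing/deformation theorem). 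Then any holomorphic vector field vanishes identically, so condition (3) forces $C$ to be constant; combined with scalar flatness this would make $(M,g)$ have constant Ricci eigenvalues, hence (by the classification underlying Proposition \ref{prpnsp}, or directly) flat — contradicting the fact that a nontrivial blow-up cannot carry a flat \k metric. Therefore $C$ is nonconstant and $g$ is not obstruction flat.

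The main obstacle is part (2): unlike part (1), there is no explicit metric, so one cannot compute $C$ directly, and the argument must be entirely structural — ruling out obstruction flatness by showing it would force $C$ constant and deriving a contradiction. The delicate points are (a) citing the correct existence theorem for scalar-flat \k metrics on these blow-ups and verifying the stated bound $k\geq 14$ matches its hypotheses, (b) confirming that the chosen configuration genuinely has no holomorphic vector fields (so that (3) collapses to "$C$ constant"), and (c) checking that $C$ is not in fact identically constant — which follows because a cscK metric with constant central curvature on a compact surface has constant Ricci eigenvalues, and such a surface with scalar curvature zero and constant Ricci eigenvalues must be a quotient of flat $\mathbb{C}^2$ or a product, neither of which is diffeomorphic to a blow-up of $\mathbb{CP}^2$. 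Part (1), by contrast, is just a careful but routine one-variable computation with the explicit Burns--Simanca potential.
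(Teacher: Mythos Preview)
Your approach is the same as the paper's in both parts, and part (1) is fine: the paper does exactly the direct radial computation you describe, finding $C=-\frac{1}{(1+|z|^2)^4}$ and then computing $LC$ explicitly as a rational function of $|z|^2$ that vanishes only on two spheres.

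For part (2), your structural argument is also the paper's: use Rollin--Singer for existence of the scalar-flat metric, arrange the blow-up points so the automorphism group is discrete, and then condition (3) of Theorem~\ref{thm 3} forces $C$ constant, hence constant Ricci eigenvalues. The gap is in your step (c), where you assert that a compact scalar-flat K\"ahler surface with constant Ricci eigenvalues ``must be a quotient of flat $\mathbb{C}^2$ or a product.'' This is false as stated: a K3 surface is Ricci-flat (hence has both Ricci eigenvalues constantly zero) but is neither flat nor a product. What you actually need is to treat two cases separately. If the eigenvalues coincide, the metric is Ricci-flat, which is ruled out here because $c_1$ of a blow-up of $\mathbb{CP}^2$ is nonzero; the paper phrases this as ``not K\"ahler--Einstein'' via the Chern-number inequality $c_1^2=9-k<0$. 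If the eigenvalues are distinct constants, you need a genuine classification input: the paper invokes Theorem~2 of Apostolov--Dr\u{a}ghici--Moroianu, which says a compact K\"ahler surface with distinct constant Ricci eigenvalues and nonnegative scalar curvature is locally symmetric, and then rules out local symmetry by observing that the blow-up is simply connected (so locally symmetric would mean globally Hermitian symmetric, contradicting discreteness of the automorphism group). Without citing or reproving this result, your argument does not close.
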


\begin{rmk}
For the Burns-Simanca metric, with $C$ denoting the central curvature as above, we actually prove that in  $\mathbb{C}^2\setminus\{0\}$, $L C$ equals $0$ only on two concentric spheres. (The obstruction non-flatness then follows from Theorem \ref{thm 3}.)
\end{rmk}

The paper is organized as follows.  In $\S$ \ref{Sec 2}, we will prove Theorem \ref{thm 1} and Theorem \ref{thm 2}, as well as Corollary \ref{cor 1} and Propositions \ref{prpn17}, \ref{prpn1d10} and \ref{prpnsp}. In $\S$ \ref{Sec 3}, we first recall some preliminary materials on the CR invariant theory and pseudohermitian geometry which will be important for the proofs. Then Theorem \ref{thm 3} and Proposition \ref{prop two examples} will be established, except that some detailed computations needed in the proofs will be left to the Appendix.

\textbf{Acknowledgement.} The authors would like to thank Sean Curry for helpful discussions on the topics in this paper.

\section{Proof of Theorem \ref{thm 1} and \ref{thm 2}}\label{Sec 2}
We will prove Theorem \ref{thm 1}, Proposition \ref{prpnsp}   in $\S$ \ref{Sec 2.1}. Corollary \ref{cor 1} will then follow from the two results. In $\S$ \ref{Sec 2.2},  we establish Theorem \ref{thm 2} and Proposition \ref{prpn17}. An ODE (see Proposition \ref{prpnzh1} and \ref{prpnzh2}) will play an important role in the proofs. In $\S$ \ref{Sec 2.3},  we will study  the rationality of the solution to this ODE when the underlying manifold is a domain of holomorphy. We then use the rationality result to prove Proposition \ref{prpn1d10}.
\subsection{Proof of Theorem \ref{thm 1}}\label{Sec 2.1}
We first recall the following work of Graham \cite{Graham1987a}, which will be the starting point of our proof.
Let $\Sigma$ be a piece of smooth strongly pseudoconvex hypersurface in $\mathbb{C}^n, n \geq 2,$ and let $\Omega$ be a one-sided neighborhood of $\Sigma$ on its strongly pseudoconvex side, such that $\overline{\Omega}:=\Omega \cup \Sigma$ becomes a manifold with boundary. Graham \cite{Graham1987a} showed that, given any $a \in C^{\infty}(\Sigma)$ and $\rho$ a Fefferman defining function of $\Sigma,$ there exists an asymptotic formal solution $u$ in the form of (\ref{Lee-Melrose expansion}), with $\eta_k \in C^{\infty}(\overline{\Omega}),$ solving $J(u)=1$ to the infinite order along $\Sigma$ for which $\frac{\eta_0-1}{\rho^{n+1}}=a$. (Any formal solution of the form (\ref{Lee-Melrose expansion}) to $J(u)=1$ necessarily satisfies $\frac{\eta_0-1}{\rho^{n+1}} \in C^{\infty}(\Sigma)$).  Such formal solution $u$ is unique in the sense that two choices of each $\eta_k$ agree to infinite order along $\Sigma.$ Furthermore, Graham proved that for each $k \geq 1, \eta_k ~\mathrm{mod}~ O(\rho^{n+1})$ is independent of the choice of $a$ and the choice of the Fefferman defining function $\rho$. The restriction $\eta_1|_{\Sigma}$ gives precisely the obstruction function on $\Sigma.$ Graham in addition showed that if $\eta_1|_{\Sigma} \equiv 0$ for some formal solution of form (\ref{Lee-Melrose expansion}) to $J(u)=1$,  then every formal solution $u$ must satisfy $\eta_k=0$ to infinite order along $\Sigma$ for all $k \geq 1.$

Therefore, if there is a function $u_0 \in  C^{\infty}(\overline{\Omega})$ such that $u_0=0$ on $\Sigma$ and $J(u_0)=1$ in $\Omega,$ then one sees $\Sigma$ must be obstruction flat, by writing $u_0$ into the form of (\ref{Lee-Melrose expansion}). To establish Theorem \ref{thm 1}, we will construct such a function $u_0$ locally for $S(L).$

We split the proof of Theorem \ref{thm 1} into propositions \ref{prpnzh1} and \ref{prpn3d5}. Our proof, especially Lemma \ref{lemma3d6}, is inspired by the work of Bland \cite{Bla86}.

\begin{prop}\label{prpnzh1}
	Let $P(y)$ be a monic polynomial in $y \in \mathbb{R}$ of degree $n=m-1 \geq 1.$ Let $Q(y)$ be a polynomial satisfying $\frac{dQ}{dy}=(m+1)yP(y)$ (thus $Q$ is a monic polynomial of degree $m+1$ and is unique up to a constant addition). Let $\hat{P}$ and $\hat{Q}$ be polynomials satisfying
	$$\hat{P}(x)=x^{m-1}P(x^{-1}), \quad \hat{Q}(x)=x^{m+1} Q(x^{-1}).$$
	Then  $\hat{P}(0)=1, \hat{Q}(0)=1.$
	Let $I \subset (0, \infty)$ be an open interval containing $r=1$ and  $Z(r)$ a real analytic function in $I$ satisfying the following conditions:
	\begin{equation}\label{eqnzodenz1}
	rZ'\hat{P}(Z)+\hat{Q}(Z)=0~\text{on}~I, \quad Z(1)=0.
	\end{equation}
	\begin{equation}\label{eqnzivnz}
	Z'(r)<0~\text{and}~\hat{P}(Z)>0~\text{on}~I, ~\text{and}~Z(r)>0 ~\text{on}~I_0:=I \cap (0,1).
	\end{equation}
	Set $\phi(r):=2 (\frac{r}{-Z'\hat{P}(Z)})^{\frac{1}{m+1}}\, Z$ on $I$.  Then $\phi$ is real analytic on $I,~\phi(1)=0,$ and $\phi>0$ on $I_0.$ Moreover, $\phi$ satisfies that $(m+1)rZ\phi'+(m+1-2Z)\phi=0$ on $I$, and $\phi'(1)=-2.$
\end{prop}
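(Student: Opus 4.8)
The plan is to verify each claim in sequence, treating the ODE \eqref{eqnzodenz1} as the main structural input and deriving everything else by direct computation. First I would establish the two normalization statements $\hat P(0)=1$ and $\hat Q(0)=1$: since $P$ is monic of degree $n=m-1$, write $P(y)=y^{m-1}+\sum_{j=0}^{m-2}a_jy^j$, so $\hat P(x)=x^{m-1}P(x^{-1})=1+\sum_{j=0}^{m-2}a_jx^{m-1-j}$, whence $\hat P(0)=1$; similarly $Q$ is monic of degree $m+1$ (its leading term comes from integrating $(m+1)y\cdot y^{m-1}=(m+1)y^m$), so $\hat Q(x)=x^{m+1}Q(x^{-1})$ has constant term equal to the leading coefficient of $Q$, namely $1$. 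Then $\hat P(Z(1))=\hat P(0)=1>0$, consistent with \eqref{eqnzivnz}.

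Next I would analyze $\phi(r):=2\bigl(\tfrac{r}{-Z'\hat P(Z)}\bigr)^{1/(m+1)}Z$. On $I$ we have $Z'<0$ and $\hat P(Z)>0$ by \eqref{eqnzivnz}, so $-Z'\hat P(Z)>0$ and $r>0$; hence the $(m+1)$-st root is of a positive real-analytic function, so $\phi$ is real analytic on $I$ (here I use that $Z$, being real analytic with $Z'\hat P(Z)$ nonvanishing, keeps the radicand positive and real analytic). At $r=1$, $Z(1)=0$ forces $\phi(1)=0$; on $I_0$ we have $Z>0$ and the prefactor is positive, so $\phi>0$ there.

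The heart of the computation is the first-order linear relation $(m+1)rZ\phi'+(m+1-2Z)\phi=0$. I would take the logarithmic derivative of $\phi$: from $\log\phi=\log 2+\tfrac1{m+1}\bigl(\log r-\log(-Z')-\log\hat P(Z)\bigr)+\log Z$ one gets
\[
\frac{\phi'}{\phi}=\frac1{m+1}\Bigl(\frac1r-\frac{Z''}{Z'}-\frac{\hat P'(Z)}{\hat P(Z)}Z'\Bigr)+\frac{Z'}{Z}.
\]
Now I would use \eqref{eqnzodenz1} to eliminate the second-derivative term: differentiating $rZ'\hat P(Z)=-\hat Q(Z)$ gives $Z'\hat P(Z)+rZ''\hat P(Z)+r(Z')^2\hat P'(Z)=-\hat Q'(Z)Z'$, so
\[
\frac{Z''}{Z'}+Z'\frac{\hat P'(Z)}{\hat P(Z)}=-\frac1r-\frac{\hat Q'(Z)}{\hat P(Z)},
\]
and substituting back yields $\tfrac{\phi'}{\phi}=\tfrac1{m+1}\bigl(\tfrac2r+\tfrac{\hat Q'(Z)}{\hat P(Z)}\bigr)+\tfrac{Z'}{Z}$. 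Multiplying by $(m+1)rZ$ and using $rZ'\hat P(Z)=-\hat Q(Z)$ again (to turn $(m+1)rZ\cdot Z'/Z$ into $-(m+1)\hat Q(Z)/\hat P(Z)$), the desired identity reduces to the purely polynomial statement
\[
2\hat Q(x)=2x\,\hat P(x)+x^2\hat Q'(x)\,,\qquad\text{i.e.}\qquad x\hat Q'(x)=\frac{2\hat Q(x)-2x\hat P(x)}{x}?
\]
— more precisely, to an algebraic identity between $\hat P,\hat Q$ that I expect to follow directly from the defining relations $\hat P(x)=x^{m-1}P(x^{-1})$, $\hat Q(x)=x^{m+1}Q(x^{-1})$, and $Q'(y)=(m+1)yP(y)$. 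The cleanest route is: differentiate $\hat Q(x)=x^{m+1}Q(x^{-1})$ to get $\hat Q'(x)=(m+1)x^mQ(x^{-1})-x^{m-1}Q'(x^{-1})=(m+1)x^mQ(x^{-1})-(m+1)x^{m-2}P(x^{-1})=(m+1)\bigl(x\hat Q(x)\cdot x^{-1}\cdot\ldots\bigr)$; carefully bookkeeping the powers, this gives $x\hat Q'(x)=(m+1)\hat Q(x)-(m+1)\hat P(x)$, which is exactly what is needed. Finally, for $\phi'(1)=-2$: from $\tfrac{\phi'}{\phi}=\tfrac1{m+1}\bigl(\tfrac2r+\tfrac{\hat Q'(Z)}{\hat P(Z)}\bigr)+\tfrac{Z'}{Z}$ both $\phi$ and $Z$ vanish at $r=1$, so I would instead use the identity $(m+1)rZ\phi'+(m+1-2Z)\phi=0$ divided by $Z$ — or better, L'Hôpital on $\phi/Z$ — to get $\phi'(1)=\lim_{r\to1}\phi/Z\cdot(\ldots)$; since near $r=1$, $\phi\sim 2\bigl(\tfrac1{-Z'(1)\hat P(0)}\bigr)^{1/(m+1)}Z$ and $\hat P(0)=1$, the evaluation $r\hat Q'(Z)/\hat P(Z)$ needs care, but plugging $Z=0$, $\hat P(0)=\hat Q(0)=1$ into the linear relation and using $Z'(1)$ from \eqref{eqnzodenz1} (namely $Z'(1)=-\hat Q(0)/(1\cdot\hat P(0))=-1$) gives $\phi'(1)=2\cdot\lim(Z/Z)\cdot 1=-2$ after tracking signs. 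I expect the \textbf{main obstacle} to be the careful power-of-$x$ bookkeeping in the algebraic identity relating $\hat P$, $\hat Q$, and their derivatives, together with the delicate $0/0$ limit at $r=1$ needed for $\phi'(1)=-2$; everything else is routine substitution once those two points are pinned down.
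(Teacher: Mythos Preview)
Your approach is correct and is essentially a repackaging of the paper's argument. The paper introduces the auxiliary quantity $Z'Z^{-(m+1)}\hat P(Z)$ and computes its logarithmic derivative two ways (once from the ODE, once from the relation $-Z'Z^{-(m+1)}\hat P(Z)=2^{m+1}r\phi^{-(m+1)}$), then compares; you instead take $\log\phi$ directly and eliminate $Z''$ using the differentiated ODE. Both routes hinge on the same algebraic identity $x\hat Q'(x)=(m+1)\bigl(\hat Q(x)-\hat P(x)\bigr)$, which is the reciprocal-polynomial reformulation of $Q'(y)=(m+1)yP(y)$. Your version is arguably more direct since it avoids the intermediate quantity; the paper's version makes the structure $\phi^{m+1}\propto r/(-Z'\hat P(Z))\cdot Z^{m+1}$ more visible, which is used again later in Proposition~2.5.

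Two small points to clean up. First, there is an arithmetic slip: after dividing the differentiated ODE by $rZ'\hat P(Z)$ you should get $\tfrac{Z''}{Z'}+Z'\tfrac{\hat P'(Z)}{\hat P(Z)}=-\tfrac1r-\tfrac{\hat Q'(Z)}{r\hat P(Z)}$ (with an $r$ in the denominator), so the correct intermediate formula is $\tfrac{\phi'}{\phi}=\tfrac1{m+1}\bigl(\tfrac2r+\tfrac{\hat Q'(Z)}{r\hat P(Z)}\bigr)+\tfrac{Z'}{Z}$; the rest of your reduction then goes through verbatim. Second, your argument for $\phi'(1)=-2$ is vaguer than necessary. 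No L'H\^opital is needed: write $\phi(r)=2g(r)Z(r)$ with $g(r)=\bigl(r/(-Z'\hat P(Z))\bigr)^{1/(m+1)}$ real analytic near $r=1$; since $Z'(1)=-\hat Q(0)/\hat P(0)=-1$ from \eqref{eqnzodenz1} and $\hat P(0)=1$, one has $g(1)=1$, whence $\phi'(1)=2g'(1)\cdot 0+2\cdot 1\cdot Z'(1)=-2$. This is exactly how the paper finishes.
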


\begin{rmk}
	It follows easily from the assumption and elementary ODE theory that (\ref{eqnzodenz1}) has a real analytic solution $Z$ in some open interval $I$ containing $1$. Since $\hat{P}(0)=1, \hat{Q}(0)=1,$ we see (\ref{eqnzodenz1}) implies $Z'(1)=-1$. Then by shrinking $I$ if necessary, we can assume (\ref{eqnzivnz}) holds.
\end{rmk}

{\em Proof of Proposition \ref{prpnzh1}.} It is clear that $\phi$ is real analytic on $I, ~\phi(1)=0,$ and $\phi>0$ on $I_0$ by the definition of $\phi$ and the assumption of $Z$. We only need to prove the last assertion in Proposition \ref{prpnzh1}. For that, we first establish the following two lemmas.

\begin{lemma}\label{lemma3d3}
	Let $Z$ be as in Proposition \ref{prpnzh1}. Then we have
	$$r(Z'Z^{-(m+1)}\hat{P}(Z))'=(m+1)Z'Z^{-(m+2)}\hat{P}(Z)-Z'Z^{-(m+1)}\hat{P}(Z)~\text{on}~I_0.$$
	Equivalently,
	$$r\frac{(Z'Z^{-(m+1)} \hat{P}(Z))'}{Z'Z^{-(m+1)}\hat{P}(Z)}=-1+(m+1)Z^{-1}~\text{on}~I_0.$$
\end{lemma}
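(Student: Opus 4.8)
The plan is to differentiate directly and use the defining ODE \eqref{eqnzodenz1} to eliminate the second derivative $Z''$. First I would set $F := Z' Z^{-(m+1)} \hat P(Z)$ and compute $F'$ by the product rule, obtaining three terms: one from differentiating $Z'$ (which produces $Z''$), one from differentiating $Z^{-(m+1)}$ (which produces $-(m+1) Z' Z^{-(m+2)} \hat P(Z) \cdot Z' = -(m+1)(Z')^2 Z^{-(m+2)}\hat P(Z)$), and one from differentiating $\hat P(Z)$ (which produces $Z'' \cdot$—no wait, $\hat P'(Z) Z' \cdot Z' Z^{-(m+1)}$, so $(Z')^2 Z^{-(m+1)}\hat P'(Z)$). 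So the only place $Z''$ enters is through the first term $Z'' Z^{-(m+1)}\hat P(Z)$.

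Next I would extract $Z''$ from the ODE. Differentiating $r Z' \hat P(Z) + \hat Q(Z) = 0$ with respect to $r$ gives $Z'\hat P(Z) + r Z'' \hat P(Z) + r (Z')^2 \hat P'(Z) + Z' \hat Q'(Z) = 0$, hence
\[
r Z'' \hat P(Z) = -Z'\hat P(Z) - r (Z')^2 \hat P'(Z) - Z' \hat Q'(Z).
\]
Then $r F' = r Z'' Z^{-(m+1)}\hat P(Z) - (m+1) r (Z')^2 Z^{-(m+2)}\hat P(Z) + r (Z')^2 Z^{-(m+1)}\hat P'(Z)$, and substituting the boxed expression for $r Z'' \hat P(Z)$ times $Z^{-(m+1)}$ turns the first term into $Z^{-(m+1)}\big(-Z'\hat P(Z) - r(Z')^2\hat P'(Z) - Z'\hat Q'(Z)\big)$. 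The $r(Z')^2 Z^{-(m+1)}\hat P'(Z)$ contributions cancel, leaving
\[
r F' = -Z' Z^{-(m+1)}\hat P(Z) - Z' Z^{-(m+1)}\hat Q'(Z) - (m+1) r (Z')^2 Z^{-(m+2)}\hat P(Z).
\]
The claimed identity is $r F' = (m+1) Z' Z^{-(m+2)}\hat P(Z) - Z' Z^{-(m+1)}\hat P(Z)$, i.e. $r F' + Z' Z^{-(m+1)}\hat P(Z) = (m+1) Z' Z^{-(m+2)}\hat P(Z)$. Comparing, this reduces to showing
\[
- Z' Z^{-(m+1)}\hat Q'(Z) - (m+1) r (Z')^2 Z^{-(m+2)}\hat P(Z) = (m+1) Z' Z^{-(m+2)}\hat P(Z),
\]
i.e., after dividing by $-Z' Z^{-(m+2)}$ (legitimate on $I_0$ since $Z' < 0$ and $Z > 0$ there by \eqref{eqnzivnz}),
\[
Z \hat Q'(Z) + (m+1) r Z' \hat P(Z) = -(m+1)\hat P(Z).
\]
Using $r Z' \hat P(Z) = -\hat Q(Z)$ from \eqref{eqnzodenz1}, the left side is $Z \hat Q'(Z) - (m+1)\hat Q(Z)$, so the whole lemma comes down to the purely algebraic (polynomial) identity
\[
x \hat Q'(x) - (m+1)\hat Q(x) = -(m+1)\hat P(x).
\]
I would verify this last identity from the definitions $\hat P(x) = x^{m-1} P(x^{-1})$, $\hat Q(x) = x^{m+1} Q(x^{-1})$ together with $Q'(y) = (m+1) y P(y)$: differentiating $\hat Q(x) = x^{m+1} Q(x^{-1})$ gives $\hat Q'(x) = (m+1) x^m Q(x^{-1}) - x^{m-1} Q'(x^{-1})$, so $x \hat Q'(x) = (m+1)\hat Q(x) - x^m Q'(x^{-1}) = (m+1)\hat Q(x) - x^m (m+1) x^{-1} P(x^{-1}) = (m+1)\hat Q(x) - (m+1) x^{m-1}P(x^{-1}) = (m+1)\hat Q(x) - (m+1)\hat P(x)$, exactly as needed. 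The equivalent "logarithmic derivative" form stated in the lemma then follows by dividing by $F = Z' Z^{-(m+1)}\hat P(Z)$, which is nonzero on $I_0$.

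I do not anticipate a serious obstacle: this is a direct computation whose only subtlety is bookkeeping the cancellation of the $\hat P'(Z)$ terms and correctly reducing to the polynomial identity; the sign conditions in \eqref{eqnzivnz} are exactly what licenses the divisions on $I_0$. The mildly delicate point worth stating carefully is that the identity is first proved on $I_0$ where one may freely divide, and—if needed elsewhere—extends to all of $I$ by real analyticity.
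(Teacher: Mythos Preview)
Your proof is correct. The paper takes a slightly more streamlined route: instead of expanding $F'$ term by term and eliminating $Z''$ via the differentiated ODE, it first divides the ODE \eqref{eqnzodenz1} by $Z^{m+1}$ to get $rF + Z^{-(m+1)}\hat Q(Z)=0$, and only then differentiates in $r$. The point is that $x^{-(m+1)}\hat Q(x)=Q(x^{-1})$, so $\frac{d}{dx}\big(x^{-(m+1)}\hat Q(x)\big)=-(m+1)x^{-3}P(x^{-1})=-(m+1)x^{-(m+2)}\hat P(x)$, and the identity drops out in one line without ever introducing $Z''$ or the $\hat P'(Z)$ terms. Your polynomial identity $x\hat Q'(x)-(m+1)\hat Q(x)=-(m+1)\hat P(x)$ is exactly the same fact in a different guise, so the two arguments are equivalent in content; the paper's organization just avoids the intermediate bookkeeping.
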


\begin{proof}
	We first observe by the definition of $\hat{Q}$ and $Q$, we have $\frac{d (x^{-(m+1)}\hat{Q}(x))}{dx}=-(m+1)x^{-3}P(x^{-1}).$
	Dividing the ODE in (\ref{eqnzodenz1}) by $Z^{m+1}$ , we have
	$$rZ'Z^{-(m+1)}\hat{P}(Z)+ Z^{-(m+1)}\hat{Q}(Z)=0~\text{on}~I_0.$$
	We then differentiate the above equation with respect to $r$ to obtain
	\begin{equation}\label{eqn03}
	r(Z'Z^{-(m+1)}\hat{P}(Z))'+ Z'Z^{-(m+1)}\hat{P}(Z) -(m+1)Z'Z^{-3}P(Z^{-1})=0.
	\end{equation}
	By the definition of $\hat{P}$, we see $Z^{-3}P(Z^{-1})=Z^{-(m+2)} \hat{P}(Z).$
	Then (\ref{eqn03}) yields Lemma \ref{lemma3d3}.
\end{proof}

\begin{lemma}\label{lemma3d4}
	Let $\phi, Z$ be as in Proposition \ref{prpnzh1}. Then we have
	$$r\frac{(Z'Z^{-(m+1)}\hat{P}(Z))'}{Z'Z^{-(m+1)}\hat{P}(Z)}=1-(m+1)r\frac{\phi'}{\phi}~\text{on}~I_0.$$
\end{lemma}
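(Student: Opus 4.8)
The goal is to prove Lemma \ref{lemma3d4}, which relates the logarithmic derivative of $Z'Z^{-(m+1)}\hat{P}(Z)$ to that of $\phi$ on $I_0$. The plan is to compute $\log \phi$ directly from the definition $\phi = 2\bigl(\tfrac{r}{-Z'\hat P(Z)}\bigr)^{1/(m+1)} Z$ and then differentiate.

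\medskip

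First I would take logarithms: from the definition of $\phi$,
\begin{equation*}
\log \phi = \log 2 + \frac{1}{m+1}\bigl(\log r - \log(-Z') - \log \hat P(Z)\bigr) + \log Z,
\end{equation*}
which is valid on $I_0$ because there $Z>0$, $Z'<0$, and $\hat P(Z)>0$ by \eqref{eqnzivnz}. Differentiating in $r$ gives
\begin{equation*}
\frac{\phi'}{\phi} = \frac{1}{m+1}\Bigl(\frac{1}{r} - \frac{Z''}{Z'} - \frac{\hat P'(Z)\,Z'}{\hat P(Z)}\Bigr) + \frac{Z'}{Z}.
\end{equation*}
Next I would multiply by $(m+1)r$ and rearrange to isolate the quantity $1 - (m+1)r\phi'/\phi$:
\begin{equation*}
1 - (m+1)r\frac{\phi'}{\phi} = r\frac{Z''}{Z'} + r\frac{\hat P'(Z)Z'}{\hat P(Z)} - (m+1)r\frac{Z'}{Z}.
\end{equation*}

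\medskip

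The key remaining step is to recognize that the right-hand side is exactly $r\bigl(Z'Z^{-(m+1)}\hat P(Z)\bigr)'/\bigl(Z'Z^{-(m+1)}\hat P(Z)\bigr)$. To see this I would compute the logarithmic derivative of $w := Z'Z^{-(m+1)}\hat P(Z)$ directly:
\begin{equation*}
\frac{w'}{w} = \frac{Z''}{Z'} - (m+1)\frac{Z'}{Z} + \frac{\hat P'(Z)Z'}{\hat P(Z)},
\end{equation*}
so $r\,w'/w$ agrees term-by-term with the expression above. This yields the claimed identity. Strictly speaking no genuine obstacle arises here — it is a bookkeeping computation — but the one point requiring care is that the manipulation is legitimate only on $I_0$, where the positivity/negativity conditions in \eqref{eqnzivnz} guarantee that all the logarithms and quotients are well defined; I would state this restriction explicitly. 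One could alternatively avoid logarithms entirely by differentiating $\phi^{m+1} = 2^{m+1}\, r\,(-Z')^{-1}\hat P(Z)^{-1}\, Z^{m+1}$, i.e.\ $\phi^{m+1} Z'Z^{-(m+1)}\hat P(Z) = -2^{m+1} r$, and this product form makes the bookkeeping slightly cleaner, but either route is routine.

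\medskip

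Once Lemmas \ref{lemma3d3} and \ref{lemma3d4} are in hand, the last assertion of Proposition \ref{prpnzh1} follows by combining them: on $I_0$ both expressions equal $r(Z'Z^{-(m+1)}\hat P(Z))'/(Z'Z^{-(m+1)}\hat P(Z))$, so $-1 + (m+1)Z^{-1} = 1 - (m+1)r\phi'/\phi$, which rearranges to $(m+1)rZ\phi' + (m+1-2Z)\phi = 0$ on $I_0$; by real analyticity and continuity up to $r=1$ (and since $I_0$ is dense in $I$ near $1$ on the left) the identity extends to all of $I$. Finally, plugging $r=1$, $Z(1)=0$, $Z'(1)=-1$, $\hat P(0)=1$ into the definition of $\phi$ and into the relation $(m+1)rZ\phi' + (m+1-2Z)\phi = 0$ — or more directly differentiating $\phi$ at $r=1$ — gives $\phi'(1) = -2$, completing the proof.
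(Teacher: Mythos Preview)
Your proof is correct and essentially the same as the paper's: both amount to taking the logarithmic derivative of the relation $-Z'Z^{-(m+1)}\hat P(Z)=2^{m+1}r\phi^{-(m+1)}$ (which is exactly the ``alternative'' you mention at the end), and your term-by-term computation of the two logarithmic derivatives separately is just a slight reorganization of that. Your concluding remarks on combining Lemmas \ref{lemma3d3} and \ref{lemma3d4} and evaluating $\phi'(1)$ also match the paper's argument.
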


\begin{proof}
	By the definition of $\phi$, we have
	$$\phi^{m+1}=2^{m+1}\frac{r Z^{m+1}}{-Z' \hat{P}(Z)}~\text{on}~I_0.$$
	Equivalently,
	\begin{equation}\label{eqn04}
	-Z' Z^{-(m+1)}\hat{P}(Z)=2^{m+1} r \phi^{-(m+1)}~\text{on}~I_0.
	\end{equation}
	We now take the logarithmic derivative of both sides of (\ref{eqn04}) with respect to $r$, and then multiply by $r$ to obtain the desired equation in Lemma \ref{lemma3d4}.
\end{proof}

Finally we compare (the second equation in) Lemma \ref{lemma3d3} and Lemma \ref{lemma3d4} to obtain $(m+1)rZ\phi'+(m+1-2Z)\phi=0$ on $I_0$. By analyticity, it holds on $I.$ Recall
$Z(1)=0, \hat{P}(0)=1,$ and $Z'(1)=-1$. It then follows from the definition of $\phi$ that $\phi'(1)=-2.$ This finishes the proof of Proposition \ref{prpnzh1}. \qed


Let $(M, g)$ and $(L, h)$ be as in  Theorem \ref{thm 1}. Write $n$ for the complex dimension of $M$ and write $m=n+1.$
Choose a coordinate chart $(D, z)$ of $M$ with a frame $e_L$ of $L$ over $D$. Writing $\pi: L \rightarrow M$ for the canonical projection, we have $$\pi^{-1}(D)=\{\xi e_L(z): (z, \xi) \in D \times \mathbb{C} \}.$$
Under this trivialization,  $S(L)$ can be written as follows locally over $D,$
$$\Sigma=\{(z, \xi) \in D \times \mathbb{C}: |\xi|^2 H(z, \Ol{z})=1 \}.$$
Here $H(z, \Ol{z})=h(e_L(z), \Ol{e_L(z)}).$
In the local coordinates $z=(z_1, \cdots, z_n)$, we write $g=\sum g_{i\Ol{j}} dz_i \otimes d\Ol{z_j}$ on $D$. By the definition of $g$, we have $g_{i \Ol{j}}=\frac{\partial^2 \log H}{\partial z_i \partial \Ol{z_j}}.$ Write $G(z)=G(z, \Ol{z})=\det (g_{i\Ol{j}})>0.$

Denote by $I_n$ the $n \times n$ identity matrix. Write $P(y, p)$ for the characteristic polynomial of the linear operator $\frac{2}{m+1} \mathrm{Ric} \cdot g^{-1}: T_{p}^{1,0}M \rightarrow T_{p}^{1,0}M.$ That is,  $P(y, p)=\det(yI_n-\frac{2}{m+1} \mathrm{Ric} \cdot g^{-1})$. In the local coordinates, writing Ricci tensor as $\mathrm{Ric}=(R_{i \Ol{k}})_{1 \leq i, k \leq n}=-(\frac{\partial^2 \log G}{\partial z_i \partial \Ol{z_k}})_{1 \leq i, k \leq n},$ we have $P(y, p)$ is the determinant of the $n \times n$ matrix $(y\delta_{ij}-\frac{2}{m+1}R_{i \Ol{k}} \cdot g^{j \Ol{k}}(p))$. Here the indices $i, j, k$ run over $\{1, 2, \cdots , n\}.$

By the constant Ricci eigenvalue assumption, 
$P(y,p)$ does not depend on $p$. We will therefore just write it as $P(y).$
It is clear that $P(y)$ is a monic polynomial in $y$ of degree $n.$ We apply Proposition \ref{prpnzh1} to this polynomial $P(y)$, then we get  polynomials $Q(y), \hat{P}(x), \hat{Q}(x)$, as well as real analytic functions $Z(r), \phi(r)$ in some interval $I$ containing $r=1$
as in Proposition \ref{prpnzh1}. Write $y(r):=\frac{1}{Z(r)}$ for $r \in I_0.$ By Proposition \ref{prpnzh1}, $(m+1)rZ\phi'+(m+1-2Z)\phi=0$ on $I$. It then follows that
\begin{equation}\label{eqn3d5}
y(r)=\frac{2\phi(r)-(m+1)r\phi'(r)}{(m+1)\phi(r)}=\frac{2}{m+1}-r\frac{\phi'}{\phi}.
\end{equation}

Theorem \ref{thm 1} will follow from the next proposition. 
\begin{prop}\label{prpn3d5}
	Let
	$$U=\{W=(z, \xi) \in \mathbb{C}^{m}: z \in D, |\xi| H(z)^{\frac{1}{2}} \in I \}, \quad U_0=\{W=(z, \xi) \in \mathbb{C}^{m}: z \in D, |\xi| H(z)^{\frac{1}{2}} \in I_0 \}.$$
	Set $$u(W)=(GH)^{-\frac{1}{m+1}}\phi\big(|\xi| H^{\frac{1}{2}}(z)\big)~\text{for}~W  \in U.$$
	Then $u$ is real analytic in $U$ and satisfies
	$$J(u)=1~\text{on}~ U_0, \quad u=0~\text{on}~\Sigma.$$
	Consequently, $\Sigma$ is obstruction flat.
\end{prop}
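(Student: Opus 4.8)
The plan is to verify directly that the explicit function $u(W) = (GH)^{-1/(m+1)}\phi(|\xi|H^{1/2}(z))$ satisfies the Fefferman Monge--Amp\`ere equation $J(u)=1$ on $U_0$; the boundary condition $u=0$ on $\Sigma$ is immediate from $\phi(1)=0$ (since $\Sigma = \{|\xi|H^{1/2}=1\}$), and real analyticity on $U$ follows from real analyticity of $\phi$, $G$, $H$ together with $G,H>0$. Once $J(u)=1$ on $U_0$ and $u=0$ on $\Sigma$ are established, obstruction flatness follows from the discussion preceding the proposition: writing $u$ in the Lee--Melrose form \eqref{Lee-Melrose expansion} forces $\eta_1|_\Sigma \equiv 0$, hence $\Sigma$ is obstruction flat. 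So the whole proposition reduces to a determinant computation.

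First I would set $r = r(W) := |\xi| H^{1/2}(z)$, so that $r^2 = |\xi|^2 H(z,\bar z)$, and treat $u = (GH)^{-1/(m+1)}\phi(r)$ as a function on $U_0 \subset \mathbb{C}^m$ with coordinates $W=(z_1,\dots,z_n,\xi)$. The key structural point is that $-\log u$ is the K\"ahler potential of the candidate metric $\widetilde\omega$ from \eqref{eqnmet} in Theorem \ref{thm 2}: indeed $-\log u = \tfrac{1}{m+1}\log(GH) - \log\phi(r)$, and $i\partial\bar\partial \log G = -\Ric$ (pulled back), $i\partial\bar\partial\log H = \omega$ (pulled back), while $-i\partial\bar\partial\log\phi(r)$ is the fiber term. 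Thus $J(u) = (-1)^m \det\begin{pmatrix} u & u_{\bar W_\ell} \\ u_{W_k} & u_{W_k \bar W_\ell}\end{pmatrix}$; the standard identity for this "bordered" determinant says that $J(u) = u^{m+1} \det\big(-\partial_{W_k}\partial_{\bar W_\ell} \log u\big)$, i.e. $J(u) = u^{m+1} \det(g_{\widetilde\omega})$ where $g_{\widetilde\omega}$ is the metric tensor of $\widetilde\omega$. So I must show $u^{m+1}\det(g_{\widetilde\omega}) = 1$ on $U_0$.

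Next I would compute $\det(g_{\widetilde\omega})$ in the frame $dz_1,\dots,dz_n,d\xi$. Because the potential splits as a base part plus $-\log\phi(r)$ with $r^2 = |\xi|^2 H$, the mixed $z_i\bar\xi$ entries are proportional to $\partial_{z_i}\log H$, and one can perform a (unimodular) change of frame $d\xi \mapsto d\xi + \xi(\partial_{z_i}\log H)\,dz_i$ to block-diagonalize: the $n\times n$ block becomes $-\partial_{z_i}\partial_{\bar z_j}\log u$ evaluated with $r$ frozen, which is $\tfrac{1}{m+1}(\Ric\cdot)_{i\bar j}$-type correction to a multiple of $g_{i\bar j}$ — concretely $g_{i\bar j}\cdot(\text{something}) - \tfrac{1}{m+1}R_{i\bar j} + (\text{fiber-derivative terms})\cdot g_{i\bar j}$ — and the $\xi\bar\xi$ entry becomes a single explicit function of $r$. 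The determinant of the $n\times n$ block is then $G \cdot \det\big((\text{scalar})\,\delta_{ij} - \tfrac{2}{m+1}R_{i\bar k}g^{j\bar k}\cdot(\text{const})\big)$, which by definition of $P(y)$ is $G$ times a value of $P$; this is exactly where the polynomial $P(y)$, its antiderivative-type partner $Q$, and the reciprocal polynomials $\hat P, \hat Q$ enter. Substituting $y = y(r) = \tfrac{2}{m+1} - r\phi'/\phi$ from \eqref{eqn3d5} and the fiber-block factor, and using the ODE $(m+1)rZ\phi' + (m+1-2Z)\phi = 0$ from Proposition \ref{prpnzh1} (equivalently the original ODE \eqref{eqnzodenz1} for $Z$), everything should telescope so that $H$-powers and $\phi$-powers cancel against $u^{m+1} = (GH)^{-1}\phi^{m+1}$, leaving $1$.

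The main obstacle I anticipate is the bookkeeping in the determinant reduction: correctly identifying the scalar multiplying $\delta_{ij}$ in the $n\times n$ block, tracking all factors of $r$, $\phi$, $\phi'$ and $H$ through the $\xi\bar\xi$ entry and the off-diagonal elimination, and then recognizing the resulting expression as precisely $\hat P(Z)$ and $\hat Q(Z)$ so that \eqref{eqnzodenz1} can be applied. A clean way to organize this is to compute $J(u)$ first for the model case where the base is K\"ahler--Einstein (so $P(y) = (y-c)^n$ for a constant $c$) to fix all constants and the form of the identity, and then observe that the general constant-Ricci-eigenvalue computation is formally identical with $(y-c)^n$ replaced by $P(y)$ — the Ricci endomorphism appears only through $\det(yI_n - \tfrac{2}{m+1}\Ric\cdot g^{-1})$, whose constancy is the sole hypothesis used. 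I expect no essential difficulty beyond this algebra, since Proposition \ref{prpnzh1} was manifestly engineered to make the substitution $y(r)$ solve whatever algebraic relation the Monge--Amp\`ere equation imposes.
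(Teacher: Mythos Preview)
Your proposal is correct and follows essentially the same approach as the paper: compute the complex Hessian of $-\log u$, observe that after row reduction the $n\times n$ base block contributes a factor $G\cdot P(Y)$ with $Y=\tfrac{2}{m+1}-r\phi'/\phi$, and then use the defining relation $\phi^{m+1}=2^{m+1}r/(-Z'\hat P(Z))$ (equivalently $y'P(y)=2^{m+1}r\phi^{-(m+1)}$) to see that $u^{m+1}\det\big((-\log u)_{i\bar j}\big)=1$. The paper organizes the Hessian computation as a lemma (Lemma~\ref{lemma3d6}) giving the exact formula $(-\log u)_{i\bar j}=-\tfrac{1}{m+1}R_{i\bar j}+\tfrac{Y}{2}g_{i\bar j}+\tfrac{Y'}{X}X_iX_{\bar j}$ for $1\le i,j\le n$ and $(-\log u)_{i\bar j}=\tfrac{Y'}{X}X_iX_{\bar j}$ otherwise, which makes the determinant immediate without needing a preliminary K\"ahler--Einstein model case.
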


{\em Proof of Proposition \ref{prpn3d5}.} The analyticity of $u$ follows easily from the analyticity of $\phi$, as well as that of $G$ and $H$ (for the later, see the discussion in Remark \ref{rmk12}). We thus only need to prove the remaining assertions. For that,  we first prove the following lemma. Write $X=X(W):=|\xi| H^{\frac{1}{2}}(z)$ for $W \in U,$  and $Y=Y(W):=\frac{2}{m+1}-X\frac{\phi'(X)}{\phi(X)}$ for $W \in U_0.$ Then by (\ref{eqn3d5}), we have $Y=y(r)|_{r=X}=\frac{1}{Z(r)}|_{r=X}.$

In the following, we will also write the coordinates of $D \times \mathbb{C}$ as $W=(w_1, \cdots, w_{m-1}, w_m).$ That is, we identify $w_i$ with $z_i$ for $1 \leq i \le m-1$, and $w_{m}$ with $\xi.$ For a sufficiently differentiable function $\Phi$ on an open subset of $D \times \mathbb{C}$,  we write, for $1 \leq i, j \leq m,$ $\Phi_i=\frac{\partial \Phi}{\partial w_i}, \Phi_{\Ol{j}}=\frac{\partial \Phi}{\partial \Ol{w_j}},$ and $\Phi_{i\Ol{j}}=\frac{\partial^2 \Phi}{\partial w_i \partial \Ol{w_j}}$.
\begin{lemma}\label{lemma3d6}
	Let u(W) be as defined in Proposition \ref{prpn3d5}.  Write $Y'=\frac{dY}{dX},$ i.e., $Y'=y'(r)|_{r=X}.$ Then we have in $U_0,$	
	$$(-\log u)_{i\Ol{j}}=-\frac{1}{m+1} R_{i\Ol{j}}+ \frac{Y}{2}g_{i \Ol{j}}+\frac{Y'}{X}X_iX_{\Ol{j}}, \quad~\text{if}~ 1 \leq i, j \leq m-1,$$
	$$(-\log u)_{i\Ol{j}}=\frac{Y'}{X}X_iX_{\Ol{j}}, \quad~\text{if}~ i=m~\text{or}~j=m.$$
	Consequently,
	$$\det \big((-\log u)_{i\Ol{j}}\big)_{1 \leq i, j \leq m}=\frac{P(Y) Y'}{2^{m+1} X}H(z)G(z).$$
\end{lemma}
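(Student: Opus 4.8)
The plan is to compute the complex Hessian of $-\log u$ directly from the factorization $u = (GH)^{-1/(m+1)}\phi(X)$, $X = |\xi|H^{1/2}(z)$, and then evaluate the determinant by exploiting the rank-one structure of the $X_i X_{\bar j}$ term. First I would write $-\log u = \frac{1}{m+1}\log G + \frac{1}{m+1}\log H - \log\phi(X)$ and differentiate term by term. The term $\frac{1}{m+1}(\log G)_{i\bar j}$ produces $-\frac{1}{m+1}R_{i\bar j}$ for $1\le i,j\le m-1$ (by the definition $R_{i\bar k} = -(\log G)_{i\bar k}$) and vanishes when $i=m$ or $j=m$ since $G$ depends only on $z$. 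Likewise $\frac{1}{m+1}(\log H)_{i\bar j}$ contributes nothing to the mixed Hessian because $\log H$ is a function of $z$ alone and $H_{i\bar j}$ enters only through $g_{i\bar j}=(\log H)_{i\bar j}$, which I will absorb into the $-\log\phi(X)$ computation. The real work is the term $-(\log\phi(X))_{i\bar j}$: since $X = |\xi|H^{1/2}$, I note $\log X = \log|\xi| + \frac12\log H$, so $(\log X)_{i\bar j} = \frac12(\log H)_{i\bar j} = \frac12 g_{i\bar j}$ for $1\le i,j\le m-1$, and $(\log X)_{i\bar j}=0$ whenever $i=m$ or $j=m$ (because $\log|\xi|^2$ is pluriharmonic on $\xi\ne 0$ and $\log H$ does not involve $\xi$). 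Then the chain rule gives $-(\log\phi(X))_{i\bar j} = -\bigl(\tfrac{\phi'}{\phi}\bigr)(X)\, X_{i\bar j} \;-\; \bigl(\tfrac{\phi'}{\phi}\bigr)'(X)\, X_i X_{\bar j}$, and I will rewrite this using $X_{i\bar j} = X\bigl((\log X)_{i\bar j} + \frac{X_iX_{\bar j}}{X^2}\bigr)$ and the identity $Y = \frac{2}{m+1} - X\frac{\phi'}{\phi}$ from \eqref{eqn3d5}. A short computation shows the coefficient of $(\log X)_{i\bar j} = \frac12 g_{i\bar j}$ is $-X\frac{\phi'}{\phi} = Y - \frac{2}{m+1}$, so combined with the $\frac{1}{m+1}g_{i\bar j}$ from $\frac1{m+1}(\log H)_{i\bar j}$ and using $g_{i\bar j}=2(\log X)_{i\bar j}$ one gets the stated $\frac{Y}{2}g_{i\bar j}$; and the coefficient of $X_iX_{\bar j}$ collapses, after differentiating $Y$, to exactly $\frac{Y'}{X}$. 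When $i=m$ or $j=m$, all $g_{i\bar j}$ and $R_{i\bar j}$ terms drop and only $\frac{Y'}{X}X_iX_{\bar j}$ survives, as claimed.

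For the determinant, I would assemble the Hessian matrix $\bigl((-\log u)_{i\bar j}\bigr) = A + \frac{Y'}{X}\, v v^*$ where $v = (X_1,\dots,X_m)^T$ and $A$ is the block-diagonal-ish matrix with $(i,j)$-entry $-\frac{1}{m+1}R_{i\bar j} + \frac{Y}{2}g_{i\bar j}$ for $i,j\le m-1$ and $0$ in the last row and column. Since $A$ is singular (its last column is zero), the matrix-determinant lemma in the form $\det(A + \frac{Y'}{X}vv^*) = \det A + \frac{Y'}{X}\,v^* \operatorname{adj}(A) v$ reduces to a single term: only the cofactor $A_{mm}$ (the determinant of the top-left $(m-1)\times(m-1)$ block $B := -\frac{1}{m+1}\mathrm{Ric} + \frac{Y}{2}g$) multiplies $|X_m|^2 = |H^{1/2}|^2 = H$ — wait, more precisely $v^*\operatorname{adj}(A)v$ picks out $\overline{X_m}X_m \cdot \det B$ since all other cofactors of $A$ vanish. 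Now $\det B = \det\bigl(\frac{Y}{2}g - \frac{1}{m+1}\mathrm{Ric}\bigr) = \det(g)\cdot\det\bigl(\frac Y2 I_n - \frac{1}{m+1}\mathrm{Ric}\cdot g^{-1}\bigr)$. Comparing with the normalization $P(y) = \det\bigl(yI_n - \frac{2}{m+1}\mathrm{Ric}\cdot g^{-1}\bigr)$, a substitution $y = Y$ together with pulling out a factor of $2$ per row gives $\det\bigl(\frac Y2 I_n - \frac1{m+1}\mathrm{Ric}\cdot g^{-1}\bigr) = 2^{-n}P(Y)$, so $\det B = 2^{-n}G\, P(Y)$. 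Finally $|X_m|^2 = |\xi|^{-2}X^2\cdot|\xi|^2\cdot$ — I'll just compute $X_m = \frac{\partial X}{\partial\xi}$ directly: from $X^2 = |\xi|^2 H$ one gets $X_m = \frac{\bar\xi H}{2X}$, hence $|X_m|^2 = \frac{|\xi|^2 H^2}{4X^2} = \frac{H}{4}$. Putting it together: $\det\bigl((-\log u)_{i\bar j}\bigr) = \frac{Y'}{X}\cdot\frac{H}{4}\cdot 2^{-n}G\,P(Y) = \frac{P(Y)Y'}{2^{m+1}X}H(z)G(z)$, using $n = m-1$ so $2^{-n}\cdot\frac14 = 2^{-(m+1)}$.

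The main obstacle I anticipate is bookkeeping in the second-derivative chain-rule step: correctly separating the pluriharmonic part $\log|\xi|^2$ of $\log X$ from the $\frac12\log H$ part, tracking which terms carry $g_{i\bar j}$ versus which carry $X_iX_{\bar j}$, and verifying that after substituting $Y = \frac{2}{m+1} - X\phi'/\phi$ the coefficient of the rank-one piece simplifies to precisely $Y'/X$ (this requires differentiating the defining relation for $Y$, i.e. using $Y' = -\frac{d}{dX}(X\phi'/\phi)$, and noticing the other contributions telescope). Once the Hessian formula is pinned down, the determinant computation is essentially the matrix-determinant lemma applied to a matrix with a zero last column, which is clean. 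I would present the Hessian identities first as the heart of the lemma, then record the determinant as a one-paragraph consequence.
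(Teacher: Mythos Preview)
Your proposal is correct and follows essentially the same route as the paper: the Hessian computation is identical (write $-\log u = \frac{1}{m+1}\log G + \frac{1}{m+1}\log H - \log\phi(X)$, use $(\log X)_{i\bar j}=\tfrac12(\log H)_{i\bar j}$ and $X_{i\bar j}=X(\log X)_{i\bar j}+\frac1X X_iX_{\bar j}$, then recognize $Y$ and $Y'/X$), and your determinant argument via the matrix-determinant lemma is just a repackaging of the paper's row-reduction (factor $\frac{Y'}{X}X_m$ from the last row, clear the rank-one contribution from the other rows), both yielding $\frac{Y'}{X}|X_m|^2\det B$ with $|X_m|^2=H/4$ and $\det B=2^{-n}G\,P(Y)$.
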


\begin{proof}
	Note for $1 \leq i, j \leq m, (\log X)_{i \Ol{j}}=\frac{1}{2}(\log H)_{i \Ol{j}}.$
	Consequently, for $1 \leq i, j \leq m,$
	$$(-\log u)_{i \Ol{j}}=\frac{1}{m+1}(\log G)_{i \Ol{j}}+ \frac{1}{m+1}(\log H)_{i \Ol{j}}-(\log \phi(X))_{i \Ol{j}}$$
	$$=\frac{1}{m+1}(\log G)_{i \Ol{j}}+ \frac{2}{m+1}(\log X)_{i \Ol{j}}-(\frac{\phi'}{\phi}X_{i \Ol{j}}+ (\frac{\phi'}{\phi})'X_i X_{\Ol{j}}).$$
	Since $X_{i \Ol{j}}=X (\log X)_{i \Ol{j}}+\frac{1}{X}X_i X_{\Ol{j}},$ the above is reduced to
	$$(-\log u)_{i \Ol{j}}=\frac{1}{m+1}(\log G)_{i \Ol{j}}+\big(\frac{2}{m+1}-X \frac{\phi'}{\phi}\big)(\log X)_{i \Ol{j}}-\big((\frac{\phi'}{\phi})'+\frac{1}{X} \frac{\phi'}{\phi} \big)X_i X_{\Ol{j}}.$$
	$$=\frac{1}{m+1}(\log G)_{i \Ol{j}}+ Y (\log X)_{i \Ol{j}}+\frac{Y'}{X}X_iX_{\Ol{j}}.$$
	Since $G(z)$ only depends on $z,$ we have $(\log G)_{i \Ol{j}}=0$ if $i=m$ or $j=m$. Furthermore, $(\log G)_{i \Ol{j}}=-R_{i\Ol{j}}$ for $1 \leq i, j \leq m-1.$ Similarly, $(\log X)_{i \Ol{j}}=\frac{1}{2}(\log H)_{i \Ol{j}}=0$ if $i=m$ or $j=m$, and $(\log X)_{i \Ol{j}}=\frac{1}{2}(\log H)_{i \Ol{j}}=\frac{1}{2}g_{i \Ol{j}}$ if $1 \leq i, j \leq m-1.$
	Then the first two equations in the lemma follow. 
To compute the determinant of the matrix $\big((-\log u)_{i\Ol{j}}\big)$, we first take out the factor $\frac{Y'}{X}X_m$ from the last row, and then use the last row to eliminate $\frac{Y'}{X}X_iX_{\Ol{j}}$ terms from other rows. This yields
\begin{align*}
	\det \big((-\log u)_{i\Ol{j}}&\big)_{1 \leq i, j \leq m}=\frac{Y'}{X}X_m X_{\Ol{m}}  \det \big(-\frac{1}{m+1} R_{i\Ol{j}}+ \frac{Y}{2}g_{i \Ol{j}}\big)_{1 \leq i, j \leq m-1}\\
	=&\frac{Y'}{X}X_m X_{\Ol{m}}  \det \big(\frac{Y}{2}\delta_{ij}-\frac{1}{m+1}R_{i \Ol{k}} \cdot g^{j \Ol{k}}(p)\big)_{1 \leq i, j \leq m-1} \det(g_{i \Ol{j}})_{1 \leq  i, j \leq m-1}\\
	=&\frac{Y'}{2^{m-1}X}X_m X_{\Ol{m}}   \det \big(Y\delta_{ij}-\frac{2}{m+1}R_{i \Ol{k}} \cdot g^{j \Ol{k}}(p)\big)_{1 \leq i, j \leq m-1} \det(g_{i \Ol{j}})_{1 \leq  i, j \leq m-1}.
\end{align*}
	Note $X_m=\frac{1}{2}|\xi|^{-1} \overline{\xi} H^{\frac{1}{2}}(z), X_{\Ol{m}}=\frac{1}{2}|\xi|^{-1} \xi H^{\frac{1}{2}}(z).$
	Therefore by the definitions of $G$ and $P(y)$, we have the above equals to $\frac{P(Y) Y'}{2^{m+1} X}H(z)G(z).$
	This proves the last equation Lemma \ref{lemma3d6}.
\end{proof}

We continue to prove Proposition \ref{prpn3d5}. 
Recall $\phi$ is defined by $\phi(r)=2  (\frac{r}{-Z'\hat{P}(Z)})^{\frac{1}{m+1}}\, Z$ on $I.$ This yields
$$-Z'\hat{P}(Z)Z^{-(m+1)}=2^{m+1}r \phi^{-(m+1)}~\text{on}~I_0.$$
Consequently, since $y=Z^{-1}$ and $-Z'=y'y^{-2},$
$$y'y^{m-1}\hat{P} (y^{-1})=2^{m+1}r \phi^{-(m+1)}~\text{on}~I_0.$$
By the definition of $\hat{P},$ we have $y^{m-1}\hat{P} (y^{-1})=P(y).$ Therefore the above is reduced to
$$y'P(y)=2^{m+1}r \phi^{-(m+1)}~\text{on}~I_0.$$
Replacing $r$ by $X=X(W)=|\xi| H^{\frac{1}{2}}(z)$ for $W \in U_0$, we have
$$Y' P(Y)=2^{m+1} (\phi(X))^{-(m+1)}\, X=2^{m+1} X G^{-1}(z) H^{-1}(z)(u(W))^{-(m+1)}.$$
The last equality follows from the definition of $u$. Combining this with Lemma \ref{lemma3d6}, we have
$$u^{m+1} \det \big((-\log u)_{i\Ol{j}}\big)_{1 \leq i, j \leq m}=1~\text{for}~W \in U_0.$$
This yields $J(u)=1$ on $U_0.$ Since $\phi(1)=0$, we have $u=0$ on $\Sigma.$ This proves the first part of Proposition \ref{prpn3d5}. The latter part of Proposition \ref{prpn3d5} follows from the first part and Graham's work \cite{Graham1987a} (see the discussion at the beginning of $\S$ 2.1). \qed

{\em Proof of Theorem  \ref{thm 1}:}  Theorem \ref{thm 1} follows from Proposition \ref{prpn3d5}. \qed

\smallskip

{\em Proof of Proposition \ref{prpnsp}:} By the work of Webster \cite{Web77b} (see  Proposition 4 in \cite{Wang}), $(M, g)$ is spherical if and only if $(M, g)$ is Bochner flat (i.e., its Bochner tensor vanishes). If $M$ is compact, then the conclusion follows from Corollary 4.17 in Bryant \cite{Bry01}; if $(M, g)$ has constant scalar curvature, then the conclusion follows from Proposition 2.5  in Bryant \cite{Bry01}. \qed

\smallskip

{\em Proof of Corollary \ref{cor 1}:} As the statement is trivial if $\dim M=1$, we will assume $\dim M\geq 2$. Note since $(M, g)$ has constant Ricci eigenvalues, it also has constant scalar curvature.  Corollary \ref{cor 1} then follows from Theorem \ref{thm 1} and Proposition \ref{prpnsp}. \qed

\subsection{Proof of Theorem \ref{thm 2}}\label{Sec 2.2}


	

Before we prove Theorem \ref{thm 2}, we first establish the following proposition, which is of particular importance for the proof of the theorem.

\begin{prop}\label{prpnzh2}
Let $n=m-1 \geq 1.$  Let $\lambda_1 \leq \cdots \leq \lambda_n <1$ be real numbers. Let $P(y)=\prod_{i=1}^n (y-\frac{2 \lambda_i}{m+1}).$ Let $Q(y)$ be a polynomial satisfying $\frac{dQ}{dy}=(m+1)yP(y)$ and $Q(\frac{2}{m+1})=0$ (thus $Q$ is a monic polynomial of degree $m+1$ and is uniquely determined). Let $\hat{P}$ and $\hat{Q}$ be polynomials satisfying
$$\hat{P}(x)=x^{m-1}P(x^{-1}), \quad \hat{Q}(x)=x^{m+1} Q(x^{-1}).$$
Note $\hat{P}(0)=1,\hat{P}(x)>0$ on $(0, \frac{m+1}{2}]$,  and $\hat{Q}(0)=1, \hat{Q}(\frac{m+1}{2})=0,~\hat{Q}'(\frac{m+1}{2})<0,~ \hat{Q}(x)>0$ on $(0, \frac{m+1}{2}).$ We have the following conclusions hold:
	
(1). There exists a unique real analytic function $Z=Z(r)$ on $[-1, 1]$ (meaning it extends real analytically to some open interval containing $[-1, 1]$) satisfying the following conditions:
\begin{equation}\label{eqnzodenz}
rZ'\hat{P}(Z)+\hat{Q}(Z)=0, \quad Z(1)=0.
\end{equation}
Moreover, $Z$ is an even function satisfying $Z(0)=\frac{m+1}{2}$ and $Z'(0)=0, Z'(r)<0$ on $(0, 1]$ and $Z'(1)=-1, Z''(0)<0.$ Consequently, $Z(r) \in (0, \frac{m+1}{2})$ on $(-1, 0) \cup (0, 1).$
	
(2). Let $\phi(r):=2(\frac{r}{-Z'\hat{P}(Z)})^{\frac{1}{m+1}}\, Z$.  Then $\phi$ is real analytic on $[-1, 1]$. Moreover, $\phi$ is an even function satisfying $\phi>0$ on $(-1, 1)$ and $\phi(1)=0.$ Moreover, $\phi$ satisfies $(m+1)rZ\phi'+(m+1-2Z)\phi=0$, and $\phi'(1)=-2.$
\end{prop}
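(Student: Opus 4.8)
The plan is to regard \eqref{eqnzodenz} as a first-order ODE that is regular away from $r=0$, reduce it to an autonomous equation, and then treat the two delicate endpoints $r=1$ and $r=0$ separately; the ``Note'' in the statement is a preliminary bookkeeping step.

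\textbf{Preliminaries (the ``Note'').} First I would establish the algebraic identity obtained by differentiating $\hat Q(x)=x^{m+1}Q(x^{-1})$ and substituting $Q'(y)=(m+1)yP(y)$:
\[
  x\,\hat Q'(x)=(m+1)\bigl(\hat Q(x)-\hat P(x)\bigr).
\]
Since $\hat P(x)=\prod_i\bigl(1-\tfrac{2\lambda_i}{m+1}x\bigr)$ and each $\lambda_i<1$, one reads off $\hat P(0)=1$ and $\hat P>0$ on $(0,\tfrac{m+1}{2}]$; monicity of $Q$ gives $\hat Q(0)=1$; and since all roots $\tfrac{2\lambda_i}{m+1}$ of $P$ lie strictly below $\tfrac{2}{m+1}$, we get $Q'(y)=(m+1)yP(y)>0$ for $y>\tfrac{2}{m+1}$, hence $Q>0$ there (as $Q(\tfrac{2}{m+1})=0$), which after $y=x^{-1}$ becomes $\hat Q>0$ on $(0,\tfrac{m+1}{2})$ together with $\hat Q(\tfrac{m+1}{2})=0$. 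Evaluating the displayed identity at $x=\tfrac{m+1}{2}$ then yields the crucial normalization $\hat Q'(\tfrac{m+1}{2})=-2\,\hat P(\tfrac{m+1}{2})<0$.

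\textbf{Existence, uniqueness and behavior on $(0,1]$.} Next I would substitute $r=e^{-t}$, turning \eqref{eqnzodenz} into the autonomous equation $\dot Z=F(Z):=\hat Q(Z)/\hat P(Z)$, where $r=1$ corresponds to $t=0$ and $r\to 0^{+}$ to $t\to+\infty$. By the facts above, $F>0$ on $[0,\tfrac{m+1}{2})$, $F(\tfrac{m+1}{2})=0$, and $F'(\tfrac{m+1}{2})=\hat Q'(\tfrac{m+1}{2})/\hat P(\tfrac{m+1}{2})=-2$, so $Z_\ast:=\tfrac{m+1}{2}$ is the first equilibrium to the right of $0$ and it is hyperbolic and attracting. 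Hence the unique solution with $Z|_{t=0}=0$ exists for all $t\ge 0$, is strictly increasing, remains in $[0,Z_\ast)$, and satisfies $Z(t)\to Z_\ast$. Translating back via $Z'=-F(Z)/r$, the function $Z$ is real analytic and strictly decreasing on $(0,1]$, with $Z(0^{+})=Z_\ast$, $Z(1)=0$, $Z'<0$ on $(0,1]$, and $Z\in(0,Z_\ast)$ on $(0,1)$. At $r=1$ the point $(r,Z)=(1,0)$ is an ordinary point of $Z'=-\hat Q(Z)/(r\hat P(Z))$ (the denominator equals $1$), so $Z$ is in fact real analytic in a full neighborhood of $r=1$, with $Z'(1)=-\hat Q(0)/\hat P(0)=-1$.

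\textbf{Analyticity and evenness at $r=0$.} This is the main point. I would view $s:=r^{2}$ as a function of $Z$ near $Z=Z_\ast$: from $dr/dZ=-r/F(Z)$ we get $\tfrac{d}{dZ}\log s=-2/F(Z)$, and since $F(Z)=-2(Z-Z_\ast)+O\bigl((Z-Z_\ast)^{2}\bigr)$ by the normalization above, $-2/F(Z)=\tfrac{1}{Z-Z_\ast}+\rho(Z)$ with $\rho$ real analytic near $Z_\ast$; integration gives $s=(Z_\ast-Z)\,h(Z)$ with $h$ real analytic near $Z_\ast$ and $h(Z_\ast)>0$. Thus $r^{2}$ is a real analytic function of $Z$ with a simple zero and nonzero derivative at $Z=Z_\ast$, so by the analytic inverse function theorem $Z$ is a real analytic function of $r^{2}$ near $0$; in particular $Z$ extends to an even real analytic function near $r=0$ with $Z(0)=Z_\ast$, $Z'(0)=0$, and $Z''(0)=-2/h(Z_\ast)<0$. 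Gluing the three local pieces (near $r=0$, on $(0,1]$, near $r=1$) and extending by evenness---legitimate since \eqref{eqnzodenz} is invariant under $r\mapsto-r$---yields a real analytic even $Z$ on an open interval containing $[-1,1]$ with all the asserted properties; uniqueness comes from uniqueness of the ODE solution at the ordinary point $r=1$ together with analytic continuation.

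\textbf{The function $\phi$, and the main obstacle.} Since $Z$ is even with $Z'(0)=0$, the quotient $-Z'(r)/r$ is even, real analytic, and equal to $-Z''(0)>0$ at $r=0$, so $\tfrac{r}{-Z'\hat P(Z)}$ is a positive even real analytic function on $(-1,1)$, tending to $1$ near $r=\pm1$; taking its $(m+1)$-st root and multiplying by $Z$ shows $\phi$ is real analytic and even on (a neighborhood of) $[-1,1]$, with $\phi>0$ on $(-1,1)$ and $\phi(1)=0$. Finally, on a subinterval $I=(\varepsilon,1+\delta)\subset(0,\infty)$ containing $1$ on which $Z'<0$ and $\hat P(Z)>0$, the function $Z$ satisfies the hypotheses of Proposition~\ref{prpnzh1}, which gives $(m+1)rZ\phi'+(m+1-2Z)\phi=0$ on $I$ and $\phi'(1)=-2$; by real analyticity the identity extends to all of $[-1,1]$. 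The one genuinely delicate step is the regularity at $r=0$: a priori the solution behaves like $Z_\ast-c\,r^{-F'(Z_\ast)}$ there, and only the exact value $F'(Z_\ast)=-2$---equivalently $\hat Q'(\tfrac{m+1}{2})=-2\hat P(\tfrac{m+1}{2})$, precisely what the identity $x\hat Q'(x)=(m+1)(\hat Q(x)-\hat P(x))$ provides---makes that expansion compatible with being even and analytic; absent this normalization, one would obtain only finite boundary regularity.
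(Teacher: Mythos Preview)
Your proof is correct and, at the key step (analyticity at $r=0$), takes a genuinely different route from the paper's.

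Both arguments rest on the same normalization $\hat Q'\!\bigl(\tfrac{m+1}{2}\bigr)=-2\hat P\!\bigl(\tfrac{m+1}{2}\bigr)$; the paper isolates this as a separate lemma, while you obtain it instantly from the pleasant identity $x\hat Q'(x)=(m+1)\bigl(\hat Q(x)-\hat P(x)\bigr)$. For the behaviour as $r\to 0^+$, the paper proceeds by hand: it writes $Z=\tfrac{m+1}{2}+r^2W$, derives a first--order ODE for $W$ with a removable $1/r$ singularity (this is where the $-2$ enters), shows $W$ has a finite negative limit $a$ by integrating that ODE, then sets up a \emph{regular} initial--value problem at $r=0$ with data $T(0)=a$, produces its unique analytic even solution, and matches it to $W$ via Gr\"onwall. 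You instead pass to the autonomous equation $\dot Z=F(Z)=\hat Q(Z)/\hat P(Z)$ through $r=e^{-t}$, identify $Z_\ast=\tfrac{m+1}{2}$ as a hyperbolic sink with eigenvalue $-2$, and invert: from $\tfrac{d}{dZ}\log r^2=-2/F(Z)=\tfrac{1}{Z-Z_\ast}+\rho(Z)$ you get $r^2=(Z_\ast-Z)h(Z)$ with $h$ analytic and positive, so the analytic inverse function theorem yields $Z$ as an analytic function of $r^2$ near $0$. This is cleaner and more conceptual---it makes transparent exactly why the eigenvalue must be $-2$, as you point out at the end---whereas the paper's substitution--and--matching argument is more elementary (no inverse function theorem) and is a template that still works in situations where one cannot neatly invert. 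For part~(2) you both simply invoke Proposition~\ref{prpnzh1} on a one--sided interval and extend by analyticity; this is the same as the paper's treatment.
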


{\em Proof of Proposition \ref{prpnzh2}.}
It follows easily from the assumption and elementary ODE theory that the ODE in (\ref{eqnzodenz}) has a real analytic solution $Z$ in some open interval $I$ containing $1$. Since $\hat{P}(0)=1, \hat{Q}(0)=1,$ we see the ODE in (\ref{eqnzodenz}) implies $Z'(1)=-1$. Set
$$t_0=\inf \{t \in [0, 1): ~\text{on}~(t,1],~\exists~\text{a real analytic solution}~Z~\text{to}~(\ref{eqnzodenz})~\text{with}~Z'<0\}.$$
By the definition, $0 \leq t_0 <1$ and  on $(t_0,1]$, there is a real analytic solution $Z$ to (\ref{eqnzodenz})  with $Z'<0$.

\begin{lemma} \label{lm1}
	We must have $t_0=0.$ Consequently, on $(0, 1]$ there exists a (unique) real analytic solution $Z$ to (\ref{eqnzodenz}) and it satisfies $Z'<0.$
\end{lemma}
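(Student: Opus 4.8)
The plan is to show that the maximal interval of real-analytic existence of a solution $Z$ to \eqref{eqnzodenz} with $Z' < 0$ extends all the way down to $r = 0$, i.e., that $t_0 = 0$. I would argue by contradiction: suppose $t_0 > 0$. The ODE \eqref{eqnzodenz} can be written in solved form $Z' = -\hat{Q}(Z)/(r\hat{P}(Z))$ wherever $r > 0$ and $\hat{P}(Z) \neq 0$, so the only ways the analytic solution can fail to continue past $t_0$ are (i) $Z(r)$ escaping to a value where $\hat{P}$ vanishes, (ii) $Z(r) \to \pm\infty$ as $r \downarrow t_0$, or (iii) $Z'$ reaching $0$ (violating the $Z' < 0$ requirement, which would let the solution turn around). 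So the crux is to get a priori bounds on $Z$ on $(t_0, 1]$ that keep it inside the "good" region.

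**Key steps in order.** First, since $Z(1) = 0$ and $Z' < 0$ on $(t_0,1]$, the function $Z$ is strictly decreasing as $r$ decreases is wrong direction — rather, $Z$ increases as $r$ decreases from $1$, so $Z(r) > 0$ on $(t_0, 1)$. Next I would establish the upper bound $Z(r) < \frac{m+1}{2}$ on $(t_0,1)$. The natural mechanism: the sign data $\hat{P}(x) > 0$ on $(0, \tfrac{m+1}{2}]$ and $\hat{Q}(x) > 0$ on $(0,\tfrac{m+1}{2})$, with $\hat{Q}(\tfrac{m+1}{2}) = 0$, means that if $Z$ ever approached $\tfrac{m+1}{2}$ from below, then near that point $Z' = -\hat{Q}(Z)/(r\hat{P}(Z))$ stays negative but $\hat{Q}(Z) \to 0$, so $Z' \to 0$ there; combined with $\hat Q$ vanishing simply at $\tfrac{m+1}{2}$ (so $\hat Q(Z) \approx c(\tfrac{m+1}{2} - Z)$ with $c = -\hat Q'(\tfrac{m+1}{2}) > 0$), the equation becomes roughly $(\tfrac{m+1}{2} - Z)' \approx \frac{c}{r\hat P}(\tfrac{m+1}{2} - Z)$, a linear ODE in $\tfrac{m+1}{2} - Z$ whose positive solutions never reach $0$ in finite $r$. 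Hence $Z$ stays strictly below $\tfrac{m+1}{2}$ on all of $(t_0,1)$ and in fact has a finite limit $Z(t_0^+) \in [0, \tfrac{m+1}{2}]$ by monotonicity. Then I examine this limit: the case $Z(t_0^+) = \tfrac{m+1}{2}$ is excluded by the barrier argument just sketched; the case $Z(t_0^+) \in (0, \tfrac{m+1}{2})$ cannot be a genuine obstruction because there $\hat{P}(Z(t_0^+)) > 0$, so the ODE in solved form has analytic right-hand side near $(t_0, Z(t_0^+))$ (using $t_0 > 0$), and Cauchy–Kovalevskaya / the standard analytic ODE existence theorem lets us extend past $t_0$ with $Z'$ still negative (since $\hat Q(Z(t_0^+)) > 0$ forces $Z'(t_0) < 0$) — contradicting minimality of $t_0$; and the case $Z(t_0^+) = 0$ would force $Z(r) = 0$ to first order matching $Z(1) = 0$, but since $Z$ is strictly decreasing with $r$ on $(t_0,1]$ and positive on $(t_0,1)$, having $Z \to 0$ at the left endpoint $t_0 > 0$ together with the ODE giving $Z'(t_0^+) \to -\hat Q(0)/(t_0 \hat P(0)) = -1/t_0 < 0$ again means the solution extends analytically past $t_0$, contradiction. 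Either way $t_0 = 0$.

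**The main obstacle.** The delicate point is ruling out $Z \to \tfrac{m+1}{2}$, i.e., proving the upper barrier: one must show the solution cannot reach the critical level $\tfrac{m+1}{2}$ of $\hat Q$ in finite positive $r$-time. I expect to handle this by a substitution $V := \tfrac{m+1}{2} - Z > 0$, rewriting \eqref{eqnzodenz} as $rV'\hat P(Z) = \hat Q(Z)$ and using $\hat Q(Z) = V\cdot \psi(V)$ with $\psi$ analytic, $\psi(0) = -\hat Q'(\tfrac{m+1}{2}) > 0$, so near $V = 0$ we get $(\log V)' = \psi(V)/(r\hat P(Z)) $, which is bounded, and hence $\log V$ cannot reach $-\infty$ over a compact subinterval of $(0,1]$; this keeps $V$ bounded below by a positive constant on $(t_0,1]$ and gives $Z \leq \tfrac{m+1}{2} - \delta$ there. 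A second, more bookkeeping-level obstacle is confirming that $t_0 > 0$ is the *only* alternative to $t_0 = 0$ worth ruling out — i.e., that no blow-up $Z \to +\infty$ can occur; but the upper bound $Z < \tfrac{m+1}{2}$ and lower bound $Z > 0$ on $(t_0,1)$ handle this automatically. The uniqueness and the sign $Z' < 0$ then come for free from the solved-form ODE having an analytic (hence locally Lipschitz) right-hand side on the region $\{r > 0,\ 0 < Z < \tfrac{m+1}{2}\}$.
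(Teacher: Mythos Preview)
Your proposal is correct and takes essentially the same approach as the paper's proof: argue by contradiction assuming $t_0>0$, use monotonicity to get a limit $\mu=Z(t_0^+)\in[0,\tfrac{m+1}{2}]$, rule out $\mu=\tfrac{m+1}{2}$ by the Gr\"onwall-type barrier on $V=\tfrac{m+1}{2}-Z$ (exploiting that $\hat Q$ has a simple zero there), and for $\mu<\tfrac{m+1}{2}$ extend analytically past $t_0$ using local existence. Two minor remarks: the paper disposes of the possibility $Z(t^*)=\tfrac{m+1}{2}$ at an \emph{interior} point more directly by noting the ODE would force $Z'(t^*)=0$, contradicting $Z'<0$; and your separate case $Z(t_0^+)=0$ is unnecessary, since monotonicity gives $\mu\ge Z(r)>0$ for any fixed $r\in(t_0,1)$.
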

{\em Proof of Lemma \ref{lm1}:} Seeking a contradiction, suppose $t_0 >0.$ Since $Z$ is decreasing on $(t_0, 1),$ we have $\mu=\lim_{r \rightarrow t_0^+} Z(r)>0$ exists. We claim $\mu \leq \frac{m+1}{2}.$ Otherwise, there exists some $t^* \in (t_0, 1)$ such that $Z(t^*)=\frac{m+1}{2}.$
This is a contradiction as the ODE in (\ref{eqnzodenz}) cannot hold at $t^*.$ We therefore proceed in two cases:

{\bf Case I:} Assume $\mu= \frac{m+1}{2}.$ Recall $\hat{P}(\mu)>0$ and $\hat{Q}(\mu)=0.$ Since $-Z'=\frac{\hat{Q}}{r\hat{P}}$, we have $0<-Z'\leq c (\mu-Z)$ on $(t_0, 1)$ for some positive constant $c$. Consequently, $\frac{d \ln (\mu -Z)}{dr} \leq c$ on $(t_0, 1).$ This contradicts the fact that $Z(r) \rightarrow \mu$ as $r \rightarrow t_0^+.$

{\bf Case II:} Assume $\mu < \frac{m+1}{2}.$ Then $\hat{P}(\mu)>0$ and $\hat{Q}(\mu)>0$. In the case, the following initial value problem has a real analytic solution $\widetilde{Z}$ on some open interval $J$ containing $t_0$:
$$rZ'\hat{P}(Z)+\hat{Q}(Z)=0, \quad Z(t_0)=\mu.$$
Shrinking $J$ if necessary, we can assume $\widetilde{Z}'<0$ on $J$. By the uniqueness of solutions in the ODE theory, we can glue the previous solution
with $\widetilde{Z}$ to obtain a real analytic solution to (\ref{eqnzodenz}), still called $Z$, on some open interval containing $[t_0, 1],$ which still satisfies $Z'<0.$ This contradicts the definition of $t_0$.

This proves Lemma \ref{lm1}. \qed

\smallskip

By Lemma \ref{lm1}, $Z$ is decreasing on $(0, 1)$ and therefore $\lambda=\lim_{r \rightarrow 0^+} Z(r)>0$ exists. By the same reasoning as in the proof of Lemma \ref{lm1}, we must have $\lambda \leq \frac{m+1}{2}.$ Suppose $\lambda < \frac{m+1}{2}.$ Note $\hat{P}(x), \hat{Q}(x)>0$ on $[0, \lambda]$. Since $-Z'=\frac{\hat{Q}}{r\hat{P}}$, we have $-Z' \geq \frac{c}{r}$ on $(0, 1)$ for some positive constant $c$. This contradicts the fact that $Z$ is bounded on $(0, 1).$  Hence we must have $\lambda = \frac{m+1}{2}$ and thus $\hat{Q}(\lambda)=0$.

We will keep the notation $\lambda := \frac{m+1}{2}$ and write $\hat{Q}(Z)=(Z-\lambda)g(Z)$ for some polynomial $g$.

\begin{lemma}\label{lm2}
	It holds that $g(\lambda)=-2\hat{P}(\lambda)<0.$ Furthermore, $Z'(0):=\lim_{r \rightarrow 0^+} \frac{Z(r)-\lambda}{r}=0.$
\end{lemma}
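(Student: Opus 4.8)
The plan is to compute the two quantities directly from the ODE \eqref{eqnzodenz}, using the expansion of $Z$ near $r=0$ that is forced by the structure of $\hat P$ and $\hat Q$ at $Z=\lambda=\frac{m+1}{2}$. First I would recall from the paragraph preceding the lemma that $Z(r)\to\lambda$ as $r\to0^+$, that $\hat Q(\lambda)=0$, that $\hat P(\lambda)>0$, and that (by hypothesis on $\hat Q$) $\hat Q'(\lambda)<0$; writing $\hat Q(Z)=(Z-\lambda)g(Z)$ we have $g(\lambda)=\hat Q'(\lambda)$. The ODE says $rZ'\hat P(Z)=-\hat Q(Z)=-(Z-\lambda)g(Z)$, i.e.
\begin{equation*}
\frac{rZ'}{Z-\lambda}=-\frac{g(Z)}{\hat P(Z)}.
\end{equation*}
As $r\to0^+$, the right side tends to $-g(\lambda)/\hat P(\lambda)$, which is positive since $g(\lambda)=\hat Q'(\lambda)<0$; call this limit $\kappa>0$. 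So near $r=0$, $Z-\lambda$ behaves like $r^{\kappa}$ (up to a constant and lower-order corrections): more precisely, integrating $\frac{d\log(\lambda-Z)}{d\log r}\to\kappa$ shows $\lambda-Z(r)=c\,r^{\kappa}(1+o(1))$ for some $c>0$ (note $\lambda-Z>0$ on $(0,1)$ since $Z$ is decreasing to $\lambda$ from above... wait, $Z<\lambda$ on $(0,1)$ and $Z\to\lambda$, so $\lambda-Z>0$ indeed, consistent with $Z'<0$).

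Next I would pin down $\kappa$. Since $Z$ is real analytic on $(0,1]$ and the only consistent local behavior of a real-analytic decreasing solution landing at $\lambda$ is an integer power, $\kappa$ must be a positive integer; the value $\kappa=1$ would force $Z'(0^+)=\lim (Z-\lambda)/r$ to be a nonzero finite number, while $\kappa\ge2$ gives $Z'(0^+)=0$. To decide between these, plug the ansatz $Z(r)=\lambda+a_1 r+a_2 r^2+\cdots$ into \eqref{eqnzodenz} and match the lowest-order terms: $rZ'\hat P(Z)$ contributes, at order $r^1$, the term $a_1\hat P(\lambda)r$, whereas $\hat Q(Z)=(Z-\lambda)g(Z)$ contributes at order $r^1$ the term $a_1 g(\lambda)r$; summing gives $a_1(\hat P(\lambda)+g(\lambda))=0$ at order $r$. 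This is where the evenness input is needed: I claim $\hat P(\lambda)+g(\lambda)\ne0$ is NOT automatic, so instead I would argue $a_1=0$ directly. Actually the cleaner route: the ODE is invariant under $r\mapsto-r$ (since only $rZ'$ appears, and $Z\mapsto Z$), and uniqueness of the real-analytic solution with $Z(1)=0$... hmm, but we are at $r=0$. Let me reorganize: I would first establish $Z'(0^+)=0$ from $\kappa\ge 2$, deferring the evenness of $Z$ to after this lemma; the point is that $\kappa=1$ is impossible because if $a_1\ne0$ then matching the $O(r)$ terms of \eqref{eqnzodenz} gives $a_1\hat P(\lambda)+a_1 g(\lambda)=0$, forcing $g(\lambda)=-\hat P(\lambda)$ — and this is exactly the first assertion of the lemma, so the two claims are linked.

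So here is the actual logical order I would follow. Step 1: from $rZ'\hat P(Z)=-(Z-\lambda)g(Z)$ and $Z\to\lambda$, deduce $\lim_{r\to0^+}\frac{rZ'}{Z-\lambda}=-\frac{g(\lambda)}{\hat P(\lambda)}=:-\alpha$. Step 2: since $Z$ is real-analytic and decreasing on $(0,1]$ with $Z(0^+)=\lambda$, its Taylor-type behavior forces $\lambda-Z\sim c\,r^{\alpha}$ with $\alpha$ equal to the order of the first nonvanishing derivative; hence $\alpha\in\mathbb{Z}^+$. Step 3: if $\alpha=1$, write $Z=\lambda+a_1r+O(r^2)$ with $a_1<0$, substitute into \eqref{eqnzodenz}, and match $O(r)$: the left side gives $a_1\hat P(\lambda)r+O(r^2)$ and the right side $-a_1 g(\lambda)r+O(r^2)$, so $a_1(\hat P(\lambda)+g(\lambda))=0$, i.e. $g(\lambda)=-\hat P(\lambda)$. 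Step 4: independently compute $g(\lambda)=\hat Q'(\lambda)$. Recall $Q'(y)=(m+1)yP(y)$, so $\hat Q(x)=x^{m+1}Q(x^{-1})$ gives $\hat Q'(x)=(m+1)x^m Q(x^{-1})-x^{m-1}Q'(x^{-1})=(m+1)x^m Q(x^{-1})-x^{m-1}(m+1)x^{-1}P(x^{-1})=(m+1)x^m Q(x^{-1})-(m+1)x^{m-2}P(x^{-1})=(m+1)\bigl(\hat Q(x)/x-\hat P(x)/x\bigr)$, hence at $x=\lambda$ where $\hat Q(\lambda)=0$ we get $\hat Q'(\lambda)=-(m+1)\hat P(\lambda)/\lambda=-2\hat P(\lambda)$ since $\lambda=\frac{m+1}{2}$. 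This gives $g(\lambda)=\hat Q'(\lambda)=-2\hat P(\lambda)$ outright — proving the first claim unconditionally. Step 5: now return to Step 3: if $\alpha=1$ we would need $g(\lambda)=-\hat P(\lambda)$, contradicting $g(\lambda)=-2\hat P(\lambda)\ne-\hat P(\lambda)$ (as $\hat P(\lambda)>0$); therefore $\alpha\ge2$, whence $Z'(0^+)=\lim_{r\to0^+}\frac{Z(r)-\lambda}{r}=0$.

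The main obstacle I anticipate is Step 2 — justifying that a real-analytic, monotone solution of \eqref{eqnzodenz} approaching $\lambda$ must do so at an integer rate $r^\alpha$ rather than with some exotic asymptotics; I would handle this by noting $Z$ is real-analytic on the half-open interval $(0,1]$ and bounded, using the ODE to bootstrap that all one-sided derivatives $Z^{(k)}(0^+)$ exist and are finite (differentiating \eqref{eqnzodenz} repeatedly and taking $r\to0^+$), so $Z$ has a genuine one-sided Taylor expansion at $0$, and then $\alpha$ is simply the first $k$ with $Z^{(k)}(0^+)\ne0$. An alternative, cleaner for the paper, would be to skip the asymptotic-order discussion entirely: differentiate the relation $rZ'\hat P(Z)+(Z-\lambda)g(Z)=0$ once and evaluate at $r=0^+$, using $Z(0^+)=\lambda$; this gives $Z'(0^+)\hat P(\lambda)+Z'(0^+)g(\lambda)=0$ directly (the term $rZ''\hat P(Z)\to0$ provided $Z''$ stays bounded, and $Z'\hat P'(Z)Z'\cdot r\to 0$ likewise), so $Z'(0^+)\bigl(\hat P(\lambda)+g(\lambda)\bigr)=Z'(0^+)\bigl(\hat P(\lambda)-2\hat P(\lambda)\bigr)=-Z'(0^+)\hat P(\lambda)=0$, forcing $Z'(0^+)=0$. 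The only loose end there is boundedness of $Z''$ near $0^+$, which again follows from differentiating the ODE a second time and solving for $Z''$ in terms of $Z,Z'$ and the (nonvanishing, since $\hat P(\lambda)>0$) coefficient $\hat P(Z)$; I would include a one-line remark to that effect. I expect the computation in Step 4 and the bootstrap for bounded derivatives to be routine, and the conceptual content to lie entirely in combining the value $g(\lambda)=-2\hat P(\lambda)$ with the linearized ODE at $r=0$.
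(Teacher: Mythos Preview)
Your Step~4 computation of $g(\lambda)=\hat Q'(\lambda)=-2\hat P(\lambda)$ is correct and is exactly what the paper does.

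For the second claim, however, your organization is tangled, and the route you label ``actual logical order'' has a real gap: Step~2 assumes $Z$ has a one-sided Taylor expansion at $0$ (so the exponent is an integer), but $Z$ is only known to be analytic on $(0,1]$; your proposed bootstrap of one-sided derivatives and your alternative via boundedness of $Z''$ both presuppose regularity at $0$ that has not yet been established, so they are circular as written. Steps~3 and~5 are then unnecessary once you realize that Step~1 together with Step~4 already gives the exponent $\kappa=-g(\lambda)/\hat P(\lambda)=2$ directly (note your symbol ``$\alpha$'' in Step~1 has a sign slip: the limit of $rZ'/(Z-\lambda)$ is $+2$, not $-\alpha$ with $\alpha>0$), and integrating $\frac{d\log|Z-\lambda|}{d\log r}\to 2$ yields $|Z-\lambda|\le C r^{2-\varepsilon}$, hence $(Z-\lambda)/r\to 0$.

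This last observation is precisely the paper's proof, packaged more cleanly: write $Z=\lambda+rG$, substitute into the ODE to obtain $\dfrac{G'}{G}=-\dfrac{\hat P(Z)+g(Z)}{r\hat P(Z)}$, and use the first claim to see $(\hat P+g)/\hat P\to -1$, so $\dfrac{d\ln(-G)}{dr}\ge \dfrac{c}{r}$ near $0$; integrating forces $G(r)\to 0$, which is exactly $Z'(0^+)=0$. No appeal to integer exponents, Taylor expansions at $0$, or bounds on $Z''$ is needed.
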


{\em Proof of Lemma \ref{lm2}:} The first statement follows from the direct computation as below. By the definition of $g$ and $\hat{Q},$
$$g(\lambda)=\hat{Q}'(\lambda)=(m+1)\lambda^{m}Q(\frac{1}{\lambda})-\lambda^{m-1}Q'(\frac{1}{\lambda}).$$
By the fact that $Q(\frac{1}{\lambda})=0$ and the definition of $Q$, we have
$$g(\lambda)=-\lambda^{m-1}Q'(\frac{1}{\lambda})=-\frac{(m+1)}{\lambda}\lambda^{m-1}P(\frac{1}{\lambda})=-2\lambda^{m-1}P(\frac{1}{\lambda})=-2\hat{P}(\lambda).$$
To prove the latter assertion, we write $Z(r)=\lambda+r G(r)$ for some analytic function $G$ on $(0, 1).$ It is clear that $G<0$ on $(0, 1).$ We only need to show $\lim_{r \rightarrow 0^+} G(r)=0$. For that, we substitute $Z=\lambda+rG$ and $\hat{Q}=(Z-\lambda)g(Z)=rG(r) g(Z)$ into the ODE in (\ref{eqnzodenz}) to get
$$\frac{G'}{G}=-\frac{\hat{P}+g}{r \hat{P}}~\text{on}~(0, 1).$$
By the first part of the lemma, $\lim_{r \rightarrow 0^+}\frac{\hat{P}+g}{\hat{P}}=-1.$ Consequently,
$\frac{d\ln (-G)}{dr}=\frac{G'}{G} \geq \frac{c}{r}$ on $(0, \delta)$ for some positive constants $c$ and $\delta$. This implies $\lim_{r \rightarrow 0^+} G(r)=0.$ \qed

\begin{lemma}\label{lm3}
	Write $Z(r)=\lambda + r^2 W(r)$. It holds that $W< 0$ on $(0, 1],$ and $a=\lim_{r \rightarrow 0^+} W(r)$ exists and is a negative real number.
\end{lemma}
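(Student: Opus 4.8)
\textbf{Plan for the proof of Lemma \ref{lm3}.}
The strategy is to refine the analysis from Lemma \ref{lm2}, where we already know $Z(r) = \lambda + rG(r)$ with $G < 0$ on $(0,1]$ and $G(r) \to 0$ as $r \to 0^+$. The goal now is to show that in fact $G(r) = r W(r)$ with $W$ having a finite negative limit at $0$; equivalently, that $Z(r) - \lambda$ vanishes to exactly second order at $r = 0$. Since $Z$ is real analytic near $r = 0$ and even (this evenness will be established separately, or we argue directly from the ODE), the Taylor expansion of $Z$ at $0$ contains only even powers, so $Z(r) - \lambda = \lambda_2 r^2 + O(r^4)$ for some constant $\lambda_2$; the content of the lemma is that $\lambda_2 < 0$ (strictly) rather than $\lambda_2 = 0$. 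Defining $W(r) := (Z(r) - \lambda)/r^2$, analyticity gives that $W$ extends real analytically across $0$ with $W(0) = \lambda_2$, and $W < 0$ on $(0,1]$ follows immediately from $Z < \lambda$ there (which we have from monotonicity and $Z(0^+) = \lambda$).

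The key step is computing $\lambda_2 = \lim_{r\to 0^+} W(r)$ and showing it is negative, which I would extract directly from the ODE $rZ'\hat P(Z) + \hat Q(Z) = 0$. Write $Z = \lambda + r^2 W$, so $Z' = 2rW + r^2 W'$ and $rZ' = 2r^2 W + r^3 W'$. Using $\hat Q(Z) = (Z-\lambda)g(Z) = r^2 W g(Z)$ from Lemma \ref{lm2}, the ODE becomes
\begin{equation*}
(2r^2 W + r^3 W')\hat P(Z) + r^2 W g(Z) = 0,
\end{equation*}
and dividing by $r^2$ yields $(2W + rW')\hat P(Z) + W g(Z) = 0$, i.e.
\begin{equation*}
rW' \hat P(Z) = -W\bigl(2\hat P(Z) + g(Z)\bigr).
\end{equation*}
Now I would argue that $W$ cannot tend to $0$: if $W(0^+) = 0$ were to hold, then since $2\hat P(\lambda) + g(\lambda) = 2\hat P(\lambda) - 2\hat P(\lambda) = 0$ by Lemma \ref{lm2}, the right-hand side would be $o(W)$ near $0$, forcing $rW'/W \to 0$; but an analyticity argument (or a direct estimate as in the previous lemmas, writing $2\hat P(Z) + g(Z) = O(Z - \lambda) = O(r^2 W)$ so that $rW'/W = O(r^2 W) \to 0$) is consistent with $W$ having a finite limit, so this alone does not immediately give a contradiction — instead I would evaluate the ODE at the analytic level: plugging the Taylor expansion $Z = \lambda + \lambda_2 r^2 + \cdots$ into $rZ'\hat P(Z) + \hat Q(Z) = 0$ and collecting the coefficient of $r^2$. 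Since $rZ' = 2\lambda_2 r^2 + O(r^4)$, $\hat P(Z) = \hat P(\lambda) + O(r^2)$, and $\hat Q(Z) = \hat Q(\lambda) + \hat Q'(\lambda)(\lambda_2 r^2) + O(r^4) = g(\lambda)\lambda_2 r^2 + O(r^4)$ (using $\hat Q(\lambda) = 0$ and $\hat Q'(\lambda) = g(\lambda)$), the $r^2$-coefficient gives $2\lambda_2 \hat P(\lambda) + g(\lambda)\lambda_2 = 0$, which is automatically satisfied because $g(\lambda) = -2\hat P(\lambda)$. Hence the $r^2$ term is undetermined at this order and one must go to the $r^4$ coefficient; this is where the work lies.

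\textbf{Main obstacle.} The delicate point is that, because $g(\lambda) = -2\hat P(\lambda)$ exactly, the linearization of the ODE at the singular point $r=0$ is degenerate and the leading coefficient $\lambda_2$ is not pinned down by the lowest-order balance — it is a free parameter of the one-parameter family of formal solutions, selected by the requirement that $Z$ be the particular solution that decreases from $\lambda$ and reaches $0$ at $r=1$. So proving $\lambda_2 < 0$ (as opposed to $\lambda_2 = 0$, which would correspond to a more degenerate, spurious branch) requires a genuinely global input: I would use that $Z' < 0$ strictly on all of $(0,1]$ together with the structure of the ODE to rule out $\lambda_2 = 0$. Concretely, from $rW'\hat P(Z) = -W(2\hat P(Z) + g(Z))$ and $2\hat P(Z) + g(Z) = (2\hat P'(\zeta^*) + g'(\zeta^*))(Z - \lambda)$ for some intermediate $\zeta^*$, hence $= c_1(Z-\lambda) + O((Z-\lambda)^2) = c_1 r^2 W + \cdots$ with $c_1 = 2\hat P'(\lambda) + g'(\lambda)$, the equation reads $rW'/W = -c_1 r^2(1 + o(1))/\hat P(\lambda)$, so $W(r) = W(1)\exp\bigl(\tfrac{c_1}{2\hat P(\lambda)}(r^2 - 1)(1+o(1))\bigr)$ and therefore $\lim_{r\to 0^+} W(r) = W(1)\exp\bigl(-\tfrac{c_1}{2\hat P(\lambda)}\bigr) \cdot (\text{positive correction})$, which is a finite negative number since $W(1) < 0$. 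This simultaneously establishes existence of the limit and its negativity, and also shows $\lambda_2 = Z''(0)/2 < 0$, matching the assertion $Z''(0) < 0$ recorded in part (1) of Proposition \ref{prpnzh2}. I would take care to justify the $o(1)$ error terms rigorously using the analyticity of $Z$ already in hand (so that all the formal manipulations are legitimate) rather than re-deriving convergence from scratch.
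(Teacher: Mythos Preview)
Your plan has a circularity: you invoke real analyticity of $Z$ at $r=0$ (``the Taylor expansion of $Z$ at $0$ contains only even powers, so $Z(r)-\lambda = \lambda_2 r^2 + O(r^4)$''), but at this stage of the argument $Z$ is only known to be analytic on $(0,1]$. Analyticity at the singular point $r=0$ is precisely what the sequence Lemma~\ref{lm3}~$\to$~Lemma~\ref{lm4}~$\to$~Lemma~\ref{lm17} establishes: first one shows the limit $a=\lim_{r\to 0^+}W(r)$ exists, then one produces an analytic solution of the $W$-ODE with initial value $a$, and finally matches it to $W$. So you cannot appeal to a Taylor expansion at $0$ here.

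Your fallback ODE argument is on the right track and is essentially the paper's method, but contains a slip. From $rW'\hat P(Z)=-W\bigl(2\hat P(Z)+g(Z)\bigr)$ and $2\hat P(Z)+g(Z)=c_1(Z-\lambda)+O((Z-\lambda)^2)=c_1 r^2 W+\cdots$, the right-hand side is $-c_1 r^2 W^2(1+o(1))/\hat P(\lambda)$, not $-c_1 r^2(1+o(1))/\hat P(\lambda)$: you dropped a factor of $W$, so the explicit exponential formula you write down does not follow. The clean way to finish (and this is what the paper does) is to use the decomposition $Z-\lambda=rG$ from Lemma~\ref{lm2}: since $2\hat P(\lambda+rG)+g(\lambda+rG)$ is a polynomial in $rG$ with vanishing constant term (because $g(\lambda)=-2\hat P(\lambda)$), the quotient
\[
f(r):=\frac{2\hat P(\lambda+rG)+g(\lambda+rG)}{r\,\hat P(\lambda+rG)}
\]
extends continuously to $[0,1]$ (using $G(r)\to 0$ from Lemma~\ref{lm2}). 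Then $W'=-fW$ integrates to $\ln(-W(r))=\ln(-W(1))-\int_r^1 f(t)\,dt$, and since $f$ is bounded the limit $a=\lim_{r\to 0^+}W(r)$ exists and equals $W(1)\exp\bigl(\int_0^1 f\bigr)<0$. No analyticity at $0$, and no computation of the next Taylor coefficient, is needed.
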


{\em Proof of Lemma \ref{lm3}:} We only need to prove the second assertion. By assumption, $\hat{Q}(Z)=(Z-\lambda)g(Z)=r^2 W(r)g(Z)$. Substituting this and $Z=\lambda + r^2 W$ into the ODE in (\ref{eqnzodenz}), we get
\begin{equation}\label{eqnzw}
W'=-\frac{2\hat{P}(Z)+g(Z)}{r\hat{P}}W =-\frac{2\hat{P}(\lambda + rG)+g(\lambda + rG)}{r\hat{P}}W~\text{on}~(0, 1].
\end{equation}
Note $2\hat{P}(\lambda + rG)+g(\lambda + rG)$ is a polynomial in $rG$, whose constant term equals $2\hat{P}(\lambda)+g(\lambda)=0$ by Lemma \ref{lm2}. Again by Lemma \ref{lm2}, $G(r) \rightarrow 0$ as $r \rightarrow 0^+.$ Consequently, $f(r):=\frac{2\hat{P}(\lambda + rG)+g(\lambda + rG)}{r\hat{P}}$ extends to a continuous function on $[0, 1].$ 
Then by (\ref{eqnzw}) we have
$$\ln (-W(r))=\ln (-W(1)) -\int_{r}^1 f(t)dt=\ln (\lambda) -\int_{r}^1 f(t)dt $$
Consequently, $a=\lim_{r \rightarrow 0^+} W(r)$ exists and is a negative real number. \qed

\begin{lemma}\label{lm4}
	There exists a unique real analytic function $T_0(r)$ at $r=0$ satisfying the following initial value problem:
	\begin{equation}\label{eqntp}
	T'=-\frac{2\hat{P}(\lambda + r^2 T)+g(\lambda + r^2 T)}{r\hat{P}(\lambda + r^2 T)}T, \quad T(0)=a.
	\end{equation}
	Moreover, the function is even on $(-\epsilon, \epsilon)$ for some small $\epsilon>0$.
\end{lemma}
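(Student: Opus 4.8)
The plan is to prove existence, uniqueness, and evenness by reducing \eqref{eqntp} to a regular (non-singular) ODE after an appropriate change of variable, exploiting the fact that $T$ depends on $r$ only through $r^2$. First I would observe that the right-hand side of \eqref{eqntp}, although singular at $r=0$, has the structure $-\frac{1}{r}F(r^2,T)\,T$ where $F(s,T) := \dfrac{2\hat P(\lambda+sT)+g(\lambda+sT)}{\hat P(\lambda+sT)}$ is real analytic in a neighborhood of $(0,a)$ in $\mathbb R^2$ (since $\hat P(\lambda)>0$), and crucially $F(0,T)\equiv 0$ by Lemma \ref{lm2} (because the constant term $2\hat P(\lambda)+g(\lambda)=0$). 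Hence one can write $F(s,T)=s\,\widetilde F(s,T)$ for a real analytic $\widetilde F$ near $(0,a)$; indeed $\widetilde F(0,a) = \partial_s F(0,a)$, which one computes explicitly and which will be the relevant ``eigenvalue'' controlling regularity.

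Next I would introduce the new independent variable $s=r^2$ and seek $T$ as a function of $s$. Since $\frac{d}{dr}=2r\,\frac{d}{ds}$, equation \eqref{eqntp} becomes
\begin{equation*}
2r\,\frac{dT}{ds}=-\frac{1}{r}\,s\,\widetilde F(s,T)\,T = -r\,\widetilde F(s,T)\,T,
\end{equation*}
that is, $\dfrac{dT}{ds}=-\tfrac12\,\widetilde F(s,T)\,T$ with $T(0)=a$. This is now a genuinely regular initial value problem with real analytic right-hand side near $(s,T)=(0,a)$. By the Cauchy--Kovalevskaya theorem (or the classical analytic ODE existence theorem), it has a unique real analytic solution $T=T(s)$ on some interval $(-\epsilon^2,\epsilon^2)$. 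Setting $T_0(r):=T(r^2)$ then produces a real analytic function of $r$ near $0$ — automatically even, since it factors through $r\mapsto r^2$ — and one checks by the chain rule that it solves \eqref{eqntp}. For uniqueness in the original variable: any real analytic solution of \eqref{eqntp} near $r=0$ with $T(0)=a$ must, by plugging a power series $\sum c_k r^k$ into \eqref{eqntp} and matching coefficients, have all odd coefficients forced to vanish (the singular $\tfrac1r$ together with the factor $s=r^2$ makes the recursion determine $c_{2j}$ from lower even coefficients and kill the odd ones), hence descends to a solution of the regular $s$-equation, where uniqueness is standard.

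The main obstacle — really the only subtle point — is verifying that the singular factor $1/r$ is genuinely cancelled, i.e. that $F(0,T)\equiv0$ so that the $s$-equation is regular rather than merely having a removable-looking singularity. This is exactly where Lemma \ref{lm2} is essential, and I would state its use explicitly. A secondary technical point is confirming that the power-series matching in the uniqueness argument is consistent (no obstruction forcing $c_0\ne a$ or making the recursion ill-defined); this follows because the leading coefficient in the recursion for $c_{2j}$ is $(2j+\text{const})$-type and nonvanishing for the relevant range, given $\hat P(\lambda)>0$ and the sign information already established. Everything else is routine: analyticity of $\hat P$, $g$ at $\lambda$, and the division $F=s\widetilde F$ by Taylor's theorem with the vanishing constant term.
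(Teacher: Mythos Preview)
Your proposal is correct and identifies the crucial point---that $2\hat P(\lambda)+g(\lambda)=0$ (Lemma~\ref{lm2}) kills the apparent $1/r$ singularity---but the route you take is more circuitous than necessary, and differs from the paper's.

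The paper observes (this is Remark~\ref{rmk16}) that since $2\hat P(\lambda+r^2T)+g(\lambda+r^2T)$ is a polynomial in $r^2T$ with zero constant term, dividing by $r$ yields a genuine polynomial in $r$ and $T$; hence the right-hand side of \eqref{eqntp} is \emph{already} real analytic in $(r,T)$ near $(0,a)$, with no change of variables required. Existence, uniqueness, and analyticity of $T_0$ then follow at once from the standard analytic ODE theorem. Evenness is obtained by the symmetry trick: if $T_0(r)$ solves \eqref{eqntp}, one checks directly that $T_0(-r)$ does too (the right-hand side is odd in $r$ for fixed $T$), and uniqueness forces $T_0(-r)=T_0(r)$.

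Your approach---passing to $s=r^2$ to obtain a regular ODE, then arguing uniqueness in $r$ via a power-series recursion showing odd coefficients vanish---works, and has the pleasant feature that evenness is automatic. But it is doing more than needed: once you have written the right-hand side as $-r\,\widetilde F(r^2,T)\,T$ (which you do), you already have an analytic ODE in $r$, and the change of variables and the separate power-series uniqueness argument become redundant. The paper's argument is shorter precisely because it stops at that point.
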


\begin{rmk}\label{rmk16}
	Note $2\hat{P}(\lambda + r^2 T)+g(\lambda + r^2 T)$ is a polynomial in $r^2T$, whose constant term equals $2\hat{P}(\lambda)+g(\lambda)=0$ by
	Lemma \ref{lm2}. Therefore $\frac{2\hat{P}(\lambda + r^2 T)+g(\lambda + r^2 T)}{r}$ is a polynomial in $r$ and $T$.
\end{rmk}

{\em Proof of Lemma \ref{lm4}:} Note by Remark \ref{rmk16}, the right hand side of the ODE in (\ref{eqntp}) is real analytic in $r, T$ in a neighborhood of $(0, a).$ Therefore the existence and uniqueness of the solution, as well as its real analyticity, follow from elementary ODE theory. Note if $T_0$ is a solution  to the initial value problem \eqref{eqntp}, then so is $T_0(-r)$. By uniqueness of the solution, $T_0$ is an even function. \qed

Let $T_0: (-\epsilon, \epsilon) \rightarrow \mathbb{R}$ be as in Lemma \ref{lm4}. Write $Z(r)=\lambda + r^2 W(r)$ as in Lemma \ref{lm3}.
Note $W$ and $T_0$ are both continuous functions on $[0, \epsilon)$ satisfying the following ODE and the Cauchy data:
\begin{equation}\label{eqntwr}
T'=-\frac{2\hat{P}(\lambda + r^2 T)+g(\lambda + r^2 T)}{r\hat{P}(\lambda + r^2 T)}T~\text{on}~(0, \epsilon), \quad T(0)=a.
\end{equation}

\begin{lemma}\label{lm17}
	We have $W=T_0$ on $[0, \epsilon)$.
\end{lemma}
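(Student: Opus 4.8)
The plan is to show that $W$ and $T_0$ coincide on $[0,\epsilon)$ by a uniqueness argument for the singular ODE \eqref{eqntwr}. Both functions are continuous on $[0,\epsilon)$, solve the same equation on $(0,\epsilon)$, and take the same value $a$ at $r=0$; the only subtlety is that the ODE is singular at $r=0$, so the classical Picard--Lindel\"of uniqueness does not apply directly there. First I would rewrite \eqref{eqntwr} using Remark \ref{rmk16}: since $2\hat P(\lambda+r^2T)+g(\lambda+r^2T)$ is a polynomial in $r^2T$ with vanishing constant term, we may write $2\hat P(\lambda+r^2T)+g(\lambda+r^2T)=r^2 T\cdot R(r^2 T)$ for a polynomial $R$, so the right-hand side of the ODE becomes
\begin{equation*}
T'=-\frac{rT^2\,R(r^2T)}{\hat P(\lambda+r^2T)}=:F(r,T),
\end{equation*}
which is now real analytic (in particular Lipschitz in $T$, locally uniformly in $r$) on a full neighborhood of $(0,a)$ in $\mathbb{R}^2$, including $r=0$.

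With $F$ so regular, I would argue as follows. Fix a small rectangle $\mathcal R=[-\delta,\delta]\times[a-c,a+c]$ on which $F$ is Lipschitz in $T$ with constant $L$, and on which both $W$ and $T_0$ take values (shrinking $\epsilon$ and $\delta$ if necessary, using the continuity of $W$ and $T_0$ and $W(0^+)=T_0(0)=a$). For $r\in(0,\delta)$ both functions satisfy the integral equation $T(r)=T(r_1)+\int_{r_1}^{r}F(s,T(s))\,ds$ for $0<r_1<r$; letting $r_1\to 0^+$ and using continuity at $0$ together with boundedness of $F$ on $\mathcal R$ (so the integral converges), we get $W(r)=a+\int_0^r F(s,W(s))\,ds$ and likewise for $T_0$. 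Subtracting, $|W(r)-T_0(r)|\le L\int_0^r|W(s)-T_0(s)|\,ds$, and Gr\"onwall's inequality forces $W\equiv T_0$ on $[0,\delta)$. Since $W$ is real analytic on $(0,1)$ (being built from the analytic solution $Z$) and agrees with the analytic function $T_0$ on an interval, analytic continuation extends the identity $W=T_0$ to all of $[0,\epsilon)$.

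The only real obstacle is the behavior at the singular point $r=0$: one must make sure that, after the substitution via Remark \ref{rmk16}, the vector field genuinely extends analytically (equivalently Lipschitz) across $r=0$ and that the limit $r_1\to 0^+$ in the integral equation is legitimate. Both follow cleanly from Remark \ref{rmk16} (the cancellation $2\hat P(\lambda)+g(\lambda)=0$ from Lemma \ref{lm2}) and from the boundedness of $F$ on the compact rectangle $\mathcal R$. Everything else is a routine Gr\"onwall estimate. As a byproduct, $W=T_0$ is even near $0$ (by Lemma \ref{lm4}), which is exactly what is needed to conclude that $Z(r)=\lambda+r^2W(r)$ is even and real analytic across $r=0$ in Proposition \ref{prpnzh2}.
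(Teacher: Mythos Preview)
Your proposal is correct and follows essentially the same approach as the paper: both arguments use Remark \ref{rmk16} to see that the right-hand side of \eqref{eqntwr} extends to a function that is real analytic (hence locally Lipschitz in $T$) in a neighborhood of $(0,a)$, then derive the integral identity and apply Gr\"onwall's inequality to conclude $W=T_0$. The only cosmetic difference is that the paper simply shrinks $\epsilon$ so that the Lipschitz estimate holds on all of $[0,\epsilon)$, whereas you first prove equality on a smaller interval $[0,\delta)$ and then invoke analytic continuation to recover $[0,\epsilon)$; both are fine, and your extra step is not needed once one is willing to replace $\epsilon$ by a smaller value.
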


{\em Proof of Lemma \ref{lm17}:} This follows from basic ODE theory. For the convenience of readers, we sketch a proof. Write $\psi(r, T)$ for the right hand side of the ODE in (\ref{eqntwr}). Recall $\psi(r, T)$ is real analytic in a neighborhood of $(r, T)=(0, a).$ Pick a small neighborhood $U$ of $(0, a)$ and a positive constant $L$ such that
$$|\psi(r, T_1)-\psi(r, T_2)|\leq L |T_1-T_2|~\text{for}~(r, T_1), (r, T_2) \in U.$$
By making $\epsilon$ smaller if necessary, we can assume $(r, W(r)), (r, T_0(r)) \in U$ whenever $r \in  [0, \epsilon)$.
Then for $x \in [0, \epsilon),$
$$|W(x)-T_0(x)| \leq \int_{0}^x |\psi(t, W(t))-\psi(t, T_0(t))|dt \leq \int_{0}^x L|W(t)-T_0(t)|dt.$$
Then by the integral form of the Gr\"onwall lemma, we have $W=T_0$ on $[0, \epsilon)$. \qed

We now continue the proof of Proposition \ref{prpnzh2}. Let $\lambda, T_0$ be as above and set $\Psi=\lambda + r^2 T_0$. Then $\Psi$ is a real analytic even function on $(-\epsilon, \epsilon).$ Moreover, $\Psi=Z$ on $[0, \epsilon)$.
Therefore we can glue $Z$ with $\Psi$, and then apply the even extension to obtain a real analytic function on $[-1, 1]$, which we still denote by $Z$. It is clear that this new function $Z$ still satisfies the ODE in (\ref{eqnzodenz}). Moreover, since $a=W(0)<0,$ we see  $Z''(0)<0$. This proves part (1) of Proposition \ref{prpnzh2}.

We now prove part (2) of Proposition \ref{prpnzh2}. First by the definition of $\phi$ and the properties of $Z$ in part (1), it is clear that $\phi$ is real analytic and even on $[-1, 1]$, and $\phi>0$ on $(-1, 1)$ and $\phi(1)=0$. The latter assertion in part (2) can be proved identically as Proposition \ref{prpnzh1}.

This finishes the proof of Proposition \ref{prpnzh2}. \qed

We are now ready to prove Theorem \ref{thm 2}.
Choose a coordinate chart $(D, z)$ of $M$ together with a frame $e_L$ of $L$ over $D$. Writing $\pi: L \rightarrow M$ for the canonical fiber projection, we have $$\pi^{-1}(D)=\{\xi e_L(z): (z, \xi) \in D \times \mathbb{C} \}.$$
Under this trivialization,  $D(L)$ and $S(L)$ can be written as follows locally over $D,$
$$D(L) \cap \pi^{-1}(D) =\{W=(z, \xi) \in D \times \mathbb{C}: |\xi|^2 H(z, \Ol{z})<1 \},$$
$$S(L) \cap \pi^{-1}(D) =\{W=(z, \xi) \in D \times \mathbb{C}: |\xi|^2 H(z, \Ol{z})=1 \}.$$
Here $H(z)=H(z, \Ol{z})=|e_L(z)|_h^2.$
In the local coordinates, we write $g=(g_{i\Ol{j}})$ on $D$. By the definition of $g$, we have $g_{i \Ol{j}}=\frac{\partial^2 \log H}{\partial z_i \partial \Ol{z_j}}.$ Write $G(z)=G(z, \Ol{z})=\det (g_{i\Ol{j}})>0.$
In the local coordinates, the K\"ahler form $\widetilde{\omega}$ in Theorem \ref{thm 2} is given by $\widetilde{\omega}=-i \partial \Ol{\partial} \log u$, where $u=(GH)^{-\frac{1}{m+1}}\phi\big(|\xi| H^{\frac{1}{2}}(z)\big)$ and $\phi$ is given by part (2) of Proposition \ref{prpnzh2}.
By the properties of $\phi$, $u$ is real analytic in a neighborhood of $\Ol{D(L)} \cap \pi^{-1}(D)$.
By the identical proof as in Proposition \ref{prpn3d5}, we can show 
$u=0~\text{on}~S(L) \cap \pi^{-1}(D)$ and
$J(u)=1~\text{on}~D(L) \cap \pi^{-1}(D).$

Since $J(u)=1$, equivalently, $\det \big((-\log u)_{i\Ol{j}}\big)_{1 \leq i, j \leq m}=u^{-(m+1)}$ in $D(L) \cap \pi^{-1}(D),$ and $u$ is a local defining function of some strongly pseudoconvex piece of boundary, we see $\widetilde{\omega}$ is positive definite in $D(L) \cap \pi^{-1}(D)$. 
Also $J(u)=1$ implies the metric $\widetilde{g}$ induced by $\widetilde{\omega}$ has constant Ricci curvature $-(m+1).$
Since the coordinate chart $D$ is arbitrarily chosen, we conclude $\widetilde{g}$ is a \ke metric in $D(L)$.

It remains to prove $\widetilde{g}$ gives a complete metric on $D(L)$. By the Hopf-Rinow theorem, it suffices to show $(D(L), \widetilde{g})$ is geodesically complete.
Let $\gamma: [0,a) \rightarrow D(L)$ be a  non-extendible geodesic in $D(L)$ of unit speed with respect to $\widetilde{g}$. We only need to show that $a=+\infty,$ equivalently that $\gamma$ has infinite length. For that, we first establish the following lemma.

\begin{lemma}\label{lm3d15}
The metric $\widetilde{g} \geq \frac{1-\lambda_n}{m+1} \pi^{*}(g)$ in $D(L).$
\end{lemma}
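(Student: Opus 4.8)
The plan is to deduce the stated tensor inequality from a pointwise comparison of Hermitian forms, exploiting the explicit shape of $\widetilde{g}$ obtained in the proof of Theorem \ref{thm 2}. Fix a trivializing chart $D$ as there, set $X=|\xi|H^{1/2}(z)\in[0,1)$ on $D(L)\cap\pi^{-1}(D)$, and put $Y=\tfrac{2}{m+1}-X\tfrac{\phi'(X)}{\phi(X)}$; by part (2) of Proposition \ref{prpnzh2} (the manipulation is that of \eqref{eqn3d5}) one has $Y=1/Z(X)$. The computation of Lemma \ref{lemma3d6} applies here verbatim and gives, for every $v=(v_1,\dots,v_m)\in\mathbb{C}^m$,
$$\widetilde{g}(v,\bar v)=\sum_{i,j=1}^{m-1}\Big(-\tfrac{1}{m+1}R_{i\bar j}+\tfrac{Y}{2}g_{i\bar j}\Big)v_i\bar v_j\;+\;\tfrac{Y'}{X}\Big|\textstyle\sum_{i=1}^{m}X_iv_i\Big|^2,\qquad Y'=-\tfrac{Z'(X)}{Z(X)^2},$$
while $\pi^*(g)(v,\bar v)=\sum_{i,j=1}^{m-1}g_{i\bar j}v_i\bar v_j$. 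Since $Z'\le 0$ on $[0,1)$ by part (1) of Proposition \ref{prpnzh2}, we have $Y'\ge 0$ there, so the rank-one term is nonnegative on $D(L)\cap\pi^{-1}(D)$ away from the zero section $\{\xi=0\}$; discarding it, it remains to prove there that $-\tfrac{1}{m+1}\Ric+\tfrac{Y}{2}g\ge\tfrac{1-\lambda_n}{m+1}g$ as Hermitian forms on $T^{1,0}M$.

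The next step is to feed in the two relevant inputs. First, the Ricci endomorphism $\Ric\cdot g^{-1}$ is $g$-self-adjoint with eigenvalues $\lambda_1\le\cdots\le\lambda_n$, hence $\Ric\le\lambda_n g$; therefore it suffices to verify the scalar inequality $\tfrac{Y}{2}\ge\tfrac{1-\lambda_n}{m+1}+\tfrac{\lambda_n}{m+1}=\tfrac{1}{m+1}$, i.e. $Y\ge\tfrac{2}{m+1}$. Second, by part (1) of Proposition \ref{prpnzh2} we have $Z(X)\in(0,\tfrac{m+1}{2}]$ for $X\in[0,1)$, so $Y=1/Z(X)\ge\tfrac{2}{m+1}$, which closes the estimate. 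This proves $\widetilde{g}\ge\tfrac{1-\lambda_n}{m+1}\pi^*(g)$ on $D(L)\cap\pi^{-1}(D)$ away from $\{\xi=0\}$, then on all of $D(L)\cap\pi^{-1}(D)$ by continuity of both sides (the complement of the zero section is dense), and finally on all of $D(L)$ since $D$ was arbitrary.

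I expect no genuine analytic difficulty; the points to watch are organizational. One should check that the intermediate identities of Lemma \ref{lemma3d6} — not merely the conclusion $J(u)=1$ quoted in the proof of Theorem \ref{thm 2} — are available on $D(L)\cap\pi^{-1}(D)$, and one must keep the normalizations straight: $P(y)$ is the characteristic polynomial of $\tfrac{2}{m+1}\Ric\cdot g^{-1}$, so its roots are $\tfrac{2\lambda_i}{m+1}$ and the bound furnished by Proposition \ref{prpnzh2} is $Z\le\tfrac{m+1}{2}$, not $Z\le 1$. Invoking continuity to cover the zero section is precisely what lets us avoid evaluating the (well-behaved but fiddly) limits of $Y'/X$ and $X_iX_{\bar j}$ as $\xi\to 0$. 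Once Lemma \ref{lm3d15} is in hand, the completeness of $\widetilde{g}$ will follow by bounding the length of the non-extendible geodesic $\gamma$ below by its $\pi^*g$-length, which is the remaining ingredient in the proof of Theorem \ref{thm 2}.
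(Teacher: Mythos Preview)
Your proof is correct and follows essentially the same approach as the paper: both use the explicit formulas from Lemma \ref{lemma3d6} for $\widetilde{g}_{i\bar j}$, drop the nonnegative rank-one term $\tfrac{Y'}{X}X_iX_{\bar j}$, and reduce to the scalar bound $Y\ge\tfrac{2}{m+1}$ via $Z\le\tfrac{m+1}{2}$ from Proposition \ref{prpnzh2} together with $\Ric\le\lambda_n g$. The only cosmetic difference is at the zero section: the paper observes directly that $\tfrac{y'}{r}$ is real analytic, even, and positive on $(-1,1)$ (since $Z'(0)=0$ and $Z''(0)<0$), so $\tfrac{Y'}{X}$ is globally positive and no separate argument is needed, whereas you work off the zero section and then invoke continuity---both are perfectly fine.
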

{\em Proof of Lemma \ref{lm3d15}:} Fix a coordinate chart $(D, z)$ of $M$ together with trivialization of $L$ over $D$.
Let $Z$ be as in Proposition \ref{prpnzh2} and let $y(r)=\frac{1}{Z(r)}.$ Under the  coordinates of $\pi^{-1}(D),$ as in $\S$ \ref{Sec 2.1}, we write
$X=X(W):=|\xi| H^{\frac{1}{2}}(z)$,  and $Y=Y(W):=\frac{2}{m+1}-X\frac{\phi'(X)}{\phi(X)}$. Similarly as in $\S$ \ref{Sec 2.1},   by (\ref{eqn3d5}), we have $Y=y(r)|_{r=X}.$ Furthermore, by Proposition \ref{prpnzh2}, $Y \in  [\frac{2}{m+1}, \infty)$ on $D(L) \cap \pi^{-1}(D).$

Since the setup here is the same as in $\S$ \ref{Sec 2.1}, an identical version of  Lemma \ref{lemma3d6} still holds. In particular, we have in $D(L) \cap \pi^{-1}(D),$ writing $Y'=\frac{dY}{dX},$
\begin{equation}\label{eqn3d11}
\widetilde{g}_{i\Ol{j}}=(-\log u)_{i\Ol{j}}=-\frac{1}{m+1} R_{i\Ol{j}}+ \frac{Y}{2}g_{i \Ol{j}}+\frac{Y'}{X}X_iX_{\Ol{j}}, \quad~\text{if}~ 1 \leq i, j \leq m-1,
\end{equation}
\begin{equation}\label{eqn3d12}
\widetilde{g}_{i\Ol{j}}=(-\log u)_{i\Ol{j}}=\frac{Y'}{X}X_iX_{\Ol{j}}, \quad~\text{if}~ i=m~\text{or}~j=m.
\end{equation}

Recall by Proposition \ref{prpnzh2},  $Z(r)$ is even, real analytic and positive on $[-1, 1]$, $Z'(r)<0$ on $(0, 1)$ and $Z'(0)=0, Z''(0)<0$.
Note $y'(r)=-Z^{-2}(r)Z'(r)$. It follows that $\frac{y'}{r}$ is a  real analytic even function on $(-1, 1).$ Furthermore, it is positive on $(-1, 1).$
Consequently, $\frac{Y'}{X}$ is a positive real analytic function in $D(L) \cap \pi^{-1}(D)$. Finally by (\ref{eqn3d11}) and (\ref{eqn3d12}), since $(X_iX_{\Ol{j}})$ is semi-positive definite and $\frac{Y}{2} \geq \frac{1}{m+1}$, we have
$$\widetilde{g}_{i\Ol{j}}=\big(-\frac{1}{m+1} R_{i\Ol{j}}+ \frac{Y}{2}g_{i \Ol{j}}\big) \oplus 0 \geq \frac{1}{m+1} \big(-R_{i\Ol{j}}+ g_{i \Ol{j}}\big) \oplus 0~~\text{on}~D(L) \cap \pi^{-1}(D).$$
By the assumption on the Ricci eigenvalues, we have $R_{i\Ol{j}} \leq \lambda_n g_{i \Ol{j}}$ with $\lambda_n <1.$ Then the lemma follows easily. \qed

Next we write $\hat{\gamma}=\pi(\gamma)$ for the projection of $\gamma$ to $M$ and proceed in two cases:

{\bf Case I:} Assume $\hat{\gamma}$ is not contained in any compact subset of $M$. In this case, since $g$ is complete on $M$, the length $L_{g}(\hat{\gamma})$ of $\hat{\gamma}$ under the metric $g$ equals $+\infty.$  Now by Lemma \ref{lm3d15} the length $L_{\widetilde{g}}(\gamma)$ of $\gamma$ under the metric $\widetilde{g}$ satisfies $L_{\widetilde{g}}(\gamma) \geq \sqrt{\frac{1-\lambda_n}{m+1}} L_{g}(\hat{\gamma}).$ Therefore it also equals $+\infty.$ Hence $a=+\infty.$

{\bf Case II:} Assume $\hat{\gamma}$ is contained in some compact subset $K$ of $M$. Note for any sequence  $\{t_i\}_{i \geq 1}$ with $t_i \rightarrow a$, we must have $|\gamma(t_i)|_h \rightarrow 1.$ Otherwise, by passing to subsequence, $\gamma(t_i)$ converges in $D(L),$ which contradicts the assumption that $\gamma$ is non-extendible.
Consequently, there exist some small $\epsilon >0$ and some $0 < t_0 <a $ such that $\gamma([t_0, a)) \subseteq W_{\e}:=\{w \in D(L): \pi(w) \in K, |w|_h > \e\}$.

Again by the assumption on the Ricci eigenvalues, we have $\Ric \leq \lambda_n g \leq g.$ Then by (\ref{eqnmet}), we have in $D(L),$
$$\omega(w, \Ol{w}) \geq \omega_1(w, \Ol{w}):= -i \partial \Ol{\partial} \log \phi(|w|_h).$$

Define a function $\psi: L \rightarrow \mathbb{R}$ by $\psi(w):=|w|_h^2-1$. Then by the well-known Grauert's observation, $\psi$ is strictly plurisubharmonic at every point $w \in L$ with $|w|_h>0.$ Moreover, $\psi$ is a  defining function of $S(L).$

Write $\varphi(w)=\phi(|w|_h).$ Recall  $\phi(1)=0, \phi'(1)<0$, and $\phi>0$ on $(-1, 1)$.  Hence $\varphi=(-\psi)\mu$ for some positive smooth function $\mu$ in a neighborhood of $\Ol{W_{\e}}$.
By page 509 of Cheng--Yau \cite{CheYau}, it proves $\Psi:=\log \frac{1}{(-\psi)}$ has bounded gradient with respect to $\omega_0=i\partial \Ol{\partial} \Psi$. More precisely, $|\nabla \Psi|_{\omega_0} \leq 1$ in $W_{\e}$.
Here the gradient is taken with respect to $\omega_0.$ 
Note in any coordinates chart, we have
$$\Psi_{j\Ol{k}}=\frac{1}{(-\psi)}\psi_{j\Ol{k}}+\frac{1}{\psi^2} \psi_j \psi_{\Ol{k}} \geq \frac{1}{(-\psi)}\psi_{j\Ol{k}}.$$
Therefore $\omega_0 \geq \frac{1}{(-\psi)} i\partial \Ol{\partial} \psi.$
Write $\kappa:=\log \frac{1}{\mu}$. By the compactness of $\Ol{W_{\e}}$ and the positivity of $i\partial \Ol{\partial} \psi,$ we see
$i\partial \Ol{\partial} \psi \geq -c i\partial \Ol{\partial} \kappa$ in $\Ol{W_{\e}}$ for some positive constant $c>0.$
Consequently, when $\frac{1}{(-\psi)}$ is large enough, we have $\omega_0 > 4 (-i\partial \Ol{\partial} \kappa)$. Hence we can find $M>0$ such that in $V_{M}:=\{z \in W_{\e}: \Psi(z)>M\},$ it holds that $\omega_0 > 4 (-i\partial \Ol{\partial} \kappa)$. This implies $\omega_1=-i\partial \Ol{\partial} \log \varphi=\omega_0 + (i\partial \Ol{\partial} \kappa) > \frac{3}{4} \omega_0$ in $V_M.$  Consequently,  $|v|_{\omega_1} \geq  \frac{\sqrt{3}}{2} |v|_{\omega_0}$ for any tangent vector $v$.

Next we choose a sequence $\{t_j\}_{j \geq 1}$ such that all $t_j \geq t_0, t_j \rightarrow a,$ and $\gamma(t_j) \in O_{j}:=\{z \in W_{\e}: \Psi(z)>2^j\}.$ Then for any $j$ large enough, it holds that $O_{j-2} \subset V_M.$ Moreover, we can find some $0 < t_j^* <t_j$ such that  $\gamma((t_j^*, t_j]) \subset  O_{j-1}$ and $\Psi (\gamma(t_j^*))=2^{j-1}.$ Then we have
$$\int_{0}^{t_j} |\gamma'(t)|_{\omega_1} dt \geq \int_{t_j^*}^{t_j} |\gamma'(t)|_{\omega_1} dt \geq  \frac{\sqrt{3}}{2} \int_{t_j^*}^{t_j} |\gamma'(t)|_{\omega_0} dt \geq \frac{\sqrt{3}}{2} \int_{t_j^*}^{t_j} |\nabla \Psi|_{\omega_0} |\gamma'(t)|_{\omega_0} dt  $$
$$\geq \frac{\sqrt{3}}{2} \int_{t_j^*}^{t_j} \langle \nabla \Psi, \gamma'(t) \rangle_{\omega_0} dt= \frac{\sqrt{3}}{2} \int_{t_j^*}^{t_j}  \frac{d}{dt} \big(\Psi \circ \gamma(t)\big) dt= \Psi \circ \gamma(t_j)-\Psi \circ \gamma (t_j^*)> 2^{j-1}.$$
This proves $\gamma$ has infinite length and thus $a=+\infty$.

Therefore in any case, $a=+\infty$. 
This proves the completeness of the metric $\widetilde{g}$. The uniqueness of a complete \ke metric of negative curvature is well-known and follows from Yau's Schwarz Lemma \cite{Yau78}. This finishes the proof of Theorem \ref{thm 2}.

\begin{rmk}
Proposition \ref{prpn17} shows that in Theorem \ref{thm 2} the assumption that all Ricci eigenvalues are strictly less than $1$ cannot be relaxed. On the other hand, it remains interesting to see whether the constancy assumption on the Ricci eigenvalues can be dropped. More precisely, let $M$ be a complex manifold and $(L,h)$ be a negative line bundle over $M$. Assume the \k metric $g$ induced by the dual bundle $(L^*, h^{-1})$ is complete on $M$. Assume at every point $p \in M,$ the Ricci eigenvalues of $(M, g)$ are all strictly less than one.
Does the disk bundle $D(L)=\{w \in L: |w|_h <1 \}$ admit a complete \ke metric with negative Ricci curvature?
\end{rmk}

\subsection{Proof of Proposition \ref{prpn17}}\label{Sec 2.3}

In this section, we prove Proposition \ref{prpn17}.
By Theorem \ref{thm 2}, it suffices to prove the ``only if" implication.
For that, we let $(L, h)$ be as in the assumption of Proposition \ref{prpn17}, such that $D(L)$ admits a complete \ke metric with negative Ricci curvature, and we will prove $\lambda<1$. Set
	\begin{equation*}
		L_0:=\{([Z], \xi Z): \xi \in \mathbb{C} \mbox{ and } Z\in \mathbb{C}^{n+1}\setminus\{0\} \}\subset \mathbb{CP}^n\times \mathbb{C}^{n+1},
	\end{equation*}
	which is the tautological line bundle over $\mathbb{CP}^n$. Let $h_0$ be the Hermitian metric on $L_0$ defined by
	\begin{equation*}
		h_0(\xi Z):=|\xi Z|^2 \quad \mbox{ for any } ([Z], \xi Z)\in L_0.
	\end{equation*}
Then $\omega_0:=-c_1(L_0,h_0)=i \partial\dbar\log h_0$
induces the Fubini-Study metric $g_0$ on $\mathbb{CP}^n$, which satisfies the \ke equation
$\Ric(\omega_0)=(n+1) \omega_0.$

	Note that the automorphism group $\Aut(\mathbb{CP}^n)$ is $\text{PGL}(n+1)$. For any $f\in \Aut(\mathbb{CP}^n)$, there exists some $A\in \text{GL}(n+1)$ such that $f([Z])=[ZA]$. Note $f$ naturally induces a bundle isomorphism $F: L_0\rightarrow L_0$, depending on the choice of $A$,
	\begin{equation*}
		F([Z], \xi Z)=([ZA], \xi ZA).
	\end{equation*}
	In particular, if $A\in \mathrm{U}(n+1)$, then $f$ is an isometry of $(\mathbb{CP}^n,g_0)$ and $F$ is an isometry of $(L_0, h_0)$. More generally, every $f \in \Aut(\mathbb{CP}^n)$ also induces an automorphism of $L_0^k$ given by
	\begin{equation}\label{induced bundle isomorphism}
		F([Z], \xi Z\otimes Z\cdots \otimes Z)=([Z], \xi ZA\otimes ZA\cdots \otimes ZA).
	\end{equation}
	In addition, if $A\in \mathrm{U}(n+1)$, then $F$ is an isometry of $(L_0^k, h_0^k)$.
	

	Since the Picard group of $\mathbb{CP}^n$ is generated by the tautological line bundle $L_0$, we can assume, up to isomorphism, $L=L_0^k$ for some positive integer $k$. Now we have two Hermitian metrics, $h$ and $h_0^k$, on $L$.  Their induced metrics on $\mathbb{CP}^n$, $g$ and $k g_0$, respectively satisfy
	\begin{equation*}
		\Ric(\omega)=\lambda\omega \qquad \mbox{and} \qquad \Ric(k\omega_0)=\frac{n+1}{k} (k\omega_0).
	\end{equation*}
Here $\omega$ denotes the \k form of $g$. Since $[\Ric(\omega)]=[\Ric(k\omega_0)]= c_1(\mathbb{CP}^n)$ and $[\omega]=[k\omega_0]=c_1(L)$, we have $\lambda=\frac{n+1}{k}$.
	
By the uniqueness of the complete \ke metric on $\mathbb{CP}^n$, there exists some $f\in \Aut(\mathbb{CP}^n)$ such that $k g_0=f^*g$.
By a standard argument in \k geometry, up to a bundle isometry,  we can just assume $(L, h)= (L_0^k, h_0^k)$, and $g=k g_0$. In this case, the isometry group of $(\mathbb{CP}^n, g)$ is $\PU(n+1)$, which acts transitively on $\mathbb{CP}^n$.
	
Suppose $D(L)$ admits a complete \ke metric  of negative Ricci curvature. Write its \k form as $\widetilde{\omega}$. By rescaling the metric, we can assume $\Ric(\widetilde{\omega})=-(n+2)\widetilde{\omega}$.
Set $U_0:=\bigl\{ [Z_0, Z_1, \cdots, Z_n] \in \mathbb{CP}^n: Z_0 \neq 0  \bigr\} \subset \mathbb{CP}^n.$	
Let $z=(z_1, \cdots z_n)$ be the local affine coordinates of $\mathbb{CP}^n$ on $U_0$, that is, $z_j=Z_j/Z_0$ for $1\leq j \leq n$. Under the local trivialization of $L$ on $U_0:$
\begin{equation*}
	([1,z], \xi(1,z)\otimes \cdots \otimes (1,z) ) \rightarrow (z, \xi) \in U_0\times \mathbb{C},
\end{equation*}
we have $(z,\xi)$ forms a local coordinate system of $L$ on $\pi^{-1}(U_0)$. For simplicity, we also denote $z_{n+1}=\xi$. On $\pi^{-1}(U_0)\cap D(L)$, we write
\begin{equation*}
	\widetilde{\omega}=\sqrt{-1}\sum_{i, j=1}^{n+1} \widetilde{g}_{i\bar{j}} dz_i\wedge d\oo{z_j}.
\end{equation*}
Set $u=\bigl(\det(\widetilde{g}_{i\bar{j}}) \bigr)^{-\frac{1}{n+2}}.$
Since $\widetilde{\omega}$ is K\"ahler-Einstein, $u$ is real analytic on $\pi^{-1}(U_0)\cap D(L)$ (cf. Theorem 6.1 in \cite{DeKa81}). We write the \ke condition in terms of $u$:
\begin{equation}\label{KE equation}
	-\partial_i\partial_{\bar{j}}\log u^{-(n+2)}=-(n+2) \widetilde{g}_{i\bar{j}}.
\end{equation}
By taking the determinant, we get
\begin{equation}\label{u complex MA equation}
	\det\bigl( (-\log u)_{i\bar{j}} \bigr)_{1 \leq i, j \leq n+1}=u^{-(n+2)}.
\end{equation}



Given any biholomorphism $F: D(L)\rightarrow D(L)$, by the uniqueness of negatively curved complete \ke metric, we have $F^*\widetilde{\omega}=\widetilde{\omega}$. This yields $F^*\widetilde{\omega}^{n+1}=\widetilde{\omega}^{n+1}$, and in local coordinates this writes into
\begin{equation}\label{u transformation formula}
	u(F(z,\xi))\, |\det JF (z,\xi)|^{-\frac{2}{n+2}}= u(z,\xi) \quad \mbox{ if } (z,\xi), F(z,\xi) \in \pi^{-1}(U_0)\cap D(L),
\end{equation}
where $JF$ is the Jacobian matrix.

We will use this transformation relation to simplify $u$. Consider the $S^1$ action on $D(L)$. That is, for any $\theta\in [0,2\pi)$, we have an associated biholomorphism $R_{\theta}: D(L)\rightarrow D(L)$ defined by
\begin{equation*}
	R_{\theta}([Z], \xi Z\otimes Z\cdots \otimes Z)=([Z],e^{i\theta}\xi Z\otimes Z \cdots \otimes Z).
\end{equation*}
In local coordinates $(z,\xi)$, $R_{\theta}(z,\xi)=(z,e^{i\theta}\xi)$, and the determinant of the Jacobian of $R_{\theta}$ equals $e^{i\theta}$. We let $F=R_{\theta}$ in (\ref{u transformation formula}) to obtain
\begin{equation}\label{u invariant S1 action}
	u(z,\xi)=u(z,|\xi|) \quad \mbox{ for any } (z,\xi)\in \pi^{-1}(U_0)\cap D(L).
\end{equation}

Let $f$ be an isometry of $(\mathbb{CP}^n, \omega_0)$. Recall that $f$ induces a bundle isometry $F$ as in \eqref{induced bundle isomorphism}. In particular, $F$ is a biholomorphism on $D(L)$. Since $\Iso(\mathbb{CP}^n, \omega_0)=\PU(n+1)$, $f$ is rational. Thus there exists a complex variety $E$, depending on $f$, such that $f|_{U_0\setminus E}$, which we still denote by $f$, gives a holomorphic rational map from $U_0\setminus E\rightarrow U_0$. Moreover, the induced bundle isometry $F$ can be written as follows for some linear function $p$ in $z:$
\begin{equation*}
F(z,\xi)=(f(z), \xi p(z)^k)~~\text{on}~\pi^{-1}(U_0\setminus E)\cap D(L).
\end{equation*}
To apply the transformation relation \eqref{u transformation formula} to the above map $F$, we need to compute $|\det Jf|$ and $|\det JF|$. Since $f\in \Iso(\mathbb{CP}^n, \omega_0)$, $f$ preserves the volume form $\frac{\omega_0^n}{n!}$. Note that on $U_0,$
\begin{equation}\label{volume form Fubini-Study metric}
	\frac{\omega_0^n}{n!}=\frac{1}{(1+|z|^2)^{n+1}} \,i^ndz_1\wedge d\oo{z_1}\wedge\cdots dz_n\wedge d\oo{z_n}.
\end{equation}
Thus,
\begin{equation}\label{Jf determinant}
	\frac{|\det Jf(z)|^2}{(1+|f(z)|^2)^{n+1}}=\frac{1}{(1+|z|^2)^{n+1}} \quad \mbox{ for any } z\in U_0\setminus E.
\end{equation}

Note that the Hermitian metric of $L$ is $h_0^k$ and
\begin{equation*}
	h_0^k(\xi(1,z)\otimes\cdots \otimes(1,z))=|\xi|^2(1+|z|^2)^k.
\end{equation*}
Since $F$ preserves this Hermitian metric, we have
\begin{equation*}
	|\xi|^2(1+|z|^2)^k=|\xi|^2|p(z)|^{2k}(1+f(z))^k.
\end{equation*}
Thus,
\begin{equation}\label{p(z)}
	|p(z)|^2=\frac{1+|z|^2}{1+|f(z)|^2} \quad \mbox{ for any } z\in U_0 \setminus E.
\end{equation}

By \eqref{Jf determinant} and \eqref{p(z)}, we obtain
\begin{equation*}
	|\det(JF)(z)|=|\det(Jf)(z)| \cdot |p(z)|^k=\bigl(\frac{1+|f(z)|^2}{1+|z|^2}\bigr)^{\frac{n+1-k}{2}}.
\end{equation*}

Now we can rewrite \eqref{u transformation formula} into
\begin{equation*}
	u(F(z,\xi))\cdot \bigl(\frac{1+|f(z)|^2}{1+|z|^2}\bigr)^{n+1-k}=u(z,\xi).
\end{equation*}
Since $\Iso(\mathbb{CP}^n, \omega_0)$ acts transitively on $\mathbb{CP}^n$, for any $z\in U_0$ we can choose some $f\in \Iso(\mathbb{CP}^n, \omega_0)$ such that $f(z)=0$. Therefore, we get
\begin{equation*}
	u(z,\xi)=u(0,\xi p(z)^k) \cdot \bigl(\frac{1}{1+|z|^2}\bigr)^{\frac{n+1-k}{n+2}}.
\end{equation*}
By \eqref{u invariant S1 action} and \eqref{p(z)}, we simplify it into
\begin{equation}\label{eqn3d21}
	u(z,\xi)=u\bigl(0, |\xi| (1+|z|^2)^{\frac{k}{2}}\bigr) \cdot \bigl(\frac{1}{1+|z|^2}\bigr)^{\frac{n+1-k}{n+2}}.
\end{equation}

We denote
\begin{align*}
	G(z):=\frac{k^n}{(1+|z|^2)^{n+1}}, \quad
	H(z):=h\bigl((1,z)\otimes\cdots\otimes(1,z)\bigr)=(1+|z|^2)^k.
\end{align*}
Note that by \eqref{volume form Fubini-Study metric}, $G(z)$ is the determinant of the metric $g=k g_0$.
By writing $X:=|\xi|H^{\frac{1}{2}}$, the equation (\ref{eqn3d21}) is reduced to
\begin{equation}
	u(z,\xi)=k^{\frac{n}{n+2}}u(0,X) \bigl(GH\bigr)^{-\frac{1}{n+2}}.
\end{equation}



Let $\phi(r)=k^{\frac{n}{n+2}}u(0,r)$ for $r\in (-1,1)$. Then $\phi$ is a positive, real analytic, even function (see (\ref{u invariant S1 action})). We rewrite $u$ into
\begin{equation*}
	u(z,\xi)=\phi(X) \bigl(GH\bigr)^{-\frac{1}{n+2}}.
\end{equation*}
We shall reduce the complex Monge-Amp\`ere equation \eqref{u complex MA equation} on $u$ into an ODE on $\phi$. Set
\begin{equation}\label{y definition}
	y(r):=\frac{2}{n+2}-\frac{r \phi'(r)}{\phi(r)}.
\end{equation}
By the same computation as in Lemma \ref{lemma3d6},  we have
\begin{align*}
	(-\log u)_{i\bar{j}}=\begin{dcases}
	-\frac{1}{n+2} R_{i\bar{j}}+\frac{Y}{2} g_{i\bar{j}}+\frac{Y'}{X} X_i X_{\bar{j}} & \mbox{ if } 1\leq i, j\leq n
	\\
	\frac{Y'}{X} X_iX_{\bar{j}} & \mbox{ if } i=n+1 \mbox{ or } j=n+1,
	\end{dcases}
\end{align*}
where $Y=y|_{r=X}$, $Y'=y'|_{r=X}$, $X_i=\frac{\partial X}{\partial z_i}$ for $1\leq i\leq n+1$ ($\xi=z_{n+1}$) and $X_{\bar{j}}$ is defined similarly as in Lemma \ref{lemma3d6}; $g= \sum g_{i \Ol{j}} dz_i \otimes d \Ol{z_j}$  and $R_{i\bar{j}}$ is the Ricci curvature of $g$. Moreover,
\begin{equation*}
	\det\bigl( (-\log u)_{i\bar{j}} \bigr)_{1 \leq i, j \leq n+1}=\frac{P(Y) Y'}{2^{n+2}X} GH,
\end{equation*}
where
\begin{equation*}
	P(Y)=\det\bigl(YI_n-\frac{2}{n+2} \Ric(g) \cdot g^{-1} \bigr)=\bigl(Y-\frac{2\lambda}{n+2}\bigr)^n.
\end{equation*}
Therefore, the complex Monge-Amp\`ere equation \eqref{u complex MA equation} now writes into
\begin{equation*}
	\frac{P(Y) Y'}{2^{n+2}X} GH=\phi(X)^{-(n+2)} GH.
\end{equation*}
After simplification, we obtain
\begin{equation*}
  	P(Y) Y'=2^{n+2}X \phi(X)^{-(n+2)} \quad \mbox{ for any } X\in (0,1).
\end{equation*}
That is,
\begin{equation*}
	P(y) y'=2^{n+2}r \phi(r)^{-(n+2)} \quad \mbox{ for any } r\in (0,1).
\end{equation*}
Note that $\phi(r)$ and $y(r)$ are both real analytic on $(-1,1)$. It follows that
\begin{equation}\label{ODE CPn}
P(y) y'=2^{n+2}r \phi(r)^{-(n+2)} \quad \mbox{ for any } r\in (-1,1).
\end{equation}


We observe the following properties of function $y(r)$.
\begin{lemma}
$y=y(r)$ satisfies (1) $y'(0)=0$; (2) $y'(r)>0$ for any $r\in (0,1)$; and (3) $\int_{\varepsilon}^1 y'(r) dr=+\infty$ for any $\varepsilon\in (0,1)$.
	In particular, $y$ is an increasing function on $(0,1)$ and $\lim_{r \rightarrow 1^-} y(r)=+\infty$.
\end{lemma}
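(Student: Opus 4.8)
The three items can be handled essentially separately, drawing on three facts already available: the evenness of $\phi$ for (1); the positive-definiteness of the Kähler matrix $(\widetilde{g}_{i\bar{j}})$ for (2) (completeness is not needed here); and the completeness of $\widetilde{g}$, combined with the Cauchy--Schwarz inequality, for (3). The concluding statements---that $y$ is increasing and that $\lim_{r\to1^-}y(r)=+\infty$---will then follow formally from (2) and (3).

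For (1), I would first note that by the $S^1$-invariance $u(z,\xi)=u(z,|\xi|)$ proved above, $\phi(r)=k^{n/(n+2)}u(0,r)$ is a positive, real-analytic, \emph{even} function near $r=0$; hence $r\phi'(r)$ is even, so $y(r)=\tfrac{2}{n+2}-\tfrac{r\phi'(r)}{\phi(r)}$ is an even real-analytic function near $0$, and therefore $y'(0)=0$ (the odd-order Taylor coefficients of an even real-analytic function vanish). For (2), I would extract the $(n+1,n+1)$-entry of $(\widetilde{g}_{i\bar{j}})=((-\log u)_{i\bar{j}})$ from the formula displayed just before the lemma: at a point with $z=0$ one has $H(0)=1$, $X=|\xi|=:r$, and $X_{n+1}=\partial X/\partial z_{n+1}=\bar{\xi}/(2|\xi|)$, so $|X_{n+1}|^2=\tfrac14$ and
\[
\widetilde{g}_{(n+1)\overline{(n+1)}}\big|_{z=0}=\frac{Y'}{X}\,|X_{n+1}|^2=\frac{y'(r)}{4r}.
\]
Since $\widetilde{\omega}$ is a Kähler metric on $D(L)$, the Hermitian matrix $(\widetilde{g}_{i\bar{j}})$ is positive definite everywhere on $D(L)$, so in particular this diagonal entry is positive at each $(0,\xi)$ with $0<|\xi|<1$; hence $y'(r)>0$ for all $r\in(0,1)$.

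For (3), which is where completeness enters, I would fix $\theta_0$ and consider the fiber ray $\gamma(r)=(0,re^{i\theta_0})$, $r\in[0,1)$, inside $D(L)\cap\pi^{-1}(U_0)$. Since $|\gamma(r)|_h=r\to1^-$, the curve leaves every compact subset of $D(L)$, so by the Hopf--Rinow theorem and the completeness of $\widetilde{g}$ it has infinite length. On the other hand its $\widetilde{g}$-length is a fixed positive constant times $\int_0^1\sqrt{\widetilde{g}_{(n+1)\overline{(n+1)}}(0,re^{i\theta_0})}\,dr=c\int_0^1\sqrt{y'(r)/r}\,dr$, so $\int_0^1\sqrt{y'(r)/r}\,dr=+\infty$. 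Because $y'$ is real-analytic with $y'(0)=0$, near $0$ we have $y'(r)=O(r)$, so $\sqrt{y'(r)/r}$ stays bounded near $0$ and the divergence survives on $[\varepsilon,1)$ for every $\varepsilon\in(0,1)$. Cauchy--Schwarz then gives
\[
\int_\varepsilon^1\sqrt{\frac{y'(r)}{r}}\,dr\le\Big(\int_\varepsilon^1 y'(r)\,dr\Big)^{1/2}\Big(\int_\varepsilon^1\frac{dr}{r}\Big)^{1/2}=\Big(\int_\varepsilon^1 y'(r)\,dr\Big)^{1/2}\big(\log(1/\varepsilon)\big)^{1/2},
\]
and since the left side is infinite while $\log(1/\varepsilon)<\infty$, we conclude $\int_\varepsilon^1 y'(r)\,dr=+\infty$, which is (3). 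Finally, (2) makes $y$ increasing on $(0,1)$, so $\lim_{r\to1^-}y(r)$ exists; by the fundamental theorem of calculus it equals $y(\varepsilon)+\int_\varepsilon^1 y'(r)\,dr=+\infty$.

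\textbf{Main obstacle.} Items (1) and (2) are essentially bookkeeping; the substance is (3), i.e.\ turning completeness of the metric on the total space $D(L)$ into divergence of a scalar integral of $y'$. The two points that make it work are: choosing the \emph{radial fiber} curve over the fixed base point $z=0$, where $H\equiv 1$ and hence $\widetilde{g}_{(n+1)\overline{(n+1)}}$ collapses to the clean expression $y'(r)/(4r)$; and the Cauchy--Schwarz step, which upgrades divergence of $\int\sqrt{y'/r}$ to divergence of $\int y'$ at the negligible cost of the finite factor $\int dr/r$. A minor point to check carefully is that the metric formula and the length integral are valid along the entire ray, including near $r=0$, where real-analyticity of $\phi$ keeps everything smooth and the integrand bounded.
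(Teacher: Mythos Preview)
Your proposal is correct and follows essentially the same approach as the paper: evenness of $\phi$ for (1), positivity of the diagonal metric entry $\widetilde{g}_{(n+1)\overline{(n+1)}}=\frac{y'}{4r}H$ for (2), and completeness along the radial fiber curve combined with Cauchy--Schwarz/H\"older for (3). The only cosmetic difference is that the paper works directly with $\gamma$ on $[\varepsilon,1)$ and invokes completeness there, whereas you first take the full ray $[0,1)$ and then use $y'(r)=O(r)$ to localize the divergence to the right endpoint; both are fine.
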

\begin{proof}
	Since $\phi(r)$ is an even function on $(-1,1)$, by \eqref{y definition}, $y(r)$ is also an even function on $(-1,1)$. Thus, $y'(0)=0$. Next, note that $(\log (\frac{1}{u}))_{i\bar{j}}$ is positive definite by the \ke condition \eqref{KE equation}. Thus,
	\begin{equation*}
		\bigl(\log (\frac{1}{u})\bigr)_{n+1\oo{n+1}}=\frac{Y'}{X}X_{n+1}X_{\oo{n+1}}=\frac{Y'H}{4X}>0~\text{for}~X \in (0, 1).
	\end{equation*}
Therefore, $y'>0$ on $(0,1)$.  To prove (3), we consider a curve $\gamma(t)=(0,\cdots, 0, t)\in \pi^{-1}(U_0)\cap D(L)$ for $t\in (\varepsilon,1)$. Since $\widetilde{\omega}$ is complete, the length of $\gamma$ with respect to $\widetilde{\omega}$ is infinity. Note that on $\gamma$, $H=1$ and $X=t$. Thus, by H\"older's inequality
	\begin{equation*}
		\infty=\int_{\varepsilon}^1 |\gamma'(t)|_{\widetilde{\omega}} dt= \sqrt{2}\int_{\varepsilon}^1 \bigl( \frac{y'}{4t} \bigr)^{\frac{1}{2}}dt\leq \sqrt{2}\bigl(\int_{\varepsilon}^1 y'dt\bigr)^{\frac{1}{2}}\cdot \bigl(\int_{\varepsilon}^1\frac{1}{4t}dt\bigr)^{\frac{1}{2}}.
	\end{equation*}
	Therefore, $\int_{\varepsilon}^1y'=\infty$ for any $\varepsilon\in (0,1)$.
\end{proof}

We are now ready to prove $\lambda< 1$. First, we compare the vanishing order at $r=0$ of both sides in \eqref{ODE CPn}. Since $\phi(r)$ is positive on $(-1,1)$, the vanishing order of the right hand side is $1$. Therefore, so is the left hand side. By this fact, since $y'(0)=0$ and $y(0)=\frac{2}{n+2}$, we must have $y''(0)\neq 0$ and
\begin{equation*}
	0\neq P(y)|_{r=0}=\bigl(\frac{2}{n+2}-\frac{2\lambda}{n+2}\bigr)^n.
\end{equation*}
Therefore, $\lambda\neq 1$. It remains to rule out the case $\lambda>1$. Assume $\lambda>1$. Since $y(0)=\frac{2}{n+2}$ and $y$ is increasing to infinity on $(0,1)$, there exists some $r^*\in (0,1)$ such that $y(r^*)=\frac{2\lambda}{n+2}$. We let $r=r^*$ in \eqref{ODE CPn} to obtain
\begin{equation*}
	0=2^{n+2} r^* \phi(r^*)^{-(n+2)}>0.
\end{equation*}
This is a contradiction. Hence we must have $\lambda<1$, and this proves Proposition \ref{prpn17}. 

\begin{rmk}
By using the same argument as above, one can prove the following result: Let $M=\mathbb{CP}^{m_1}\times \cdots \times \mathbb{CP}^{m_k}$. Let $\pi_i: M\rightarrow \mathbb{CP}^{m_i}$ be the projection from $M$ to the $i$th component $\mathbb{CP}^{m_i}$. Let $(L_i, h_i)$ be a line bundle over $\mathbb{CP}^{m_i}$ such that the dual line bundle $L_i^{-1}$ induces a \ke metric $g_i$ on $\mathbb{CP}^{m_i}$ with $\Ric(g_i)=\lambda_ig_i$. Let $(L,h)$ be the line bundle over $M$ defined by $(L, h)=\pi_1^*(L_1, h_1)\otimes\cdots\otimes\pi_k^*(L_k, h_k)$. Then the disk bundle $D(L)$ admits a complete \ke metric with negative Ricci curvature if and only if $\lambda_i<1$ for every $1\leq i\leq k$.

\end{rmk}


\subsection{An interesting case in Theorem \ref{thm 2} and proof of Proposition \ref{prpn1d10}}
A case of particular interest in Theorem \ref{thm 2} is when the base manifold $M$ is a bounded domain of holomorphy $D$.
Recall that by a result of Mok--Yau \cite{MoYau}, given $\lambda <0,$ any bounded domain of holomorphy $D \subseteq \mathbb{C}^n$ admits a unique complete K\"ahler-Einstein metric $g$ with $\mathrm{Ric}(g)=\lambda g$.  Write $z=(z_1, \cdots, z_n)$ for the coordinates of $\mathbb{C}^n$, and $g=\sum g_{i \Ol{j}} dz_i \otimes d \Ol{z_j}$. Write $G(z)=\det(g_{i \Ol{j}})$. Let $L=D \times \mathbb{C}$ be the trivial line bundle over $D$ and $H=H(z, \overline{z})$ a metric of $L$ with $g_{i \Ol{j}}=\frac{\partial^2 \log H}{\partial z_i \partial \Ol{z_j}}$ (the existence of $H$ is guaranteed by the \ke condition). In this case, the disk bundle $D(L)$ becomes a domain in $\mathbb{C}^m=\mathbb{C}^n \times \mathbb{C}, m=n+1,$
\begin{equation}\label{eqnodl}
\Omega=D(L)=\{W=(z, \xi)\in D \times \mathbb{C}: |\xi|^2 H(z, \Ol{z}) -1<0 \}.
\end{equation}

Likewise, $S(L) \subseteq D \times \mathbb{C}$ is defined by $|\xi|^2 H(z, \Ol{z})=1.$ Note all $n$ Ricci eigenvalues of the metric $g$ equal to $\lambda.$ By Theorem \ref{thm 2}, $\Omega$ admits  a complete K\"ahler-Einstein metric $\widetilde{g}$ with Ricci curvature equals to $-(m+1)$. We further use (\ref{eqnmet}) to write the metric $\widetilde{g}$ in a more concrete way. For that, we first investigate the function $\phi$ given by Proposition \ref{prpnzh2}.  Writing $\mu=\frac{2\lambda}{m+1},$ the polynomials $P(y), \hat{P}(x), Q(y), \hat{Q}(x)$ in Proposition \ref{prpnzh2} are given by,
$$P(y)=(y-\mu)^n,~~Q(y)=\frac{m+1}{m} y (y-\mu)^m-\frac{1}{m}(y-\mu)^{m+1}+c; $$
$$ \hat{P}(x)=(1-\mu x)^n,~~ \hat{Q}(x)=\frac{m+1}{m}(1-\mu x)^m-\frac{1}{m}(1-\mu x)^{m+1}+cx^{m+1}.$$
Here $c=-(\frac{2}{m+1})^{m+1}(1-\lambda)^m (1+\frac{\lambda}{m}).$ It turns out that in this case the ODE considered in Proposition \ref{prpnzh2} can be a lot simplified. Let $\xi(r)=\frac{Z(r)}{1-\mu Z(r)}$, which defines  a real analytic and even function in a neighborhood of $[-1, 1].$ With the change of variables from $Z$ to $\xi$, one can verify that the ODE in (\ref{eqnzodenz}) is reduced to (\ref{eqnzode}), and $\phi(r)=2  (\frac{r}{-Z'\hat{P}(Z)})^{\frac{1}{m+1}} Z=2  (\frac{r}{-\xi'})^{\frac{1}{m+1}} \xi.$ In addition, since  $(m+1)rZ\phi'+(m+1-2Z)\phi=0$, we have (\ref{eqnhxi})  holds. Then Proposition \ref{prpnzh2} and Theorem \ref{thm 2} (together with Proposition \ref{prpn3d5}) yield Proposition \ref{prpn2} and Theorem \ref{T1}, respectively.

\begin{prop}\label{prpn2}
There exists a unique real analytic function $\xi(r)$ on $J:=[-1, 1]$ (meaning it extends real analytically to some open interval containing $J$) such that the following conditions hold:
\begin{equation}\label{eqnzode}
r\xi'=-c\xi^{m+1}+a\xi-1~\text{on}~J, \quad \xi(1)=0.
\end{equation}
Here $c$ is as above and $a=-\frac{2 \lambda}{m}>0.$ Furthermore, the function $\phi(r)=2 \left(\frac{r}{-\xi'}\right)^{\frac{1}{m+1}}\xi$  is the function in part (2) of Proposition \ref{prpnzh2}. In addition, the following holds:
\begin{equation}\label{eqnhxi}	
 r\xi \phi'+(1-\alpha \xi)\phi=0~\text{with}~\alpha=\frac{2(1-\lambda)}{m+1}.
\end{equation}

\end{prop}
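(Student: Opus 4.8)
The plan is to derive Proposition \ref{prpn2} from Proposition \ref{prpnzh2} by the explicit substitution $\xi = Z/(1-\mu Z)$, equivalently $Z=\xi/(1+\mu\xi)$, where $\mu=\tfrac{2\lambda}{m+1}<0$. Since the function $Z$ produced in Proposition \ref{prpnzh2} takes values in $[0,\tfrac{m+1}{2}]$ and $\mu<0$, we have $1-\mu Z\ge 1>0$ throughout, so $\xi$ is a genuine real analytic function on a neighborhood of $[-1,1]$, and it is even because $Z$ is and the transformation $Z\mapsto Z/(1-\mu Z)$ is analytic near the range of $Z$. It then remains to (i) identify the ODE satisfied by $\xi$, (ii) establish uniqueness, and (iii) rewrite both $\phi$ and the relation \eqref{eqnhxi} in the new variable.

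First I would record the explicit polynomials. Integrating $\tfrac{dQ}{dy}=(m+1)y(y-\mu)^{n}$ with $n=m-1$, and fixing the constant by $Q(\tfrac{2}{m+1})=0$, gives $Q(y)=\tfrac{m+1}{m}y(y-\mu)^m-\tfrac1m(y-\mu)^{m+1}+c$; a direct evaluation (using $\tfrac{2}{m+1}-\mu=\tfrac{2(1-\lambda)}{m+1}$ and $\tfrac{(m+1)\mu}{m}=\tfrac{2\lambda}{m}$) shows the forced constant is exactly $c=-(\tfrac{2}{m+1})^{m+1}(1-\lambda)^m(1+\tfrac{\lambda}{m})$. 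Substituting $x\mapsto x^{-1}$ and clearing denominators yields $\hat P(x)=(1-\mu x)^n$ and $\hat Q(x)=\tfrac{m+1}{m}(1-\mu x)^m-\tfrac1m(1-\mu x)^{m+1}+cx^{m+1}$. The key algebraic fact, using $1-\mu Z=(1+\mu\xi)^{-1}$ and $Z=\xi(1+\mu\xi)^{-1}$, is that $\hat P(Z)=(1+\mu\xi)^{-n}$ and $\hat Q(Z)=(1+\mu\xi)^{-(m+1)}\bigl(1+\tfrac{(m+1)\mu}{m}\xi+c\xi^{m+1}\bigr)$. Differentiating the substitution gives $\xi'=Z'(1-\mu Z)^{-2}=Z'(1+\mu\xi)^2$, hence $Z'\hat P(Z)=\xi'(1+\mu\xi)^{-(m+1)}$. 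Plugging these into $rZ'\hat P(Z)+\hat Q(Z)=0$ from \eqref{eqnzodenz} and cancelling the common factor $(1+\mu\xi)^{-(m+1)}$ produces $r\xi'+1+\tfrac{(m+1)\mu}{m}\xi+c\xi^{m+1}=0$; since $\tfrac{(m+1)\mu}{m}=\tfrac{2\lambda}{m}=-a$, this is precisely \eqref{eqnzode}, and $\xi(1)=0$ follows from $Z(1)=0$. For uniqueness, any real analytic solution of \eqref{eqnzode} on $[-1,1]$ satisfies on $(0,1]$ the ODE $\xi'=\tfrac1r(a\xi-1-c\xi^{m+1})$, whose right-hand side is real analytic (hence locally Lipschitz) there; by standard ODE uniqueness it is determined on $(0,1]$ by $\xi(1)=0$, and then on all of $[-1,1]$ by real analyticity.

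Next I would translate the formula for $\phi$: substituting $Z'\hat P(Z)=\xi'(1+\mu\xi)^{-(m+1)}$ and $Z=\xi(1+\mu\xi)^{-1}$ into $\phi=2\bigl(\tfrac{r}{-Z'\hat P(Z)}\bigr)^{1/(m+1)}Z$, the two factors of $(1+\mu\xi)$ cancel (all the relevant quantities are positive on $(0,1)$ — in particular $-\xi'>0$ there since $Z'<0$ and $1+\mu\xi>0$ — so the $(m+1)$-st root is unambiguous), giving $\phi=2\bigl(\tfrac{r}{-\xi'}\bigr)^{1/(m+1)}\xi$; and $\phi$ is the function of part (2) of Proposition \ref{prpnzh2} by construction. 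Finally, to obtain \eqref{eqnhxi} I would start from $(m+1)rZ\phi'+(m+1-2Z)\phi=0$ (Proposition \ref{prpnzh2}), divide by $m+1$, substitute $Z=\xi(1+\mu\xi)^{-1}$, and factor out $(1+\mu\xi)^{-1}$, using $1-\tfrac{2Z}{m+1}=\tfrac{1-\alpha\xi}{1+\mu\xi}$ with $\alpha=\tfrac{2}{m+1}-\mu=\tfrac{2(1-\lambda)}{m+1}$; this yields $r\xi\phi'+(1-\alpha\xi)\phi=0$.

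I do not expect a serious obstacle: the whole argument is a change of variables plus bookkeeping, the substitution being engineered so that the $Z$-nonlinear ODE \eqref{eqnzodenz} collapses to the much simpler (Riccati-type) ODE \eqref{eqnzode}. The only points that require care are (a) verifying that the stated $c$ is exactly the one forced by $Q(\tfrac{2}{m+1})=0$, (b) keeping track of signs so that $-\xi'>0$ on $(0,1)$ and all $(m+1)$-st roots are taken on the positive reals, and (c) observing that $1-\mu Z$ never vanishes on $[-1,1]$, so the change of variables and every cancellation above are valid on the whole interval, giving in particular the real analyticity and evenness of $\xi$ on a neighborhood of $[-1,1]$.
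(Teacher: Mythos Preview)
Your proposal is correct and follows essentially the same approach as the paper: the paper introduces the substitution $\xi=Z/(1-\mu Z)$ in the paragraph preceding the proposition and asserts (without the details) that the ODE \eqref{eqnzodenz} reduces to \eqref{eqnzode}, that the $\phi$-formula transforms as claimed, and that \eqref{eqnhxi} follows from $(m+1)rZ\phi'+(m+1-2Z)\phi=0$. You have carried out exactly these verifications in full, and your separate uniqueness argument (ODE uniqueness on $(0,1]$ plus real analyticity) is a valid alternative to invoking the bijection $Z\leftrightarrow\xi$ together with the uniqueness of $Z$ from Proposition~\ref{prpnzh2}.
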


\begin{thm}\label{T1}
Let $D\subset \mathbb{C}^n$ be a bounded domain of holomorphy and $g$ be a complete K\"ahler-Einstein metric on $D$ with some negative Ricci curvature $\lambda$.
Let $G, H$ be as above and $\Omega$ be the domain in $\mathbb{C}^m$ as defined in (\ref{eqnodl}). Let $\xi$ and
$\phi$ be as in Proposition \ref{prpn2}. Set
\begin{equation}\label{eqnuh}
u(W)=(GH)^{-\frac{1}{m+1}} \phi(|\xi|H^{\frac{1}{2}}(z)).
\end{equation}
Then $u$ is real analytic in $\Omega,$  and
$\log \frac{1}{u}$ gives a potential function for a complete K\"ahler-Einstein metric on $\Omega$ with Ricci curvature $-(m+1).$
In particular, $u$ is the (global) Cheng--Yau solution in $\Omega.$ It extends real analytically across the boundary part $S(L).$
\end{thm}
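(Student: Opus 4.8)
The plan is to read off Theorem \ref{T1} from Theorem \ref{thm 2}, the explicit description of $\phi$ in Proposition \ref{prpn2}, and the uniqueness of a negatively curved complete K\"ahler--Einstein metric. First I would check that Theorem \ref{thm 2} applies in this setting: the Mok--Yau metric $g$ on $D$ is complete and satisfies $\Ric(g)=\lambda g$ with $\lambda<0<1$, so $(D,g)$ has constant Ricci eigenvalues, all equal to $\lambda$, and in particular every Ricci eigenvalue is strictly less than one. Applying Theorem \ref{thm 2} to the trivial bundle $(L,h)=(D\times\mathbb{C},H)$ thus produces a unique complete K\"ahler--Einstein metric $\widetilde g$ on $\Omega=D(L)$ with $\Ric(\widetilde g)=-(m+1)\widetilde g$, given in the (global) trivialization by $\widetilde\omega=-i\partial\dbar\log u$ with $u=(GH)^{-\frac1{m+1}}\phi(|\xi|H^{1/2}(z))$ and $\phi$ the function of part (2) of Proposition \ref{prpnzh2}. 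By Proposition \ref{prpn2} this $\phi$ is exactly $2\bigl(r/(-\xi')\bigr)^{1/(m+1)}\xi$, so the function $u$ in \eqref{eqnuh} coincides with the one delivered by Theorem \ref{thm 2}. Since $L=D\times\mathbb{C}$ is globally trivial, $u$ is a single globally defined positive function on $\Omega$, so $\widetilde\omega=i\partial\dbar\bigl(-\log u\bigr)$ holds globally and $\log\frac1u$ is a global K\"ahler potential for $\widetilde g$; moreover the computation of Proposition \ref{prpn3d5} (run with the $\phi$ of Proposition \ref{prpnzh2}, as in the proof of Theorem \ref{thm 2}) gives $J(u)=1$ in $\Omega$ and $u=0$ on $S(L)$.

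Next I would establish the real analyticity of $u$ in $\Omega$ and across $S(L)$. Since $g$ is K\"ahler--Einstein it is a constant scalar curvature K\"ahler metric, hence real analytic (Remark \ref{rmk12}); therefore the local potential $\log H$, and then $G=\det\bigl(\partial_i\partial_{\bar j}\log H\bigr)$, are real analytic, and $(GH)^{-\frac1{m+1}}$ is real analytic and positive on $D\times\mathbb{C}$. By Proposition \ref{prpn2} (equivalently Proposition \ref{prpnzh2}), $\xi$ and hence $\phi$ are real analytic on an open interval containing $[-1,1]$, and $\phi$ is even, so we may write $\phi(t)=\psi(t^2)$ with $\psi$ real analytic on an open neighborhood $V$ of $[0,1]$ in $\mathbb{R}$. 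Then
\begin{equation*}
u(z,\xi)=(G(z)H(z))^{-\frac1{m+1}}\,\psi\bigl(|\xi|^2H(z)\bigr),
\end{equation*}
and since $(z,\xi)\mapsto|\xi|^2H(z)$ is real analytic on $D\times\mathbb{C}$, the right-hand side is real analytic on $\{(z,\xi)\in D\times\mathbb{C}:|\xi|^2H(z)\in V\}$, an open set containing $\Omega\cup S(L)$. This simultaneously yields the real analyticity of $u$ in $\Omega$ (in particular along the zero section, where the apparent singularity of $|\xi|$ is removed by the evenness of $\phi$) and its real analytic extension across $S(L)$.

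Finally I would identify $u$ with the Cheng--Yau solution. The domain $\Omega$ is a bounded Hartogs-type domain over $D$ whose exponent $\log H$ is strictly plurisubharmonic, since $\bigl(\partial_i\partial_{\bar j}\log H\bigr)=(g_{i\bar j})>0$; hence $\Omega$ is a bounded domain of holomorphy, its boundedness stemming from the blow-up of the K\"ahler--Einstein volume form of $g$ as $z\to\partial D$, which forces the fibers $\{|\xi|^2<1/H(z)\}$ to shrink. On such a domain there is, by Cheng--Yau \cite{CheYau} (Mok--Yau \cite{MoYau} in this generality), a unique positive solution $v$ of the Dirichlet problem \eqref{Dirichlet problem} with this $\Omega$, equivalently a unique complete K\"ahler--Einstein metric with Ricci curvature $-(m+1)$; uniqueness follows from Yau's Schwarz Lemma \cite{Yau78}. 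Since our $u$ has all of these properties, it is precisely the (global) Cheng--Yau solution in $\Omega$. The points requiring the most care are the passage from the local statement of Proposition \ref{prpn3d5} to the global solution on $\Omega$ — handled by the global triviality of $L$ over $D$ — and the identification of the canonical solution on a domain whose full boundary need not be smooth, for which one invokes uniqueness in the Mok--Yau form rather than the smooth-boundary Cheng--Yau form.
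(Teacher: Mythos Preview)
Your proposal is correct and follows the paper's own route: the paper simply states that Proposition~\ref{prpnzh2} and Theorem~\ref{thm 2} (together with Proposition~\ref{prpn3d5}) yield Theorem~\ref{T1}, and you have spelled out exactly this deduction --- applying Theorem~\ref{thm 2} to the trivial bundle over $D$, identifying the function $\phi$ via Proposition~\ref{prpn2}, reading off $J(u)=1$ and the real analyticity (including across $S(L)$) from Proposition~\ref{prpn3d5} and the evenness of $\phi$, and invoking uniqueness of the negatively curved complete K\"ahler--Einstein metric to identify $u$ as the Cheng--Yau solution. Your added remarks on pseudoconvexity and boundedness of $\Omega$ are reasonable supplements that the paper leaves implicit.
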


\begin{rmk}\label{rmkzh}
	The ODE (\ref{eqnzode}) can be explicitly solved when $\lambda =-m.$ In this case, $a=\alpha=2$ and $c=0.$ A direct computation yields $\xi(r)=\frac{1-r^2}{2}, Z(r)=\frac{1-r^2}{2+\mu -\mu r^2}$ and $\phi(r)=1-r^2.$
\end{rmk}

While it seems difficult to solve (\ref{eqnzode}) explicitly in general, in this section we will study the rationality of $\xi$ (equivalently, the rationality of $Z$) and find some applications. 

\begin{prop}\label{prpnrn}
The following statements are equivalent: (1) $\xi$ is rational; (2) $\phi^{m+1}$ is rational; (3) $\lambda=-m.$
\end{prop}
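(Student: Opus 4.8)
The plan is to analyze the ODE (\ref{eqnzode}), $r\xi' = -c\xi^{m+1} + a\xi - 1$ with $\xi(1)=0$, where $a = -2\lambda/m > 0$ and $c = -(2/(m+1))^{m+1}(1-\lambda)^m(1+\lambda/m)$. The equivalence $(1)\Leftrightarrow(2)$ should be essentially formal: from $\phi = 2(r/(-\xi'))^{1/(m+1)}\xi$ one has $\phi^{m+1} = 2^{m+1} r \xi^{m+1}/(-\xi')$, and using the ODE to rewrite $-\xi' = (c\xi^{m+1} - a\xi + 1)/r$ gives $\phi^{m+1} = 2^{m+1} r^2 \xi^{m+1}/(c\xi^{m+1} - a\xi + 1)$; so if $\xi$ is rational then $\phi^{m+1}$ is rational, and conversely if $\phi^{m+1}$ is rational then, since $\xi$ is real analytic and $\xi^{m+1}/(-\xi') = \phi^{m+1}/(2^{m+1}r)$ is rational, one recovers $\xi$ as a rational function by the relation $(\log \xi)' = \xi'/\xi$ together with $\xi^{m+1}/\xi' $ rational, i.e. $\xi^m/(\log\xi)'$ rational — more directly, from (\ref{eqnhxi}) $r\xi\phi' + (1-\alpha\xi)\phi = 0$ we can solve $\xi = \phi/(\alpha\phi - r\phi')$, which is rational once $\phi^{m+1}$ (hence $\phi'$ and $\phi$ in the relevant algebraic sense — careful here, $\phi$ itself need not be rational, only $\phi^{m+1}$) is. So I would actually run the argument entirely through $\Phi := \phi^{m+1}$: (\ref{eqnhxi}) raised to a suitable power gives a relation expressing $\xi$ as a rational function of $\Phi$ and $\Phi'$, which handles $(2)\Rightarrow(1)$ cleanly, and the display above handles $(1)\Rightarrow(2)$.

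The substantive direction is $(1)\Rightarrow(3)$ (equivalently $(2)\Rightarrow(3)$); the direction $(3)\Rightarrow(1)$ is immediate from Remark \ref{rmkzh}, where $\lambda=-m$ forces $c=0$ and yields $\xi(r)=(1-r^2)/2$, visibly rational. So assume $\xi$ is rational and $\lambda \ne -m$, and derive a contradiction. I would first dispose of the case $\lambda$ such that $c = 0$: $c=0$ forces $\lambda = -m$ (since $1-\lambda \ne 0$ and $1+\lambda/m = 0 \iff \lambda = -m$), so under $\lambda \ne -m$ we have $c \ne 0$. Now $\xi$ rational and real analytic on a neighborhood of $[-1,1]$ with $\xi(1)=0$, $\xi(0) = Z(0)/(1-\mu Z(0)) > 0$ (using $Z(0) = (m+1)/2$ from Proposition \ref{prpnzh2}, one computes $\xi(0) = \tfrac{(m+1)/2}{1-\lambda}$). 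Write $\xi = p/q$ in lowest terms. The key is to understand the behavior of $\xi$ at $r = \infty$ and at its poles, and play the degree/growth of the two sides of (\ref{eqnzode}) against each other. Substituting $\xi = p/q$ into $r\xi' = -c\xi^{m+1} + a\xi - 1$ and clearing denominators, $rq(p'q - pq') = -cp^{m+1} + apq^m - q^{m+1}$; examining this as a polynomial identity, comparing the degree of the highest-order pole of $\xi$ (a root of $q$) forces constraints, and comparing leading behavior as $r\to\infty$ forces $\xi$ to have a specific asymptotic form.

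The main obstacle, and where I would concentrate the work, is turning "$\xi$ rational" plus the exact structure of the nonlinear ODE into the single algebraic constraint $c = 0$. The clean way is probably the substitution $\eta = 1/\xi$ (or $\eta = Z$, since $\xi = Z/(1-\mu Z)$ and rationality of $\xi$ is equivalent to rationality of $Z$), turning (\ref{eqnzode}) into a first-order rational ODE for $\eta$ with a pole structure that is easier to read; near $r=1$, where $\xi$ vanishes simply (as $\xi'(1) = -1$ from $r\xi'|_{r=1} = -c\cdot 0 + a\cdot 0 - 1 = -1$), and near the poles of $\xi$, the local Laurent data of a rational solution must be consistent with the autonomous-after-rescaling structure $r\xi' = R(\xi)$ where $R(\xi) = -c\xi^{m+1}+a\xi-1$; writing $r = e^t$ makes it autonomous, $\dot\xi = R(\xi)$, whose solutions are rational in $r = e^t$ only in degenerate cases — and I expect precisely the vanishing of the top coefficient $c$ (dropping the degree of $R$) to be what is needed, since a rational (in $e^t$) solution of $\dot\xi = R(\xi)$ with $\deg R \ge 2$ generically produces branch points or essential singularities. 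I would make this rigorous by evaluating (\ref{eqnhxi}) and (\ref{eqnzode}) at a pole $r_0$ of $\phi^{m+1}$ or of $\xi$: from $\phi^{m+1} = 2^{m+1}r^2\xi^{m+1}/(c\xi^{m+1}-a\xi+1)$, the poles of $\phi^{m+1}$ occur where $c\xi^{m+1} - a\xi + 1 = 0$, and tracking the order of such a pole against the ODE-imposed relation between $\xi$ and $\xi'$ there should collapse to $c=0$. That local-analysis step — correctly bookkeeping vanishing/pole orders for the degree-$(m+1)$ nonlinearity — is the crux; the rest is bookkeeping.
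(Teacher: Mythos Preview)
Your treatment of $(1)\Leftrightarrow(2)$ is essentially the paper's: the clean form of $(2)\Rightarrow(1)$ is exactly what you eventually arrive at, namely from (\ref{eqnhxi}) one reads off
\[
\frac{1}{\xi}=\alpha-r\frac{\phi'}{\phi}=\alpha-\frac{r}{m+1}\,\frac{(\phi^{m+1})'}{\phi^{m+1}},
\]
so rationality of $\phi^{m+1}$ gives rationality of $\xi$ immediately. Your $(1)\Rightarrow(2)$ via $\phi^{m+1}=2^{m+1}r\xi^{m+1}/(-\xi')$ is also the paper's argument. And $(3)\Rightarrow(1)$ via Remark \ref{rmkzh} is fine.

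The gap is in $(1)\Rightarrow(3)$. You write down the right polynomial identity (modulo a slip: clearing denominators by $q^{m+1}$ gives $rq^{m-1}(p'q-pq')$ on the left, not $rq(p'q-pq')$), but then you abandon it for a speculative and incomplete programme involving poles, the autonomous substitution $r=e^t$, and local order-of-vanishing bookkeeping that you explicitly flag as unfinished. None of that is needed. The identity
\[
rq^{m-1}(p'q-pq')=-cp^{m+1}+apq^{m}-q^{m+1}
\]
finishes the argument in two lines: since $m\ge 2$, $q$ divides the left side and the last two terms on the right, hence $q\mid cp^{m+1}$; as $p,q$ are coprime this forces $q$ constant (when $c\ne 0$), so $\xi=p$ is a nonconstant polynomial. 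But then the left side has degree $\deg p$ while $-c\,p^{m+1}$ has strictly larger degree $(m+1)\deg p$ unless $c=0$. Hence $c=0$, i.e.\ $\lambda=-m$. Your proposed pole/asymptotic analysis is aiming at exactly this degree obstruction, but the direct divisibility-plus-degree count from the cleared identity is the whole proof.
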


{\em Proof of Proposition \ref{prpnrn}:} We first prove (1) and (2) are equivalent. By Proposition \ref{prpn2},  $\phi^{m+1}=\big(\frac{r}{-\xi'} \big)(2 \xi)^{m+1}.$ Consequently, the rationality of $\xi$ implies that of $\phi^{m+1}$. On the other hand, by (\ref{eqnhxi}), $\frac{1}{\xi}=\frac{\alpha \phi-r\phi'}{\phi}=\alpha-r\frac{\phi'}{\phi}=\alpha-\frac{r}{m+1} \frac{(\phi^{m+1})'}{\phi^{m+1}}.$ Hence if $\phi^{m+1}$ is rational, then so is $\xi$. This proves the equivalence of (1) and (2). By Remark \ref{rmkzh}, we see (3) implies (1) and (2). It then suffices to show
(1) implies (3).  Suppose $\xi$ is rational and write $\xi(r)=\frac{p(r)}{q(r)}$ with $p$ and $q$ coprime. Putting this into the ODE in (\ref{eqnzode}), we obtain
\begin{equation}\label{eqnxipq}
rq^{m-1}(p'q-pq')=-cp^{m+1}+ap q^m -q^{m+1}.
\end{equation}
Since $m \geq 2,$ we have $q$ divides $p^{m+1}.$ But this is only the case when $q$ is constant. Therefore we can assume $q=1$ and thus $\xi=p.$ Since $\xi$ cannot be constant by (\ref{eqnzode}), we have the degree of $p$ is at least $1$. Then by comparing the degree of
both sides of (\ref{eqnxipq}), we must have $c=0$, and consequently $\lambda=-m.$ This finishes the proof of Proposition \ref{prpnrn}. \qed



We are now ready to prove Proposition \ref{prpn1d10}.

{\em Proof of Proposition \ref{prpn1d10}:} Let $D$ be the $n-$dimensional complex unit ball $\{z \in \mathbb{C}^n: |z|^2<1 \}.$ Let $m=n+1$, $\lambda <0$ and $H=(1-|z|^2)^{\frac{m}{\lambda}}.$
Let $g=\sum g_{i \Ol{j}} dz_i \otimes d \Ol{z_j}$ be the induced metric with $g_{i \Ol{j}}=\frac{\partial^2 \log H}{\partial z_i \partial \Ol{z_j}}$.
 It is a well-known fact that $g$ is a complete K\"ahler-Einstein metric on $D$ with Ricci curvature equals to $\lambda$. In addition, writing $G(z)=\det(g_{i \Ol{j}})$, we have $G=(\frac{1}{-\lambda})^n\frac{m^n}{(1-|z|^2)^{m}}.$ To keep notation simple, we will write $\mu=\mu(n)=(\frac{1}{-\lambda})^n m^n$, and thus $\mu^{-1} G=(1-|z|^2)^{-m}, H=(\mu^{-1} G)^{-\frac{1}{\lambda}}.$  Note the disk bundle $\Omega$ as defined in (\ref{eqnodl}) is now reduced to
$$\Omega=\Omega_{\lambda}:=\{W=(z, \xi)\in \mathbb{C}^{n} \times \mathbb{C}: z \in \mathbb{B}^n, |\xi|^2 (1-|z|^2)^{\frac{m}{\lambda}}<1 \}.$$
Set $p^*=\frac{1}{pm}.$ If we pick $\lambda=-\frac{1}{p^*}$, then we have $\Omega_{-\frac{1}{p^*}}=E_p$. Therefore by Theorem \ref{T1},
writing $u(W)$ for the Cheng--Yau solution of $E_p$, we have
\begin{equation}\label{eqnv}
u(W)=\mu^{\frac{p^*}{m+1}} (G(z))^{-\frac{p^*+1}{m+1}}  \phi\big(\mu^{-\frac{p^*}{2}}|\xi| (G(z))^{\frac{p^*}{2}}\big).
\end{equation}
We also recall the Bergman kernel $K$ of $E_p$ was computed by D'Angelo \cite{DA94}:
$$K(W, \Ol{W})=\sum_{i=0}^{m} c_i\frac{(1-|z|^2)^{-m+\frac{i}{p}}}{\big((1-|z|^2)^{\frac{1}{p}}-|\xi|^2 \big)^{1+i}}.$$
Here $c_i$ are constants depending on $i, m$ and $p$. Note $(1-|z|^2)^{\frac{1}{p}}=(\mu^{-1}G)^{-p^*}.$ Consequently,
\begin{equation}\label{eqnk}
\begin{split}
K(W, \Ol{W})=&\sum_{i=0}^{m} c_i \frac{(\mu^{-1}G) (\mu^{-1}G)^{-ip^*}}{\big((\mu^{-1}G)^{-p^*}-|\xi|^2 \big)^{1+i}}\\
=&(\mu^{-1}G)^{1+p^*} \sum_{i=0}^{m} c_i \frac{1}{\big(1-(\mu^{-1}G)^{p^*}|\xi|^2\big)^{1+i}}.
\end{split}
\end{equation}
	
To establish Proposition \ref{prpn1d10}, we only need to show that if the Bergman metric of $E_p$ is K\"ahler-Einstein, then $p=1.$ For that, we assume the Bergman metric $g_{\mathrm{B}}$ of $E_p$ is K\"ahler-Einstein and first follow the work of Fu-Wong \cite{FuWo} to compute the volume form of $g_{\mathrm{B}}$.  Note a generic boundary point of $E_p$ is smooth and  strictly pseudoconvex (indeed spherical). Fix any  strictly pseudoconvex  boundary point $W^*$. By using Fefferman's expansion for the Bergman kernel near $W^*$ and the argument in Cheng--Yau (\cite{CheYau}, page 510), we see the Ricci curvature of $g_{\mathrm{B}}$ at $W \in E_p$ tends to $-1$ as $W$ approaches $W^*$.     By the K\"ahler-Einstein assumption,
the Ricci curvature of $g_{\mathrm{B}}$ must equal to $-1$. Then by Proposition 1.2 in \cite{FuWo}, the determinant of $g_{\mathrm{B}}$ equals the Bergman kernel up to a constant multiple. On the other hand, the volume form of two  complete \ke metrics, $\bigl( (-\log u)_{i\bar{j}} \bigr)$ and $g_{\mathrm{B}}$,  on $E_p$ can only differ by a constant multiple.
Consequently, we have  $u^{-(m+1)}(W)=c K$ for some constant $c >0$. Combining this with (\ref{eqnv}) and (\ref{eqnk}), we obtain
	\begin{equation}
	\phi^{-(m+1)}(r)=\frac{c}{\mu} \sum_{i=0}^{m} c_i \frac{1}{(1-r^2)^{1+i}}.
	\end{equation}
	This implies $\phi^{m+1}$ is rational. By proposition \ref{prpnrn}, $\lambda=-\frac{1}{p^*}=-m.$ That is, $p=1.$	\qed

\section{\k surface with constant scalar curvature and Proof of Theorem \ref{thm 3}}\label{Sec 3}

\subsection{Preliminaries from CR and pseudohermitian geometry}\label{Sec preliminary from CR}

In this section, we briefly review some background materials on the CR and pseudohermitian geometry, which will be used in the proof later. For more details, we refer readers to \cite{Web77b}, \cite{Lee86} and \cite{Wang}.

3.1.1 \textbf{Pseudohermitian geometry.} Let $X$ be a smooth orientable manifold of real dimension $2n+1$. An almost CR structure on $X$ is a pair $(H(M), J)$, where $H(M)$ is a subbundle of $TX$ with rank $2n$, and $J$ is an almost complex structure on $H(M)$. We can then decompose $H(M)\otimes \mathbb{C}=T^{1,0}X\oplus T^{0,1}X$, where $J$ acts by $i$ on $T^{1,0}X$ and by $-i$ on $T^{0,1}X=\oo{T^{1,0}X}$. $(X, H(M), J)$ is called a \emph{CR manifold} if $T^{1,0}X$ satisfies the integrability condition $[T^{1,0}X, T^{1,0}X]\subset T^{1,0}X$.

Since $X$ is orientable and the complex structure $J$ defines an orientation on $H(M)$, the annihilator subbundle $H(M)^{\perp}:=\mathrm{Ann}(H(M))\subset T^*X$ is also orientable. A section $\theta$ of $H(M)^{\perp}$ is a real 1-form. If $\theta$ is nowhere vanishing, then we shall refer to such $\theta$ as a \emph{contact form} for $H(M)$. (The assumption of strict pseudoconvexity below justifies this terminology as $\theta$ will then be a contact form in the classical sense.) A CR structure $(X, H(M), J)$ together with a choice of contact form $\theta$ is referred to as a \emph{pseudohermitian structure}, a terminology that is explained and motivated below. Let $\omega=d\theta$. Then $\omega(\cdot, J\cdot)$ defines a symmetric bilinear form on $H(M)$. If this symmetric bilinear form is positive definite, then the CR manifold $X$ is said to be \emph{strictly (or strongly) pseudoconvex}. The \emph{Reeb vector field} associated with the contact form $\theta$ is the unique vector field $T$ on $X$ such that $\theta(T)=1$ and $T \intprod  d\theta=0$.

From now on, $(X, H(M), J)$ is always assumed to be a strictly pseudoconvex CR manifold and $\theta$ is a given choice of  a contact form. Let $\{Z_1, \cdots Z_{n}\}$ be a local frame of $T^{1,0}X$ and $Z_{\bar{\alpha}}=\oo{Z_{\alpha}}$ for $1\leq \alpha\leq n$. Then $\{T, Z_1, \cdots, Z_n, Z_{\bar{1}}, \cdots, Z_{\bar{n}}\}$ is a local frame of $TX\otimes\mathbb{C}$. We denote the dual frame by $\{\theta, \theta^1, \cdots, \theta^n, \theta^{\bar{1}}, \cdots, \theta^{\bar{n}}\}$. Note this is an admissible coframe in the terminology of Webster \cite{Web78}, and $\{\theta^1, \cdots, \theta^n\}$ can be identified with a coframe for $(T^{1,0})^* X$. By the integrability condition and the fact that $T$ is the Reeb vector field, we have
\begin{equation*}
d\theta=i g_{\alpha\bar{\beta}}\theta^{\alpha}\wedge\theta^{\bar{\beta}},
\end{equation*}
where $g_{\alpha\bar{\beta}}$ is a positive definite Hermitian matrix. The induced Hermitian form on $T^{1,0} X$ is called the {\em Levi form}. Some authors refer to the symmetric form in the previous paragraph as the Levi form, but one is the complexification (and restriction to $T^{1,0}X$) of the other so both contain the same information.

Webster introduced the notion of a pseudohermitian structure in \cite{Web78}. By fixing a contact form $\theta$, he showed that there are uniquely determined $1$-forms $\omega_{\alpha}{}^{\beta}, \tau^{\beta}$ on $X$ satisfying
\begin{align}\label{structure equation of CR manifold}
\begin{split}
&d\theta^{\beta}=\theta^{\alpha}\wedge \omega_{\alpha}{}^{\beta}+\theta\wedge\tau^{\beta},\\
\omega_{\alpha\bar{\beta}}&+\omega_{\bar{\beta}\alpha}=dg_{\alpha\bar{\beta}}, \qquad \tau_{\alpha}\wedge\theta^{\alpha}=0,
\end{split}
\end{align}
where the complex conjugate is reflected in the index (e.g., $\omega_{\bar{\beta}\alpha}=\oo{\omega_{\beta\bar{\alpha}}}$), and we have used the Levi form $g_{\alpha\bar{\beta}}$ to raise and lower the index. These conventions will be also adapted in the following. We can write $\tau_{\alpha}=A_{\alpha\gamma}\theta^{\gamma}$ with $A_{\alpha\gamma}=A_{\gamma\alpha}$, and $(A_{\alpha\gamma})$ is called the \emph{pseudohermitian torsion}.
The {\em Tanaka-Webster connection} is defined by
\begin{equation*}
\nab Z_{\alpha}=\omega_{\alpha}{}^{\beta}\otimes Z_{\beta}, \quad \nab Z_{\bar{\alpha}}=\omega_{\bar{\alpha}}{}^{\bar{\beta}}\otimes Z_{\bar{\beta}}, \quad \nab T=0.
\end{equation*}
In the case $(A_{\alpha\gamma})$ all vanish, we say that the Tanaka-Webster connection $\nab$ is (pseudohermitian) torsion free. By the work of Webster \cite{Web78} (cf. \cite{Wang}), the curvature form is given by
\begin{align*}
\Omega_{\alpha}{}^{\beta}:=&d\omega_{\alpha}{}^{\beta}-\omega_{\alpha}{}^{\gamma}\wedge \omega_{\gamma}{}^{\beta}
\\
=&-R_{\mu\bar{\nu}\alpha}{}^{\beta}\theta^{\mu}\wedge\theta^{\bar{\nu}}+\nab^{\beta}A_{\alpha\gamma}\theta^{\gamma}\wedge\theta-\nab_{\alpha}A_{\bar{\gamma}}{}^{\beta} \theta^{\bar{\gamma}}\wedge\theta+i\theta_{\alpha}\wedge\tau^{\beta}-i\tau_{\alpha}\wedge\theta^{\beta},
\end{align*}
where $R_{\mu\bar{\nu}\alpha}{}^{\beta}$ (equivalently, $R_{\mu\bar{\nu}\alpha\bar{\beta}}$) is the \emph{pseudohermitian curvature}. Taking trace, we have the \emph{pseudohermitian Ricci curvature} $R_{\mu\bar{\nu}}=-R_{\mu\bar{\nu}\alpha}{}^{\alpha}$ and the \emph{pseudohermitian scalar curvature} $R=R_{\mu\bar{\nu}}g^{\mu\bar{\nu}}$.

3.1.2 \textbf{The Fefferman space.} Let $(X, H(M), J)$ be a CR manifold and $\theta$ a choice of contact form, giving the CR manifold a pseudohermitian structure as defined in $\S$ 3.1.1. We introduce the \emph{canonical line bundle} of $X$ as follows:
\begin{equation*}
K:=\{\eta\in \Lambda^{n+1}(X): V\intprod \eta=0 \mbox{ for any } V\in T^{0,1}X \}.
\end{equation*}
If $X$ obtains its CR structure from an embedding in a complex manifold $N$, then $K$ is naturally isomorphic to the restriction of the canonical line bundle $K_N$ of $(n+1, 0)$ forms on $N$.

Let $K^*$ be the canonical line bundle with the zero section deleted. We define an intrinsic circle bundle $\mathcal{F}=K^*/R^+$ as the quotient of $K^*$ by the $R^+$ action $\eta\rightarrow \lambda\eta$ for any $\lambda\in R^+$. A point in $\mathcal{F}$ is an equivalent class $[\eta]$ of $(n+1,0)$-forms under multiplication by any $\lambda\in R^+$. We can fix a unique representative $\eta_0$ by imposing the volume normalization condition
\begin{equation}\label{normalization condition}
i^{n^2}n! \theta\wedge (T\intprod \eta_0)\wedge (T\intprod \oo{\eta_0})=\theta\wedge d\theta^n.
\end{equation}

In an admissible coframe $\{\theta, \theta^1, \cdots, \theta^n, \theta^{\bar{1}}, \cdots, \theta^{\bar{n}}\}$ over an open set $U\subset X$, define $\zeta$ to be 
\begin{equation*}
\zeta=(\det g_{\alpha\bar{\beta}})^{1/2} \theta\wedge\theta^1\wedge\cdots\wedge\theta^n.
\end{equation*}
One can verify $\zeta$ is a nonvanishing $(n+1,0)$ form on $U$ and satisfies \eqref{normalization condition}. Therefore, $\zeta$ serves as a local frame of the circle bundle $\mathcal{F}$ over $U$: any point $\eta_0$ of $\mathcal{F}$ over $U$ can be expressed as $e^{i\gamma}\zeta$. We will use $\gamma$ as a local fiber coordinate.

The \emph{Fefferman metric} on $\mathcal{F}$ is given in terms of the pseudohermitian invariants by
\begin{equation}\label{Fefferman metric}
\mathsf{g}=g_{\alpha\bar{\beta}}\,\theta^{\alpha}\cdot\theta^{\bar{\beta}}+2\theta\cdot\sigma, \qquad\mbox{ with }
\end{equation}
\begin{equation}\label{sigma in Fefferman metric}
\sigma=\frac{1}{n+2}\bigl(d\gamma+i\omega_{\alpha}{}^{\alpha}-\frac{i}{2}g^{\alpha\bar{\beta}}dg_{\alpha\bar{\beta}}-\frac{1}{2(n+1)}R\theta ).
\end{equation}

The Fefferman metric $\mathsf{g}$ is a Lorentz metric with signature $(2n+1, 1)$, and $(\mathcal{F}, \mathsf{g})$ is called the \emph{Fefferman space}. The conformal class of the Fefferman metric is a CR invariant of $(X, H(M), J)$. The reader is referred to, e.g., \cite{Lee86} for more details.

3.1.3 \textbf{The circle bundle and \k geometry.}
Let $(L,h)$ be a Hermitian line bundle over a complex manifold $M$ of dimension $n$. We assume $(L,h)$ is a \emph{negative line bundle}, that is, $\omega=-c_1(L,h)$ induces a \k form on $M$. Consider the circle bundle
\begin{equation*}
S(L)=S(L,h):=\{v\in L: |v|_h=1 \},
\end{equation*}
which is a $(2n+1)$-dimensional CR manifold. By an observation of Grauert, it is strictly pseudoconvex. Set $\theta=i\dbar\log|v|_h^2$, which defines a contact form on $S(L)$. In this way, we obtain a pseudohermitian structure on the strictly pseudoconvex CR manifold $S(L)$ with contact form $\theta$; we shall use the notation $(S(L), \theta)$ for this pseudohermitian structure.

Let $(U, (z^1, \cdots, z^n))$ be a local chart of $M$ on which we have a local trivialization $e_L$ of $L$. If we write $|e_L|_h^2=H(z,\bar{z})$, then locally $\omega=i\partial\dbar\log H(z,\bar{z}):=ig_{\alpha\bar{\beta}}dz^{\alpha}\wedge d\oo{z^{\beta}}$ with $g_{\alpha\bar{\beta}}=\frac{\partial^2\log H(z,\bar{z})}{\partial z^{\alpha}\partial \oo{z^{\beta}}}$.
Over $U$, $S(L)$ is given by $\{(z,\xi)\in U\times \mathbb{C}: |\xi|^2 H(z,\bar{z})=1\}$
with the contact form $\theta=i\dbar\log(|\xi|^2H)$ and $d\theta=\omega$. We have a local frame of the complexified tangent bundle $TS(L)\otimes\mathbb{C}$:
\begin{align*}
T=i\bigl(\xi\frac{\partial}{\partial\xi}-\oo{\xi}\frac{\partial}{\partial \oo{\xi}} \bigr), \qquad Z_{\alpha}=\frac{\partial}{\partial z^{\alpha}}-\xi\frac{\partial\log H}{\partial z^{\alpha}} \frac{\partial}{\partial\xi}, \qquad Z_{\oo{\alpha}}=\oo{Z_{\alpha}}.
\end{align*}
The dual frame is given by
\begin{align*}
\theta, \qquad \theta^{\alpha}=dz^{\alpha},\qquad \theta^{\oo{\alpha}}=d\oo{z^{\alpha}}.
\end{align*}
Let $\nab$ be the Tanaka-Webster connection of the pseudohermitian structure induced by $\theta$. As is well-known in this case, the Tanaka-Webster connection is (pseudohermitian) torsion free (i.e., the pseudohermitian torsion defined in $\S$ 3.1.1 vanishes) and the connection (covariant differentiation) is given by
\begin{equation*}
\nab_T Z_{\alpha}=0, \qquad \nab_{Z_{\oo{\alpha}}} Z_{\beta}=0, \qquad \nab_{Z_{\alpha}} Z_{\beta}=\widetilde{\Gamma}_{\alpha\beta}^{\gamma}Z_{\gamma},
\end{equation*}
where $\widetilde{\Gamma}_{\alpha\beta}^{\gamma}=g^{\gamma\bar{\nu}}\frac{\partial g_{\beta\bar{\nu}}}{\partial z_{\alpha}}$ (see more details in \cite{Wang}). Note that the $\widetilde{\Gamma}_{\alpha\beta}^{\gamma}$ are $S^1$-invariant and can by identified with the Christoffel symbol of the Levi-Civita connection of $(M,g)$ (where $g$ denotes the \k metric induced by the \k form $\omega$).

Let $\pi: S(L)\rightarrow M$ be the natural projection. Then $\pi_*T=0$, $\pi_*Z_{\alpha}=\frac{\partial}{\partial z^{\alpha}}$ and $\pi_*Z_{\oo{\alpha}}=\frac{\partial}{\partial \oo{z^{\alpha}}}$. Under this map, we can actually identify the Tanaka-Webster connection on the CR tangent bundle $H(S(L))\otimes\mathbb{C}$ with the Levi-Civita connection on $M$. In particular, the Tanaka-Webster connection $1$-forms $\omega_{\alpha}{}^{\beta}$ are identified with the Levi-Civita connection $1$-form of $(M, g)$. By the work of Webster \cite{Web78} (cf. \cite{Wang}), the pseudohermitian curvatures are given by
\begin{align*}
R_{\mu\bar{\nu}\alpha\bar{\beta}}=\frac{\partial^2 g_{\alpha\bar{\beta}}}{\partial z_{\mu}\partial \oo{z_\nu}}-g^{\gamma\bar{\delta}} \frac{\partial g_{\gamma\bar{\beta}}}{\partial \oo{z_{\nu}}} \frac{\partial g_{\alpha\bar{\delta}}}{\partial z_{\mu}}=R^M_{\mu\bar{\nu}\alpha\bar{\beta}}, \qquad
R_{\mu\bar{\nu}}=R^M_{\mu\bar{\nu}}, \qquad R=R^M,
\end{align*}
where $R^M_{\mu\bar{\nu}\alpha\bar{\beta}}, R^M_{\mu\bar{\nu}}$ and $R^M$ are respectively the (K\"ahler) Riemannian, Ricci and scalar curvature of $(M,\omega)$. Due to these identities, we can drop the superscript `$M$' in the curvature notations of $(M,\omega)$ without causing any ambiguity.

In computations, we shall often use normal coordinates for $g$ near a given point $p\in M$, under which we have all connection $1$-forms $\omega_{\alpha}{}^{\beta}$ vanish at $p$ (i.e., all Christoffel symbols $\widetilde{\Gamma}_{\alpha\gamma}^{\beta}$ vanish) and
\begin{equation}\label{normal coordinates}
	(g_{\alpha\bar{\beta}}(p))=\begin{pmatrix}
	1 & 0\\
	0 & 1\\
	\end{pmatrix}, \qquad
	(R_{\alpha\bar{\beta}}(p))=\begin{pmatrix}
	\lambda_1 & 0\\
	0 & \lambda_2\\
	\end{pmatrix},
\end{equation}
where $\lambda_1$ and $\lambda_2$ are are the Ricci eigenvalues at $p$.

\subsection{Proof of Theorem \ref{thm 3}}
In this section, we focus on the case of \k surfaces ($n=2$) with constant scalar curvature. In this case, we can give an explicit formula of the obstruction function in terms of curvature tensors of the \k surface (Theorem \ref{obstruction tensor for csck thm} below), from which Theorem \ref{thm 3} will follow immediately.

Let $K_{\alpha\bar{\beta}}$ be the trace-modified Ricci curvature tensor defined by $K_{\alpha\bar{\beta}}=\frac{1}{4}\bigl(R_{\alpha\bar{\beta}}-\frac{1}{6}Rg_{\alpha\bar{\beta}} \bigr)$. We denote $\Lambda=K_{\alpha\bar{\beta}}K^{\alpha\bar{\beta}}$, where the indices are raised up by using $g^{\alpha\bar{\beta}}$.

\begin{thm}\label{obstruction tensor for csck thm}
	If $(M,g)$ is a \k surface with constant scalar curvature, then the obstruction function $\mathcal{O}$ of the circle bundle $S(L)$ up to a nonzero constant multiple is given by
	\begin{equation}\label{obstruction tensor for csck}
		\Delta^2 \Lambda+R^{\alpha\bar{\beta}} \nab_{\alpha}\nab_{\bar{\beta}} \Lambda,
	\end{equation}
	where $\Delta=g^{\alpha\bar{\beta}}\nab_{\alpha}\nab_{\bar{\beta}}$ is the (complex) Laplacian on $M$.
\end{thm}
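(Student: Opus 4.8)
The plan is to compute the obstruction function $\mathcal{O}$ of $S(L)$ directly from pseudohermitian data and then to simplify it using the two special features recorded in \S 3.1.3: the Tanaka--Webster torsion of $(S(L),\theta)$ vanishes, and its pseudohermitian curvature tensors $R_{\mu\bar{\nu}\alpha\bar{\beta}}$, $R_{\mu\bar{\nu}}$, $R$ coincide with the K\"ahler curvature tensors of $(M,g)$. First I would write $\mathcal{O}$ as a universal pseudohermitian invariant of the appropriate weight. Here the work of Lee \cite{Lee86}, relating the curvature of the Fefferman metric to pseudohermitian invariants, together with the formulas of Graham--Hirachi \cite{GrHi05, GrHi08} for the ambient obstruction tensor (see also \cite{GoPe06}), identifies $\mathcal{O}$ for a five-dimensional CR manifold with an explicit natural scalar built from $R_{\mu\bar{\nu}\alpha\bar{\beta}}$, the torsion $A_{\alpha\beta}$, and their Tanaka--Webster covariant derivatives. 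Setting $A_{\alpha\beta}\equiv 0$ and passing to K\"ahler notation on $M$ leaves a natural scalar invariant of $(M,g)$, of weight $-4$, assembled from $R_{\alpha\bar{\beta}\gamma\bar{\delta}}$, $R_{\alpha\bar{\beta}}$, $R$ and their covariant derivatives.

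Second, I would impose constant scalar curvature and reduce. Because $\nab R\equiv 0$, the contracted second Bianchi identity gives $\nab^{\alpha}R_{\alpha\bar{\beta}}=\nab_{\bar{\beta}}R=0$, and together with the K\"ahler symmetry $\nab_\alpha R_{\beta\bar{\gamma}}=\nab_\beta R_{\alpha\bar{\gamma}}$ this makes the covariant derivatives of the trace-modified Ricci tensor $K_{\alpha\bar{\beta}}$ highly constrained. Systematic integration by parts and Bianchi manipulations should collapse the universal invariant onto a linear combination of $\Delta^2\Lambda$ and $R^{\alpha\bar{\beta}}\nab_\alpha\nab_{\bar{\beta}}\Lambda$ --- that is, onto the Lichnerowicz operator $L$ of Remark \ref{Lichnerowicz operator rmk} applied to $\Lambda=K_{\alpha\bar{\beta}}K^{\alpha\bar{\beta}}$. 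To pin down the two coefficients (and show they agree and are nonzero), I would test the formula against metrics for which $\mathcal{O}$ is already known to vanish, namely those with constant Ricci eigenvalues (Theorem \ref{thm 1}): in complex dimension two with $R$ constant these force $\Lambda$ to be constant, hence $L\Lambda=0$. Combined with explicit cscK models --- K\"ahler--Einstein metrics, Riemannian products, and examples of Siegel--Jacobi type where the eigenvalues are constant but $\nab K_{\alpha\bar{\beta}}\not\equiv 0$ --- this kills all curvature-cubic and gradient-quadratic contributions, normalizes the surviving combination to $\Delta^2\Lambda+R^{\alpha\bar{\beta}}\nab_\alpha\nab_{\bar{\beta}}\Lambda$, and shows the overall constant is nonzero.

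I expect the main obstacle to be exactly this reduction. The general pseudohermitian expression for $\mathcal{O}$ in dimension five is long, and the bookkeeping required to verify that after the torsion-free, constant-scalar specialization every term other than $\Delta^2\Lambda$ and $R^{\alpha\bar{\beta}}\nab_\alpha\nab_{\bar{\beta}}\Lambda$ vanishes or recombines --- and that these two appear with the same coefficient --- is delicate and calls for careful control of the weight $-4$ invariants; it is this part that I would relegate to the Appendix. Tracking the numerical constants correctly is where the constant-Ricci-eigenvalue test metrics furnished by Theorem \ref{thm 1} are indispensable.

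Granting Theorem \ref{obstruction tensor for csck thm}, Theorem \ref{thm 3} follows at once. By the formula, $g$ is obstruction flat if and only if $L\Lambda\equiv 0$. A short computation in a Ricci eigenbasis, using $\lambda_1^2+\lambda_2^2=R^2-2C$ and $\lambda_1\lambda_2=C$, gives $\Lambda=-\tfrac{1}{8}C+\tfrac{13}{288}R^2$, so with $R$ constant $L\Lambda=-\tfrac{1}{8}LC$ by Remark \ref{Lichnerowicz operator rmk}, whence (1)$\iff$(2). If in addition $M$ is compact, then $L$ is self-adjoint and $\int_M C\,LC$ equals a positive multiple of $\int_M|\nab_{\bar{\alpha}}\nab_{\bar{\beta}}C|^2$; hence $LC\equiv 0$ if and only if $\nab_{\bar{\beta}}\nab_{\bar{\alpha}}C\equiv 0$, i.e.\ $\grad^{1,0}C$ is holomorphic, giving (1)$\iff$(2)$\iff$(3). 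Finally, one checks consistency with Theorem \ref{thm 1}: when $(M,g)$ has constant Ricci eigenvalues, $\Lambda$ is constant, so $\mathcal{O}=c\,L\Lambda=0$, as it must be.
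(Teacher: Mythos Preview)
Your proposal is correct and follows essentially the same strategy as the paper: compute $\mathcal{O}_{00}$ on the Fefferman space via the Graham--Hirachi formula, translate through Lee's dictionary to pseudohermitian and then K\"ahler data (with the detailed curvature computations deferred to the Appendix), and use the constant-Ricci-eigenvalue models supplied by Theorem~\ref{thm 1} to kill the residual algebraic terms. One small sharpening: the test metrics all have $\Lambda$ constant, so they cannot by themselves pin down the two derivative coefficients --- those must come from the direct computation; the paper handles this cleanly by computing modulo the class $\mathcal{P}$ of symmetric polynomials in the Ricci eigenvalues, obtaining $\mathcal{O}_{00}=8\Delta^2\Lambda+8R^{\alpha\bar\beta}\nab_\alpha\nab_{\bar\beta}\Lambda+\mathcal{P}$ first and then invoking the products $\Delta_a\times\Delta_b$ to force the $\mathcal{P}$-remainder to vanish.
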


We first sketch the idea to prove Theorem \ref{obstruction tensor for csck thm} here. In the seminal work of Fefferman and Graham \cite{FeGr12}, they introduced the obstruction tensor $\mathcal{O}_{ij}$ on an even dimensional conformal manifold. The obstruction tensor is a conformally invariant symmetric $2$-tensor. Consider a strictly pseudoconvex CR manifold $X$ with pseudohermitian structure $(X, \theta)$ and let $(\mathcal{F}, \mathsf{g})$ be its Fefferman space. In \cite{GrHi08}, Graham and Hirachi pointed out the relation between the obstruction function $\mathcal{O}$ of $X$ and the obstruction tensor $\mathcal{O}_{ij}$ of its Fefferman space $(\mathcal{F}, \mathsf{g})$.
\begin{thm}[Proposition 3.6 in \cite{GrHi08}] \label{GrHi thm}
	Let $\pi: \mathcal{F}\rightarrow X$ be the natural projection. Then the pullback of $\mathcal{O}\, \theta^2$ is the obstruction tensor of $(\mathcal{F}, \mathsf{g})$ up to a nonzero constant multiple.
\end{thm}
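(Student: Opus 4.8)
The plan is to follow the argument of Proposition~3.6 in \cite{GrHi08}: one compares two ``ambient'' constructions and matches their obstructions. On the conformal side, recall that the Fefferman--Graham obstruction tensor $\mathcal{O}_{ij}$ of an even-dimensional conformal manifold $(N^d,[\mathsf{g}])$ is the obstruction, arising at order $d/2$, to the existence of a smooth Ricci-flat ambient metric extending the conformal data; it is a natural, conformally invariant, trace-free and divergence-free symmetric $2$-tensor of conformal weight $2-d$ (see \cite{FeGr12}). For $N=\mathcal{F}$, the Fefferman space of a strictly pseudoconvex CR manifold $X^{2n+1}$, one has $d=2n+2$, so the relevant order is $d/2=n+1$. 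On the CR side, the obstruction function $\mathcal{O}$ of $X$ is, by Graham's theorem recalled in $\S$~\ref{Sec 2.1}, exactly the coefficient $\eta_1|_X$ in the Lee--Melrose expansion, i.e.\ the obstruction at order $n+1$ to the existence of a formal smooth solution of Fefferman's complex Monge--Amp\`ere equation $J(u)=1$ --- equivalently, to a smooth Ricci-flat (Lorentz--)K\"ahler metric on a neighborhood of $X$ inside its canonical bundle. The coincidence of the two critical orders ($d/2=n+1$) is the first indication that the two obstructions must agree.

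First I would invoke the classical identification, going back to Fefferman \cite{Fe2} and Lee \cite{Lee86} (and used in \cite{GrHi08}), of the Fefferman--Graham ambient metric of $(\mathcal{F},\mathsf{g})$ with a real form of Fefferman's CR-ambient Lorentz--K\"ahler metric: the CR-ambient space is an $(n+2)$-complex-dimensional neighborhood $\widetilde N$ of the zero section (deleted) of the canonical bundle $K\to X$, carrying a Lorentz--K\"ahler metric built from a defining function $r$ with $J(r)=1+O(r^{n+2})$; passing to the underlying real manifold and quotienting by the natural $\mathbb{R}^{+}$-dilation recovers the Fefferman--Graham ambient space of $\mathcal{F}$, and the Lorentz--K\"ahler metric descends to its ambient metric. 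Under this correspondence, Ricci-flatness of the ambient metric becomes the Monge--Amp\`ere equation $J(r)=1$, so the order-$(n+1)$ obstruction to the former coincides, up to a universal nonzero constant, with the order-$(n+1)$ obstruction to the latter, which is by definition $\mathcal{O}$. This gives the \emph{scalar} identity between the two obstruction quantities. Tracking conventions and constants through this identification --- matching the dilation variable of the conformal construction with the fiber coordinate of $K$, and verifying that Ricci-flatness really does reduce to complex Monge--Amp\`ere at the relevant order --- is the technical heart of the argument, and is the step I expect to be the main obstacle.

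It then remains to upgrade the scalar identity to the tensorial statement that $\pi^{*}(\mathcal{O}\,\theta^2)$ equals the obstruction tensor of $(\mathcal{F},\mathsf{g})$ up to a nonzero constant. Here I would use the structure of the Fefferman space: the circle action along the fibers of $\pi:\mathcal{F}\to X$ preserves the conformal class $[\mathsf{g}]$, so $\mathcal{O}_{ij}$ is $S^1$-invariant; moreover $\mathcal{O}_{ij}$ is a natural polynomial in the curvature of $\mathsf{g}$ and its covariant derivatives, conformally invariant of weight $2-d=-2n$, trace-free and divergence-free. Combining $S^1$-invariance with the weight constraint and the explicit form $\mathsf{g}=g_{\alpha\bar\beta}\,\theta^{\alpha}\!\cdot\!\theta^{\bar\beta}+2\theta\!\cdot\!\sigma$ (in which $\theta$ is null for $\mathsf{g}$), one argues, as in \cite{GrHi08}, that $\mathcal{O}_{ij}$ can have only a $\theta\theta$-component, of the form $\theta_i\theta_j$ times a pseudohermitian scalar on $X$ of the correct CR weight; the trace-free and divergence-free conditions are then automatic since $\theta$ is null. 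The scalar identity of the previous step identifies that function as a nonzero constant multiple of $\mathcal{O}$, completing the proof.
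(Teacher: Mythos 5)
The paper states this result as Proposition~3.6 of \cite{GrHi08} and does not prove it; it is used as a black box, so there is no in-paper argument to compare your proposal against. Your sketch correctly reconstructs the cited Graham--Hirachi strategy: identify the Fefferman--Graham ambient metric of $\mathcal{F}$ with a real form of Fefferman's Lorentz--K\"ahler ambient metric on the (deleted) canonical bundle $K^*\to X$, observe that under this identification Ricci-flatness reduces to the Monge--Amp\`ere equation $J=1$, so that the two order-$(n+1)$ obstructions coincide up to a universal nonzero constant, and then upgrade the scalar equality to the tensorial statement using $S^1$-invariance, naturality, the conformal weight $2-(2n+2)=-2n$, and the nullity of $\theta$ to force $\mathcal{O}_{ij}$ to be a multiple of $\theta_i\theta_j$. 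One small imprecision: you assert that trace-freeness and divergence-freeness of the resulting $\theta_i\theta_j$-tensor are ``automatic since $\theta$ is null,'' but only trace-freeness follows directly from $\mathsf{g}^{ij}\theta_i\theta_j=0$; divergence-freeness is an intrinsic property of the obstruction tensor established in \cite{FeGr12} and should be invoked as such rather than attributed to nullity. At the level of a proof sketch, your proposal is a faithful account of the cited argument, and you correctly flag the matching of conventions and constants in the ambient identification as the technical crux.
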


In our situation, the CR manifold $X$ is $S(L)$ and $(\mathcal{F}, \mathsf{g})$ is its Fefferman space. Let the local frame $\{T, Z_{\alpha}, Z_{\oo{\alpha}}\}$ on an open neighborhood $U\subset X$ and its dual frame $\{\theta, \theta^{\alpha}, \theta^{\oo{\alpha}} \}$ be as introduced in $\S$ \ref{Sec preliminary from CR}. We shall identify $\theta, \theta^{\alpha}, \oo{\theta^{\alpha}}$ with their pullback $\pi^*\theta, \pi^*\theta^{\alpha}, \pi^*\oo{\theta^{\alpha}}$ on $\mathcal{F}$ respectively. Putting them together with $\sigma$ introduced in \eqref{sigma in Fefferman metric}, we obtain a local coframe on $\mathcal{F}$: 
\begin{equation}\label{Fefferman space dual frame}
	(\theta^0, \cdots, \theta^{2n+1}):=(\theta, \theta^1, \cdots, \theta^n, \theta^{\oo{1}}, \cdots, \theta^{\oo{n}}, \sigma).
\end{equation}
By a straightforward computation, this is the dual frame of
\begin{equation}\label{Fefferman space frame}
	(X_0, \cdots, X_{2n+1}):=(N, Y_1, \cdots, Y_n, Y_{\oo{1}}, \cdots, Y_{\oo{n}}, (n+2)S),
\end{equation}
where $S$ is the vector field induced by the $S^1$ action on $\mathcal{F}$, and
\begin{align*}
		N=T+\frac{R}{2(n+1)} S, \quad Y_{\alpha}=Z_{\alpha}-\frac{i}{2} g^{\gamma\bar{\delta}} (Z_{\alpha} g_{\gamma\bar{\delta}}) S, \quad Y_{\bar{\alpha}}=\oo{Y_{\alpha}}, \quad 1\leq \alpha \leq n.
\end{align*}

\begin{rmk}\label{identification of tensors on F and X rmk} 
	For any function $f$ on $S(L)$, we can lift it to an $S^1$-invariant function $\pi^*f$ on $\mathcal{F}$. Then $X_i, 0\leq i\leq 2n$, (i.e., $N, Y_{\alpha}, Y_{\bar{\alpha}}$, $1\leq \alpha\leq n$,) acting on $\pi^*f$, respectively, is the same as the lift of $Tf, Z_{\alpha}f, Z_{\bar{\alpha}}f$ to $\mathcal{F}$. On the other hand, $X_{2n+1}$ always annihilates $\pi^*f$. These facts will be often used in the following computations without being explicitly pointed out. We will also identify $\pi^*f$ with $f$ when there is no ambiguity.
\end{rmk}



In this local coframe \eqref{Fefferman space dual frame}, 
Theorem \ref{GrHi thm} yields that the only possibly nonzero component of the obstruction tensor $\mathcal{O}_{ij}$ is the $\theta^2$ component, i.e., $\mathcal{O}_{00}$. Furthermore,  under the map $\pi: S(L)\rightarrow M$, the obstruction function $\mathcal{O}$ equals $\mathcal{O}_{00}$ up to a nonzero constant multiple. Therefore, to find $\mathcal{O}$  it suffices to compute $\mathcal{O}_{00}$.

We will use the letters $i, j, k$ to denote indices ranging between $0$ and $2n+1$, and use Greek letters $\alpha, \beta, \gamma$ to denote indices ranging between $1$ and $n$. Let $D$ be the Levi-Civita connection of $(\mathcal{F}, \mathsf{g})$. Let $\rho_{ij}$ and $\rho$ be respectively the Ricci and scalar curvature of $(\mathcal{F}, \mathsf{g})$.
When $M$ is a \k surface ($n=2$), we have $\dim_{\mathbb{R}}\mathcal{F}=6$. In this case, Graham and Hirachi \cite{GrHi05} gave an explicit formula for the obstruction tensor in terms of certain curvatures and their covariant derivatives.
\begin{thm}[\cite{GrHi05}]
	When $\dim_{\mathbb{R}}\mathcal{F}=6$, one has
	\begin{align}\label{obstruction tensor for n=6}
	\begin{split}
		\mathcal{O}_{ij}=B_{ij},_k{}^k - 2W_{kijl}&B^{kl} -4P_k{}^kB_{ij} +8P^{kl}C_{(ij)k},_l
		-4C^k{}_i{}^lC_{ljk} \\
		&+2C_i{}^{kl}C_{jkl} +4P^k{}_k,_lC_{(ij)}{}^l
		-4W_{kijl}P^k{}_mP^{ml},
	\end{split}
	\end{align}
	where $W_{ijkl}$ is the Weyl curvature tensor of the Fefferman space $(\mathcal{F}, \mathsf{g})$; $P_{ij}, C_{ijk}$ and $B_{ij}$ are respectively the Schouten, Cotton and Bach tensor of $(\mathcal{F}, \mathsf{g})$ defined by
	\begin{equation*}
		P_{ij}=\frac{1}{4}\bigl(\rho_{ij}-\frac{\rho}{10}\mathsf{g}_{ij}\bigr), \qquad C_{ijk} = P_{ij},_k - P_{ik},_j,\qquad B_{ij} = P_{ij},_k{}^k
		- P_{ik},_j{}^k - P^{kl}W_{kijl}.
	\end{equation*}
\end{thm}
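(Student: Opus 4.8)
\emph{Proof idea.} The identity \eqref{obstruction tensor for n=6} is a purely (pseudo-)conformal-geometric statement about $6$-dimensional conformal manifolds, so the plan is to prove it at that level of generality and then read it off for the Fefferman space $(\mathcal F,\mathsf g)$. Recall that for an even-dimensional conformal manifold $(N^{2m},[\mathsf g])$ the obstruction tensor $\mathcal O_{ij}$ is defined through the Fefferman--Graham construction: one seeks a formal Poincar\'e metric $g_+ = r^{-2}\bigl(dr^2 + g_r\bigr)$, with $g_r$ a one-parameter family of metrics on $N$, even in $r$, with $g_0 = \mathsf g$, solving $\mathrm{Ric}(g_+) + 2m\,g_+ = 0$; this can be solved uniquely through order $r^{2m-2}$, and the obstruction to solving at order $r^{2m}$ (equivalently, the coefficient of the $r^{2m}\log r$ term that must be introduced) is, up to a universal nonzero constant, $\mathcal O_{ij}$, which is trace-free, divergence-free and conformally invariant of weight $2-2m$. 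Equivalently, one may use the Fefferman--Graham ambient metric $\tilde g$, whose Ricci tensor vanishes to order $\rho^{m-1}$, and identify $\mathcal O_{ij}$ with a constant times its $\rho^{m-1}$-coefficient. Here $2m = 6$, so $m-1 = 2$.

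The plan is to expand $g_r = \mathsf g + r^2 g^{(2)} + r^4 g^{(4)} + r^6\bigl(g^{(6)} + (\log r)\,\mathcal H\bigr) + O(r^7)$, substitute into the Einstein equation, and extract the recursion relations order by order. At order $r^0$ the equation is vacuous. At order $r^2$ it forces $g^{(2)}$ to be a fixed multiple of the Schouten tensor $P_{ij} = \frac{1}{2m-2}\bigl(\rho_{ij} - \frac{1}{2(2m-1)}\rho\,\mathsf g_{ij}\bigr)$, which in dimension $6$ is exactly $\frac14\bigl(\rho_{ij} - \frac{1}{10}\rho\,\mathsf g_{ij}\bigr)$ as in the statement. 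At order $r^4$ the equation determines $g^{(4)}$ up to trace terms, its trace-free part being a universal combination of the Bach tensor $B_{ij}$ and the quadratic term $P_{ik}P^k{}_j$. The crux is order $r^6$: there the trace-free part of $g^{(6)}$ is \emph{not} determined by the Einstein equation, but the equation at that order is consistent only after inserting the $\log$ term, and its coefficient $\mathcal H$ — hence $\mathcal O_{ij}$ — is forced to equal an explicit polynomial in $\rho_{ij}$, $W_{ijkl}$ and their covariant derivatives. Simplifying this polynomial using the contracted second Bianchi identity, the identity $\nabla_k P_{ij} - \nabla_j P_{ik} = C_{ijk}$ together with the fact that the divergence $\nabla^l W_{lijk}$ of the Weyl tensor is a nonzero multiple of the Cotton tensor in dimension $6$ (so that the $\nabla^2 W$ terms reduce to $\nabla C$ terms), and repeated commutation of covariant derivatives to convert $\nabla_k\nabla_l B^{kl}$-type and $\nabla^2 g^{(4)}$-type terms into $\Delta B_{ij} = \nabla_k\nabla^k B_{ij}$ plus Weyl- and Schouten-quadratic corrections, reorganizes the answer into precisely the right-hand side of \eqref{obstruction tensor for n=6}, up to the overall nonzero constant that we absorb into the normalization of $\mathcal O_{ij}$. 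Finally, specializing to $(N,\mathsf g) = (\mathcal F,\mathsf g)$ gives the displayed formula, which — combined with Theorem \ref{GrHi thm}, identifying $\mathcal O_{00}$ with a constant multiple of the CR obstruction function — is what we use downstream.

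The main obstacle is the order-$r^6$ computation itself: expanding $\mathrm{Ric}(g_+)$ to sixth order in $r$ generates a large number of nonlinear contributions — schematically $\nabla^2 g^{(2)}$, $g^{(2)}\!\cdot\! g^{(4)}$, $(g^{(2)})^3$, $\nabla^2 g^{(4)}$, and various contractions thereof — and the bookkeeping needed to isolate the trace-free, divergence-free part and to recognize it as the stated curvature polynomial is lengthy and delicate. A natural internal consistency check is to verify a posteriori that the right-hand side of \eqref{obstruction tensor for n=6} is trace-free, divergence-free, and of conformal weight $-4$, all of which are guaranteed by the general theory of the obstruction tensor. Since this identity is established in \cite{GrHi05}, in the paper we simply invoke it; the above outlines the route by which it is proved.
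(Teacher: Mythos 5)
The paper does not prove this theorem at all: it is quoted verbatim from \cite{GrHi05} as a black box (the theorem header is ``\cite{GrHi05}''), and the only thing the paper does with it is feed the formula into the component computation of $\mathcal{O}_{00}$ in the proof of Theorem \ref{obstruction tensor for csck thm}. You recognize this explicitly in your last sentence, so your treatment is consistent with the paper's: both ultimately defer to the reference.

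What you add beyond the paper is a sketch of how \cite{GrHi05} obtains the identity, and that sketch is essentially accurate for the Fefferman--Graham/ambient approach: the Poincar\'e metric $g_+ = r^{-2}(dr^2+g_r)$ is solved order by order, $g^{(2)}$ is forced to be $-P_{ij}$ (your normalization $P_{ij}=\tfrac{1}{n-2}(\rho_{ij}-\tfrac{\rho}{2(n-1)}\mathsf g_{ij})$ does reduce to $\tfrac14(\rho_{ij}-\tfrac{\rho}{10}\mathsf g_{ij})$ when $n=6$), $g^{(4)}$ is determined up to trace by $B_{ij}$ and $P_{ik}P^{k}{}_{j}$, and the obstruction is the coefficient of the $r^{6}\log r$ term (equivalently the $\rho^{\,n/2-1}=\rho^{2}$ coefficient of the ambient Ricci tensor). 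Your internal checks are also correct: $\mathcal O_{ij}$ is trace-free, divergence-free, and of conformal weight $2-n=-4$, and the Weyl divergence identity $\nabla^{l}W_{lijk}=(n-3)C_{jki}$ has nonzero coefficient in dimension $6$, which is what lets the $\nabla^{2}W$ terms collapse into $\nabla C$ terms. None of this is carried out in the paper, and none of it needs to be: one should not, however, mistake your outline for a self-contained proof, since the genuinely hard step (the full order-$r^{6}$ bookkeeping that produces exactly the eight displayed terms with those coefficients) is only gestured at, which is fine here precisely because the result is being cited, not reproved.
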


The parentheses in $C_{(ij)k},_l $ and $C_{(ij)}{}^l$ above indicate symmetrization over the index pair $i, j$: $C_{(ij)k},_l=\frac{1}{2}C_{ijk},_l+\frac{1}{2}C_{jik},_l$, and likewise for $C_{(ij)}{}^l$.

By Lee's work \cite{Lee86}, the Fefferman metric $\mathsf{g}$ on $\mathcal{F}$ can be characterized by Webster's pseudohermitian invariants on $X$. As a result, we can translate the curvature tensors in \eqref{obstruction tensor for n=6}, such as the Schouten, Cotton, Weyl and Bach tensors, into the pseudohermitian curvatures on $X$. This process consists of standard, but technical calculations;  we will therefore leave it to Appendix $\S$ \ref{Sec Appendix}. It is worth to mention that every $S^1$-invariant tensor on $\mathcal{F}$ can be identified with a tensor on $X$. We shall then express the component $\mathcal{O}_{00}$ in terms of the pseudohermitian curvatures. As pointed out in $\S$ \ref{Sec preliminary from CR}, the pseudohermitian curvatures on $X$ can be identified with the corresponding curvatures of the \k manifold $(M,\omega)$. This allows us to write $\mathcal{O}_{00}$ in terms of the curvatures on $(M,\omega)$  and obtain the desired result in Theorem \ref{obstruction tensor for csck thm}. However, a complete computation would {\em a priori} be quite laborious. We make a key observation that Theorem \ref{thm 1} can be applied to much simplify the process.


Before proving Theorem \ref{obstruction tensor for csck thm}, let us set up some notations. Denote by  $\mathcal{T}=\mathcal{T}(R_{\alpha\bar{\beta}}, g_{\alpha\bar{\beta}})$  the vector space of all pseudohermitian tensors which are linear combinations of the products of $R_{\alpha\bar{\beta}}, g_{\alpha\bar{\beta}}$ and their contractions. For example, all the following terms
\begin{equation*}
R=R_{\alpha\bar{\beta}} g^{\alpha\bar{\beta}},\quad K_{\alpha\bar{\beta}}=\frac{1}{4} \bigl( R_{\alpha\bar{\beta}}-\frac{1}{6}R g_{\alpha\bar{\beta} }\bigr),\quad K_{\alpha}{}^{\gamma}K_{\gamma\bar{\beta}}, \quad \Lambda=K_{\alpha\bar{\beta}}K^{\alpha\bar{\beta}}
\end{equation*}
are in $\mathcal{T}$. Note that neither the full Riemannian curvature $R_{\alpha\bar{\beta}\gamma\bar{\delta}}$ nor any covariant derivative of $R_{\alpha\bar{\beta}\gamma\bar{\delta}}$ is allowed in $\mathcal{T}$. For two tensors $A$ and $B$, we write $A=B+\mathcal{T}$ if $A-B\in \mathcal{T}$.

Let $\lambda_1 \leq \lambda_2$ be the Ricci eigenvalue functions on $M$. That is,  for any $p\in M$, $\lambda_1(p)$ and $\lambda_2(p)$ denote the two eigenvalues of the endomorphism $R_{\alpha}{}^{\beta}|_p: T^{1,0}_pM\rightarrow T^{1,0}_pM$.  Denote by $\mathcal{P}=\mathcal{P}(\lambda_1,\lambda_2)$ the vector space of symmetric polynomials of $\lambda_1$ and $\lambda_2$ (which are well-defined smooth functions on $M$). For example, all of $\lambda_1+\lambda_2, \lambda_1^2+\lambda_2^2$ and $\lambda_1\lambda_2$ are elements in $\mathcal{P}$. For another example, the complete contraction of any tensor in $\mathcal{T}$ is always in $\mathcal{P}$. This is because we can choose the normal coordinates at a given point $p$ to make \eqref{normal coordinates} hold.
For two functions $\Phi$ and $\Psi$ on $M$, we write $\Phi=\Psi+\mathcal{P}$ if $\Phi-\Psi\in \mathcal{P}$.

We are in the position to prove Theorem \ref{obstruction tensor for csck thm}. In the proof, we will use some results and formulas from Appendix $\S$ \ref{Sec Appendix}.

{\em Proof of Theorem \ref{obstruction tensor for csck thm}:} We first need the following lemma on the covariant derivative of the Bach tensor on $(\mathcal{F}, \mathsf{g})$.
\begin{lemma}\label{Bach tensor second order derivative lemma}
	\begin{equation*}
	\mathsf{g}^{kl}B_{00},_{kl}=8\Delta^2\Lambda+4R^{\alpha\bar{\beta}}\nab_{\alpha}\nab_{\bar{\beta}}\Lambda+\frac{10}{3}R\Delta\Lambda+\mathcal{P}.
	\end{equation*}
\end{lemma}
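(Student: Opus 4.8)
The identity to prove is
\[
\mathsf{g}^{kl}B_{00},_{kl}=8\Delta^2\Lambda+4R^{\alpha\bar{\beta}}\nab_{\alpha}\nab_{\bar{\beta}}\Lambda+\frac{10}{3}R\Delta\Lambda+\mathcal{P},
\]
so everything is computed modulo $\mathcal{P}$, the symmetric polynomials in the Ricci eigenvalues; this is exactly the slack that lets us avoid keeping track of zeroth-order curvature contractions. The starting point is the Appendix's expression for $B_{00}$ (the $00$-component of the Bach tensor of the Fefferman space) in terms of pseudohermitian data on $S(L)$; by Theorem \ref{obstruction tensor for csck thm}'s set-up, $B_{00}$ is, modulo lower-order terms, a constant multiple of $\Delta\Lambda$ plus terms in $\mathcal{T}$ (recall the Bach tensor is second order in the Schouten tensor, hence fourth order in curvature; for a cscK surface the leading fourth-order piece is carried by $\Lambda=K_{\alpha\bar\beta}K^{\alpha\bar\beta}$). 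So the first step is to record, from the Appendix, the precise formula $B_{00}=c\,\Delta\Lambda+(\text{terms of the form }\nab\nab(\mathcal{T})\text{ and below})$ and pin the constant $c$; I expect $c$ to be a small rational number fixed by the normalization in \eqref{obstruction tensor for n=6} and the definition $P_{ij}=\tfrac14(\rho_{ij}-\tfrac{\rho}{10}\mathsf g_{ij})$.

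Next I would apply $\mathsf{g}^{kl}D_kD_l$ to that expression. The Fefferman metric $\mathsf g$ splits along the coframe \eqref{Fefferman space dual frame} as $g_{\alpha\bar\beta}\theta^\alpha\cdot\theta^{\bar\beta}+2\theta\cdot\sigma$, so the Laplacian $\mathsf g^{kl}D_kD_l$ on an $S^1$-invariant function built from pseudohermitian curvature of $S(L)$ decomposes into the horizontal piece $g^{\alpha\bar\beta}(Z_\alpha Z_{\bar\beta}+Z_{\bar\beta}Z_\alpha)$ — which, by the identification in \S 3.1.3 and Remark \ref{identification of tensors on F and X rmk}, becomes $2\Delta$ on $(M,g)$ up to connection terms — plus a cross term involving $N$ and $S$ and the Christoffel symbols of $\mathsf g$ (the $\sigma$-direction). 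Since $\Lambda$ (and every tensor in $\mathcal{T}$) is $S^1$-invariant, $S$ annihilates it and $N=T+\tfrac{R}{2(n+1)}S$ reduces to $T$, which also annihilates $\Lambda$ because $\Lambda$ is the pullback of a function on $M$. The upshot should be that $\mathsf g^{kl}B_{00},_{kl}=c\cdot(\text{Fefferman Laplacian of }\Delta\Lambda)+\mathcal{P}$, and the Fefferman Laplacian of a pulled-back function $F$ on $M$ equals $2\Delta F$ plus a term coming from the trace of the Fefferman Christoffel symbols acting on $dF$. Here is where the genuinely CR/pseudohermitian input enters: the non-Levi-Civita contribution to the Fefferman connection involves $\omega_\alpha{}^\alpha$ and $\tfrac1{2(n+1)}R\theta$, and contracting it against $\nab F$ produces a term proportional to $R\,\Delta F$ and a term proportional to $R^{\alpha\bar\beta}\nab_\alpha\nab_{\bar\beta}F$ — which is precisely the shape $\Delta^2\Lambda$, $R\Delta\Lambda$, $R^{\alpha\bar\beta}\nab_\alpha\nab_{\bar\beta}\Lambda$ appearing on the right-hand side. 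I would compute these three coefficients carefully (this is the one place where constants genuinely matter) and check they come out $8$, $\tfrac{10}{3}$, and $4$ respectively.

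There is a crucial shortcut I would deploy to pin the constants without redoing the full Appendix computation: Theorem \ref{thm 1} says that if $(M,g)$ has constant Ricci eigenvalues then $S(L)$ is obstruction flat, i.e., $\mathcal O=0$. In the constant-eigenvalue case $\Lambda$ is a constant and every term in $\mathcal{T}$ is parallel, so $\mathcal{P}$-terms are the only survivors; feeding this into \eqref{obstruction tensor for n=6} together with the (already computed, in the Appendix) expressions for the other summands $W_{kijl}B^{kl}$, $P_k{}^kB_{ij}$, $P^{kl}C_{(ij)k},_l$, etc., forces the $\mathcal{P}$-valued ``constant'' that $\mathcal O_{00}$ reduces to in that case to vanish identically as a polynomial in $\lambda_1,\lambda_2$. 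That consistency constraint, combined with the known leading-order structure, should be enough to lock down the constant $c$ above and to confirm that all the genuinely $\mathcal P$-valued debris in $\mathsf g^{kl}B_{00},_{kl}$ is correctly absorbed into the ``$+\mathcal{P}$''. \textbf{The main obstacle} I anticipate is bookkeeping: correctly separating, in the expansion of the Fefferman-space covariant Laplacian, the pieces that are genuinely fourth-order in the Ricci curvature (which must be tracked exactly) from the lower-order contractions (which may be discarded into $\mathcal P$), while making sure no fourth-order term is mistakenly dropped — in particular verifying that the cross terms between the $\sigma$-direction of $\mathsf g$ and the horizontal curvature derivatives do not secretly contribute another $\Delta^2\Lambda$ or $R^{\alpha\bar\beta}\nab_\alpha\nab_{\bar\beta}\Lambda$ with the wrong sign. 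Handling commutators of $\nab_\alpha,\nab_{\bar\beta}$ on $\Lambda$ (which generate extra curvature$\times\nab\nab\Lambda$ terms) is the technical heart, and those commutator terms are exactly what produces the $R^{\alpha\bar\beta}\nab_\alpha\nab_{\bar\beta}\Lambda$ summand with coefficient $4$.
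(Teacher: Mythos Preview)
Your plan has a genuine gap: you are treating $B_{00},_{kl}$ as if it were the Hessian of the scalar function $B_{00}$, but it is the $(0,0)$-component of the second covariant derivative of the Bach \emph{tensor}. Already at first order,
\[
B_{00},_{\alpha}=X_\alpha B_{00}-2\Gamma_{\alpha 0}^{\mu}B_{\mu 0}
= X_\alpha B_{00}-2iK_{\alpha}{}^{\mu}B_{0\mu},
\]
and the off-diagonal component $B_{0\mu}=i\nab_{\mu}\Lambda$ (Corollary~\ref{Bach tensor n=2}) feeds back in through the Fefferman Christoffel symbol $\Gamma_{\alpha 0}^{\mu}=iK_{\alpha}{}^{\mu}$. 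At second order there is further mixing with $B_{0\bar\gamma}$ and $B_{\gamma\bar\delta}$. In the paper's computation it is exactly these tensor-index corrections that produce the $8K^{\alpha\bar\beta}\nab_\alpha\nab_{\bar\beta}\Lambda$ and $-\tfrac13 R B_{00}$ contributions, which after unpacking $K$ give the $4R^{\alpha\bar\beta}\nab_\alpha\nab_{\bar\beta}\Lambda$ term and fix the $R\Delta\Lambda$ coefficient at $\tfrac{10}{3}$. If you only apply the scalar Fefferman Laplacian to the function $B_{00}=2\Delta\Lambda+\tfrac{4}{3}R\Lambda+\text{const}$, you obtain $8\Delta^2\Lambda+\tfrac{16}{3}R\Delta\Lambda$ and miss the $R^{\alpha\bar\beta}\nab_\alpha\nab_{\bar\beta}\Lambda$ term entirely.

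Two specific misattributions follow from this. First, the commutator $[\nab_\alpha,\nab_{\bar\beta}]$ on the \emph{scalar} $\Lambda$ is zero, so commutators cannot be ``exactly what produces the $R^{\alpha\bar\beta}\nab_\alpha\nab_{\bar\beta}\Lambda$ summand''; the source is the coupling to $B_{0\alpha}$ described above. Second, your proposed shortcut via Theorem~\ref{thm 1} cannot pin any of the three coefficients $8$, $4$, $\tfrac{10}{3}$: on a constant-Ricci-eigenvalue surface $\Lambda$ is constant, so every derivative term on both sides vanishes and the identity degenerates to $\mathcal{P}=\mathcal{P}$. (The paper uses that trick only \emph{after} this lemma, in Step~2 of the proof of Theorem~\ref{obstruction tensor for csck thm}, to kill the residual $\mathcal{P}$ in $\mathcal{O}_{00}$.) To make your argument work you must carry the full tensorial covariant derivative, tracking $B_{0\alpha}$ and $B_{\alpha\bar\beta}$ alongside $B_{00}$.
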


\begin{proof}
	We work with the local frame $\{X_0, \cdots, X_{2n+1} \}$ with $n=2$. By \eqref{Fefferman metric in matrix} we have
	\begin{align*}
		\mathsf{g}^{kl}B_{00},_{kl}=B_{00},_{0\,2n+1}+B_{00},_{2n+1\,0}+2g^{\alpha\bar{\beta}}B_{00},_{\alpha\bar{\beta}}+2g^{\bar{\beta}\alpha}B_{00},_{\bar{\beta}\alpha}.
	\end{align*}
	Let us compute these four terms one by one. Let $\Gamma_{ij}^{k}$ be the Christoffel symbol of the Levi-Civita connection $D$ on $(\mathcal{F}, \mathsf{g})$. Since $\Gamma_{00}^p=\Gamma_{2n+1\,0}^p=\Gamma_{0\,2n+1}^p=0$ for any $p$ by \eqref{christoffel symbols}, we have
	\begin{align*}
		B_{00,0\,2n+1}=X_{2n+1}B_{00,0} \qquad B_{00,2n+1\,0}=X_0 B_{00,2n+1}.
	\end{align*}
	For the same reason, we also have $B_{00},_0=X_0 B_{00}$ and $B_{00},_{2n+1}=X_{2n+1} B_{00}$. By Corollary \ref{Bach tensor n=2} and Remark \ref{identification of tensors on F and X rmk}, it follows that $B_{00},_0=B_{00},_{2n+1}=0$ and thus $B_{00},_{00}=B_{00},_{2n+1\,0}=0$.
	
	We note $g^{\alpha\bar{\beta}}B_{00,\alpha\bar{\beta}}$ and $g^{\bar{\beta}\alpha}B_{00,\bar{\beta}\alpha}$ are the conjugate of each other. Thus it is sufficient to compute either one of them. Since the Bach tensor is symmetric,
	\begin{align*}
		B_{00},_{\alpha\bar{\beta}}=X_{\bar{\beta}} B_{00},_{\alpha}-\Gamma_{\bar{\beta}0}^pB_{p0},_{\alpha}-\Gamma_{\bar{\beta}0}^pB_{0p},_{\alpha}-\Gamma_{\bar{\beta}\alpha}^pB_{00},_{p}
		=X_{\bar{\beta}} B_{00},_{\alpha}-2\Gamma_{\bar{\beta}0}^pB_{0p},_{\alpha}-\Gamma_{\bar{\beta}\alpha}^pB_{00},_{p}.
	\end{align*}
	By \eqref{christoffel symbols} and the fact $B_{00},_0=B_{00},_{2n+1}=0$, we get
	\begin{align}\label{B 2nd order derivative 2}
	\begin{split}
	B_{00},_{\alpha\bar{\beta}}
	=X_{\bar{\beta}} B_{00},_{\alpha}-2\Gamma_{\bar{\beta}0}^{\bar{\gamma}}B_{0\bar{\gamma}},_{\alpha}-\Gamma_{\bar{\beta}\alpha}^0B_{00},_{0}-\Gamma_{\bar{\beta}\alpha}^{2n+1}B_{00},_{2n+1}
	=X_{\bar{\beta}} B_{00},_{\alpha}+2iK_{\bar{\beta}}{}^{\bar{\gamma}}B_{0\bar{\gamma}},_{\alpha}.
	\end{split}
	\end{align}  	
	
	To proceed, we need to compute the first order covariant derivatives $B_{00},_{\alpha}$ and $B_{0\bar{\gamma}},_{\alpha}$. By \eqref{christoffel symbols},
	\begin{align*}
		&B_{00},_{\alpha}=X_{\alpha}B_{00}-2\Gamma_{\alpha 0}^{\mu}B_{0\mu}=X_{\alpha}B_{00}-2iK_{\alpha}{}^{\mu}B_{0\mu},
		\\
		B_{0\bar{\gamma}},_{\alpha}=X_{\alpha}B_{0\bar{\gamma}}-&\Gamma_{\alpha 0}^{\mu}B_{\mu\bar{\gamma}}-\Gamma_{\alpha\bar{\gamma}}^0B_{00}-\Gamma_{\alpha\bar{\gamma}}^{2n+1}B_{0\,2n+1}=X_{\alpha}B_{0\bar{\gamma}}-iK_{\alpha}{}^{\mu}B_{\mu\bar{\gamma}}+\frac{i}{2}g_{\alpha\bar{\gamma}}B_{00}.
	\end{align*}

We put these identities back into \eqref{B 2nd order derivative 2} and use Corollary \ref{Bach tensor n=2} to obtain
	\begin{align*}
		B_{00},_{\alpha\bar{\beta}}=&X_{\bar{\beta}}\bigl(X_{\alpha}B_{00}-2iK_{\alpha}{}^{\mu}B_{0\mu} \bigr)+2iK_{\bar{\beta}}{}^{\bar{\gamma}}\bigl(X_{\alpha}B_{0\bar{\gamma}}-iK_{\alpha}{}^{\mu}B_{\mu\bar{\gamma}}+\frac{i}{2}g_{\alpha\bar{\gamma}}B_{00}\bigr)\\
		=&X_{\bar{\beta}}X_{\alpha}B_{00}+2\nab_{\bar{\beta}}K_{\alpha}{}^{\mu}\nab_{\mu}\Lambda+2K_{\alpha}{}^{\mu}\nab_{\bar{\beta}}\nab_{\mu}\Lambda+2K_{\bar{\beta}}{}^{\bar{\gamma}}\nab_{\alpha}\nab_{\bar{\gamma}}\Lambda-K_{\alpha\bar{\beta}}B_{00}+\mathcal{T}
	\end{align*}
	We shall trace out the index by the Fefferman metric $\mathsf{g}$. Note that $g^{\alpha\bar{\beta}}\nab_{\bar{\beta}}K_{\alpha}{}^{\mu}=0$ and $g^{\alpha\bar{\beta}}K_{\alpha\bar{\beta}}=\frac{1}{6}R$ (see Lemma \ref{K some properties lemma}). Thus, by \eqref{Fefferman metric in matrix} and Corollary \ref{Bach tensor n=2} again we have
	\begin{align*}
		\mathsf{g}^{\alpha\bar{\beta}}B_{00},_{\alpha\bar{\beta}}=&2g^{\alpha\bar{\beta}}X_{\bar{\beta}}X_{\alpha}B_{00}+8K^{\alpha\bar{\beta}}\nab_{\alpha}\nab_{\bar{\beta}}\Lambda-\frac{1}{3}RB_{00}+\mathcal{P}\\
		=&2\Delta\bigl(2\Delta\Lambda+4(\frac{1}{3}\Lambda R-\frac{1}{6^3} R^3 ) \bigr)+2\bigl(R^{\alpha\bar{\beta}}-\frac{1}{6}Rg^{\alpha\bar{\beta}}\bigr)\nab_{\alpha}\nab_{\bar{\beta}}\Lambda-\frac{2}{3}R\Delta\Lambda+\mathcal{P}
		\\
		=&4\Delta^2\Lambda+2R^{\alpha\bar{\beta}}\nab_{\alpha}\nab_{\bar{\beta}}\Lambda+\frac{5}{3}R\Delta\Lambda+\mathcal{P}.
	\end{align*}
	So the result follows from
	\begin{align*}
		\mathsf{g}^{kl}B_{00},_{kl}=2g^{\alpha\bar{\beta}}B_{00},_{\alpha\bar{\beta}}+2\oo{g^{\alpha\bar{\beta}}B_{00},_{\alpha\bar{\beta}}}=8\Delta^2\Lambda+4R^{\alpha\bar{\beta}}\nab_{\alpha}\nab_{\bar{\beta}}\Lambda+\frac{10}{3}R\Delta\Lambda+\mathcal{P}
	\end{align*}
\end{proof}

We continue to prove Theorem \ref{obstruction tensor for csck thm}. The proof divides into two steps. In the first step, we prove \eqref{obstruction tensor for csck} holds up to some zero order term, that is,
\begin{equation}\label{obstruction tensor for csck weaker}
	\mathcal{O}_{00}=8\Delta^2\Lambda+8R^{\alpha\bar{\beta}}\nab_{\alpha}\nab_{\bar{\beta}}\Lambda+\mathcal{P}.
\end{equation}
Then in the second step, by comparing with some model case which has constant Ricci eigenvalues and applying Theorem \ref{thm 1}, we conclude the zero order term represented by $\mathcal{P}$ in the above equation must be identically zero.

\textbf{Step 1.} We first prove \eqref{obstruction tensor for csck weaker}.

We will compute each term of $\mathcal{O}_{00}$ in \eqref{obstruction tensor for n=6} up to some term in $\mathcal{P}$. Since the first term is computed in Lemma \ref{Bach tensor second order derivative lemma}, we shall begin from the second term in \eqref{obstruction tensor for n=6}.

$\bullet$ \textbf{Computation of $W_{k00l}B^{kl}$.}

By \eqref{Fefferman metric in matrix} and \eqref{weyl curvature 1} in the appendix and the symmetry of the Bach tensor, we have
\begin{align*}
	W_{k00l}B^{kl}=\mathsf{g}^{\alpha\bar{\delta}}\mathsf{g}^{\gamma\bar{\beta}}W_{\alpha00\bar{\beta}}B_{\bar{\delta}\gamma}+\mathsf{g}^{\alpha\bar{\delta}}\mathsf{g}^{\gamma\bar{\beta}}W_{\bar{\beta}00\alpha}B_{\gamma\bar{\delta}}=8g^{\alpha\bar{\delta}}g^{\gamma\bar{\beta}}W_{\alpha00\bar{\beta}}B_{\gamma\bar{\delta}}.
\end{align*}
Since $B_{\gamma\bar{\delta}}\in\mathcal{T}$ by Corollary \ref{Bach tensor n=2} and $W_{\alpha00\bar{\beta}}\in\mathcal{T}$ by \eqref{weyl curvature 1}, it follows that $W_{k00l}B^{kl}\in\mathcal{P}$.

$\bullet$ \textbf{Computation of $P_k{}^kB_{00}$.}

By \eqref{schouten tensor} and \eqref{K trace}, we have
\begin{align}\label{schouten trace}
	P_k{}^k=2\mathsf{g}^{\alpha\bar{\beta}}P_{\alpha\bar{\beta}}=2g^{\alpha\bar{\beta}}K_{\alpha\bar{\beta}}=\frac{1}{3}R.
\end{align}
By Corollary \ref{Bach tensor n=2}, $B_{00}=2\Delta\Lambda+\mathcal{P}$. Therefore, $P_k{}^kB_{00}=\frac{2}{3}R\Delta\Lambda+\mathcal{P}$.

$\bullet$ \textbf{Computation of $P^{kl}C_{00k},_l$.}

By \eqref{Fefferman metric in matrix}, we have
\begin{align}\label{O00 4th term}
\begin{split}
	P^{kl}C_{00k},_l=&P_{ij}C_{00k},_l \mathsf{g}^{ik}\mathsf{g}^{jl}\\
	=&P_{00}C_{002n+1},_{2n+1}+P_{2n+1\,2n+1}C_{000},_0+4P_{\alpha\bar{\beta}}C_{00\bar{\delta}},_{\gamma}g^{\alpha\bar{\delta}}g^{\gamma\bar{\beta}}+4P_{\bar{\beta}\alpha}C_{00\gamma},_{\bar{\delta}}g^{\alpha\bar{\delta}}g^{\gamma\bar{\beta}}.
\end{split}
\end{align}
We need to compute the covariant derivatives of the Cotton tensor.
Since $\Gamma_{2n+1\,2n+1}^p=\Gamma_{2n+1\,0}^p=\Gamma_{00}^p=0$ for any $p$ by \eqref{christoffel symbols}, we have
\begin{align*}
	C_{002n+1},_{2n+1}=X_{2n+1}C_{002n+1}, \qquad C_{000},_{0}=X_{0}C_{000}.
\end{align*}
By Proposition \ref{cotton tensor prop} and Remark \ref{identification of tensors on F and X rmk}, it follows that
\begin{align*}
	C_{002n+1},_{2n+1}=C_{000},_{0}=0.
\end{align*}
Note that the last two terms of \eqref{O00 4th term} are the conjugate of each other. Thus we only need to compute the last one. By \eqref{christoffel symbols}, we have
\begin{align*}
	C_{00\gamma},_{\bar{\delta}}=&X_{\bar{\delta}}C_{00\gamma}-\Gamma_{\bar{\delta}0}^pC_{p0\gamma}-\Gamma_{\bar{\delta}0}^pC_{0p\gamma}-\Gamma_{\bar{\delta}\gamma}^pC_{00p}\\
	=&X_{\bar{\delta}}C_{00\gamma}+iK_{\bar{\delta}}{}^{\bar{\nu}}C_{\bar{\nu}0\gamma}+iK_{\bar{\delta}}{}^{\bar{\nu}}C_{0\bar{\nu}\gamma}-\frac{i}{2}g_{\gamma\bar{\delta}}C_{000}-\frac{i}{2}K_{\gamma\bar{\delta}}C_{002n+1}.
\end{align*}
We further apply Proposition \ref{cotton tensor prop} and obtain
\begin{align*}
	C_{00\gamma},_{\bar{\delta}}=\frac{1}{2}X_{\bar{\delta}}X_{\gamma}\Lambda+\mathcal{T}.
\end{align*}
Therefore, by \eqref{schouten tensor} it follows that
\begin{align*}
	4P_{\bar{\beta}\alpha}C_{00\gamma},_{\bar{\delta}}g^{\alpha\bar{\delta}}g^{\gamma\bar{\beta}}=K^{\gamma\bar{\delta}}X_{\bar{\delta}}X_{\gamma}\Lambda+\mathcal{P}=\frac{1}{4}R^{\gamma\bar{\delta}}\nab_{\gamma}\nab_{\bar{\delta}}\Lambda-\frac{1}{24}R\Delta\Lambda+\mathcal{P}.
\end{align*}
We put it back into \eqref{O00 4th term} and get
\begin{align*}
	P^{kl}C_{00k},_l=\frac{1}{2}R^{\alpha\bar{\beta}}\nab_{\alpha}\nab_{\bar{\beta}}\Lambda-\frac{1}{12}R\Delta\Lambda+\mathcal{P}.
\end{align*}

$\bullet$ \textbf{Computation of $C^k{}_0{}^lC_{l0k}$ and $C_0{}^{kl}C_{0kl}$.}

Since $C_{ijk}=0$ if at least one of the index is $2n+1$ by Proposition \ref{cotton tensor prop}, we have
\begin{align*}
	C^k{}_0{}^lC_{l0k}=&\mathsf{g}^{ik}\mathsf{g}^{jl}C_{i0j}C_{l0k}
	\\
	=&\mathsf{g}^{\alpha\bar{\beta}}\mathsf{g}^{\gamma\bar{\delta}}\bigl(C_{\alpha0\gamma}C_{\bar{\delta}0\bar{\beta}}+C_{\bar{\beta}0\gamma}C_{\bar{\delta}0\alpha}+C_{\alpha0\bar{\delta}}C_{\gamma0\bar{\beta}}+C_{\bar{\beta}0\bar{\delta}}C_{\gamma0\alpha} \bigr).
\end{align*}
By Proposition \ref{cotton tensor prop} again, $C_{\alpha0\beta}=0$ and $C_{\alpha0\bar{\beta}}\in\mathcal{T}$. It follows that $C^k{}_0{}^lC_{l0k}\in\mathcal{P}$.
Similarly, we also have
\begin{align*}
	C_0{}^{kl}C_{0kl}=\mathsf{g}^{\alpha\bar{\beta}}\mathsf{g}^{\gamma\bar{\delta}}\bigl(C_{0\alpha\gamma}C_{0\bar{\beta}\bar{\delta}}+C_{0\bar{\beta}\gamma}C_{0\alpha\bar{\delta}}+C_{0\alpha\bar{\delta}}C_{0\bar{\beta}\gamma}+C_{0\bar{\beta}\bar{\delta}}C_{0\alpha\gamma} \bigr)\in\mathcal{P}.
\end{align*}

$\bullet$ \textbf{Computation of $P^k{}_k,_lC_{00}{}^l$.}

Since $P^k{}_k=\frac{1}{3}R$ by \eqref{schouten trace} and $R$ is constant, we have $P^k{}_k,_l=0$. Therefore, $P^k{}_k,_lC_{00}{}^l=0$.

$\bullet$ \textbf{Computation of $W_{k00l}P^k{}_mP^{ml}$.}

By \eqref{weyl curvature 1} and \eqref{schouten tensor upper index}, we have
\begin{align*}
	W_{k00l}P^k{}_mP^{ml}=W_{\alpha00\bar{\beta}}P^{\alpha}{}_mP^{m\bar{\beta}}+W_{\bar{\beta}00\alpha}P^{\bar{\beta}}{}_mP^{m\alpha}=W_{\alpha00\bar{\beta}}P^{\alpha}{}_{\gamma}P^{\gamma\bar{\beta}}+W_{\bar{\beta}00\alpha}P^{\bar{\beta}}{}_{\bar{\gamma}}P^{\bar{\gamma}\alpha}.
\end{align*}
By \eqref{weyl curvature 1} again and \eqref{schouten tensor}, it follows that $W_{\alpha00\bar{\beta}}, P_{\alpha\bar{\beta}}\in\mathcal{T}$. Therefore, $W_{k00l}P^k{}_mP^{ml}\in\mathcal{P}$.

Now we have done the computations for all terms in $\mathcal{O}_{00}$. We put them back into \eqref{obstruction tensor for n=6} to obtain
\begin{align*}
	\mathcal{O}_{00}=&8\Delta^2\Lambda+4R^{\alpha\bar{\beta}}\nab_{\alpha}\nab_{\bar{\beta}}\Lambda+\frac{10}{3}R\Delta\Lambda-\frac{8}{3}R\Delta\Lambda+4R^{\alpha\bar{\beta}}\nab_{\alpha}\nab_{\bar{\beta}}\Lambda-\frac{2}{3}R\Delta\Lambda+\mathcal{P}\\
	=&8\Delta^2\Lambda+8R^{\alpha\bar{\beta}}\nab_{\alpha}\nab_{\bar{\beta}}\Lambda+\mathcal{P}.
\end{align*}
Thus \eqref{obstruction tensor for csck weaker} is proved.
Consequently, $\mathcal{O}_{00}-8\Delta^2\Lambda-8R^{\alpha\bar{\beta}}\nab_{\alpha}\nab_{\bar{\beta}}\Lambda=P$ for some $P\in \mathcal{P}$.

\textbf{Step 2.} We then show $P=0$.

Given $\lambda\in \mathbb{R}$, let $\Delta_\lambda$ be the unit disk $D(0,1)\subset \mathbb{C}$ endowed with \k metric
\begin{equation*}
	\omega_{\lambda}=\begin{dcases}
	\frac{2}{\lambda}i\partial\dbar\log(1+|z|^2) & \mbox{when } \lambda>0\\
	\frac{i}{2}\partial\dbar|z|^2 & \mbox{when } \lambda=0\\
	\frac{2}{\lambda}i\partial\dbar\log(1-|z|^2) & \mbox{when } \lambda<0.
	\end{dcases}
\end{equation*}
Then $\Delta_\lambda$ has constant Gauss curvature $\lambda$.
Given $a, b\in \mathbb{R}$, $\Delta_a\times \Delta_b$ is a \k surface with constant Ricci eigenvalues $a$ and $b$. By Theorem \ref{thm 1}, its circle bundle is obstruction flat. On the other hand, by Step 1 of the proof, we know the obstruction tensor is in the form of \eqref{obstruction tensor for csck weaker}. Since $\Lambda$ is a constant in this case,
\begin{align*}
	0=\mathcal{O}_{00}=P(a,b) \quad \mbox{ for any } a, b \in \mathbb{R}.
\end{align*}
Thus we must have $P\equiv 0$ and $\mathcal{O}_{00}=8\Delta^2\Lambda+8R^{\alpha\bar{\beta}}\nab_{\alpha}\nab_{\bar{\beta}}\Lambda$.
Combining this with Theorem \ref{GrHi thm} (see the discussion right after Remark \ref{identification of tensors on F and X rmk}),  the desired conclusion follows immediately.
\qed

We are now ready to prove Theorem \ref{thm 3}.

{\em Proof of Theorem \ref{thm 3}:}
By Remark \ref{Lichnerowicz operator rmk}, when $(M,g)$ has constant scalar curvature, the differential operator $\Delta^2 +R^{\alpha\bar{\beta}} \nab_{\alpha}\nab_{\bar{\beta}}$ appearing in \eqref{obstruction tensor for csck} is the Lichnerowicz operator $L$. Then Theorem \ref{obstruction tensor for csck thm} yields that, up to a nonzero constant multiple, $\mathcal{O}$ is given by
$L\Lambda=\nab^{\bar{\alpha}}\nab^{\bar{\beta}}\nab_{\bar{\beta}}\nab_{\bar{\alpha}}\Lambda.$
By using the normal coordinates at a given point in $M$, we have at this point
	\begin{align*}
	(g_{\alpha\bar{\beta}})=\begin{pmatrix}
	1 & 0\\
	0 & 1\\
	\end{pmatrix} \qquad
	(R_{\alpha\bar{\beta}})=\begin{pmatrix}
	\lambda_1 & 0\\
	0 & \lambda_2\\
	\end{pmatrix}.
	\end{align*}
	Thus, the trace-modified Ricci curvature is given by
	\begin{align*}
		(K_{\alpha\bar{\beta}})=\frac{1}{4}\begin{pmatrix}
		\frac{5}{6}\lambda_1-\frac{1}{6}\lambda_2 & 0\\
		0 & \frac{5}{6}\lambda_2-\frac{1}{6}\lambda_1
		\end{pmatrix}.
	\end{align*}
	It follows that
	\begin{equation*}
		\Lambda=\frac{1}{16}\bigl( (\frac{5}{6}\lambda_1-\frac{1}{6}\lambda_2)^2+(\frac{5}{6}\lambda_2-\frac{1}{6}\lambda_1)^2 \bigr)=\frac{1}{288}\bigl(13\lambda_1^2+13\lambda_2^2-10\lambda_1\lambda_2 \bigr)=\frac{13}{288}R^2-\frac{1}{8}C.
	\end{equation*}
	Since the scalar curvature $R$ is a constant, we have $L\Lambda=-8LC$. Therefore the \k metric is obstruction flat if and only if $LC=0$. If $M$ is compact, by integration by parts, we have $LC=0$ if and only if $\nab_{\bar{\beta}}\nab_{\bar{\alpha}}C=0$, which is equivalent to say that $\grad^{1,0}C=g^{\alpha\bar{\beta}}\nab_{\bar{\beta}}C\frac{\partial}{\partial z^{\alpha}}$ is a holomorphic vector field on $M$.
\qed

\subsection{Proof of Proposition \ref{prop two examples}}

Since these two examples in Proposition \ref{prop two examples} are not related, we shall prove them separately. Let us first prove part (a) in Proposition \ref{prop two examples}.

Recall the Burns-Simanca metric is defined by
\begin{equation*}
\omega=i\partial\dbar \bigl(|z|^2+\log |z|^2\bigr) \quad \mbox{ for any } z=(z_1, z_2) \in \mathbb{C}^2\setminus \{0\}.
\end{equation*}
Therefore, the metric tensor and its inverse are
\begin{equation*}
(g_{\alpha\bar{\beta}})=\begin{pmatrix}
1+\frac{|z_2|^2}{|z|^4} & -\frac{\oo{z_1}z_2}{|z|^4}\\
-\frac{\oo{z_2}z_1}{|z|^4} & 1+\frac{|z_1|^2}{|z|^4}
\end{pmatrix}
\qquad
(g^{\alpha\bar{\beta}})=\frac{|z|^2}{1+|z|^2} \begin{pmatrix}
1+\frac{|z_1|^2}{|z|^4} & \frac{\oo{z_1}z_2}{|z|^4}\\
\frac{\oo{z_2}z_1}{|z|^4} & 1+\frac{|z_2|^2}{|z|^4}
\end{pmatrix}.
\end{equation*}
A straightforward computation shows the Ricci tensor is given by
\begin{equation*}
(R_{\alpha\bar{\beta}})=\frac{1}{|z|^4(1+|z|^2)^2}\begin{pmatrix}
|z_2|^2+|z|^2(|z_2|^2-|z_1|^2) & -(2|z|^2+1)\oo{z_1}z_2\\
-(2|z|^2+1)\oo{z_2}z_1 & |z_1|^2+|z|^2(|z_1|^2-|z_2|^2)
\end{pmatrix}.
\end{equation*}
By raising up the index we obtain
\begin{equation*}
(R_{\alpha}{}^{\beta})=\frac{1}{|z|^2(1+|z|^2)^2}\begin{pmatrix}
|z_2|^2-|z_1|^2 & -2\oo{z_1}z_2\\
-2z_1\oo{z_2} & |z_1|^2-|z_2|^2\\
\end{pmatrix}.
\end{equation*}

The scalar curvature is $R=\tr (R_{\alpha}{}^{\beta})=0$ and the central curvature is $C=\det(R_{\alpha}{}^{\beta})=-\frac{1}{(1+|z|^2)^4}$. 
By a straightforward computation, we have
\begin{equation*}
\Delta C=-\frac{12}{(1+|z|^2)^5}+\frac{16}{(1+|z|^2)^6} \qquad \Delta^2C=\frac{-240|z|^2+60}{(1+|z|^2)^7}+\frac{480|z|^2-96}{(1+|z|^2)^8}.
\end{equation*}
Moreover, we also have
\begin{equation*}
R^{\alpha\bar{\beta}}\nab_{\alpha}\nab_{\bar{\beta}}C=\frac{20|z|^2-4}{(1+|z|^2)^8}.
\end{equation*}
Therefore,
\begin{equation*}
LC=\Delta^2C+R^{\alpha\bar{\beta}}\nab_{\alpha}\nab_{\bar{\beta}}C=\frac{-240|z|^2+60}{(1+|z|^2)^7}+\frac{500|z|^2-100}{(1+|z|^2)^8}.
\end{equation*}
The above equals $0$ if and only if $|z|^2=\frac{4\pm\sqrt{10}}{6}$. This yields part (a) of Proposition \ref{prop two examples}.

To show part (b) of Proposition \ref{prop two examples}, we first prove the following theorem.
\begin{thm}
	Let $(M,g)$ be a compact \k surface with constant nonnegative scalar curvature and with discrete automorphism group. If $(M,g)$ is not \ke and not locally symmetric (i.e., is not  locally isometric to any Hermitian symmetric space), then $(M,g)$ is not obstruction flat.
\end{thm}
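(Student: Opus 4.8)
The plan is to argue by contraposition: supposing $g$ is obstruction flat, we will show that $(M,g)$ is \ke or locally symmetric. Since $(M,g)$ is a compact \k surface with constant scalar curvature, Theorem \ref{thm 3}(3) applies, so the central vector field $\grad^{1,0}C$ is a holomorphic vector field on $M$. The Lie algebra of the (complex) Lie group $\Aut(M)$ is $H^0(M, T^{1,0}M)$, the space of global holomorphic vector fields; as $\Aut(M)$ is discrete, this space is $\{0\}$. Hence $\grad^{1,0}C\equiv 0$, so $dC=0$ and $C$ is constant. Combined with the hypothesis that the scalar curvature $R$ is constant, the characteristic polynomial $t^2-Rt+C$ of the Ricci endomorphism is constant, i.e. $(M,g)$ has constant Ricci eigenvalues $\lambda_1\le\lambda_2$.

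It thus suffices to prove that a compact \k surface with constant Ricci eigenvalues, nonnegative scalar curvature and discrete automorphism group is \ke or locally symmetric. If $\lambda_1=\lambda_2$, then $\Ric=\lambda_1 g$ and $(M,g)$ is \ke. Assume $\lambda_1<\lambda_2$. Let $\omega$ and $\rho$ be the \k form and the Ricci form, and let $\rho_0=\rho-\tfrac{R}{2}\omega$ be the primitive (trace-free) part of $\rho$. Since $M$ is compact and $R$ is constant, both $\omega$ and $\rho$ are harmonic $2$-forms, hence so is the anti-self-dual $(1,1)$-form $\rho_0$; and since the two Ricci eigenvalues are constant, $|\rho_0|^2$ is a positive constant. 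Now I would run a Bochner argument for $\rho_0$: because $|\rho_0|^2$ is constant, the Bochner--Weitzenb\"ock formula gives $0=|\nab\rho_0|^2+(\text{curvature term})$ pointwise on $M$, and one checks, using $R\ge 0$ together with the special structure of the curvature operator on primitive $(1,1)$-forms of a \k surface, that the curvature term is nonnegative. This forces $\nab\rho_0=0$, i.e. the Ricci tensor of $(M,g)$ is parallel. By the de Rham decomposition theorem the universal cover of $(M,g)$ splits as a \k product, which in complex dimension two must be a product of two Riemann surfaces of constant Gauss curvature $\lambda_1$ and $\lambda_2$; hence $(M,g)$ is locally symmetric. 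In either case we contradict the hypothesis that $(M,g)$ is neither \ke nor locally symmetric, so $g$ is not obstruction flat. (Alternatively, this last step can be replaced by an appeal to the classification of compact \k surfaces with constant Ricci eigenvalues.)

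The reduction in the first paragraph and the de Rham splitting once $\nab\Ric=0$ is known are routine. The main obstacle is the Bochner step: showing that on a \emph{compact} \k surface, constant Ricci eigenvalues together with $R\ge 0$ force the Ricci tensor to be parallel. The naive Weitzenb\"ock formula for an anti-self-dual harmonic $2$-form does not suffice, since the anti-self-dual Weyl curvature $W^-$ of a \k surface is not controlled by the scalar curvature; one needs the \k-specific Bochner formula for the trace-free Ricci tensor, in which the extra curvature contractions combine with the $R\ge 0$ hypothesis to produce the required sign. Both compactness and the sign of $R$ are essential: the noncompact Siegel--Jacobi space has constant Ricci eigenvalues $0$ and $-3$, is obstruction flat by Theorem \ref{thm 1}, yet is neither \ke nor locally symmetric.
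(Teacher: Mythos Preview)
Your reduction in the first paragraph is exactly the paper's proof: obstruction flat $\Rightarrow$ $\grad^{1,0}C$ holomorphic (Theorem~\ref{thm 3}(3)) $\Rightarrow$ $\grad^{1,0}C\equiv 0$ since $\Aut(M)$ is discrete $\Rightarrow$ $C$ constant $\Rightarrow$ constant Ricci eigenvalues.

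For the last step the paper does not attempt a Bochner argument; it simply observes that if the two eigenvalues are distinct then Theorem~2 of Apostolov--Dr\u{a}ghici--Moroianu \cite{ApoDraMo01} gives a contradiction, since that result says a compact \k surface with two distinct constant Ricci eigenvalues and nonnegative scalar curvature must be locally symmetric. This is precisely the ``classification of compact \k surfaces with constant Ricci eigenvalues'' you mention parenthetically; so your alternative \emph{is} the paper's proof.

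Your sketched Bochner argument, by contrast, is an attempt to reprove that theorem, and the gap you identify is genuine. The Weitzenb\"ock formula for a harmonic anti-self-dual $2$-form on a $4$-manifold produces a curvature term involving $W^-$ in addition to the scalar curvature, and $R\ge 0$ alone does not control the sign of $\langle W^-(\rho_0),\rho_0\rangle$; on a \k surface $W^-$ is the Bochner tensor, which is not determined by $R$. The argument in \cite{ApoDraMo01} does use \k-specific identities (and the assumption $R\ge 0$ in an essential way), but it is more delicate than the one-line ``one checks'' you offer. So as written your second paragraph is incomplete, while your parenthetical fallback closes the argument and coincides with the paper's route.
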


\begin{proof}
Suppose $(M,g)$ is obstruction flat. Then by Theorem \ref{thm 3}, $\grad^{1,0}C$ is a holomorphic vector field. Since $(M,g)$ has discrete automorphism group, $\grad^{1,0}C$ must be trivial and therefore $C$ is constant on $M$. Now that both the central curvature  $C$ 
and the scalar curvature 
are constant, we conclude that the Ricci eigenvalues are constant. Since $(M,g)$ is not K\"ahler-Einstein, the two Ricci eigenvalues must be distinct. But this is impossible by Theorem 2 in \cite{ApoDraMo01} (note in \cite{ApoDraMo01}, by ``Ricci tensor has constant eigenvalues", it means the Ricci eigenvalues are constant in our terminology), as $(M,g)$ is not locally symmetric and has nonnegative scalar curvature.
\end{proof}

Now to establish part (b) of Proposition \ref{prop two examples}, it suffices to prove
\begin{lemma}
	Let $k\geq 14$. Then there is a complex surface $M$ obtained by the blow up of $\mathbb{CP}^2$ at $k$ suitably chosen points, that admits a scalar flat \k metric $g$ satisfying all the following conditions:
	\begin{itemize}
		\item[(1)] the automorphism group of $M$ is discrete;
		\item[(2)] $(M,g)$ is not K\"ahler-Einstein;
		\item[(3)] $(M,g)$ is not locally symmetric.
	\end{itemize}
\end{lemma}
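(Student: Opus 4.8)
The plan is to build the desired complex surface $M$ using the existence theory for scalar-flat K\"ahler (equivalently, extremal with zero scalar curvature) metrics on blow-ups, and then to verify the three side conditions by choosing the blown-up points generically. First I would invoke the by-now classical gluing results for scalar-flat K\"ahler (ALE) metrics. Recall that $\mathbb{CP}^2$ blown up at $k$ points carries a scalar-flat K\"ahler metric for all $k$ sufficiently large: one starts from the Burns metric (the scalar-flat K\"ahler ALE space on $\Bl_0\mathbb{C}^2$) and glues copies of it, or one uses the LeBrun--Singer / Rollin--Singer / Arezzo--Pacard machinery, which produces scalar-flat K\"ahler metrics on $\Bl_{p_1,\dots,p_k}\mathbb{CP}^2$ provided $k$ is large enough and the points are in general position (with respect to a suitable stability / balancing condition). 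The threshold $k\geq 14$ is exactly the kind of bound that appears in these constructions (e.g.\ Rollin--Singer, or the del Pezzo/parabolic stability count), so I would cite the relevant theorem to get existence of a scalar-flat K\"ahler metric $g$ on such an $M$ for some choice of $k\geq 14$ points.

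Next I would verify the three conditions, each of which is generic in the point configuration. For (1), the automorphism group: $\Aut(\Bl_{p_1,\dots,p_k}\mathbb{CP}^2)$ is contained in the subgroup of $\mathrm{PGL}(3,\mathbb{C})$ fixing the configuration $\{p_1,\dots,p_k\}$ (together with the exceptional divisors), and for $k\geq 4$ points in general position this group is finite — indeed trivial for a generic configuration — since a projective transformation is determined by the images of $4$ points in general position. Hence $\Aut(M)$ is discrete (in fact finite). For (2), non-K\"ahler--Einstein: a compact K\"ahler--Einstein surface with $c_1\neq 0$ is either a del Pezzo surface (if $c_1>0$) or has $c_1<0$; our $M$ has $c_1$ neither positive (it has too many blow-ups: $\mathbb{CP}^2$ blown up at $k\geq 14>8$ points is not del Pezzo) nor negative (it is rational), so it admits no K\"ahler--Einstein metric at all; alternatively, one notes a scalar-flat K\"ahler--Einstein metric would be Ricci-flat, forcing $c_1=0$, which fails. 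For (3), not locally symmetric: a compact locally symmetric K\"ahler surface is locally a product of space forms / a Hermitian symmetric space, hence has parallel curvature and in particular locally constant Ricci eigenvalues; but a scalar-flat metric that is also locally symmetric would have to be locally $\mathbb{C}^2$, a space form of the right sign, or a product — none of which is a compact rational surface with the topology of our $M$ (e.g.\ its signature/Euler characteristic, or the fact that $b_2$ is large, rules this out). One can also argue more directly: if $g$ were locally symmetric it would be real-analytic with parallel Ricci tensor, and combined with scalar-flatness on a compact rational surface this is impossible.

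The cleanest way to organize the proof is therefore: (i) quote the existence theorem for scalar-flat K\"ahler metrics on $\Bl_k\mathbb{CP}^2$, $k\geq 14$, with the points chosen generically; (ii) observe that "generic configuration" can simultaneously be taken to ensure $\Aut(M)$ is trivial (a countable intersection of Zariski-open conditions is still generic); (iii) rule out K\"ahler--Einstein by a topological/$c_1$ argument; (iv) rule out locally symmetric by a holonomy/curvature argument together with the topology of $M$. Then the preceding theorem applies verbatim to conclude $(M,g)$ is not obstruction flat, which is part (b) of Proposition \ref{prop two examples}.

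The main obstacle is step (i): one must locate in the literature a scalar-flat K\"ahler metric on a blow-up of $\mathbb{CP}^2$ at $k\geq 14$ \emph{specific} points for which the genericity needed in steps (ii)--(iv) is compatible with whatever stability/balancing hypotheses the gluing theorem imposes. The Arezzo--Pacard--Singer and Rollin--Singer results do allow the blown-up points to vary in an open set, so compatibility should hold, but pinning down the precise citation and the precise count "$14$" (as opposed to some other explicit bound) is where the real work lies; the three verifications (1)--(3) are then comparatively routine, each being an open dense condition or a topological obstruction.
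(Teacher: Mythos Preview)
Your strategy is exactly the paper's: cite an existence result for scalar-flat K\"ahler metrics on blow-ups of $\mathbb{CP}^2$, then verify (1)--(3) by choosing the points appropriately. You also correctly locate the main difficulty in step (i), and that is indeed where the precise number $14$ comes from. The paper invokes Theorem~A of Rollin--Singer \cite{RoSin05}: $\mathbb{CP}^2$ blown up at \emph{ten} suitably chosen points $p_1,\dots,p_{10}$ admits a scalar-flat K\"ahler metric, and \emph{any further blow-up} of the resulting surface still does. One then blows up $k-10$ additional points $q_1,\dots,q_{k-10}$ chosen freely away from the first ten, and the automorphism argument requires four of these to be in general position in $\mathbb{CP}^2$; hence $k-10\geq 4$, i.e.\ $k\geq 14$. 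This resolves your obstacle: the ``balancing'' condition is absorbed entirely into the first ten points, and the remaining $k-10\geq 4$ points are unconstrained.

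Two of your verifications need tightening. For (1), it is not true in general that $\Aut(\Bl_{p_1,\dots,p_k}\mathbb{CP}^2)$ sits inside the subgroup of $\mathrm{PGL}(3,\mathbb{C})$ preserving the configuration (think of Cremona-type birational self-maps); what one needs is only that the identity component is trivial. The paper argues this directly: an automorphism $f$ close to the identity must send each exceptional divisor to itself (a proper map to an affine neighborhood of a point is constant), hence descends to an automorphism of $\mathbb{CP}^2$ fixing all $p_i$ and $q_j$; since $q_1,\dots,q_4$ are in general position, this forces $f=\id$. For (3), your curvature/topology sketch is vague; the paper's argument is cleaner: $M$ is simply connected (blow-ups preserve simple connectedness), so a locally symmetric $M$ would be globally biholomorphic to a Hermitian symmetric space, contradicting (1). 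Your argument for (2) via ``scalar-flat K\"ahler--Einstein $\Rightarrow$ Ricci-flat $\Rightarrow c_1=0$'' is fine and arguably slicker than the paper's Chern-number computation $c_1^2=9-k<0$.
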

\begin{proof}
First by Theorem A in \cite{RoSin05}, the complex projective plane $\mathbb{CP}^2$, blown up at $10$ suitably chosen points, $p_1, \cdots, p_{10}$ (not necessarily distinct), admits a scalar flat \k metric, and any further blow-up of the resulting complex surface $\widetilde{M}$ still admits a scalar flat \k metric. Note there exists a union $T$ of subvarieties of $\widetilde{M}$  such that $\widetilde{M}\setminus T$ is biholomorphic to $\mathbb{CP}^2\setminus \cup_{i=1}^{10}\{p_i\}$. Next we pick distinct points $q_1, \cdots, q_{k-10}$ in $\widetilde{M}\setminus T\cong \mathbb{CP}^2\setminus \cup_{i=1}^{10}\{p_i\}$ such that the first $4$ points are in general position, which means any choice of $3$ of them are linearly independent in $\mathbb{C}^3$. Then we blow up $\widetilde{M}$ at $q_1, \cdots, q_{k-10}$ to obtain a complex surface, which will be denoted by $M$. By the aforementioned result in \cite{RoSin05}, we see that $M$ admits a scalar flat \k metric $g$. Since $q_1, \cdots, q_4$ are in general position, it is well known that the automorphism group of $M$ is discrete. For the convenience of the readers, we sketch a proof here.
	
Let $\pi: M\rightarrow \mathbb{CP}^2$ be the natural projection, induced by the blowups. Let $L_i=\pi^{-1}(p_i)$ for $1\leq i\leq 10$ and $H_i=\pi^{-1}(q_j)$ for $1\leq j\leq k-10$. They are all compact subvarieties of $M$. If $f$ is an automorphism of $M$ that is sufficiently close to the identity, then $\pi\circ f(L_i)$ is in an affine neighborhood of $p_i$, and $\pi\circ f(H_j)$ is in an affine neighborhood of $q_j$. On the other hand, since $\pi\circ f$ is a proper map, $\pi\circ f(L_i)$ and $\pi\circ f(H_j)$ are complex subvarieties of $\mathbb{CP}^2$. This implies $\pi\circ f$ is constant on each $L_i$ and $H_j$. But $f$ is an automorphism of $M$ and $\pi$ is isomorphic away from $L_i$ and $H_j$. Hence we must have $f(L_i)=L_i$ and $f(H_j)=H_j$ for all $i$ and $j$.	Consequently, denoting by $W$ the union of $L_i$ and $H_j$, $f$ is a biholomorphism of $M\setminus W$. We conclude there exists an automorphism $\phi$ of $\mathbb{CP}^2$  such that $\pi\circ f=\phi\circ \pi$. Note it must hold that $\phi(p_i)=p_i$ and $\phi(q_j)=q_j$ for all $i$ and $j$. Since $q_1, \cdots, q_4$ are in general position, $\phi$ must be the identity map of $\mathbb{CP}^2$. Consequently, $f$ is the identity map of $M$. Therefore, the automorphism group of $M$ is discrete.
	
It also follows from a standard argument that $(M,g)$ is not K\"ahler-Einstein. Indeed note the Chern number of $M$ satisfies $c_1^2(M)=9-k<0$. Assume $\Ric(g)=\lambda\omega_g$. Then $\int_M \Ric(g)\wedge \Ric(g)=\lambda^2\int_M \omega_g^2\geq 0$, a plain contradiction. For more details, we refer the readers to page 236 of \cite{RoSin05}. Finally, since the blowup of a simply connected compact complex manifold is also simply-connected, we see $M$ must be simply connected. Then $M$ cannot be locally symmetric. Otherwise $M$ is (globally) biholomorphic to some Hermitian symmetric space, which contradicts (1).
\end{proof}

\section{Appendix: Computation of several tensors}\label{Sec Appendix}
In this appendix, we will compute several tensors that were used in $\S$ 3. We will first compute them in general dimension, and then restrict to the $2$-dimensional case.

Let $(L, h)$ be a negative line bundle over a complex manifold $M$ of dimension $n$. We assume $(L,h)$ is negative and let $\omega=-c_1(L,h)$ be the induced \k form on $M$. Thus, $(M, \omega)$ is a \k manifold and we denote the induced \k metric by $g$. Let $X=S(L)$ be the circle bundle, which is a strictly pseudoconvex CR manifold of dimension $2n+1$. Let the local frame $\{T, Z_{\alpha}, Z_{\oo{\alpha}}\}$ on an open neighborhood $U\subset X$ and its dual frame $\{\theta, \theta^{\alpha}, \theta^{\oo{\alpha}} \}$ be as in $\S$ \ref{Sec preliminary from CR}. Let $(\mathcal{F}, \mathsf{g})$ be the Fefferman space over $X$. We will work with the local frame $(X_0, \cdots, X_{2n+1})$ of $\mathcal{F}$, and its dual frame $(\theta^0, \cdots, \theta^{2n+1})$ defined in \eqref{Fefferman space frame} and \eqref{Fefferman space dual frame} respectively.

We will use the Roman letters $i, j, k, \cdots$ to denote indices ranging between $0$ and $2n+1$ and Greek letters $\alpha, \beta, \gamma, \cdots$ to denote indices ranging between $1$ and $n$. With respect to the above frame, by \eqref{Fefferman metric} the Fefferman metric $\mathsf{g}$ on $\mathcal{F}$ and the \k metric $g$ on $M$ are related as follows:
\begin{equation}\label{Fefferman metric in matrix}
	(\mathsf{g}_{ij})=\begin{pmatrix}
	0 & 0 & 0 &1\\
	0 & 0 & \frac{1}{2}g_{\alpha\bar{\beta}} & 0\\
	0 &\frac{1}{2} g_{\bar{\alpha}\beta} & 0 & 0\\
	1 & 0 & 0 & 0
	\end{pmatrix}.
\end{equation}

For the rest of this appendix, we will always assume the \k manifold $(M, g)$ has constant scalar curvature. We will denote by $D$ the Levi-Civita connection of the Fefferman space $(\mathcal{F}, \mathsf{g})$, and denote by $\nab$ the Tanaka-Webster connection on $X$. As discussed in $\S$ \ref{Sec preliminary from CR}, the Tanaka-Webster connection $\nab$ is (pseudohermitian) torsion free and we can also identify it with the Levi-Civita connection on $(M,g)$. In the following context, we denote by $\rho_{ijkl}, \rho_{ij}$ and $\rho$ respectively the Riemannian (Lorentzian) , Ricci and scalar curvature of $(\mathcal{F}, \mathsf{g})$.
We denote by $R_{\alpha\bar{\beta}\gamma\bar{\delta}}, R_{\alpha\bar{\beta}}$ and $R$ respectively the pseudohermitian curvature, Ricci curvature and scalar curvature of $X$; recall that these curvatures of the pseudohermitian manifold $X$ can be identified with those of the \k manifold $(M, g)$.

We now compute the Schouten, Cotton, Weyl and Bach tensors of the Fefferman metric. Before carrying out the calculations, we will need the formulas for the Christoffel symbols.
\subsection{The Christoffel symbol}
Lee computed the Levi-Civita connection on the Fefferman space $(\mathcal{F}, \mathsf{g})$. Since in our setting the Tanaka-Webster connection $\nab$ is (pseudohermitian) torsion free and the pseudohermitian scalar curvature is constant, Lee's result simplifies into the following.

\begin{prop}[Proposition 6.5 in \cite{Lee86}]
	The Levi-Civita connection $1$-form $(\phi_j{}^k)$ of the Fefferman metric $\mathsf{g}$ is given by
\begin{equation}\label{connection 1 form}
	(\phi_{j}{}^k)=\begin{pmatrix}
	0 & i\sigma^{\beta} & -i\sigma^{\bar{\beta}} & 0\\
	\frac{i}{2}\theta_{\alpha} & \phi_{\alpha}{}^{\beta} & 0 & \frac{i}{2}\sigma_{\alpha}\\
	-\frac{i}{2}\theta_{\bar{\alpha}} & 0 & \phi_{\bar{\alpha}}{}^{\bar{\beta}} & -\frac{i}{2}\sigma_{\bar{\alpha}}\\
	0 & i\theta^{\beta} & -i\theta^{\bar{\beta}} & 0
	\end{pmatrix}
\end{equation}	
where
\begin{align*}
	\phi_{\alpha}{}^{\beta}=\omega_{\alpha}{}^{\beta}+iK_{\alpha}{}^{\beta}\theta+i\delta_{\alpha}^{\beta}\sigma, \quad \sigma_{\alpha}=K_{\alpha\bar{\beta}} \theta^{\bar{\beta}}, \quad K_{\alpha\bar{\beta}}=\frac{1}{n+2} \bigl( R_{\alpha\bar{\beta}}-\frac{1}{2(n+1)}R g_{\alpha\bar{\beta} }\bigr).
\end{align*}
Here $\delta_{\alpha}^{\beta}$ is the Kronecker symbol and $\omega_{\alpha}{}^{\beta}$'s are defined in \eqref{structure equation of CR manifold}.
\end{prop}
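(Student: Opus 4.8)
The plan is to obtain \eqref{connection 1 form} as the specialization of Lee's general computation of the Levi-Civita connection of the Fefferman metric. In \cite{Lee86}, Lee expresses the connection $1$-forms $\phi_j{}^k$ of $\mathsf{g}$, relative to an admissible coframe, entirely in terms of pseudohermitian data on $X$: the Tanaka-Webster connection forms $\omega_\alpha{}^\beta$, the contact form $\theta$, the coframe $\theta^\alpha$, the auxiliary $1$-form $\sigma$ of \eqref{sigma in Fefferman metric}, and, in addition, the pseudohermitian torsion $A_{\alpha\beta}$ together with the covariant derivatives $\nab_\alpha R$ of the pseudohermitian scalar curvature. For $X=S(L)$ with the contact form $\theta=i\dbar\log|v|_h^2$, the Tanaka-Webster connection is pseudohermitian torsion free (as recalled in $\S$\,\ref{Sec preliminary from CR}), so $A_{\alpha\beta}\equiv 0$; and, by the standing hypothesis of this appendix, $(M,g)$ has constant scalar curvature, so $R$ is constant and $\nab_\alpha R\equiv 0$. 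Substituting these two vanishings into Lee's formula makes every torsion term and every term carrying a derivative of $R$ disappear, leaving precisely the matrix \eqref{connection 1 form} and the stated expressions for $\phi_\alpha{}^\beta$, $\sigma_\alpha$ and $K_{\alpha\bar\beta}$.

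To keep the argument self-contained one may instead verify \eqref{connection 1 form} directly, using that the Levi-Civita connection of $\mathsf{g}$ is the unique collection of $1$-forms $\phi_j{}^k$ satisfying the (torsion-free) first structure equation $d\theta^k=\theta^j\wedge\phi_j{}^k$ together with metric compatibility $d\mathsf{g}_{jk}=\phi_{jk}+\phi_{kj}$, indices lowered by $\mathsf{g}$. The required inputs are: the Levi formula $d\theta=ig_{\alpha\bar\beta}\,\theta^\alpha\wedge\theta^{\bar\beta}$; the pseudohermitian structure equation $d\theta^\beta=\theta^\alpha\wedge\omega_\alpha{}^\beta$ from \eqref{structure equation of CR manifold}, the torsion term dropping out; the identity $\omega_{\alpha\bar\beta}+\omega_{\bar\beta\alpha}=dg_{\alpha\bar\beta}$; and an expression for $d\sigma$ obtained by exterior differentiation of \eqref{sigma in Fefferman metric} together with the trace of the pseudohermitian curvature form, which is the only place constancy of $R$ enters, killing a $dR\wedge\theta$ contribution. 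One then checks that the displayed matrix satisfies both characterizing equations, and concludes by uniqueness.

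The main obstacle in either route is purely organizational: producing the correct formula for $d\sigma$ in terms of $\theta$, $\theta^\alpha$, $\sigma$ and the curvature, and keeping careful track of the normalizations, namely the factor $\tfrac{1}{n+2}$ in $\sigma$ and in $K_{\alpha\bar\beta}$, the factor $\tfrac{1}{2}$ in $\mathsf{g}_{\alpha\bar\beta}$, and the placement of the $i$'s, so that the mixed $0$th and $(2n+1)$th rows and columns come out exactly as displayed. Since this proposition is needed only as an intermediate step toward the Schouten, Cotton, Weyl and Bach tensors computed below, we will simply invoke \cite{Lee86} and record the stated specialization.
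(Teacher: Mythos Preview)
Your proposal is correct and matches the paper's approach exactly: the paper does not prove this proposition but simply cites it from \cite{Lee86}, noting that Lee's general formula simplifies to \eqref{connection 1 form} once the pseudohermitian torsion and the derivatives of $R$ are set to zero. Your additional sketch of a direct verification via the first structure equation and metric compatibility is a reasonable alternative, but it is not needed here.
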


The Christoffel symbol is defined by $D_{X_i} X_j=\Gamma_{ij}^k X_k$. It is related to the connection $1$-form as $\Gamma_{ij}^k = \phi_j{}^k(X_i)$. By this relation, we can easily find out the Christoffel symbol.
\begin{align}\label{christoffel symbols}
	\begin{split}
	(\Gamma_{j0}^k)
	=&\begin{pmatrix}
	0 & 0 & 0 & 0\\
	0 & iK_{\alpha}{}^{\beta} & 0 & 0\\
	0 & 0 & -iK_{\bar{\alpha}}{}^{\bar{\beta}} & 0\\
	0 & 0 & 0 & 0\\
	\end{pmatrix}
	\qquad \qquad
	(\Gamma_{j\gamma}^k)=\begin{pmatrix}
	0 & iK_{\gamma}{}^{\beta} & 0 & 0\\
	0 & \widetilde{\Gamma}_{\alpha\gamma}^{\beta} & 0 & 0\\
	\frac{i}{2} g_{\gamma\bar{\alpha}} & 0 & 0 & \frac{i}{2} K_{\gamma\bar{\alpha}}\\
	0 & i\delta_{\gamma}^{\beta} & 0 & 0
	\end{pmatrix}
	\\
	(\Gamma_{j\bar{\gamma}}^k)=&\begin{pmatrix}
	0 & 0 & -iK_{\bar{\gamma}}{}^{\bar{\beta}} & 0\\
	-\frac{i}{2} g_{\alpha\bar{\gamma}} & 0 & 0 & -\frac{i}{2} K_{\alpha\bar{\gamma}}\\
	0 & 0 & \widetilde{\Gamma}_{\bar{\alpha}\bar{\gamma}}^{\bar{\beta}} & 0\\
	0 & 0 & -i\delta_{\bar{\gamma}}^{\bar{\beta}} & 0
	\end{pmatrix}
	\qquad
	(\Gamma_{j\, 2n+1}^k)
	=\begin{pmatrix}
	0 & 0 & 0 & 0\\
	0 & i\delta_{\alpha}^{\beta} & 0 & 0\\
	0 & 0 & -i\delta_{\bar{\alpha}}^{\bar{\beta}} & 0\\
	0 & 0 & 0 & 0\\
	\end{pmatrix}.
	\end{split}
\end{align}

In the above, $j$ is the row index and $k$ is the column index, and $\widetilde{\Gamma}_{\alpha\gamma}^{\beta}=g^{\beta\bar{\nu}}\frac{\partial g_{\gamma\bar{\nu}}}{\partial z_{\alpha}}$ is the Christoffel symbol on \k manifold $(M, g)$. Note $\Gamma_{ij}^k-\Gamma_{ji}^k=\theta^k ([X_i, X_j])$, which does not vanish in general.

The following lemma on the tensor $K_{\alpha\bar{\beta}}$ will be used in later computations.
\begin{lemma}\label{K some properties lemma}
	\begin{equation}\label{K trace}
	K_{\alpha}{}^{\alpha}=\frac{1}{2(n+1)} R.
	\end{equation}
	\begin{equation}\label{K square}
	K_{\alpha}{}^{\mu}K_{\mu\bar{\beta}}=\frac{1}{(n+2)^2}R_{\alpha}{}^{\mu}R_{\mu\bar{\beta}}-\frac{1}{(n+1)(n+2)^2}RR_{\alpha\bar{\beta}}+\frac{1}{4(n+1)^2(n+2)^2}R^2g_{\alpha\bar{\beta}}.
	\end{equation}
	\begin{equation}\label{K first derivative}
		\nab_{\alpha}K_{\beta\bar{\gamma}}=\nab_{\beta}K_{\alpha\bar{\gamma}}, \qquad \nab_{\bar{\beta}}K_{\alpha\bar{\delta}}=\nab_{\bar{\delta}}K_{\alpha\bar{\beta}}, \qquad \nab_{\alpha}K_{\beta}{}^{\alpha}=0, \qquad \nab^{\beta} K_{\beta}{}^{\alpha}=0.
	\end{equation}
	\begin{equation}\label{K derivative and product}
		\bigl(\nab_{\alpha}K_{\gamma}{}^{\beta}\bigr) K_{\beta}{}^{\gamma}=K^{\gamma\bar{\delta}}\nab_{\alpha}K_{\gamma\bar{\delta}}=\frac{1}{2}\nab_{\alpha}\Lambda.
	\end{equation}
	\begin{equation}\label{K laplace}
	\nab^{\gamma}\nab_{\gamma}K_{\alpha\bar{\beta}}=\frac{1}{n+2}R^{\gamma\bar{\delta}}R_{\alpha\bar{\beta}\gamma\bar{\delta}}+\frac{1}{n+2}R_{\alpha}{}^{\mu}R_{\mu\bar{\beta}}.
	\end{equation}
\end{lemma}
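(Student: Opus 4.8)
The first two identities are purely algebraic. Substituting the definition $K_{\alpha\bar\beta}=\frac{1}{n+2}\bigl(R_{\alpha\bar\beta}-\frac{1}{2(n+1)}Rg_{\alpha\bar\beta}\bigr)$ and using $g^{\alpha\bar\beta}g_{\alpha\bar\beta}=n$ gives $K_\alpha{}^\alpha=\frac{1}{n+2}\bigl(R-\frac{n}{2(n+1)}R\bigr)=\frac{R}{2(n+1)}$, which is \eqref{K trace}. For \eqref{K square} one simply expands the product $K_\alpha{}^\mu K_{\mu\bar\beta}=\frac{1}{(n+2)^2}\bigl(R_\alpha{}^\mu-\frac{R}{2(n+1)}\delta_\alpha^\mu\bigr)\bigl(R_{\mu\bar\beta}-\frac{R}{2(n+1)}g_{\mu\bar\beta}\bigr)$ term by term. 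I would just record these two computations directly.

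For \eqref{K first derivative}, recall that on a \k manifold the Ricci form is closed, equivalently $\nabla_\mu R_{\alpha\bar\beta}=\nabla_\alpha R_{\mu\bar\beta}$ and $\nabla_{\bar\nu}R_{\alpha\bar\beta}=\nabla_{\bar\beta}R_{\alpha\bar\nu}$. Since $R$ is constant and $g$ is parallel, $\nabla_\bullet K_{\alpha\bar\beta}=\frac{1}{n+2}\nabla_\bullet R_{\alpha\bar\beta}$, so the first two symmetries in \eqref{K first derivative} are immediate. For the divergence identities, contract the first symmetry with $g^{\alpha\bar\gamma}$: $\nabla_\alpha K_\beta{}^\alpha=g^{\alpha\bar\gamma}\nabla_\alpha K_{\beta\bar\gamma}=g^{\alpha\bar\gamma}\nabla_\beta K_{\alpha\bar\gamma}=\nabla_\beta K_\alpha{}^\alpha=\frac{1}{2(n+1)}\nabla_\beta R=0$ by \eqref{K trace} and the constancy of $R$; the identity $\nabla^\beta K_\beta{}^\alpha=0$ follows by the conjugate computation. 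For \eqref{K derivative and product}, apply the Leibniz rule to $\Lambda=K_{\gamma\bar\delta}K^{\gamma\bar\delta}$; the two resulting terms coincide because $K$ is Hermitian, so $\nabla_\alpha\Lambda=2K^{\gamma\bar\delta}\nabla_\alpha K_{\gamma\bar\delta}$, and rewriting $K^{\gamma\bar\delta}\nabla_\alpha K_{\gamma\bar\delta}=(\nabla_\alpha K_\gamma{}^\beta)K_\beta{}^\gamma$ is just raising and lowering indices.

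The substantive part is \eqref{K laplace}, a Bochner/Weitzenböck-type identity. Since $R$ is constant and $g$ is parallel, $\nabla^\gamma\nabla_\gamma K_{\alpha\bar\beta}=\frac{1}{n+2}\nabla^\gamma\nabla_\gamma R_{\alpha\bar\beta}$, with $\nabla^\gamma=g^{\gamma\bar\delta}\nabla_{\bar\delta}$. The plan is: first use the Bianchi symmetry $\nabla_\gamma R_{\alpha\bar\beta}=\nabla_\alpha R_{\gamma\bar\beta}$ to write $\nabla^\gamma\nabla_\gamma R_{\alpha\bar\beta}=g^{\gamma\bar\delta}\nabla_{\bar\delta}\nabla_\alpha R_{\gamma\bar\beta}$; then commute $\nabla_{\bar\delta}$ past $\nabla_\alpha$ via the Ricci identity for the Levi-Civita (equivalently Tanaka--Webster) connection. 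The commuted term is $g^{\gamma\bar\delta}\nabla_\alpha\nabla_{\bar\delta}R_{\gamma\bar\beta}=g^{\gamma\bar\delta}\nabla_\alpha\nabla_{\bar\beta}R_{\gamma\bar\delta}=\nabla_\alpha\nabla_{\bar\beta}R=0$, using the conjugate Bianchi symmetry, parallelism of $g$, and constancy of $R$. What remains are the two curvature terms produced by the commutator acting on the $\gamma$-slot and the $\bar\beta$-slot of $R_{\gamma\bar\beta}$; contracting with $g^{\gamma\bar\delta}$ and invoking the symmetries of the \k curvature tensor $R_{\alpha\bar\delta\gamma\bar\nu}$, the partial trace $g^{\gamma\bar\delta}R_{\alpha\bar\delta\gamma}{}^\mu$ collapses to $R_\alpha{}^\mu$, giving the term $\frac{1}{n+2}R_\alpha{}^\mu R_{\mu\bar\beta}$, while the $\bar\beta$-slot term reorganizes to $\frac{1}{n+2}R^{\gamma\bar\delta}R_{\alpha\bar\beta\gamma\bar\delta}$. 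I expect the main obstacle to be purely the bookkeeping of signs and of the index/curvature conventions in this last step; I would therefore fix all conventions explicitly at the outset and cross-check against the standard formula for the rough Laplacian of the Ricci tensor on a constant-scalar-curvature \k manifold.
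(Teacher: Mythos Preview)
Your proposal is correct and follows essentially the same approach as the paper's own proof: direct algebraic expansion for \eqref{K trace} and \eqref{K square}; the second Bianchi identity together with the constancy of $R$ for \eqref{K first derivative}; the Leibniz rule for \eqref{K derivative and product}; and for \eqref{K laplace}, the paper likewise writes $\nab^{\gamma}\nab_{\gamma}K_{\alpha\bar\beta}=\frac{1}{n+2}\nab^{\gamma}\nab_{\alpha}R_{\gamma\bar\beta}$ via Bianchi, commutes derivatives by the Ricci identity, and kills the main term using $\nab_{\alpha}\nab^{\gamma}R_{\gamma\bar\beta}=\nab_{\alpha}\nab_{\bar\beta}R=0$. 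The only difference is notational (you keep $g^{\gamma\bar\delta}$ explicit where the paper writes $\nab^{\gamma}$), and your caution about sign/index conventions in the commutator step is well placed but does not reflect any divergence in method.
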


\begin{proof}
	The first two identities follow from straightforward computations:
	\begin{align*}
	K_{\alpha}{}^{\alpha}=&K_{\alpha\bar{\beta}}g^{\alpha\bar{\beta}}=\frac{1}{n+2} \bigl( R_{\alpha\bar{\beta}}-\frac{1}{2(n+1)}R g_{\alpha\bar{\beta} }\bigr) g^{\alpha\bar{\beta}}=\frac{1}{2(n+1)}R,
	\\
	K_{\alpha}{}^{\mu}K_{\mu\bar{\beta}}=&\frac{1}{(n+2)^2}\bigl(R_{\alpha}{}^{\mu}-\frac{1}{2(n+1)}R\delta_{\alpha}^{\mu}\bigr)\bigl(R_{\mu\bar{\beta}}-\frac{1}{2(n+1)}Rg_{\mu\bar{\beta}}\bigr)\\
	=&\frac{1}{(n+2)^2}R_{\alpha}{}^{\mu}R_{\mu\bar{\beta}}-\frac{1}{(n+1)(n+2)^2}RR_{\alpha\bar{\beta}}+\frac{1}{4(n+1)^2(n+2)^2}R^2g_{\alpha\bar{\beta}}.
	\end{align*}
	
	For \eqref{K first derivative}, since the scalar curvature $R$ is assumed to be constant, $\nab_{\alpha}K_{\beta\bar{\gamma}}=\frac{1}{n+2}\nab_{\alpha}R_{\beta\bar{\gamma}}$. By the second Bianchi identity, we have $	\nab_{\alpha}K_{\beta\bar{\gamma}}=\nab_{\beta}K_{\alpha\bar{\gamma}}$. If we trace out the index $\alpha$ and $\bar{\gamma}$, then
	\begin{equation*}
		\nab_{\alpha}K_{\beta}{}^{\alpha}=\nab_{\beta}K_{\alpha}{}^{\alpha}=\frac{1}{2(n+1)}\nab_{\beta}R=0.
	\end{equation*} The other two identities in \eqref{K first derivative} can be proved similarly. To prove \eqref{K derivative and product}, we have
	\begin{equation*}
		\bigl(\nab_{\alpha}K_{\gamma}{}^{\beta}\bigr) K_{\beta}{}^{\gamma}=\frac{1}{2}\nab_{\alpha} 	\bigl(K_{\gamma}{}^{\beta} K_{\beta}{}^{\gamma} \big)=\frac{1}{2}\nab_{\alpha}\Lambda.
	\end{equation*}	
	Similarly, we can also prove $K^{\gamma\bar{\delta}}\nab_{\alpha}K_{\gamma\bar{\delta}}=\frac{1}{2}\nab_{\alpha}\Lambda$.
	
	We now prove the last identity. Since $R$ is constant, by the second Bianchi identity we have
	\begin{align*}
	\nab^{\gamma}\nab_{\gamma}K_{\alpha\bar{\beta}}=\frac{1}{n+2}\nab^{\gamma}\nab_{\gamma}R_{\alpha\bar{\beta}}=\frac{1}{n+2}\nab^{\gamma}\nab_{\alpha}R_{\gamma\bar{\beta}}.
	\end{align*}
	We use the Ricci identity to commute the covariant derivatives:
	\begin{align*}
	\nab^{\gamma}\nab_{\alpha}R_{\gamma\bar{\beta}}=\nab_{\alpha}\nab^{\gamma}R_{\gamma\bar{\beta}}+R_{\mu\bar{\beta}}R_{\alpha}{}^{\gamma\mu}{}_{\gamma}+R_{\gamma\bar{\nu}}R_{\alpha}{}^{\gamma\bar{\nu}}{}_{\bar{\beta}}=\nab_{\alpha}\nab^{\gamma}R_{\gamma\bar{\beta}}+R_{\mu\bar{\beta}}R_{\alpha}{}^{\mu}+R^{\gamma\bar{\delta}}R_{\alpha\bar{\delta}\gamma\bar{\beta}}.
	\end{align*}
	Note that by the second Bianchi identity again we have
	\begin{equation*}
	\nab_{\alpha}\nab^{\gamma}R_{\gamma\bar{\beta}}=\nab_{\alpha}\nab_{\bar{\beta}}R_{\gamma}{}^{\gamma}=\nab_{\alpha}\nab_{\bar{\beta}}R=0.
	\end{equation*}
	Finally, \eqref{K laplace} follows from the above three equations.
\end{proof}

\subsection{The Schouten tensor}
In \cite{Lee86}, Lee expressed the curvature of the Fefferman metric on $\mathcal{F}$ in terms of the pseudohermitian curvature on $X$. In our setting the Tanaka-Webster connection $\nab$ is (pseudohermitian) torsion free and the pseudohermitian scalar curvature is constant, and Lee's result simplifies into the following.
\begin{thm}[Theorem 6.6 and Theorem 6.2 in \cite{Lee86}]\label{thm on curvatures by Lee}
	The Ricci curvature tensor of $(\mathcal{F}, \mathsf{g})$ is given by
	\begin{align*}
		\rho_{ij}\theta^i\otimes \theta^j=\frac{1}{n+1} R\, \mathsf{g}_{ij}\theta^i\otimes\theta^j+nK_{\alpha\bar{\beta}}\,\theta^{\alpha}\otimes\oo{\theta^{\beta}}+nK_{\alpha\bar{\beta}}\,\oo{\theta^{\beta}}\otimes \theta^{\alpha}+2n\sigma\otimes\sigma+2K_{\alpha\bar{\beta}}K^{\alpha\bar{\beta}}\theta\otimes\theta.
	\end{align*}
	In particular, the scalar curvature of $(\mathcal{F}, \mathsf{g})$ is $\rho=\frac{2(2n+1)}{n+1}R$.
\end{thm}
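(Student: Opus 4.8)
The plan is to obtain the curvature of $(\mathcal{F},\mathsf{g})$ directly from the Levi--Civita connection $1$-forms $(\phi_j{}^k)$ recorded in \eqref{connection 1 form}, and then contract. Concretely, I would compute the curvature $2$-forms
\begin{equation*}
\Phi_j{}^k = d\phi_j{}^k - \phi_j{}^l\wedge\phi_l{}^k ,
\end{equation*}
whose coefficients in the coframe $(\theta^0,\dots,\theta^{2n+1})$ of \eqref{Fefferman space dual frame} are the Riemann curvature components $\rho_{ijkl}$ of $\mathsf{g}$. This requires the exterior derivatives of $\theta,\theta^{\alpha},\theta^{\bar{\alpha}},\sigma$. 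The first three are governed by the pseudohermitian structure equations \eqref{structure equation of CR manifold}, and since the Tanaka--Webster torsion $A_{\alpha\beta}$ vanishes in our setting these reduce to $d\theta = i g_{\alpha\bar{\beta}}\theta^{\alpha}\wedge\theta^{\bar{\beta}}$ and $d\theta^{\beta} = \theta^{\alpha}\wedge\omega_{\alpha}{}^{\beta}$; the form $d\sigma$ is obtained by differentiating \eqref{sigma in Fefferman metric}, using the constancy of $R$ and the pseudohermitian curvature form, which in the torsion-free case is just $\Omega_{\alpha}{}^{\beta} = -R_{\mu\bar{\nu}\alpha}{}^{\beta}\,\theta^{\mu}\wedge\theta^{\bar{\nu}}$. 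This expresses $d\sigma$ in terms of the Ricci form $iR_{\alpha\bar{\beta}}\theta^{\alpha}\wedge\theta^{\bar{\beta}}$ (equivalently, in terms of $K_{\alpha\bar{\beta}}$) plus a term proportional to $\theta\wedge\sigma$; the two hypotheses ``torsion-free'' and ``$R$ constant'' are precisely what keep this step tractable.

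Expanding $\Phi_j{}^k$ block by block according to the matrix shape of $(\phi_j{}^k)$ in \eqref{connection 1 form} and reading off the coefficients yields the Riemann tensor $\rho_{ijkl}$ of $\mathsf{g}$ in terms of the Kähler curvature $R_{\alpha\bar{\beta}\gamma\bar{\delta}}$, the tensor $K_{\alpha\bar{\beta}}$, and the metric. One then forms $\rho_{ij}$ by tracing with the inverse Fefferman metric \eqref{Fefferman metric in matrix}, where $\mathsf{g}^{\alpha\bar{\beta}} = 2g^{\alpha\bar{\beta}}$ and $\mathsf{g}^{0\,2n+1}=1$ while $\mathsf{g}^{00}=\mathsf{g}^{2n+1\,2n+1}=0$. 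The contraction over the holomorphic/antiholomorphic block produces the $\tfrac{1}{n+1}R\,\mathsf{g}_{ij}$ piece together with the $nK_{\alpha\bar{\beta}}$ off-diagonal pieces --- here the trace identity $K_{\alpha}{}^{\alpha}=\tfrac{R}{2(n+1)}$ from \eqref{K trace} pins down the coefficient in front of $\mathsf{g}_{ij}$ --- the $\sigma$-direction contributes the $2n\,\sigma\otimes\sigma$ term, and the $\theta$-direction collects $2K_{\alpha\bar{\beta}}K^{\alpha\bar{\beta}}\,\theta\otimes\theta$. Contracting $\rho_{ij}$ once more against \eqref{Fefferman metric in matrix} gives the scalar curvature: since $\mathsf{g}^{ij}\mathsf{g}_{ij}=2n+2$, the first term yields $2R$, the two $K_{\alpha\bar{\beta}}$ terms yield $\tfrac{2nR}{n+1}$ again by \eqref{K trace}, and the $\sigma\otimes\sigma$ and $\theta\otimes\theta$ terms contribute nothing because $\sigma$ and $\theta$ are $\mathsf{g}$-null; the total is $2R+\tfrac{2nR}{n+1}=\tfrac{2(2n+1)}{n+1}R$, as claimed.

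The main obstacle is bookkeeping: there are many curvature $2$-form components, and the cross terms involving $\sigma$, $d\sigma$, and $\theta$ must be tracked carefully, with the first and second Bianchi identities used to verify that the apparent torsion-type and antisymmetric contributions cancel, so that $\rho_{ij}$ is symmetric and has exactly the stated shape. A convenient shortcut, which I would in any case use to cross-check, is to invoke Lee's general curvature formulas (Theorems~6.2 and~6.6 of \cite{Lee86}): those contain additional terms built from $A_{\alpha\beta}$, its covariant derivatives, and $dR$, all of which vanish in our setting, and dropping them reproduces the displayed formula for $\rho_{ij}$ and the value $\rho=\tfrac{2(2n+1)}{n+1}R$ directly.
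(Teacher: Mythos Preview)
Your proposal is correct and aligns with the paper's treatment: the paper does not reprove this result but simply cites Lee's Theorems~6.2 and~6.6 and notes that, under the standing hypotheses of vanishing pseudohermitian torsion and constant $R$, the extra terms in Lee's general formulas drop out to give the displayed expression. Your final paragraph is exactly this shortcut; the direct computation you sketch beforehand is a legitimate alternative derivation, and your scalar-curvature contraction is carried out correctly.
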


The Schouten tensor is defined as
\begin{equation*}
	P_{ij}=\frac{1}{2n}\bigl(\rho_{ij}-\frac{\rho}{2(2n+1)}\mathsf{g}_{ij}\bigr).
\end{equation*}
By Theorem \ref{thm on curvatures by Lee}, we can express the Schouten tensor in terms of pseudohermitian curvatures.
\begin{align}\label{schouten tensor}
	(P_{ij})=\begin{pmatrix}
	\frac{1}{n}\Lambda & 0 & 0 & 0\\
	0 & 0 & \frac{1}{2} K_{\alpha\bar{\beta}} & 0\\
	0 & \frac{1}{2} K_{\bar{\alpha}\beta} & 0 & 0\\
	0 & 0 & 0 & 1\\
	\end{pmatrix}, \quad \mbox{where } \Lambda=K_{\alpha\bar{\beta}} K^{\alpha\bar{\beta}}.
\end{align}

We emphasize that  $P_{ij}$ is a tensor on the Fefferman space $\mathcal{F}$, and its indices are raised up or lowered down   by the Fefferman metric $\mathsf{g}$. On the other hand, $K_{\alpha\bar{\beta}}$ is a pseudohermitian curvature on $X$, whose indices are raised up or lowered down by the pseudohermitian metric $g$.
For example,
\begin{align*}
	P^{\alpha\bar{\beta}}=\frac{1}{2}\mathsf{g}^{\alpha\bar{\delta}}K_{\bar{\delta}\gamma}\mathsf{g}^{\gamma\bar{\beta}}=2g^{\alpha\bar{\delta}}K_{\bar{\delta}\gamma}g^{\gamma\bar{\beta}}=2K^{\alpha\bar{\beta}}.
\end{align*}
A straightforward computation gives
\begin{align}\label{schouten tensor upper index}
	(P^{ij})=\begin{pmatrix}
	1 & 0 & 0 & 0\\
	0 & 0 & 2 K^{\alpha\bar{\beta}} & 0\\
	0 & 2 K^{\bar{\alpha}\beta} & 0 & 0\\
	0 & 0 & 0 & \frac{1}{n}\Lambda\\
	\end{pmatrix}
\end{align}

\subsection{The Cotton tensor}

Since the Cotton tensor is defined as $C_{ijk}=P_{ij},_k-P_{ik},_j$, we first compute the covariant derivatives of the Schouten tensor $P_{ij}$.

\begin{prop}\label{Schouten tensor derivatives}
	The covariant derivatives of the Schouten tensor are given as follows.
	\begin{itemize}
		\item[(1)] $P_{00},_k=\frac{1}{n} X_k\Lambda \quad \mbox{for any $k$}$.
		
		\item[(2)] $P_{0\alpha},_k=P_{\alpha 0},_k=\begin{cases}
			\frac{i}{2}\bigl(K_{\alpha\bar{\gamma}}K^{\bar{\gamma}}{}_{\bar{\beta}}-\frac{\Lambda}{n}g_{\alpha\bar{\beta}} \bigr) & \mbox{when } k=\bar{\beta}\\
			0 & \mbox{otherwise}
		\end{cases}$.
		
		\item[(3)] $P_{0\bar{\alpha}},_k=P_{\bar{\alpha}0},_k=\begin{cases}
			-\frac{i}{2}\bigl(K_{\beta}{}^{\gamma}K_{\gamma\bar{\alpha}}-\frac{\Lambda}{n}g_{\beta\bar{\alpha}} \bigr) & \mbox{when } k=\beta\\
			0 & \mbox{otherwise}
		\end{cases}$.
		
		\item[(4)] $P_{\alpha\bar{\beta}},_k=P_{\bar{\beta}\alpha},_k=\begin{cases}
		0 & \mbox{when } k=0, 2n+1\\
		\frac{1}{2}\nabla_{\gamma}K_{\alpha\bar{\beta}} & \mbox{when } k=\gamma\\
		\frac{1}{2}\nabla_{\bar{\gamma}}K_{\alpha\bar{\beta}} & \mbox{when } k=\bar{\gamma}
		\end{cases}$.
		
		\item[(5)] $P_{\alpha\beta},_k=P_{\bar{\alpha}\bar{\beta}},_k=0 \quad \mbox{for any $k$}$.
		
		\item[(6)] $P_{ij},_k=0 \quad \mbox{if at least one of $i ,j $ and $k$ is $2n+1$}$.	
	\end{itemize}
\end{prop}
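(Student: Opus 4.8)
The proof is a direct computation from the definition
\[
P_{ij},_k \;=\; X_k\bigl(P_{ij}\bigr)-\Gamma_{ki}^{\,p} P_{pj}-\Gamma_{kj}^{\,p} P_{pi},
\]
using the explicit form of the Schouten tensor in \eqref{schouten tensor}, the table of Christoffel symbols \eqref{christoffel symbols}, and the algebraic identities of Lemma \ref{K some properties lemma}. The feature that keeps this manageable is that $P_{ij}$ has only three nonzero ``blocks'': $P_{00}=\tfrac1n\Lambda$, $P_{\alpha\bar\beta}=P_{\bar\beta\alpha}=\tfrac12 K_{\alpha\bar\beta}$, and $P_{2n+1\,2n+1}=1$. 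Hence in each term $\Gamma_{ki}^{\,p} P_{pj}$ only finitely many values of $p$ survive, and one enumerates them directly from \eqref{christoffel symbols}. Throughout I will invoke Remark \ref{identification of tensors on F and X rmk}: the functions $\Lambda$ and $K_{\alpha\bar\beta}$ are $S^1$-invariant on $S(L)$, hence pulled back from $M$, so $X_0$ and $X_{2n+1}$ annihilate them, while $X_\gamma$ (resp.\ $X_{\bar\gamma}$) acting on them is the lift of the coordinate derivative $Z_\gamma$ (resp.\ $Z_{\bar\gamma}$) on $M$; combining $X_\gamma K_{\alpha\bar\beta}$ with the $\widetilde\Gamma$-entries of \eqref{christoffel symbols} recovers the Tanaka--Webster $=$ Levi--Civita covariant derivative $\nabla_\gamma K_{\alpha\bar\beta}$, the Kähler condition ensuring that no cross-type correction terms appear.

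I would organize the six cases so that the vanishing statements come first. Items (5) and (6) are fastest: $P_{\alpha\beta}=0$ identically and no Christoffel entry in \eqref{christoffel symbols} can feed a nonzero $P$-component into $P_{\alpha\beta},_k$, so (5) follows; for (6) one checks, according to which of $i,j,k$ equals $2n+1$, that the only relevant entries $\Gamma_{i\,2n+1}^{\,j}=\pm i\delta$ (and likewise $\Gamma_{2n+1\,i}^{\,j}=\pm i\delta$) contribute to $P_{pj}$ and $P_{2n+1\,2n+1}$ either nothing or a pair that cancels, while $X_{2n+1}$ kills every component. Item (1) is immediate since $\Gamma_{k0}^{\,0}=0$ for all $k$, giving $P_{00},_k=\tfrac1n X_k\Lambda$. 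For (2) (and (3), its complex conjugate) the only nonzero case is $k=\bar\beta$: there $-\Gamma_{\bar\beta 0}^{\,\bar\gamma}P_{\bar\gamma\alpha}$ produces the $\tfrac i2 K_{\alpha\bar\gamma}K^{\bar\gamma}{}_{\bar\beta}$ term and $-\Gamma_{\bar\beta\alpha}^{\,0}P_{00}=-\tfrac i2 g_{\alpha\bar\beta}\cdot\tfrac\Lambda n$ produces the correction term, while the values $k\in\{0,2n+1,\gamma\}$ give zero because $\Gamma_{\gamma 0}^{\,\bar\mu}$, $\Gamma_{\gamma\alpha}^{\,0}$ and $\Gamma_{2n+1\,0}^{\,p}$ all vanish. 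Finally, for (4) one computes $P_{\alpha\bar\beta},_\gamma=X_\gamma(\tfrac12 K_{\alpha\bar\beta})-\Gamma_{\gamma\alpha}^{\,\mu}P_{\mu\bar\beta}=\tfrac12\nabla_\gamma K_{\alpha\bar\beta}$ (and its conjugate for $k=\bar\gamma$), while $P_{\alpha\bar\beta},_0=0$ and $P_{\alpha\bar\beta},_{2n+1}=0$ follow from the pairwise cancellation $-\tfrac i2 K_\alpha{}^\mu K_{\mu\bar\beta}+\tfrac i2 K_\alpha{}^\mu K_{\mu\bar\beta}=0$ of the $\Gamma_{0\alpha}^{\,\mu},\Gamma_{0\bar\beta}^{\,\bar\nu}$ (resp.\ $\Gamma_{2n+1\,\alpha}^{\,\mu},\Gamma_{2n+1\,\bar\beta}^{\,\bar\nu}$) contributions.

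The main obstacle is purely organizational: one must track the index positions in \eqref{christoffel symbols} carefully — especially the off-diagonal entries $\tfrac i2 g_{\gamma\bar\alpha}$, $\tfrac i2 K_{\gamma\bar\alpha}$, $i\delta_\gamma^\beta$ and their conjugates, which couple the ``$0$'', ``holomorphic'', and ``$2n+1$'' blocks — and verify that all spurious terms cancel to leave exactly the six displayed formulas. No input beyond Lemma \ref{K some properties lemma} is required; in particular the constancy of $R$ (hence of $K_\alpha{}^\alpha=\tfrac1{2(n+1)}R$, and the absence of $\nabla R$ terms) is what lets (4) be written through $\nabla K$ alone, and the Hermitian symmetry of $K_{\alpha\bar\beta}$ is used to identify $K_{\bar\beta}{}^{\bar\gamma}K_{\alpha\bar\gamma}$ with $K_{\alpha\bar\gamma}K^{\bar\gamma}{}_{\bar\beta}$ and $K_\alpha{}^\mu K_{\mu\bar\beta}$ with $K_{\bar\beta}{}^{\bar\nu}K_{\alpha\bar\nu}$ in the cancellations above.
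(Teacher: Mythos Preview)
Your proposal is correct and follows essentially the same approach as the paper: both start from the covariant derivative formula $P_{ij},_k=X_kP_{ij}-\Gamma_{ki}^{\,p}P_{pj}-\Gamma_{kj}^{\,p}P_{ip}$, exploit the block structure of $(P_{ij})$ from \eqref{schouten tensor}, and read off the surviving contributions from the Christoffel table \eqref{christoffel symbols}, with the paper likewise obtaining (3) by conjugation and omitting the details of (5)--(6). One small imprecision: for $P_{\alpha\bar\beta},_{2n+1}$ the cancelling pair is $-\tfrac i2 K_{\alpha\bar\beta}+\tfrac i2 K_{\alpha\bar\beta}$ (coming from $\Gamma_{2n+1\,\alpha}^{\,\mu}=i\delta_\alpha^\mu$ and $\Gamma_{2n+1\,\bar\beta}^{\,\bar\nu}=-i\delta_{\bar\beta}^{\bar\nu}$), not the quadratic expression you wrote, but the mechanism you describe is otherwise right.
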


\begin{proof} We start with the covariant derivative formula
\begin{equation}\label{Pij covariant derivative}
	P_{ij},_k=X_k P_{ij}-\Gamma_{ki}^lP_{lj}-\Gamma_{kj}^lP_{il}.
\end{equation}

To prove (1), we let $i=j=0$ in the above equation and apply \eqref{schouten tensor} to obtain
\begin{equation*}
	P_{00},_k=X_kP_{00}-\Gamma_{k0}^lP_{l0}-\Gamma_{k0}^lP_{0l}=X_kP_{00}-2\Gamma_{k0}^0P_{00}.
\end{equation*}	
Since $\Gamma_{k0}^0=0$ for any $k$ and $P_{00}=\frac{1}{n}\Lambda$, it follows that $P_{00},_k=\frac{1}{n}X_k \Lambda$.

For (2), we first note $P_{\alpha 0},_k=P_{0 \alpha},_k$ as the Schouten tensor is symmetric. Therefore it is sufficient to prove the formula for $P_{0\alpha},_k$. For that, we let $i=0$ and $j=\alpha$ in \eqref{Pij covariant derivative} and get
\begin{equation*}
	P_{0\alpha},_k=X_kP_{0\alpha}-\Gamma_{k0}^lP_{l\alpha}-\Gamma_{k\alpha}^lP_{0l}=-\Gamma_{k0}^{\bar{\gamma}}P_{\bar{\gamma}\alpha}-\Gamma_{k\alpha}^0P_{00}.
\end{equation*}
By \eqref{christoffel symbols} and \eqref{schouten tensor}, we get
\begin{equation*}
	P_{0\alpha},_k=\begin{dcases}
	\frac{i}{2}K_{\bar{\beta}}{}^{\bar{\gamma}} K_{\alpha\bar{\gamma}}-\frac{i}{2n}g_{\alpha\bar{\beta}}\,  \Lambda & \mbox{when } k=\bar{\beta},\\
	0 & \mbox{otherwise}.
	\end{dcases}
\end{equation*}
Then the result in (2) follows from the symmetry $K_{\bar{\gamma}}{}^{\bar{\beta}}=K^{\bar{\beta}}{}_{\bar{\gamma}}$. We take the conjugate of the equation (2) to obtain (3).


To prove (4) we let $i=\alpha$ and $j=\bar{\beta}$ in \eqref{Pij covariant derivative} and apply \eqref{schouten tensor} to obtain
\begin{equation*}
	P_{\alpha\bar{\beta}},_k=X_kP_{\alpha\bar{\beta}}-\Gamma_{k\alpha}^lP_{l\bar{\beta}}-\Gamma_{k\bar{\beta}}^lP_{\alpha l}=\frac{1}{2}X_k K_{\alpha\bar{\beta}}-\Gamma_{k\alpha}^{\mu}P_{\mu\bar{\beta}}-\Gamma_{k\bar{\beta}}^{\bar{\nu}}P_{\alpha \bar{\nu}}.
\end{equation*}
It follows that when $k=\gamma$, by Remark \eqref{identification of tensors on F and X rmk} and \eqref{christoffel symbols},
\begin{equation*}
	P_{\alpha\bar{\beta}},_{\gamma}=\frac{1}{2}X_{\gamma}K_{\alpha\bar{\beta}}-\frac{1}{2}\Gamma_{\gamma\alpha}^{\mu}K_{\mu\bar{\beta}}-\frac{1}{2}\Gamma_{\gamma\bar{\beta}}^{\bar{\nu}}K_{\alpha \bar{\nu}}=\frac{1}{2}Z_{\gamma}K_{\alpha\bar{\beta}}-\frac{1}{2}\widetilde{\Gamma}_{\gamma\alpha}^{\mu}K_{\mu\bar{\beta}}=\frac{1}{2}\nab_{\gamma}K_{\alpha\bar{\beta}}.
\end{equation*}
When $k=\bar{\gamma}$, $P_{\alpha\bar{\beta}},_{\bar{\gamma}}=\oo{P_{\beta\bar{\alpha}},_{\gamma}}=\frac{1}{2}\nab_{\bar{\gamma}} K_{\alpha\bar{\beta}}$.
Likewise when $k=0, 2n+1$, one can show $P_{\alpha\bar{\beta}},_k=0$. The equations (5) and (6) can be proved in the same way as above, and we omit the proof.
\end{proof}

Then the explicit formulas of the Cotton tensor are given as follows.
\begin{prop}\label{cotton tensor prop}
	The following equations hold.
	\begin{itemize}
		\item[(1)] $\displaystyle  C_{ijk}=0 \quad \mbox{if at least one of $i, j$ and $k$ is $2n+1$} $.
		
		\item[(2)] $\displaystyle  C_{00k}=\frac{1}{n}X_k \Lambda \quad \mbox{for any $k$}$.
		
		\item[(3)] $C_{0\alpha k}$ equals $-\frac{1}{n} \nabla_{\alpha} \Lambda$ when $k=0$, equals $i\bigl(K_{\alpha}{}^{\gamma}K_{\gamma\bar{\beta}}-\frac{\Lambda}{n} g_{\alpha\bar{\beta}} \bigr)$ when $k=\bar{\beta}$, and equals $0$ for any other $k$.
		
		\item[(4)] $C_{0\bar{\alpha}k}$ equals $-\frac{1}{n}\nab_{\bar{\alpha}}\Lambda$ when $k=0$, equals $-i\bigl(K_{\beta}{}^{\gamma} K_{\gamma\bar{\alpha}}-\frac{\Lambda}{n} g_{\beta\bar{\alpha}} \bigr)$ when $k=\beta$, and equals $0$ for any other $k$.
				
		\item[(5)] $C_{\alpha 0 k}$ equals $\frac{i}{2} \bigl( K_{\alpha}{}^{\gamma}K_{\gamma\bar{\beta}}-\frac{\Lambda}{n} g_{\alpha\bar{\beta}} \bigr)$ for $k=\bar{\beta}$, and equals $0$ for any other $k$.
		
		\item[(6)] $C_{\bar{\alpha} 0k}$ equals $-\frac{i}{2}\bigl(K_{\beta}{}^{\gamma}K_{\gamma\bar{\alpha}}-\frac{\Lambda}{n}g_{\beta\bar{\alpha}}\bigr)$ for $k=\beta$, and equals $0$ for any other $k$.
				
		\item[(7)] $C_{\alpha\beta k}$ equals $-\frac{1}{2} \nab_{\beta} K_{\alpha\bar{\gamma}}$ for $k=\bar{\gamma}$, and equals $0$ for any other $k$.
			
		\item[(8)] $C_{\alpha\bar{\beta}k}$ equals $-\frac{i}{2}\bigl(K_{\alpha}{}^{\gamma}K_{\gamma\bar{\beta}}-\frac{\Lambda}{n}g_{\alpha\bar{\beta}} \bigr)$ for $k=0$, equals $\frac{1}{2}\nab_{\gamma} K_{\alpha\bar{\beta}}$ for $k=\gamma$, and equals $0$ for any other $k$.
	\end{itemize}
\end{prop}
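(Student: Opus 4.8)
The plan is to read off every component of the Cotton tensor directly from its definition $C_{ijk}=P_{ij},_k-P_{ik},_j$ together with the explicit Schouten-tensor derivatives recorded in Proposition \ref{Schouten tensor derivatives}. In particular $C_{ijk}$ is antisymmetric in $j$ and $k$, so the whole computation reduces to substituting the formulas of Proposition \ref{Schouten tensor derivatives} and simplifying, case by case according to which of the values $0$, $\gamma$, $\bar{\gamma}$, $2n+1$ the indices $i,j,k$ take. Two elementary observations drive everything: first, by Proposition \ref{Schouten tensor derivatives}(6), $P_{ij},_k$ vanishes whenever any one of $i,j,k$ equals $2n+1$, which gives item (1) at once; second, $\Lambda=K_{\alpha\bar{\beta}}K^{\alpha\bar{\beta}}$ and, more generally, every pseudohermitian scalar built from curvature is the pullback of a function on $M$, so by Remark \ref{identification of tensors on F and X rmk} one has $X_0\Lambda=X_{2n+1}\Lambda=0$ and $X_{\gamma}\Lambda=Z_{\gamma}\Lambda=\nab_{\gamma}\Lambda$ (and similarly for $\bar{\gamma}$). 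It is exactly this last point that turns the ``$X_k$'' appearing in Proposition \ref{Schouten tensor derivatives}(1) into the covariant derivatives displayed in items (2)--(8).

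Concretely, I would first establish item (2): $C_{00k}=P_{00},_k-P_{0k},_0=\frac{1}{n}X_k\Lambda$, because $P_{0k},_0=0$ for every $k$ (one has $P_{00},_0=\frac{1}{n}X_0\Lambda=0$, and by Proposition \ref{Schouten tensor derivatives}(2)--(3) the only nonvanishing derivatives of $P_{0\alpha}$ and $P_{0\bar{\alpha}}$ are in the $\bar{\beta}$- and $\beta$-directions respectively). Items (3)--(6) then come from pairing the single nonvanishing mixed derivative of $P_{0\alpha}$, $P_{\alpha 0}$, $P_{0\bar{\alpha}}$, $P_{\bar{\alpha}0}$ against the complementary term; for example $C_{0\alpha\bar{\beta}}=P_{0\alpha},_{\bar{\beta}}-P_{0\bar{\beta}},_{\alpha}$ produces twice the quantity $\frac{i}{2}\bigl(K_{\alpha}{}^{\gamma}K_{\gamma\bar{\beta}}-\frac{\Lambda}{n}g_{\alpha\bar{\beta}}\bigr)$ after using the raising identity $K_{\alpha\bar{\gamma}}K^{\bar{\gamma}}{}_{\bar{\beta}}=K_{\alpha}{}^{\gamma}K_{\gamma\bar{\beta}}$, whereas $C_{\alpha 0\bar{\beta}}=P_{\alpha 0},_{\bar{\beta}}-P_{\alpha\bar{\beta}},_0$ produces only one copy of it, since $P_{\alpha\bar{\beta}},_0=0$; this accounts for the factor-$2$ difference between items (3) and (5). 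The $k=0$ entries of items (3)--(4) are just $-P_{00},_{\alpha}=-\frac{1}{n}\nab_{\alpha}\Lambda$ and its conjugate, and all entries not listed vanish because the relevant Schouten derivatives do.

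Finally I would handle items (7) and (8), the ones containing $\nab K$. For item (7), $C_{\alpha\beta k}=P_{\alpha\beta},_k-P_{\alpha k},_{\beta}=-P_{\alpha k},_{\beta}$ since $P_{\alpha\beta},_k=0$ for all $k$ by Proposition \ref{Schouten tensor derivatives}(5), and this is nonzero only for $k=\bar{\gamma}$, where Proposition \ref{Schouten tensor derivatives}(4) gives $C_{\alpha\beta\bar{\gamma}}=-\frac{1}{2}\nab_{\beta}K_{\alpha\bar{\gamma}}$. For item (8), $C_{\alpha\bar{\beta}k}=P_{\alpha\bar{\beta}},_k-P_{\alpha k},_{\bar{\beta}}$: the $k=0$ case uses $P_{\alpha\bar{\beta}},_0=0$ and $P_{\alpha 0},_{\bar{\beta}}=P_{0\alpha},_{\bar{\beta}}$; the $k=\gamma$ case uses Proposition \ref{Schouten tensor derivatives}(4) and (5); and the $k=\bar{\gamma}$ case is the one spot where the Bianchi-type symmetry $\nab_{\bar{\gamma}}K_{\alpha\bar{\beta}}=\nab_{\bar{\beta}}K_{\alpha\bar{\gamma}}$ of \eqref{K first derivative} in Lemma \ref{K some properties lemma} is needed to see that $P_{\alpha\bar{\beta}},_{\bar{\gamma}}-P_{\alpha\bar{\gamma}},_{\bar{\beta}}$ cancels. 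I do not expect a genuine obstacle here: the argument is purely mechanical, and the only things to be careful about are the sign and normalization conventions and remembering to invoke the symmetry of $\nab K$ and the vanishing of $X_0,X_{2n+1}$ on pulled-back scalars at the right moments.
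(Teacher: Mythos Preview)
Your proposal is correct and follows exactly the paper's own approach: the paper's proof consists of the single observation that all formulas follow directly from Proposition \ref{Schouten tensor derivatives} via $C_{ijk}=P_{ij},_k-P_{ik},_j$, together with the remark that the case $C_{\alpha\bar{\beta}\bar{\gamma}}=0$ requires the Bianchi-type identity $\nab_{\bar{\gamma}}K_{\alpha\bar{\beta}}=\nab_{\bar{\beta}}K_{\alpha\bar{\gamma}}$ from \eqref{K first derivative}. Your write-up simply spells out these substitutions in more detail, including the same observations about $X_0\Lambda=X_{2n+1}\Lambda=0$ and the factor-of-two mechanism distinguishing items (3) and (5).
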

\begin{proof}
	The above formulas follow directly from Proposition \ref{Schouten tensor derivatives} as $C_{ijk}=P_{ij},_k-P_{ik},_j$. To show $C_{\alpha\bar{\beta}\bar{\gamma}}=0$, we need to apply \eqref{K first derivative} to see $\nab_{\bar{\gamma}} K_{\alpha\bar{\beta}}-\nab_{\bar{\beta}} K_{\alpha\bar{\gamma}}=0$.
\end{proof}

\subsection{The Weyl tensor}
We shall compute the Weyl curvature tensor, which is defined by
\begin{equation}\label{weyl curvature def}
W_{ijkl}=\rho_{ijkl}-\bigl(P_{ik}\mathsf{g}_{jl}+P_{jl}\mathsf{g}_{ik}-P_{il}\mathsf{g}_{jk}-P_{jk}\mathsf{g}_{il} \bigr).
\end{equation}

Here $\rho_{ijkl}$ is the Riemannian curvature of the Fefferman metric,  given by the structure equation
\begin{equation}\label{Fefferman space curvature}
	\Omega_i{}^j:=d\phi_i{}^j-\phi_i{}^k\wedge \phi_k{}^j=-\frac{1}{2} \rho_{pqi}{}^j\theta^p\wedge\theta^q.
\end{equation}

\begin{prop}
	The Weyl curvature tensor satisfies
	\begin{align}\label{weyl curvature 1}
	\begin{split}
	&W_{p 00\,2n+1}=0 \quad \mbox{ for any } p, \qquad W_{\alpha 2n+1\,0\,2n+1}=0,\\
	&W_{\alpha 00\beta}=W_{\bar{\alpha} 00 \bar{\beta}}=0, \qquad\qquad
	W_{\alpha 00\bar{\beta}}=W_{\bar{\beta}00\alpha}=-\frac{1}{2} K_{\alpha}{}^{\gamma}K_{\gamma\bar{\beta}}+\frac{\Lambda}{2n} g_{\alpha\bar{\beta}},
	\\
	&W_{\alpha\beta 0\bar{\delta}}=W_{\alpha\,2n+1\,0\bar{\delta}}=0, \qquad\qquad W_{\alpha 0\gamma\bar{\delta}}=-\frac{i}{2}\nab_{\alpha}K_{\gamma\bar{\delta}}.	
	\end{split}
	\end{align}
\end{prop}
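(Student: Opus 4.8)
The plan is to extract each component of \eqref{weyl curvature 1} from the definition \eqref{weyl curvature def}, $W_{ijkl}=\rho_{ijkl}-\bigl(P_{ik}\mathsf{g}_{jl}+P_{jl}\mathsf{g}_{ik}-P_{il}\mathsf{g}_{jk}-P_{jk}\mathsf{g}_{il}\bigr)$, using the Schouten tensor $(P_{ij})$ recorded in \eqref{schouten tensor} and the Fefferman metric $(\mathsf{g}_{ij})$ in \eqref{Fefferman metric in matrix}, both of which are already in hand. Since $P$ and $\mathsf g$ are each supported on only a few index patterns, the bracketed ``Schouten part'' of $W_{ijkl}$ is immediate to write down for every index choice occurring in \eqref{weyl curvature 1}; for instance the only surviving contribution to the Schouten part of $W_{\alpha 00\bar\beta}$ is $-P_{00}\mathsf{g}_{\alpha\bar\beta}=-\tfrac{\Lambda}{2n}g_{\alpha\bar\beta}$, which accounts for the $\tfrac{\Lambda}{2n}g_{\alpha\bar\beta}$ term in the stated formula, and for the vanishing components the Schouten part is identically zero. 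Thus the real task is to compute the relevant components of the Riemann curvature $\rho_{ijkl}$ of $(\mathcal F,\mathsf g)$.

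These I would obtain from the structure equation \eqref{Fefferman space curvature}, $\Omega_i{}^j=d\phi_i{}^j-\phi_i{}^k\wedge\phi_k{}^j=-\tfrac12\rho_{pqi}{}^j\,\theta^p\wedge\theta^q$, feeding in the connection $1$-forms \eqref{connection 1 form} of Lee together with the pseudohermitian structure equations \eqref{structure equation of CR manifold} for $d\theta$ and $d\theta^\beta$ (and the formula for $d\sigma$ coming from \eqref{sigma in Fefferman metric}), all specialized to our setting of vanishing pseudohermitian torsion and constant pseudohermitian scalar curvature; equivalently one may extract the components from Lee's general curvature formula (Theorem 6.2 of \cite{Lee86}) after setting the torsion and $\nabla R$ to zero. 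Every component appearing in \eqref{weyl curvature 1} carries the Reeb index $0$ in its third slot, so only the coefficient of $\theta^p\wedge\theta$ in each $\Omega_i{}^j$ is needed. The components carrying the fibre index $2n+1$ are the easiest: by \eqref{christoffel symbols} the symbols $\Gamma_{j\,2n+1}^{k}$ and $\Gamma_{2n+1\,j}^{k}$ are purely algebraic and $X_{2n+1}$ annihilates every pullback from $S(L)$ (Remark \ref{identification of tensors on F and X rmk}), so the corresponding curvature components are algebraic in the $\Gamma$'s and either vanish or are cancelled by the Schouten part. The vanishing of $W_{\alpha 00\beta}$ and $W_{\bar\alpha 00\bar\beta}$ reflects $\phi_\alpha{}^{\bar\beta}=0$ in \eqref{connection 1 form} (so the Fefferman connection is type-preserving along the frame directions); the quadratic term in $W_{\alpha 00\bar\beta}$ arises from wedging the curvature-linear entries of \eqref{connection 1 form} (those involving $K_\alpha{}^\beta\theta$ and $\sigma^\beta$) against one another; and $W_{\alpha 0\gamma\bar\delta}$ comes from the $dK_{\gamma\bar\delta}$-contribution, the Christoffel corrections there cancelling (by covariance) to leave precisely $\nabla_\alpha K_{\gamma\bar\delta}$.

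Finally I would lower the remaining upper index with $\mathsf g$ — keeping in mind that this pairing mixes the $0$ and $2n+1$ slots and rescales the frame slots by $\tfrac12 g_{\alpha\bar\beta}$, so signs and index placements must be tracked carefully — substitute into \eqref{weyl curvature def}, and simplify using Lemma \ref{K some properties lemma}, in particular the symmetry $\nabla_\alpha K_{\gamma\bar\delta}=\nabla_\gamma K_{\alpha\bar\delta}$, the trace relation $K_\alpha{}^\alpha=\tfrac{1}{2(n+1)}R$, and $\nabla^\alpha K_{\alpha\bar\beta}=0$, to reach the stated expressions and to confirm each claimed vanishing. I expect the main obstacle to be organizational rather than conceptual: correctly expanding the many wedge-product terms in $\phi_i{}^k\wedge\phi_k{}^j$ and the exterior derivative $d\sigma$, verifying that the $\theta^p\wedge\theta$ parts of $d\omega_\alpha{}^\beta$, $d\theta$ and $d\sigma$ contribute nothing spurious in the Kähler torsion-free case, and executing the index gymnastics through the null-type Fefferman metric without slips. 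Lee's connection and curvature formulas already encode all of the underlying geometry, so no essentially new input beyond this bookkeeping should be required.
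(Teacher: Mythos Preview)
Your plan is correct and matches the paper's approach: compute the relevant curvature forms $\Omega_i{}^j$ from Lee's connection matrix \eqref{connection 1 form} via the structure equation \eqref{Fefferman space curvature}, read off the needed Riemann components, lower the index with $\mathsf g$, and subtract the Schouten part from \eqref{weyl curvature def}. The paper carries this out by explicitly computing $\Omega_0{}^0$, $\Omega_0{}^{\alpha}$ (in normal coordinates), and $\Omega_\alpha{}^0$, exactly as you outline; one small slip in your description is that the Reeb index $0$ in the third slot of $W_{ijkl}$ corresponds to the \emph{lower} index of $\Omega_0{}^j$ (so you need all wedge components of $\Omega_0{}^j$, not just its $\theta^p\wedge\theta$ part), and $W_{\alpha 0\gamma\bar\delta}$ is obtained from $W_{\bar\delta\gamma 0\alpha}$ by symmetry --- but this would sort itself out in execution.
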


\begin{proof}
	We begin with the computation of the curvature form $\Omega_0{}^0$. By \eqref{connection 1 form}, we have
	\begin{align*}
		\Omega_0{}^0=d\phi_0{}^0-\phi_0{}^k\wedge\phi_k{}^0=-\phi_0{}^{\beta}\wedge\phi_{\beta}{}^0-\phi_0{}^{\bar{\beta}}\wedge\phi_{\bar{\beta}}{}^0
		=-i\sigma^{\beta}\wedge \frac{i}{2}\theta_{\beta}-i\sigma^{\bar{\beta}}\wedge \frac{i}{2}\theta_{\bar{\beta}}.
	\end{align*}
	Since $\sigma^{\beta}=K_{\gamma}{}^{\beta}\theta^{\gamma}$ and $\theta_{\beta}=g_{\beta\bar{\delta}}\theta^{\bar{\delta}}$, it follows that $\sigma^{\beta}\wedge\theta_{\beta}=K_{\gamma\bar{\delta}} \theta^{\gamma}\wedge\theta^{\bar{\delta}}$. Therefore,
	\begin{align*}
		\Omega_0{}^0=\frac{1}{2}K_{\gamma\bar{\delta}} \theta^{\gamma}\wedge\theta^{\bar{\delta}}+\frac{1}{2}\oo{K_{\gamma\bar{\delta}} \theta^{\gamma}\wedge\theta^{\bar{\delta}}}=0.
	\end{align*}
	Thus, by \eqref{Fefferman space curvature} the Riemannian curvature satisfies $\rho_{pq0}{}^0=0$ for any $p, q$. By lowering down the index, we get $\rho_{pq0\,2n+1}=0$ for any $p, q$. Then by \eqref{weyl curvature def},
	\begin{equation*}
		W_{pq0\,2n+1}=-\bigl(P_{p0}\mathsf{g}_{q\,2n+1}+P_{q\,2n+1}\mathsf{g}_{p0}-P_{p\,2n+1}\mathsf{g}_{q0}-P_{q0}\mathsf{g}_{p\,2n+1} \bigr) \quad \mbox{ for any } p, q.
	\end{equation*}
	In particular, we let $q=0$ and apply \eqref{Fefferman metric in matrix} to get
	\begin{align*}
		W_{p00\,2n+1}=-\bigl(P_{p0}\mathsf{g}_{0\,2n+1}+P_{0\,2n+1}\mathsf{g}_{p0}-P_{p\,2n+1}\mathsf{g}_{00}-P_{00}\mathsf{g}_{p\,2n+1} \bigr)
		=-P_{p0}+P_{00}\mathsf{g}_{p\,2n+1}.
	\end{align*}
	When $p=0$, it follows immediately $W_{000\,2n+1}=0$. When $p\neq 0$, $P_{p0}$ and $\mathsf{g}_{p\,2n+1}$ both vanish, and thus $W_{p00\,2n+1}=0$. Therefore, $W_{p00\,2n+1}=0$ for any $p$.
	
	Moreover, by \eqref{weyl curvature def}, \eqref{Fefferman metric in matrix} and \eqref{schouten tensor}, we also have
	\begin{equation*}
		W_{\alpha 2n+1\,0\,2n+1}=-\bigl(P_{\alpha 0}\mathsf{g}_{2n+1\, 2n+1}+P_{2n+1\, 2n+1}\mathsf{g}_{\alpha 0}-P_{\alpha\,2n+1}\mathsf{g}_{2n+1\,0}-P_{2n+1\,0}\mathsf{g}_{\alpha\,2n+1} \bigr)=0.
	\end{equation*}
	
We then compute $\Omega_0{}^{\alpha}$:
\begin{align*}
	\Omega_0{}^{\alpha}=d\phi_0{}^{\alpha}-\phi_0{}^k\wedge\phi_k{}^{\alpha}=&d(i\sigma^{\alpha})-\phi_0{}^{\beta}\wedge\phi_{\beta}{}^{\alpha}\\
	=&id(K_{\gamma}{}^{\alpha}\theta^{\gamma})-iK_{\gamma}{}^{\beta}\theta^{\gamma}\wedge \bigl(\omega_{\beta}{}^{\alpha}+iK_{\beta}{}^{\alpha}\theta+i\delta_{\beta}^{\alpha}\sigma \bigr).
\end{align*}	
We use the normal coordinates at a given point $p\in M$. At $p$ all $\omega_{\alpha}{}^{\beta}$ vanish and furthermore
\begin{align*}
	\Omega_0{}^{\alpha}=i\nab_{\delta}K_{\gamma}{}^{\alpha}\theta^{\delta}\wedge\theta^{\gamma}+i\nab_{\bar{\delta}}K_{\gamma}{}^{\alpha}\theta^{\bar{\delta}}\wedge\theta^{\gamma}+K_{\gamma}{}^{\beta}K_{\beta}{}^{\alpha}\theta^{\gamma}\wedge \theta+K_{\gamma}{}^{\alpha}\theta^{\gamma}\wedge\sigma.
\end{align*}

By \eqref{K first derivative}, $\nab_{\delta}K_{\gamma}{}^{\alpha}$ is symmetric in $\gamma$ and $\delta$. Thus the first term on the right hand side vanishes. Therefore,
\begin{equation*}
	\Omega_0{}^{\alpha}=-i\nab_{\bar{\delta}}K_{\gamma}{}^{\alpha}\theta^{\gamma}\wedge\theta^{\bar{\delta}}-K_{\gamma}{}^{\beta}K_{\beta}{}^{\alpha}\theta\wedge\theta^{\gamma}+K_{\gamma}{}^{\alpha}\theta^{\gamma}\wedge\sigma.
\end{equation*}

It follows that the Riemannian curvatures satisfy	
\begin{align*}
	&\rho_{\gamma 0 0}{}^{\alpha}=-K_{\gamma}{}^{\beta}K_{\beta}{}^{\alpha}, \qquad \rho_{\bar{\gamma}\, 00}{}^{\alpha}=0,  \\
	& \rho_{\gamma \delta 0}{}^{\alpha}=0, \qquad \rho_{\gamma \bar{\delta} 0}{}^{\alpha}=i\nab_{\bar{\delta}}K_{\gamma}{}^{\alpha}, \qquad  \rho_{\gamma\, 2n+1\,0}{}^{\alpha}=-K_{\gamma}{}^{\alpha}.
\end{align*}

We shall lower down the index $\alpha$. Note that the left hand sides are curvatures of the Fefferman metric, whose index is raised up or lowered down by the Fefferman metric $\mathsf{g}_{ij}$. But the right hand sides are pseudohermitian curvatures, whose index is raised up or lowered down by the pseudohermitian metric $g_{\alpha\bar{\beta}}$. By relation $\mathsf{g}_{\alpha\bar{\beta}}=\frac{1}{2}g_{\alpha\bar{\beta}}$, we obtain
\begin{align*}
	&\rho_{\gamma 0 0\bar{\beta}}=-\frac{1}{2}K_{\gamma}{}^{\mu}K_{\mu\bar{\beta}}, \qquad \rho_{\bar{\gamma} 00\bar{\beta}}=0, \\
	&\rho_{\gamma \delta 0\bar{\beta}}=0, \qquad \rho_{\gamma \bar{\delta} 0\bar{\beta}}=\frac{i}{2}\nab_{\bar{\delta}}K_{\gamma\bar{\beta}}, \qquad\rho_{\gamma\, 2n+1\,0\bar{\beta}}=-\frac{1}{2}K_{\gamma\bar{\beta}}.
\end{align*}

We can further compute the Weyl curvatures by \eqref{weyl curvature def}, \eqref{Fefferman metric in matrix} and \eqref{schouten tensor}. We first compute $W_{\alpha 00 \bar{\beta}}$.
\begin{align*}
	W_{\alpha 00\bar{\beta}}=\rho_{\alpha 00\bar{\beta}}-\bigl( P_{\alpha 0}\mathsf{g}_{0\bar{\beta}}+P_{0\bar{\beta}}\mathsf{g}_{\alpha 0}-P_{\alpha\bar{\beta}} \mathsf{g}_{00}-P_{00}\mathsf{g}_{\alpha\bar{\beta}} \bigr)=-\frac{1}{2}K_{\alpha}{}^{\gamma}K_{\gamma\bar{\beta}}+\frac{\Lambda}{2n} g_{\alpha\bar{\beta}}.
\end{align*}
By the symmetry of the Weyl curvature, we also have
\begin{equation*}
	W_{\bar{\beta} 00\alpha}=W_{\alpha 00\bar{\beta}}=-\frac{1}{2}K_{\alpha}{}^{\gamma}K_{\gamma\bar{\beta}}+\frac{\Lambda}{2n} g_{\alpha\bar{\beta}}.
\end{equation*}

Next we compute $W_{\alpha 00 \beta}$ and $W_{\bar{\alpha} 00 \bar{\beta}}$. Similarly as above, we get
\begin{align*}
	W_{\bar{\alpha} 00\bar{\beta}}=\rho_{\bar{\alpha} 00\bar{\beta}}-\bigl( P_{\bar{\alpha} 0}\mathsf{g}_{0\bar{\beta}}+P_{0\bar{\beta}}\mathsf{g}_{\bar{\alpha} 0}-P_{\bar{\alpha}\bar{\beta}} \mathsf{g}_{00}-P_{00}\mathsf{g}_{\bar{\alpha}\bar{\beta}} \bigr)=0.
\end{align*}
We take the conjugate and obtain $W_{\alpha 00 \beta}=0$. We can similarly compute $W_{\alpha\beta 0 \bar{\delta}}$ and $W_{\alpha\bar{\beta} 0 \bar{\delta}}$:
\begin{align*}
	&W_{\alpha\beta 0 \bar{\delta}}=\rho_{\alpha\beta 0 \bar{\delta}}-\bigl( P_{\alpha 0}\mathsf{g}_{\beta\bar{\delta}}+P_{\beta\bar{\delta}}\mathsf{g}_{\alpha 0}-P_{\alpha\bar{\delta}}\mathsf{g}_{\beta 0}-P_{\beta 0}\mathsf{g}_{\alpha\bar{\delta}} \bigr)=0,
	\\
	&W_{\alpha\bar{\beta} 0 \bar{\delta}}=\rho_{\alpha\bar{\beta} 0 \bar{\delta}}-\bigl( P_{\alpha 0}\mathsf{g}_{\bar{\beta}\bar{\delta}}+P_{\bar{\beta}\bar{\delta}}\mathsf{g}_{\alpha 0}-P_{\alpha\bar{\delta}}\mathsf{g}_{\bar{\beta} 0}-P_{\bar{\beta} 0}\mathsf{g}_{\alpha\bar{\delta}} \bigr)=\frac{i}{2}\nab_{\bar{\delta}} K_{\alpha\bar{\beta}}.
\end{align*}
For the last equality we have used \eqref{K first derivative}.
By taking the conjugate and using the symmetry of Weyl tensor, we also have
\begin{align*}
	W_{\alpha 0 \gamma\bar{\delta}}=\oo{W_{\bar{\alpha}0\bar{\gamma}\delta}}=\oo{W_{\delta\bar{\gamma}0\bar{\alpha}}}=-\frac{i}{2}\nab_{\alpha}K_{\gamma\bar{\delta}}.
\end{align*}
Finally we compute $W_{\alpha\, 2n+1\,0 \bar{\delta}}$ in a similar way:
\begin{align*}
	W_{\alpha\, 2n+1\,0 \bar{\delta}}=&\rho_{\alpha\, 2n+1\,0 \bar{\delta}}-\bigl( P_{\alpha 0} \mathsf{g}_{2n+1\bar{\delta}}+P_{2n+1\bar{\delta}}\mathsf{g}_{\alpha 0}-P_{\alpha\bar{\delta}}\mathsf{g}_{2n+1\,0}-P_{2n+1\,0}\mathsf{g}_{\alpha\bar{\delta}} \bigr)
	\\=&-\frac{1}{2}K_{\alpha\bar{\delta}}+\frac{1}{2}K_{\alpha\bar{\delta}}=0.
\end{align*}
\end{proof}

\begin{prop}
	The Weyl curvature tensor satisfies
	\begin{align}\label{weyl curvature 2}
	\begin{split}
	W_{\alpha\bar{\beta}\gamma\bar{\delta}}=\frac{1}{2}&R_{\alpha\bar{\beta}\gamma\bar{\delta}}+\frac{1}{2}K_{\alpha\bar{\beta}}g_{\gamma\bar{\delta}}+\frac{1}{2}K_{\gamma\bar{\delta}}g_{\alpha\bar{\beta}}+\frac{1}{2}K_{\gamma\bar{\beta}}g_{\alpha\bar{\delta}}+\frac{1}{2}K_{\alpha\bar{\delta}}g_{\gamma\bar{\beta}},
	\\ &W_{\alpha\beta\bar{\gamma}\bar{\delta}}=0,
	\qquad W_{2n+1\,\alpha\bar{\beta}\,2n+1}=W_{2n+1\,\bar{\beta}\alpha\,2n+1}=0.
	\end{split}
	\end{align}
\end{prop}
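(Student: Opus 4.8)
The plan is to compute the curvature $2$-form $\Omega_\alpha{}^\beta$ of the Fefferman metric directly from the connection $1$-forms in \eqref{connection 1 form}, read off the relevant components of the Riemannian curvature $\rho_{ijkl}$, and then assemble the Weyl tensor via its definition \eqref{weyl curvature def} using the already computed Schouten tensor \eqref{schouten tensor} and the matrix form \eqref{Fefferman metric in matrix} of $\mathsf{g}$. This is exactly the method used to obtain \eqref{weyl curvature 1} in the preceding proposition, now carried out for the holomorphic--antiholomorphic block of indices and for the components involving the fibre direction $X_{2n+1}=(n+2)S$.

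Concretely, from \eqref{connection 1 form} and \eqref{Fefferman space curvature},
\begin{equation*}
\Omega_\alpha{}^\beta=d\phi_\alpha{}^\beta-\phi_\alpha{}^0\wedge\phi_0{}^\beta-\phi_\alpha{}^\gamma\wedge\phi_\gamma{}^\beta-\phi_\alpha{}^{2n+1}\wedge\phi_{2n+1}{}^\beta,
\end{equation*}
since $\phi_\alpha{}^{\bar{\gamma}}=0$. Writing $\phi_\alpha{}^\beta=\omega_\alpha{}^\beta+iK_\alpha{}^\beta\theta+i\delta_\alpha^\beta\sigma$, and using that the Tanaka--Webster connection is torsion free so that $d\omega_\alpha{}^\beta-\omega_\alpha{}^\gamma\wedge\omega_\gamma{}^\beta=-R_{\mu\bar{\nu}\alpha}{}^\beta\,\theta^\mu\wedge\theta^{\bar{\nu}}$, together with $d\theta=ig_{\gamma\bar{\delta}}\theta^\gamma\wedge\theta^{\bar{\delta}}$ and the expression for $d\sigma$ obtained by differentiating \eqref{sigma in Fefferman metric} (recall $R$ is constant), I would collect the coefficient of $\theta^\gamma\wedge\theta^{\bar{\delta}}$ in $\Omega_\alpha{}^\beta$. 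Working in normal coordinates at a fixed point $p\in M$ (so all $\omega_\alpha{}^\beta$ and $\widetilde{\Gamma}$ vanish at $p$, though their differentials do not) keeps the bookkeeping manageable and yields $\rho_{\gamma\bar{\delta}\alpha}{}^\beta$; lowering the last index via $\mathsf{g}_{\alpha\bar{\beta}}=\tfrac12 g_{\alpha\bar{\beta}}$ gives $\rho_{\alpha\bar{\beta}\gamma\bar{\delta}}$ in terms of $R_{\alpha\bar{\beta}\gamma\bar{\delta}}$, $K_{\alpha\bar{\beta}}$ and $g_{\alpha\bar{\beta}}$. The components $\rho_{\alpha\beta\bar{\gamma}\bar{\delta}}$ and $\rho_{2n+1\,\alpha\bar{\beta}\,2n+1}$ are obtained the same way --- the latter from the coefficient of $\theta^{2n+1}\wedge\theta^\alpha$ in $\Omega_{\bar{\beta}}{}^0$ --- and turn out to involve only $K$ and $g$.

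Substituting these into \eqref{weyl curvature def} with $P_{ij}$ from \eqref{schouten tensor} and $\mathsf{g}_{ij}$ from \eqref{Fefferman metric in matrix}, the first identity in \eqref{weyl curvature 2} follows after collecting the Schouten-times-metric terms, while for $W_{\alpha\beta\bar{\gamma}\bar{\delta}}$ and $W_{2n+1\,\alpha\bar{\beta}\,2n+1}$ the curvature and Schouten contributions cancel, giving $0$; the symmetry $W_{2n+1\,\alpha\bar{\beta}\,2n+1}=W_{2n+1\,\bar{\beta}\alpha\,2n+1}$ is then immediate from the symmetries of the Weyl tensor. I expect the main obstacle to be the correct evaluation of the $d\sigma$ contribution and of $d(K_\alpha{}^\beta)\wedge\theta$: these produce several $\theta^\gamma\wedge\theta^{\bar{\delta}}$ terms that must be combined, using the first Bianchi identity for $R_{\alpha\bar{\beta}\gamma\bar{\delta}}$ and the identities of Lemma \ref{K some properties lemma}, into the clean symmetric expression claimed; and one must stay consistent about whether $\mathsf{g}$ or $g$ raises and lowers each index throughout.
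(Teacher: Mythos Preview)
Your proposal is correct and follows essentially the same route as the paper: compute $\Omega_\alpha{}^\beta$, $\Omega_\alpha{}^{\bar\beta}$, and $\Omega_\alpha{}^0$ from \eqref{connection 1 form} in normal coordinates (using the explicit formula $d\sigma=iK_{\gamma\bar\delta}\theta^\gamma\wedge\theta^{\bar\delta}$), read off the Riemann components, lower with $\mathsf{g}_{\alpha\bar\beta}=\tfrac12 g_{\alpha\bar\beta}$, and insert into \eqref{weyl curvature def} together with \eqref{schouten tensor}. One small remark: the term $d(K_\alpha{}^\beta)\wedge\theta$ you flag as a possible obstacle actually does not contribute to the $\theta^\gamma\wedge\theta^{\bar\delta}$ coefficient (it is $\theta$-valued), and no Bianchi identity is needed---the paper's computation is entirely direct.
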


\begin{proof}
	We first compute the Riemannian curvature $\rho_{\alpha\bar{\beta}\gamma\bar{\delta}}$ of the Fefferman metric. Note
\begin{align*}
		\Omega_{\alpha}{}^{\beta}=d\phi_{\alpha}{}^{\beta}-\phi_{\alpha}{}^j\wedge\phi_j{}^{\beta}=d\phi_{\alpha}{}^{\beta}-\phi_{\alpha}{}^0\wedge\phi_0{}^{\beta}-\phi_{\alpha}{}^{\gamma}\wedge\phi_{\gamma}{}^{\beta}-\phi_{\alpha}{}^{2n+1}\wedge\phi_{2n+1}{}^{\beta}.
	\end{align*}
	By \eqref{connection 1 form}, we get
	\begin{align*}
		\Omega_{\alpha}{}^{\beta}=d(\omega_{\alpha}{}^{\beta}+iK_{\alpha}{}^{\beta}\theta+&i\delta_{\alpha}^{\beta}\sigma)+\frac{1}{2}\theta_{\alpha}\wedge K_{\gamma}{}^{\beta}\theta^{\gamma}+\frac{1}{2}K_{\alpha\bar{\delta}}\theta^{\bar{\delta}}\wedge \theta^{\beta}
		\\
		&-(\omega_{\alpha}{}^{\gamma}+iK_{\alpha}{}^{\gamma}\theta+i\delta_{\alpha}^{\gamma}\sigma)\wedge (\omega_{\gamma}{}^{\beta}+iK_{\gamma}{}^{\beta}\theta+i\delta_{\gamma}^{\beta}\sigma).
	\end{align*}
	We next compute the term $d\sigma$. For that, we use the normal coordinates at a given point $p\in M$. Then by \eqref{sigma in Fefferman metric} the following holds at $p$:
	\begin{align*}
		d\sigma=\frac{1}{n+2}\bigl( id\omega_{\alpha}{}^{\alpha}-\frac{1}{2(n+1)}Rd\theta\bigr).
	\end{align*}
	Note that at $p$ we have $d\omega_{\alpha}{}^{\beta}=-R_{\gamma\bar{\delta}\alpha}{}^{\beta}\theta^{\gamma}\wedge \theta^{\bar{\delta}}$. In particular, $d\omega_{\alpha}{}^{\alpha}=R_{\gamma\bar{\delta}}\theta^{\gamma}\wedge \theta^{\bar{\delta}}$. We can thus simplify $d\sigma$ into
	\begin{align*}
	d\sigma=\frac{1}{n+2}\bigl( iR_{\alpha\bar{\beta}} \theta^{\alpha}\wedge\theta^{\bar{\beta}}-\frac{1}{2(n+1)}iR g_{\alpha\bar{\beta}}\theta^{\alpha}\wedge \theta^{\bar{\beta}}\bigr)=iK_{\alpha\bar{\beta}}\theta^{\alpha}\wedge \theta^{\bar{\beta}}.
	\end{align*}


	By this expression of $d\sigma$, under the normal coordinates at $p$ we write the curvature form into
	\begin{align*}
		\Omega_{\alpha}{}^{\beta}=&d\omega_{\alpha}{}^{\beta}+iK_{\alpha}{}^{\beta} d\theta-\delta_{\alpha}^{\beta}K_{\gamma\bar{\delta}}\theta^{\gamma}\wedge\theta^{\bar{\delta}}+\frac{1}{2}\theta_{\alpha}\wedge K_{\gamma}{}^{\beta}\theta^{\gamma}+\frac{1}{2}K_{\alpha\bar{\delta}}\theta^{\bar{\delta}}\wedge \theta^{\beta} \quad \mod \theta
		\\
		=&\bigl(-R_{\gamma\bar{\delta}\alpha}{}^{\beta}-K_{\alpha}{}^{\beta}g_{\gamma\bar{\delta}}-\delta_{\alpha}^{\beta}K_{\gamma\bar{\delta}}-\frac{1}{2}g_{\alpha\bar{\delta}}K_{\gamma}{}^{\beta}-\frac{1}{2}K_{\alpha\bar{\delta}}\delta_{\gamma}^{\beta}\bigr) \theta^{\gamma}\wedge\theta^{\bar{\delta}} \quad \mod \theta.
	\end{align*}
	Thus, by \eqref{Fefferman space curvature}
	\begin{align*}
		\rho_{\gamma\bar{\delta}\alpha}{}^{\beta}=R_{\gamma\bar{\delta}\alpha}{}^{\beta}+K_{\alpha}{}^{\beta}g_{\gamma\bar{\delta}}+\delta_{\alpha}^{\beta}K_{\gamma\bar{\delta}}+\frac{1}{2}g_{\alpha\bar{\delta}}K_{\gamma}{}^{\beta}+\frac{1}{2}K_{\alpha\bar{\delta}}\delta_{\gamma}^{\beta}.
	\end{align*}
	We lower down the index $\beta$ to get the following equation. Here again recall on the two sides of the above equation the indices are raised up or lowered down by $\mathsf{g}_{ij}$ and $g_{\alpha\bar{\beta}}$ respectively.
	\begin{align*}
		\rho_{\gamma\bar{\delta}\alpha\bar{\beta}}=\frac{1}{2}R_{\gamma\bar{\delta}\alpha\bar{\beta}}+\frac{1}{2}K_{\alpha\bar{\beta}}g_{\gamma\bar{\delta}}+\frac{1}{2}g_{\alpha\bar{\beta}}K_{\gamma\bar{\delta}}+\frac{1}{4}g_{\alpha\bar{\delta}}K_{\gamma\bar{\beta}}+\frac{1}{4}K_{\alpha\bar{\delta}}g_{\gamma\bar{\beta}}.
	\end{align*}
	
	The corresponding Weyl curvature is given by
	\begin{align*}
		W_{\alpha\bar{\beta}\gamma\bar{\delta}}=&\rho_{\alpha\bar{\beta}\gamma\bar{\delta}}-\bigl(P_{\alpha\gamma}\mathsf{g}_{\bar{\beta}\bar{\delta}}+P_{\bar{\beta}\bar{\delta}}\mathsf{g}_{\alpha\gamma}-P_{\alpha\bar{\delta}}\mathsf{g}_{\bar{\beta}\gamma}-P_{\bar{\beta}\gamma}\mathsf{g}_{\alpha\bar{\delta}} \bigr)\\
		=&\frac{1}{2}R_{\alpha\bar{\beta}\gamma\bar{\delta}}+\frac{1}{2}K_{\alpha\bar{\beta}}g_{\gamma\bar{\delta}}+\frac{1}{2}g_{\alpha\bar{\beta}}K_{\gamma\bar{\delta}}+\frac{1}{2}g_{\alpha\bar{\delta}}K_{\gamma\bar{\beta}}+\frac{1}{2}K_{\alpha\bar{\delta}}g_{\gamma\bar{\beta}}.
	\end{align*}
	
	Next we compute the Riemannian curvature $\rho_{\alpha\beta\bar{\gamma}\bar{\delta}}$ and the Weyl curvature $W_{\alpha\beta\bar{\gamma}\bar{\delta}}$.	
	\begin{align*}
		\Omega_{\alpha}{}^{\bar{\beta}}=d\phi_{\alpha}{}^{\bar{\beta}}-\phi_{\alpha}{}^j\wedge\phi_j{}^{\bar{\beta}}=-\phi_{\alpha}{}^0\wedge\phi_0{}^{\bar{\beta}}-\phi_{\alpha}{}^{2n+1}\wedge\phi_{2n+1}{}^{\bar{\beta}}.
	\end{align*}
	Then by \eqref{connection 1 form}
	\begin{align*}
		\Omega_{\alpha}{}^{\bar{\beta}}=-\frac{1}{2}\theta_{\alpha}\wedge \sigma^{\bar{\beta}}-\frac{1}{2}\sigma_{\alpha}\wedge \theta^{\bar{\beta}}=-\frac{1}{2}g_{\alpha\bar{\gamma}}\theta^{\bar{\gamma}}\wedge K_{\bar{\delta}}{}^{\bar{\beta}} \theta^{\bar{\delta}}-\frac{1}{2}K_{\alpha\bar{\gamma}}\theta^{\bar{\gamma}}\wedge \theta^{\bar{\beta}}.
	\end{align*}
	By anti-symmetrizing the index $\bar{\gamma}$ and $\bar{\delta}$ on the right hand side, we get
	\begin{align*}
		\rho_{\bar{\gamma}\bar{\delta}\alpha}{}^{\bar{\beta}}=\frac{1}{2}g_{\alpha\bar{\gamma}}K_{\bar{\delta}}{}^{\bar{\beta}}+\frac{1}{2}K_{\alpha\bar{\gamma}}\delta_{\bar{\delta}}^{\bar{\beta}}-\frac{1}{2}g_{\alpha\bar{\delta}}K_{\bar{\gamma}}{}^{\bar{\beta}}-\frac{1}{2}K_{\alpha\bar{\delta}}\delta_{\bar{\gamma}}^{\bar{\beta}}.
	\end{align*}
	We lower down the index $\bar{\beta}$ to obtain
	\begin{align*}
		\rho_{\bar{\gamma}\bar{\delta}\alpha\beta}=\frac{1}{4}g_{\alpha\bar{\gamma}}K_{\beta\bar{\delta}}+\frac{1}{4}g_{\beta\bar{\delta}}K_{\alpha\bar{\gamma}}-\frac{1}{4}g_{\alpha\bar{\delta}}K_{\beta\bar{\gamma}}-\frac{1}{4}g_{\beta\bar{\gamma}}K_{\alpha\bar{\delta}}.
	\end{align*}
	Therefore, the corresponding Weyl curvature is
	\begin{align*}
		W_{\alpha\beta\bar{\gamma}\bar{\delta}}=\rho_{\alpha\beta\bar{\gamma}\bar{\delta}}-\bigl(P_{\alpha\bar{\gamma}}\mathsf{g}_{\beta\bar{\delta}}+P_{\beta\bar{\delta}}\mathsf{g}_{\alpha\bar{\gamma}}-P_{\alpha\bar{\delta}}\mathsf{g}_{\beta\bar{\gamma}}-P_{\beta\bar{\gamma}}\mathsf{g}_{\alpha\bar{\delta}} \bigr)=0.
	\end{align*}
	Finally, we compute $\rho_{2n+1\,\bar{\beta}\alpha\,2n+1}$ and $W_{2n+1\,\bar{\beta}\alpha\,2n+1}$. By \eqref{connection 1 form}, we have
	\begin{equation*}
		\Omega_{\alpha}{}^0=d\phi_{\alpha}{}^0-\phi_{\alpha}{}^j\wedge \phi_j{}^0=\frac{i}{2}d(g_{\alpha\bar{\beta}}\theta^{\bar{\beta}})-\bigl(\omega_{\alpha}{}^{\beta}+iK_{\alpha}{}^{\beta}\theta+i\delta_{\alpha}^{\beta}\sigma\bigr)\wedge \frac{i}{2}\theta_{\beta}.
	\end{equation*}	
	Since $d\theta^{\beta}=\theta^{\alpha}\wedge \omega_{\alpha}{}^{\beta}$ by \eqref{structure equation of CR manifold}, if we use normal coordinates at a given point, then it simplifies into
	\begin{equation*}
		\Omega_{\alpha}{}^0=-\bigl(iK_{\alpha}{}^{\beta}\theta+i\delta_{\alpha}^{\beta}\sigma\bigr)\wedge \frac{i}{2}g_{\beta\bar{\gamma}}\theta^{\bar{\gamma}}=\frac{1}{2}K_{\alpha\bar{\gamma}}\theta\wedge\theta^{\bar{\gamma}}+\frac{1}{2}g_{\alpha\bar{\gamma}}\sigma\wedge\theta^{\bar{\gamma}}.
	\end{equation*}
	Thus, $\rho_{2n+1\bar{\gamma}\alpha}{}^0=-\frac{1}{2} g_{\alpha\bar{\gamma}}$. If we lower down the index $0$, then $\rho_{2n+1\,\bar{\gamma}\alpha \,2n+1}=-\frac{1}{2} g_{\alpha\bar{\gamma}}$. Therefore, 
	\begin{align*}
		W_{2n+1\,\bar{\beta}\alpha \,2n+1}=&\rho_{2n+1\,\bar{\beta}\alpha \,2n+1}-\bigl(P_{2n+1\,\alpha}\mathsf{g}_{\bar{\beta}\,2n+1}+P_{\bar{\beta}\,2n+1}\mathsf{g}_{2n+1\,\alpha}-P_{2n+1\,2n+1}\mathsf{g}_{\bar{\beta}\alpha}-P_{\bar{\beta}\alpha}\mathsf{g}_{2n+1\,2n+1} \bigr)\\
		=&-\frac{1}{2}g_{\alpha\bar{\beta}}+\frac{1}{2}g_{\alpha\bar{\beta}}=0.
	\end{align*}
	By taking the conjugate, we also have $W_{2n+1\,\alpha\bar{\beta}\,2n+1}=0$.
\end{proof}
\subsection{The Bach tensor}
We next compute the Bach tensor $B_{ij}=\mathsf{g}^{kl}D_lC_{ijk}-P^{kl}W_{kijl}.$

\begin{prop}\label{Bach tensor prop}
	The Bach tensor of the Fefferman metric satisfies
	\begin{align*}
		B_{00}=&\frac{4}{n}\Delta\Lambda+8K_{\alpha}{}^{\beta}K_{\beta}{}^{\gamma}K_{\gamma}{}^{\alpha}-\frac{4}{n(n+1)}\Lambda R,
		\\
		B_{\alpha\bar{\beta}}=&-\frac{n-1}{(n+1)(n+2)^2}RR_{\alpha\bar{\beta}}+\frac{n-1}{(n+2)^2}R_{\alpha}{}^{\mu}R_{\mu\bar{\beta}}+\frac{n-1}{4(n+1)^2(n+2)^2}R^2 g_{\alpha\bar{\beta}}-\frac{n-1}{n}\Lambda g_{\alpha\bar{\beta}},
		\\
		B_{0\alpha}=&\frac{2n-2}{n}i\nab_{\alpha}\Lambda, \qquad B_{0\,2n+1}=0,
	\end{align*}
	where $\Delta=g^{\alpha\bar{\beta}}\nab_{\alpha}\nab_{\bar{\beta}}$ is the complex Laplacian on the base \k manifold $M$.
\end{prop}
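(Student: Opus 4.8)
The plan is to evaluate $B_{ij}=\mathsf{g}^{kl}C_{ijk},_l-P^{kl}W_{kijl}$ directly, feeding in the data already assembled: the Christoffel symbols \eqref{christoffel symbols}, the raised Schouten tensor \eqref{schouten tensor upper index}, the Cotton tensor of Proposition \ref{cotton tensor prop}, the Weyl tensor \eqref{weyl curvature 1}--\eqref{weyl curvature 2}, and the block form \eqref{Fefferman metric in matrix} of the Fefferman metric, which converts every trace $\mathsf{g}^{kl}(\,\cdot\,)_{kl}$ into $(\,\cdot\,)_{0\,2n+1}+(\,\cdot\,)_{2n+1\,0}+2g^{\alpha\bar\beta}(\,\cdot\,)_{\alpha\bar\beta}+2g^{\bar\beta\alpha}(\,\cdot\,)_{\bar\beta\alpha}$. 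Along the way I will use Remark \ref{identification of tensors on F and X rmk} to evaluate $X_0$ and $X_{2n+1}$ on pullbacks (both annihilate $S^1$-invariant functions and lifts of functions from $M$, so e.g. $X_0\Lambda=X_{2n+1}\Lambda=0$) and to identify $X_\alpha,X_{\bar\alpha}$ with the Kähler covariant derivatives $\nab_\alpha,\nab_{\bar\alpha}$; the constancy of the scalar curvature and the Bianchi identities packaged in Lemma \ref{K some properties lemma} will be invoked repeatedly, and all index manipulations will be carried out in normal coordinates \eqref{normal coordinates} at the point in question, where the $\widetilde\Gamma$'s drop out and only the ``algebraic'' Christoffel terms (those built from $K$ and $g$) survive.

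For $B_{00}$: since $C_{00k}=\tfrac1n X_k\Lambda$ vanishes for $k=0$ and $k=2n+1$, the terms $C_{000},_{2n+1}$ and $C_{00\,2n+1},_0$ vanish (their corrections involve $\Gamma_{\cdot\,0}^{\cdot}$ with vanishing entries), and the divergence reduces to $2g^{\alpha\bar\beta}C_{00\bar\beta},_\alpha$ plus its conjugate. Expanding $C_{00\bar\beta},_\alpha=X_\alpha C_{00\bar\beta}-\Gamma_{\alpha0}^pC_{p0\bar\beta}-\Gamma_{\alpha0}^pC_{0p\bar\beta}-\Gamma_{\alpha\bar\beta}^pC_{00p}$ and substituting $C_{\mu0\bar\beta}$, $C_{0\mu\bar\beta}$ from Proposition \ref{cotton tensor prop}(3),(5) produces $\tfrac1n\nab_\alpha\nab_{\bar\beta}\Lambda$ together with cubic terms $K_\alpha{}^\mu K_\mu{}^\gamma K_{\gamma\bar\beta}$ and $\Lambda K_{\alpha\bar\beta}$; tracing against $g^{\alpha\bar\beta}$ and using $g^{\alpha\bar\beta}K_{\alpha\bar\beta}=K_\alpha{}^\alpha=\tfrac R{2(n+1)}$ from \eqref{K trace} turns these into $\tfrac4n\Delta\Lambda$, a multiple of $K_\alpha{}^\beta K_\beta{}^\gamma K_\gamma{}^\alpha$, and a multiple of $\Lambda R$. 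The Weyl term collapses, by \eqref{weyl curvature 1} and \eqref{schouten tensor upper index} (using $W_{p00\,2n+1}=0$ and $W_{\alpha00\beta}=0$), to $2K^{\alpha\bar\beta}W_{\alpha00\bar\beta}+2K^{\bar\alpha\beta}W_{\bar\alpha00\beta}$, which contributes another multiple of $K_\alpha{}^\beta K_\beta{}^\gamma K_\gamma{}^\alpha$ and of $\Lambda R$. Collecting coefficients gives $B_{00}=\tfrac4n\Delta\Lambda+8K_\alpha{}^\beta K_\beta{}^\gamma K_\gamma{}^\alpha-\tfrac4{n(n+1)}\Lambda R$.

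For $B_{\alpha\bar\beta}$, $B_{0\alpha}$ and $B_{0\,2n+1}$ the same mechanism applies with the relevant Cotton components $C_{\alpha\bar\beta k}$, $C_{0\alpha k}$, $C_{0\,2n+1\,k}$ of Proposition \ref{cotton tensor prop}. In $\mathsf{g}^{kl}C_{\alpha\bar\beta k},_l$ the components with a $2n+1$ index again wash out, the surviving contribution is the trace of $\nab_\gamma C_{\alpha\bar\beta\bar\delta}$ and $\nab_{\bar\delta}C_{\alpha\bar\beta\gamma}$, and the leading piece is $\nab^\gamma\nab_\gamma K_{\alpha\bar\beta}$, which I rewrite via \eqref{K laplace} in terms of $R^{\gamma\bar\delta}R_{\alpha\bar\beta\gamma\bar\delta}$ and $R_\alpha{}^\mu R_{\mu\bar\beta}$; after adding $P^{kl}W_{k\alpha\bar\beta l}$, where the full Riemann contribution coming from the $R_{\alpha\bar\beta\gamma\bar\delta}$ term in \eqref{weyl curvature 2} exactly cancels the $R^{\gamma\bar\delta}R_{\alpha\bar\beta\gamma\bar\delta}$ piece, one is left with $(n-1)K_\alpha{}^\mu K_{\mu\bar\beta}-\tfrac{n-1}n\Lambda g_{\alpha\bar\beta}$, and expanding $K_\alpha{}^\mu K_{\mu\bar\beta}$ with \eqref{K square} yields the stated form. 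For $B_{0\alpha}$ all surviving pieces are linear in $\nab_\alpha\Lambda$, using \eqref{K derivative and product} to convert contractions $K^{\gamma\bar\delta}\nab_\alpha K_{\gamma\bar\delta}$ into $\tfrac12\nab_\alpha\Lambda$, giving $B_{0\alpha}=\tfrac{2(n-1)}n i\nab_\alpha\Lambda$. For $B_{0\,2n+1}$ every Cotton component occurring carries a $2n+1$ index and hence vanishes by Proposition \ref{cotton tensor prop}(1), and likewise $W_{k0\,2n+1\,l}$ vanishes after contracting with $P^{kl}$ by \eqref{weyl curvature 1}, so $B_{0\,2n+1}=0$.

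The main obstacle is organizational rather than conceptual: $B_{00}$ requires differentiating the Cotton tensor once more and bookkeeping roughly a dozen Christoffel-correction terms, several of them cubic in $K$, so the numerical coefficients and signs demand care; and the exact cancellation of the full Riemann curvature term in $B_{\alpha\bar\beta}$ against the Weyl contribution is the one structural point that is not purely mechanical, and is where the constancy of the scalar curvature (entering through Lemma \ref{K some properties lemma}) is essential.
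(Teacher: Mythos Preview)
Your plan is correct and is essentially the paper's own computation: split $B_{ij}=\mathsf g^{kl}C_{ijk,l}-P^{kl}W_{kijl}$, use the block form of $\mathsf g^{-1}$ to reduce the trace to the four nontrivial $(k,l)$ pairs, feed in Proposition~\ref{cotton tensor prop}, \eqref{weyl curvature 1}--\eqref{weyl curvature 2}, \eqref{schouten tensor upper index} and \eqref{christoffel symbols}, and for $B_{\alpha\bar\beta}$ cancel the $R^{\gamma\bar\delta}R_{\alpha\bar\beta\gamma\bar\delta}$ term coming from \eqref{K laplace} against the one from $K^{\gamma\bar\delta}W_{\bar\delta\alpha\bar\beta\gamma}$ (indeed the remainder is exactly $(n-1)K_\alpha{}^\mu K_{\mu\bar\beta}-\tfrac{n-1}{n}\Lambda g_{\alpha\bar\beta}$, which \eqref{K square} converts to the stated form). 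One small imprecision: for $B_{0\,2n+1}$ it is not quite true that ``every Cotton component occurring carries a $2n+1$ index,'' since the Christoffel corrections in $D_lC_{0\,2n+1\,k}$ bring in $\Gamma_{l\,2n+1}^{\,p}C_{0pk}$ with $p=\gamma,\bar\gamma$; however, these contributions are $\pm(K_\alpha{}^\gamma K_{\gamma\bar\beta}-\tfrac{\Lambda}{n}g_{\alpha\bar\beta})$ and trace to zero under $g^{\alpha\bar\beta}$, so your conclusion stands (the paper in fact omits this case).
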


\begin{proof}
	We begin with the computation of $B_{00}$. Note
	\begin{equation}\label{B00}
		B_{00}=\mathsf{g}^{kl}D_lC_{00k}-P^{kl}W_{k00l}.
	\end{equation}
	
	By \eqref{schouten tensor upper index} and the symmetry of Weyl tensor, the second term on the right hand side writes into
	\begin{align*}
	P^{kl}W_{k00l}=&P^{00}W_{0000}+P^{\alpha\bar{\beta}}W_{\alpha 00\bar{\beta}}+P^{\bar{\beta}\alpha}W_{\bar{\beta}00\alpha}+P^{2n+1\,2n+1}W_{2n+1\,00\,2n+1}\\
	=&2P^{\alpha\bar{\beta}} W_{\alpha 00\bar{\beta}}+P^{2n+1\,2n+1}W_{2n+1\,00\,2n+1}.
	\end{align*}
	
	We further use \eqref{weyl curvature 1}, \eqref{schouten tensor upper index} and \eqref{K trace} to write it into
	\begin{align*}
		P^{kl}W_{k00l}=&4K^{\alpha\bar{\beta}} \bigl(-\frac{1}{2} K_{\alpha}{}^{\gamma}K_{\gamma\bar{\beta}}+\frac{\Lambda}{2n} g_{\alpha\bar{\beta}}\bigr)
		=-2K_{\alpha}{}^{\gamma} K_{\gamma}{}^{\mu} K_{\mu}{}^{\alpha}+\frac{\Lambda}{n(n+1)} R.
	\end{align*}
	
	Let us also compute the first term on the right hand side of \eqref{B00}.
	\begin{equation}\label{B00 first term}
		\mathsf{g}^{kl}D_l C_{00k}= D_{2n+1}C_{000}+\mathsf{g}^{\alpha\bar{\beta}}D_{\bar{\beta}}C_{00\alpha}+\mathsf{g}^{\bar{\beta}\alpha}D_{\alpha}C_{00\bar{\beta}}+D_{0}C_{00\,2n+1}.
	\end{equation}
	We need to compute the covariant derivatives of the Cotton tensor on the right hand side of the above equality. For the first one,	
	\begin{align*}
		D_{2n+1}C_{000}=X_{2n+1}C_{000}-\Gamma_{2n+1\,0}^pC_{p00}-\Gamma_{2n+1\,0}^pC_{0p0}-\Gamma_{2n+1\,0}^pC_{00p}
	\end{align*}
	
	Since $C_{000}=0$ by Proposition \ref{cotton tensor prop} and $\Gamma_{2n+1\,0}^p=0$ for any $p$ by \eqref{christoffel symbols}, it follows that $D_{2n+1}C_{000}=0$.
	
For the second and third terms in \eqref{B00 first term}, we apply Proposition \ref{cotton tensor prop} and \eqref{christoffel symbols} to obtain
\begin{align*}
		D_{\bar{\beta}} C_{00\alpha}=&X_{\bar{\beta}} C_{00\alpha}-\Gamma_{\bar{\beta}0}^p C_{p0\alpha}-\Gamma_{\bar{\beta}0}^p C_{0p\alpha}-\Gamma_{\bar{\beta}\alpha}^p C_{00p}\\
		=&\frac{1}{n}X_{\bar{\beta}}X_{\alpha} \Lambda+\frac{3}{2}K_{\bar{\beta}}{}^{\bar{\gamma}} \bigl(K_{\alpha}{}^{\mu}K_{\mu\bar{\gamma}}-\frac{\Lambda}{n}g_{\alpha\bar{\gamma}} \bigr);
\end{align*}
\begin{align*}
		\mathsf{g}^{\alpha\bar{\beta}}D_{\bar{\beta}} C_{00\alpha}=&\frac{2}{n}g^{\alpha\bar{\beta}}X_{\bar{\beta}}X_{\alpha} \Lambda+3K^{\alpha\bar{\gamma}} \bigl(K_{\alpha}{}^{\mu}K_{\mu\bar{\gamma}}-\frac{\Lambda}{n}g_{\alpha\bar{\gamma}} \bigr)
		\\
		=&\frac{2}{n}\Delta\Lambda+3K_{\alpha}{}^{\beta}K_{\beta}{}^{\gamma}K_{\gamma}{}^{\alpha}-\frac{3}{2n(n+1)}\Lambda R.
\end{align*}
	
	Note that $\mathsf{g}^{\alpha\bar{\beta}}D_{\bar{\beta}} C_{00\alpha}$ is real. By taking the conjugate, we see $\mathsf{g}^{\bar{\beta}\alpha}D_{\alpha}C_{00\bar{\beta}}=\mathsf{g}^{\alpha\bar{\beta}}D_{\bar{\beta}} C_{00\alpha}$.
	
	For the last term in \eqref{B00 first term}, we have
	\begin{align*}
		D_0C_{00\,2n+1}=X_0C_{00\,2n+1}-\Gamma_{00}^{p}C_{p0\,2n+1}-\Gamma_{00}^{p}C_{0p\,2n+1}-\Gamma_{0\,2n+1}^{p}C_{00 p}.
	\end{align*}
	Since $C_{00\,2n+1}=0$ by Proposition \ref{cotton tensor prop} and $\Gamma_{00}^p=\Gamma_{0\,2n+1}^p=0$ for any $p$ by \eqref{christoffel symbols}, it follows that $D_0C_{00\,2n+1}=0$.
	
Bringing these results on Cotton tensor's derivatives into \eqref{B00 first term}, we get
	\begin{align*}
		\mathsf{g}^{kl}D_lC_{00k}=\frac{4}{n}\Delta\Lambda+6K_{\alpha}{}^{\beta}K_{\beta}{}^{\gamma}K_{\gamma}{}^{\alpha}-\frac{3}{n(n+1)}\Lambda R.
	\end{align*}
	Therefore,
	\begin{equation*}
		B_{00}=\frac{4}{n}\Delta\Lambda+8K_{\alpha}{}^{\beta}K_{\beta}{}^{\gamma}K_{\gamma}{}^{\alpha}-\frac{4}{n(n+1)}\Lambda R.
	\end{equation*}
	
	Next we will compute $B_{\alpha\bar{\beta}}$:
	\begin{align}\label{Bab}
		B_{\alpha\bar{\beta}}=\mathsf{g}^{kl}D_lC_{\alpha\bar{\beta}k}-P^{kl}W_{k\alpha\bar{\beta}l}.
	\end{align}
	The second term on the right hand side writes into
	\begin{align*}
		P^{kl}W_{k\alpha\bar{\beta}l}=P^{00}W_{0\alpha\bar{\beta}0}+P^{2n+1\,2n+1}W_{2n+1\alpha\bar{\beta}2n+1}+P^{\gamma\bar{\delta}}W_{\gamma\alpha\bar{\beta}\bar{\delta}}+P^{\bar{\delta}\gamma}W_{\bar{\delta}\alpha\bar{\beta}\gamma}.
	\end{align*}
	By \eqref{schouten tensor upper index}, \eqref{weyl curvature 1} and \eqref{weyl curvature 2}, we get
	\begin{align*}
	P^{kl}W_{k\alpha\bar{\beta}l}=&W_{\alpha 00\bar{\beta}}+P^{\bar{\delta}\gamma}W_{\bar{\delta}\alpha\bar{\beta}\gamma}
	\\
	=&-\frac{1}{2} K_{\alpha}{}^{\gamma}K_{\gamma\bar{\beta}}+\frac{\Lambda}{2n} g_{\alpha\bar{\beta}}+2K^{\gamma\bar{\delta}}\bigl(\frac{1}{2}R_{\alpha\bar{\delta}\gamma\bar{\beta}}+\frac{1}{2}K_{\alpha\bar{\beta}}g_{\gamma\bar{\delta}}+\frac{1}{2}K_{\gamma\bar{\delta}}g_{\alpha\bar{\beta}}+\frac{1}{2}K_{\gamma\bar{\beta}}g_{\alpha\bar{\delta}}+\frac{1}{2}K_{\alpha\bar{\delta}}g_{\gamma\bar{\beta}} \bigr).
	\end{align*}	
	
	We use \eqref{K trace} and the fact that $\Lambda=K_{\alpha\bar{\beta}}K^{\alpha\bar{\beta}}$ to simplify it into
	\begin{align}\label{Bab second term}
		P^{kl}W_{k\alpha\bar{\beta}l}	=K^{\gamma\bar{\delta}}R_{\alpha\bar{\beta}\gamma\bar{\delta}}+\frac{1}{2(n+1)}R K_{\alpha\bar{\beta}}+\frac{2n+1}{2n}\Lambda g_{\alpha\bar{\beta}}+\frac{3}{2}K_{\alpha}{}^{\gamma}K_{\gamma\bar{\beta}}.
	\end{align}
	We next compute the first term on the right hand side of \eqref{Bab}.
	\begin{align}\label{Bab first term}
		\mathsf{g}^{kl}D_lC_{\alpha\bar{\beta}k}=D_0C_{\alpha\bar{\beta}\,2n+1}+D_{2n+1}C_{\alpha\bar{\beta}0}+2g^{\gamma\bar{\delta}}D_{\bar{\delta}}C_{\alpha\bar{\beta}\gamma}+2g^{\bar{\delta}\gamma}D_{\gamma}C_{\alpha\bar{\beta}\bar{\delta}}.
	\end{align}
	Let us compute these covariant derivatives of the Cotton tensor. Since $C_{ijk}=0$ if at least one of the index is $2n+1$ by Proposition \ref{cotton tensor prop} and $\Gamma_{0\,2n+1}^p=0$ for any $p$ by \eqref{christoffel symbols},
	\begin{align*}
		D_0C_{\alpha\bar{\beta}\,2n+1}=-\Gamma_{0\,2n+1}^p C_{\alpha\bar{\beta}p}=0.
	\end{align*}
	
	For the second term in \eqref{Bab first term}, we have
	\begin{align*}
		D_{2n+1}C_{\alpha\bar{\beta}0}=&X_{2n+1}C_{\alpha\bar{\beta}0}-\Gamma_{2n+1\alpha}^pC_{p\bar{\beta}0}-\Gamma_{2n+1\bar{\beta}}^pC_{\alpha p0}-\Gamma_{2n+1\,0}^pC_{\alpha\bar{\beta}p}\\
		=&-\Gamma_{2n+1\alpha}^pC_{p\bar{\beta}0}-\Gamma_{2n+1\bar{\beta}}^pC_{\alpha p0}.
	\end{align*}
	By \eqref{christoffel symbols}, we get
	\begin{align*}
		D_{2n+1}C_{\alpha\bar{\beta}0}=-\Gamma_{2n+1\alpha}^{\mu}C_{\mu\bar{\beta}0}-\Gamma_{2n+1\bar{\beta}}^{\bar{\nu}}C_{\alpha \bar{\nu}0}=-i\delta_{\alpha}^{\mu}C_{\mu\bar{\beta}0}+i\delta_{\bar{\beta}}^{\bar{\nu}}C_{\alpha \bar{\nu}0}=0.	
	\end{align*}
	
	For the third term in \eqref{Bab first term}, we apply \eqref{christoffel symbols}, Remark \ref{identification of tensors on F and X rmk} and Proposition \ref{cotton tensor prop} to obtain
	\begin{align*}
		D_{\bar{\delta}}C_{\alpha\bar{\beta}\gamma}=&X_{\bar{\delta}}C_{\alpha\bar{\beta}\gamma}-\Gamma_{\bar{\delta}\bar{\beta}}^pC_{\alpha p\gamma}-\Gamma_{\bar{\delta}\alpha}^pC_{p\bar{\beta}\gamma}-\Gamma_{\bar{\delta}\gamma}^pC_{\alpha\bar{\beta}p}\\
		=&\bigl(X_{\bar{\delta}}C_{\alpha\bar{\beta}\gamma}-\widetilde{\Gamma}_{\bar{\delta}\bar{\beta}}^{\bar{\nu}}C_{\alpha \bar{\nu}\gamma}\bigr)-\Gamma_{\bar{\delta}\alpha}^0C_{0\bar{\beta}\gamma}-\Gamma_{\bar{\delta}\gamma}^0C_{\alpha\bar{\beta}0}\\
		=&\frac{1}{2}\nab_{\bar{\delta}}\nab_{\gamma}K_{\alpha\bar{\beta}}-\frac{1}{2}g_{\alpha\bar{\delta}} 	\bigl(K_{\gamma}{}^{\mu} K_{\mu\bar{\beta}}-\frac{\Lambda}{n} g_{\gamma\bar{\beta}} \bigr)  -\frac{1}{4}g_{\gamma\bar{\delta}}\bigl(K_{\alpha}{}^{\mu} K_{\mu\bar{\beta}}-\frac{\Lambda}{n} g_{\alpha\bar{\beta}} \bigr).
	\end{align*}
	Therefore,
	\begin{align*}
		2g^{\gamma\bar{\delta}}D_{\bar{\delta}}C_{\alpha\bar{\beta}\gamma}=&\nab^{\gamma}\nab_{\gamma}K_{\alpha\bar{\beta}}-\delta_{\alpha}^{\gamma}\bigl(K_{\gamma}{}^{\mu} K_{\mu\bar{\beta}}-\frac{\Lambda}{n} g_{\gamma\bar{\beta}} \bigr)  -\frac{n}{2}\bigl(K_{\alpha}{}^{\mu} K_{\mu\bar{\beta}}-\frac{\Lambda}{n} g_{\alpha\bar{\beta}} \bigr)
		\\
		=&\nab^{\gamma}\nab_{\gamma}K_{\alpha\bar{\beta}} -\frac{n+2}{2}\bigl(K_{\alpha}{}^{\mu} K_{\mu\bar{\beta}}-\frac{\Lambda}{n} g_{\alpha\bar{\beta}} \bigr).
	\end{align*}
	
	For the last term in \eqref{Bab first term}, we have
	\begin{align*}
		D_{\gamma}C_{\alpha\bar{\beta}\bar{\delta}}=&X_{\gamma}C_{\alpha\bar{\beta}\bar{\delta}}-\Gamma_{\gamma\alpha}^pC_{p\bar{\beta}\bar{\delta}}-\Gamma_{\gamma\bar{\beta}}^pC_{\alpha p\bar{\delta}}-\Gamma_{\gamma\bar{\delta}}^pC_{\alpha\bar{\beta}p}\\
		=&X_{\gamma}C_{\alpha\bar{\beta}\bar{\delta}}-\Gamma_{\gamma\alpha}^{\mu}C_{\mu\bar{\beta}\bar{\delta}}-\Gamma_{\gamma\bar{\beta}}^0C_{\alpha 0\bar{\delta}}-\Gamma_{\gamma\bar{\delta}}^0C_{\alpha\bar{\beta}0}.
	\end{align*}
	By Proposition \ref{cotton tensor prop} and \eqref{christoffel symbols}, it follows that
	\begin{align*}
		D_{\gamma}C_{\alpha\bar{\beta}\bar{\delta}}=-\frac{1}{4}g_{\gamma\bar{\beta}}\bigl(K_{\alpha}{}^{\mu}K_{\mu\bar{\delta}}-\frac{\Lambda}{n}g_{\alpha\bar{\delta}} \bigr)+\frac{1}{4}g_{\gamma\bar{\delta}}\bigl(K_{\alpha}{}^{\mu}K_{\mu\bar{\beta}}-\frac{\Lambda}{n}g_{\alpha\bar{\beta}} \bigr).
	\end{align*}
	Therefore,
	\begin{align*}
		2g^{\bar{\delta}\gamma}D_{\gamma}C_{\alpha\bar{\beta}\bar{\delta}}=-\frac{1}{2}\delta_{\bar{\beta}}^{\bar{\delta}}\bigl(K_{\alpha}{}^{\mu}K_{\mu\bar{\delta}}-\frac{\Lambda}{n}g_{\alpha\bar{\delta}} \bigr)+\frac{n}{2}\bigl(K_{\alpha}{}^{\mu}K_{\mu\bar{\beta}}-\frac{\Lambda}{n}g_{\alpha\bar{\beta}} \bigr)=\frac{n-1}{2}\bigl(K_{\alpha}{}^{\mu}K_{\mu\bar{\beta}}-\frac{\Lambda}{n}g_{\alpha\bar{\beta}} \bigr).
	\end{align*}
We put the formulas on covariant derivatives of the Cotton tensor into \eqref{Bab first term} and obtain
	\begin{align}\label{DlCabk}
		\mathsf{g}^{kl}D_lC_{\alpha\bar{\beta}k}=\nab^{\gamma}\nab_{\gamma}K_{\alpha\bar{\beta}}-\frac{3}{2}\bigl(K_{\alpha}{}^{\mu}K_{\mu\bar{\beta}}-\frac{\Lambda}{n}g_{\alpha\bar{\beta}} \bigr).
	\end{align}
	By putting \eqref{Bab second term} and \eqref{DlCabk} into \eqref{Bab}, we have
	\begin{align*}
		B_{\alpha\bar{\beta}}=\nab^{\gamma}\nab_{\gamma}K_{\alpha\bar{\beta}}-K^{\gamma\bar{\delta}}R_{\alpha\bar{\beta}\gamma\bar{\delta}}-\frac{1}{2(n+1)}R K_{\alpha\bar{\beta}}-\frac{n-1}{n}\Lambda g_{\alpha\bar{\beta}}-3K_{\alpha}{}^{\gamma}K_{\gamma\bar{\beta}}.
	\end{align*}
	By \eqref{K square} and \eqref{K laplace}, we finally get
	\begin{align*}
		B_{\alpha\bar{\beta}}=&\frac{R}{2(n+1)(n+2)}g^{\gamma\bar{\delta}}R_{\alpha\bar{\beta}\gamma\bar{\delta}}+\frac{1}{n+2}R_{\alpha}{}^{\mu}R_{\mu\bar{\beta}}-\frac{1}{2(n+1)}R K_{\alpha\bar{\beta}}-\frac{n-1}{n}\Lambda g_{\alpha\bar{\beta}}-3K_{\alpha}{}^{\gamma}K_{\gamma\bar{\beta}}
		\\=&-\frac{n-1}{(n+1)(n+2)^2}RR_{\alpha\bar{\beta}}+\frac{n-1}{(n+2)^2}R_{\alpha}{}^{\mu}R_{\mu\bar{\beta}}+\frac{n-1}{4(n+1)^2(n+2)^2}R^2 g_{\alpha\bar{\beta}}-\frac{n-1}{n}\Lambda g_{\alpha\bar{\beta}}.
	\end{align*}

	The last two equations of Proposition \ref{Bach tensor prop} can be proved similarly and we omit the proof.
\end{proof}	

\begin{cor}\label{Bach tensor n=2}
	When $n=2$, we have
	\begin{align*}
	&B_{00}=2\Delta\Lambda+4\bigl(\frac{1}{3}\Lambda R-\frac{1}{6^3} R^3 \bigr)
	\\
	&B_{\alpha\bar{\beta}}\in\mathcal{T}, \qquad
	B_{0\alpha}=i\nab_{\alpha}\Lambda, \qquad B_{0\,2n+1}=0.
	\end{align*}
\end{cor}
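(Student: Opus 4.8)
\textbf{Proof proposal for Corollary \ref{Bach tensor n=2}.}

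The plan is to specialize the general-dimension formulas of Proposition \ref{Bach tensor prop} to the case $n=2$ and then identify the resulting polynomial expressions in $\lambda_1,\lambda_2$ (equivalently in $R=\lambda_1+\lambda_2$ and $C=\lambda_1\lambda_2$) using the elementary algebra of $2\times 2$ Hermitian matrices. First I would substitute $n=2$ into the formula
$$B_{00}=\frac{4}{n}\Delta\Lambda+8K_{\alpha}{}^{\beta}K_{\beta}{}^{\gamma}K_{\gamma}{}^{\alpha}-\frac{4}{n(n+1)}\Lambda R,$$
which immediately gives $B_{00}=2\Delta\Lambda+8\,\tr(K^3)-\tfrac{2}{3}\Lambda R$. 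The only nonroutine piece is to evaluate $\tr(K^3)$, the complete contraction $K_{\alpha}{}^{\beta}K_{\beta}{}^{\gamma}K_{\gamma}{}^{\alpha}$, in terms of $R$ and $\Lambda$. Since this is a complete contraction of a tensor built from $R_{\alpha\bar\beta}$ and $g_{\alpha\bar\beta}$, it lies in $\mathcal P(\lambda_1,\lambda_2)$; diagonalizing at a point via the normal coordinates \eqref{normal coordinates}, the eigenvalues of $K_{\alpha}{}^{\beta}$ are $\kappa_i=\tfrac14(\lambda_i-\tfrac16R)$ for $n=2$ (so $\kappa_1=\tfrac14(\tfrac56\lambda_1-\tfrac16\lambda_2)$, $\kappa_2=\tfrac14(\tfrac56\lambda_2-\tfrac16\lambda_1)$), whence $\tr(K^3)=\kappa_1^3+\kappa_2^3$ and $\Lambda=\kappa_1^2+\kappa_2^2$, $K_\alpha{}^\alpha=\kappa_1+\kappa_2=\tfrac16R$. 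Using Newton's identities for two variables, $\kappa_1^3+\kappa_2^3=(\kappa_1+\kappa_2)^3-3(\kappa_1+\kappa_2)(\kappa_1\kappa_2)$ and $\kappa_1\kappa_2=\tfrac12\big((\kappa_1+\kappa_2)^2-(\kappa_1^2+\kappa_2^2)\big)=\tfrac12(\tfrac1{36}R^2-\Lambda)$. A short computation then yields $8\,\tr(K^3)=8\big(\tfrac1{6^3}R^3-\tfrac16 R\cdot\tfrac12(\tfrac1{36}R^2-\Lambda)\big)=\tfrac23\Lambda R+8\big(\tfrac1{6^3}R^3-\tfrac1{432}R^3\big)$; bookkeeping the constants gives $8\,\tr(K^3)-\tfrac23\Lambda R = 4\big(\tfrac13\Lambda R-\tfrac1{6^3}R^3\big)$, which is exactly the claimed zero-order term. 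I would double-check the numerical coefficient here, as it is the one place a sign or factor could slip.

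Next, for $B_{\alpha\bar\beta}$ I would simply observe that with $n=2$ the formula
$$B_{\alpha\bar\beta}=-\frac{n-1}{(n+1)(n+2)^2}RR_{\alpha\bar\beta}+\frac{n-1}{(n+2)^2}R_{\alpha}{}^{\mu}R_{\mu\bar\beta}+\frac{n-1}{4(n+1)^2(n+2)^2}R^2 g_{\alpha\bar\beta}-\frac{n-1}{n}\Lambda g_{\alpha\bar\beta}$$
is a linear combination of $RR_{\alpha\bar\beta}$, $R_\alpha{}^\mu R_{\mu\bar\beta}$, $R^2 g_{\alpha\bar\beta}$ and $\Lambda g_{\alpha\bar\beta}$, each of which (using $\Lambda\in\mathcal P\subset\mathcal T$ via $\Lambda = K_{\alpha\bar\beta}K^{\alpha\bar\beta}$ and the fact that $\Lambda g_{\alpha\bar\beta}$ is a product of a scalar in $\mathcal T$ with $g_{\alpha\bar\beta}$) is by definition an element of $\mathcal T=\mathcal T(R_{\alpha\bar\beta},g_{\alpha\bar\beta})$. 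Hence $B_{\alpha\bar\beta}\in\mathcal T$. Finally, $B_{0\alpha}=\tfrac{2n-2}{n}i\nab_\alpha\Lambda = i\nab_\alpha\Lambda$ when $n=2$, and $B_{0\,2n+1}=0$ is immediate from Proposition \ref{Bach tensor prop}.

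The main obstacle, such as it is, is purely computational: correctly carrying out the $2$-variable symmetric-polynomial reduction of $\tr(K^3)$ and matching the constant $4\big(\tfrac13\Lambda R - \tfrac1{6^3}R^3\big)$. Everything else is direct substitution of $n=2$ and an appeal to the definition of the spaces $\mathcal T$ and $\mathcal P$ introduced before the proof of Theorem \ref{obstruction tensor for csck thm}. I would present the argument in three short bullets (one per displayed equation in the corollary), keeping the $\tr(K^3)$ evaluation as the only nontrivial line.
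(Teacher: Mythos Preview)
Your approach is correct and essentially identical to the paper's: specialize Proposition \ref{Bach tensor prop} to $n=2$, diagonalize $K_{\alpha}{}^{\beta}$ in normal coordinates, and reduce $\tr(K^3)$ to a symmetric polynomial in $\kappa_1+\kappa_2=\tfrac16 R$ and $\kappa_1^2+\kappa_2^2=\Lambda$ (the paper uses the factorization $\kappa_1^3+\kappa_2^3=\tfrac12(\kappa_1+\kappa_2)\bigl(3(\kappa_1^2+\kappa_2^2)-(\kappa_1+\kappa_2)^2\bigr)$ instead of Newton's identity, but this is the same elementary algebra). One slip to fix: in your displayed line for $8\tr(K^3)$ you dropped the factor $3$ from $-3(\kappa_1+\kappa_2)\kappa_1\kappa_2$, so the intermediate expression $8\bigl(\tfrac1{6^3}R^3-\tfrac16 R\cdot\tfrac12(\tfrac1{36}R^2-\Lambda)\bigr)$ is off and does not actually equal the (correct) final answer you state; with the $3$ restored you get $8\tr(K^3)=2R\Lambda-\tfrac{1}{54}R^3$, and then $8\tr(K^3)-\tfrac23 R\Lambda=\tfrac43 R\Lambda-\tfrac{1}{54}R^3=4\bigl(\tfrac13\Lambda R-\tfrac1{6^3}R^3\bigr)$ as claimed.
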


\begin{proof}
	Note that the last three identities follow immediately from Proposition \ref{Bach tensor prop}. It remains to prove the first one on $B_{00}$. Fix a point $p\in M$ and we use the normal coordinates at $p$. In particular \eqref{normal coordinates} holds at $p$, i.e., $(g_{\alpha\bar{\beta}})$ becomes the identity matrix and $(R_{\alpha\bar{\beta}})$ becomes $\mathrm{diag}(\lambda_1, \lambda_2)$ at $p$. Since $K_{\alpha\bar{\beta}}=\frac{1}{4}\bigl(R_{\alpha\bar{\beta}}-\frac{1}{6}Rg_{\alpha\bar{\beta}} \bigr)$, we have the following two equations hold at $p$.
	\begin{align*}
		(K_{\alpha\bar{\beta}})=&\begin{pmatrix}
		\frac{5\lambda_1-\lambda_2}{24} & 0\\
		0 & \frac{5\lambda_2-\lambda_1}{24}\\
		\end{pmatrix}:=\begin{pmatrix}
		\mu_1 & 0\\
		0 & \mu_2\\
		\end{pmatrix};\\
		K_{\alpha}{}^{\beta}K_{\beta}{}^{\gamma}K_{\gamma}{}^{\alpha}=&\mu_1^3+\mu_2^3
		=\frac{1}{2} (\mu_1+\mu_2) \bigl(3\mu_1^2+3\mu_2^2-(\mu_1+\mu_2)^2 \bigr).
	\end{align*}
	Note that $\mu_1+\mu_2=\frac{1}{6}R$ and $\mu_1^2+\mu_2^2=\Lambda$. Therefore,
	\begin{align*}
		K_{\alpha}{}^{\beta}K_{\beta}{}^{\gamma}K_{\gamma}{}^{\alpha}=\frac{1}{12}R \bigl(3\Lambda-\frac{1}{36}R^2 \bigr)=\frac{1}{4}\Lambda R-\frac{1}{2^4\cdot 3^3} R^3.
	\end{align*}
	
Finally bringing the above identity into the first equation of Proposition \ref{Bach tensor prop}, we get
\begin{align*}
		B_{00}=2\Delta\Lambda+8\bigl(\frac{1}{4}\Lambda R-\frac{1}{2^4\cdot 3^3} R^3\bigr)-\frac{2}{3}\Lambda R
		=2\Delta \Lambda+\frac{4}{3}R\Lambda-\frac{1}{2\cdot 3^3} R^3.
\end{align*}
\end{proof}

\bibliographystyle{plain}
\bibliography{references}

\end{document}